\newcommand{\image}{\operatorname{im}}
\newcommand{\coker}{\operatorname{coker}}
\newcommand{\ab}[1]{#1^{\mathrm {ab}}}
\newcommand{\End}{\operatorname{\mathrm{End}}}
\newcommand{\sour}{\operatorname{\boldsymbol s}}
\newcommand{\ran}{\operatorname{\boldsymbol r}}
\newcommand{\tr}{\operatorname{\mathrm{tr}}\nolimits}
\newcommand{\id}{\operatorname{\mathrm{id}}\nolimits}
\newcommand{\inv}{^{-1}}
\newcommand{\p}{\varphi}
\newcommand{\ov}[1]{\ensuremath{\overline {#1}}}
\newcommand{\til}[1]{\ensuremath{\widetilde {#1}}}
\newcommand{\wh}{\widehat}
\newcommand{\Tor}{\operatorname{\mathrm{Tor}}\nolimits}
\newcommand{\cs}{\mathrm{C}^\ast}
\newcommand{\acts}{\curvearrowright}
\newcommand{\rightacts}{\curvearrowleft}
\newcommand{\Z}{\mathbb Z}
\newcommand{\underlying}[1]{D_{#1}}
\newcommand{\universal}[1]{\mathscr U_{#1}}
\newcommand{\reg}{\mathrm{reg}}
\newcommand{\sing}{\mathrm{sing}}
\newcommand*{\suchthat}{\;\ifnum\currentgrouptype=16 \middle\fi|\;}
\newtheorem*{ThmA*}{Theorem~A}
\newtheorem*{Thm*}{Theorem}
\newtheorem{Thm}{Theorem}[section]
\newtheorem{Prop}[Thm]{Proposition}
\newtheorem{Lemma}[Thm]{Lemma}
{\theoremstyle{definition}
\newtheorem{Def}[Thm]{Definition}}
{\theoremstyle{remark}
\newtheorem{Rmk}[Thm]{Remark}}
\newtheorem{Cor}[Thm]{Corollary}
{\theoremstyle{remark}
}
{\theoremstyle{remark}
\newtheorem{Example}[Thm]{Example}}
\theoremstyle{remark}
\newtheorem{Claim}{Claim}}
\theoremstyle{remark}
\theoremstyle{remark}
\theoremstyle{remark}
\newtheorem*{Claim*}{Claim}}
\theoremstyle{plain}
\newtheorem{thmx}{Theorem}
\newtheorem{corx}[thmx]{Corollary}
\numberwithin{equation}{section}
\tikzset{
  curarrow/.style={
  rounded corners=8pt,
  execute at begin to={every node/.style={fill=red}},
    to path={-- ([xshift=60pt]\tikztostart.center)
    |- (#1) node {}
    -| ([xshift=-60pt]\tikztotarget.center)
    -- (\tikztotarget)}
    }
}
\tikzset{
  labcurarrow1/.style={
  rounded corners=8pt,
  execute at begin to={every node/.style={fill=red}},
    to path={-- ([xshift=60pt]\tikztostart.center)
    |- (#1) node[fill=white] {$\scriptstyle{\id - H_n(\mathcal X_\reg,A)}$}
    -| ([xshift=-60pt]\tikztotarget.center)
    -- (\tikztotarget)}
    }
}
\tikzset{
  labcurarrow2/.style={
  rounded corners=8pt,
  execute at begin to={every node/.style={fill=red}},
    to path={-- ([xshift=60pt]\tikztostart.center)
    |- (#1) node[fill=white] {$\scriptstyle{\id - \Phi_n}$}
    -| ([xshift=-60pt]\tikztotarget.center)
    -- (\tikztotarget)}
    }
}
\tikzset{
  labcurarrow3/.style={
  rounded corners=8pt,
  execute at begin to={every node/.style={fill=red}},
    to path={-- ([xshift=60pt]\tikztostart.center)
    |- (#1) node[fill=white] {$\scriptstyle{\id - \Phi_1}$}
    -| ([xshift=-60pt]\tikztotarget.center)
    -- (\tikztotarget)}
    }
}
\tikzset{
  labcurarrow4/.style={
  rounded corners=8pt,
  execute at begin to={every node/.style={fill=red}},
    to path={-- ([xshift=60pt]\tikztostart.center)
    |- (#1) node[fill=white] {$\scriptstyle{\id - \Phi_0}$}
    -| ([xshift=-60pt]\tikztotarget.center)
    -- (\tikztotarget)}
    }
}
\title[Homology and K-theory for self-similar actions]{Homology and K-theory for self-similar actions of groups and groupoids}
\author{Alistair Miller}
\address[A.~Miller]{%
    Department of Mathematics and Computer Science\\
    University of Southern Denmark\\
    Moseskovvej\\
    5230 Odense\\
    Denmark}
\email{mil@sdu.dk}
\author{Benjamin Steinberg}
\address[B.~Steinberg]{%
    Department of Mathematics\\
    City College of New York\\
    Convent Avenue at 138th Street\\
    New York, New York 10031\\
    USA}
\email{bsteinberg@ccny.cuny.edu}
\thanks{The first author was supported by the Independent Research Fund Denmark through the Grant 1054-00094B. The second author was supported by a Simons Foundation Collaboration Grant, award number 849561, and the Australian Research Council Grant DP230103184.}
\date{\today}
\keywords{self-similar group, groupoid homology, Nekrashevych algebra, self-similar groupoid, K-theory, groupoid correspondences}
\subjclass[2020]{20J05,22A22,46L05,46L80,20F65}
\begin{document}
\begin{abstract}
Nekrashevych associated to each self-similar group action an ample groupoid and a $\cs$-algebra. 
We perform complete computations of the homology of the groupoid and the K-theory of the $\cs$-algebra for a myriad of examples, including the Grigorchuk group, the Grigorchuk--Erschler group, Gupta--Sidki groups, and self-similar actions of free abelian groups and lamplighter groups.
The key development is the construction, for arbitrary self-similar group actions, of long exact sequences which compute the homology and K-theory in terms of the homology of the group and K-theory of the group $\cs$-algebra via the transfer map and the virtual endomorphism. Results are proved more generally for self-similar groupoids.  As a consequence of our results and recent results of X.~Li, we are able to show that R\"over's simple group containing the Grigorchuk group  and Thompson's group $V$ is rationally acyclic but has  nontrivial Schur multiplier.  We prove many more R\"over--Nekrashevych groups of self-similar groups are rationally acyclic.
\end{abstract}

\maketitle

\tableofcontents

\section{Introduction}

The theory of self-similar groups, in the guise of automaton groups, began in the seventies and eighties with the work of Aleshin, Sushchanski{\u\i} and Grigorchuk; see~\cite{GNS} for more history.  The initial interest in self-similar groups was as a means to provide concrete constructions of groups with exotic properties, such as  finitely generated infinite torsion groups (Burnside groups)~\cite{Grigtorsion,GuptaSidki} and groups of intermediate growth (first~\cite{griggrowth}, and later others~ \cite{FG85}).  Grigorchuk and \.{Z}uk's discovery that the lamplighter group can be realized as a self-similar group, and their use of self-similarity to compute the spectra of random walks on this group~\cite{GrigZuk} led to a flurry of work around  the strong Atiyah conjecture on $\ell_2$-Betti numbers~\cite{strongatiyah}. 

The modern theory of self-similar groups began with Nekrashevych's monograph~\cite{selfsimilar}; see also~\cite{Nekbook2}. In particular, Nekrashevych showed that self-similar groups arise very naturally in dynamical settings via his iterated monodromy group construction.  Bartholdi and Nekrashevych~\cite{BN06} solved Hubbard's twisted rabbit problem using iterated monodromy groups.

Traditionally the theory of self-similar groups was presented as the theory of groups acting on rooted trees, typically in the language of wreath products~\cite{GNS}. Nekrashevych developed the abstract theory in~\cite{selfsimilar} in terms of proper self-correspondences of discrete groups (using the language of covering bimodules). In the spirit of noncommutative geometry, Nekrashevych introduced a $\cs$-algebra $\mathcal O_{(G,X)}$ to encode the underlying self-similar space of a self-similar group action $(G,X,\sigma)$ with finite alphabet $X$ and cocycle $\sigma \colon G \times X \to G$~\cite{Nekcstar}, and further provided a groupoid model $\mathscr G_{(G,X)}$. 
For faithful self-similar group actions, the groupoid is always purely infinite, 
minimal and effective, but often it is not Hausdorff.  A lot of the effort to understand simplicity of algebras and $\cs$-algebras associated to non-Hausdorff groupoids was motivated in part by the case of Nekrashevych algebras of self-similar groups; in particular establishing simplicity of the Nekrashevych algebra of the Grigorchuk group~\cite{Nekrashevychgpd,nonhausdorffsimple,simplicity,SS23}. Since the groupoid is purely infinite, minimal and effective, the Nekrashevych algebra $\mathcal O_{(G,X)} = \cs(\mathscr G_{(G,X)})$ is purely infinite when it is simple~\cite{KwaMey21}.

The homology of the groupoid $\mathscr G_{(G,X)}$ associated to a self-similar group action $(G,X,\sigma)$ and the K-theory of its Nekrashevych algebra $\mathcal O_{(G,X)}$ provide fundamental invariants for the self-similar system. The K-theory is particularly pertinent as many Nekrashevych algebras, e.g. for the Grigorchuk group and all regular contracting self-similar groups, are UCT Kirchberg algebras, which are classified by their K-theory by the Kirchberg--Phillips Theorem~\cite{Kirchberg, Phillips}.
The groupoid homology of an ample groupoid $\mathscr G$ shares many similarities with the K-theory of its reduced $\cs$-algebra, enjoying a close relationship to the K-theory when the isotropy groups are torsion-free~\cite{PY22, miller2024isomorphisms}. Further, the homology of $\mathscr G_{(G,X)}$ is related to the homology of its topological full group \cite{li2022ample}, which (as pointed out in~\cite{Nekbook2}) is the R\"over--Nekrashevych group $V(G)$ of the self-similar group action $(G,X,\sigma)$.
This group is studied in~\cite{NekraTFG,NekraTFG2}, where the commutator subgroup $V(G)'$ is shown to be simple and the abelianization $V(G)/V(G)'$ is computed.  When $(G,X,\sigma)$ is contracting, the groups $V(G)$ and $V(G)'$ are finitely presented. 

Nekrashevych computed the K-theory of Nekrashevych algebras of iterated monodromy groups of  post-critically finite hyperbolic rational functions~\cite{Nekcstar}. He used a two-step approach, first relating the K-theory of $\mathcal O_{(G,X)}$ with that of its gauge-invariant subalgebra, which is then described as an inductive limit of matrix amplifications of $\cs(G)$. A similar approach, at the groupoid level, was taken by Ortega and Sanchez~\cite{ortegadihedral} to study the homology of the groupoid associated to a certain self-similar action of the infinite dihedral group. Although they were able to prove that the homology was torsion, which was enough to show that the groupoid strongly contradicts Matui's HK conjecture~\cite{Matui16}, this approach does not seem well adapted to computing the homology precisely. Deaconu also outlines a general strategy along these lines for both K-theory and homology~\cite{DeaconuSelfSimilar}.

We compute homology and K-theory in much greater generality by relating the groupoid $\mathscr G_{(G,X)}$ and the $\cs$-algebra $\mathcal O_{(G,X)}$ of a self-similar group action $(G,X,\sigma)$ directly to $G$ and $\cs(G)$ respectively. This provides us a means to compute the homology and K-theory in entirely group-theoretic terms, namely via the transfer map and the virtual endomorphism. 

\begin{thmx}\label{Theorem A}
Let $(G,X,\sigma)$ be a self-similar group action over a finite alphabet $X$.  For $x\in X$, let $\sigma_x\colon G_x\to G$ be the virtual endomorphism $g\mapsto g|_x =\sigma(g,x)$. Then there is a long exact sequence
\[\cdots\to H_{n+1}(\mathscr G_{(G,X)})\to H_n(G)\xrightarrow{\id - \Phi_n} H_n(G)\to H_n(\mathscr G_{(G,X)})\to \cdots\] where $\Phi_n = \sum_{x \in T} H_n(\sigma_x)\circ \mathrm{tr}^{G}_{G_x}$ for any transversal $T$ to $G\backslash X$.
\end{thmx}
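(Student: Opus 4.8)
The plan is to realize $\mathscr G_{(G,X)}$ as the groupoid of a self-similar correspondence of $G$ and then to feed this into a long exact sequence for groupoid homology of mapping-torus (Pimsner--Voiculescu) type. Regarding $G$ as a one-object groupoid, the finite $G$-set $X$ together with the cocycle $\sigma$ encodes a proper self-correspondence of $G$, whose associated Cuntz--Pimsner (Deaconu--Renault) groupoid is exactly $\mathscr G_{(G,X)}$. The self-similar structure is precisely what makes this correspondence well defined: the left $G$-action on $X$ provides the base change, the restrictions $g\mapsto g|_x$ provide the right action, and the cocycle identity $\sigma(gh,x)=\sigma(g,hx)\sigma(h,x)$ guarantees associativity.

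The first main step is to establish, for such a correspondence, a long exact sequence
\[\cdots \to H_{n+1}(\mathscr G_{(G,X)}) \to H_n(G) \xrightarrow{\,\id - \lambda_n\,} H_n(G) \to H_n(\mathscr G_{(G,X)}) \to \cdots\]
in which $\lambda_n\colon H_n(G)\to H_n(G)$ is the map induced on homology by the correspondence. I would obtain this from a short exact sequence of chain complexes built from the filtration of $\mathscr G_{(G,X)}$ by word length (the gauge grading): the degree-zero part is Morita equivalent to $G$, the shift implements the endomorphism $\lambda_\bullet$, and the mapping cone of $\id - \lambda_\bullet$ computes $H_\bullet(\mathscr G_{(G,X)})$. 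Alternatively one invokes the general homology sequence for Cuntz--Pimsner groupoids of groupoid correspondences and specializes it to a one-object groupoid.

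The second, and I expect harder, step is to identify $\lambda_n$ with $\Phi_n=\sum_{x\in T}H_n(\sigma_x)\circ\mathrm{tr}^{G}_{G_x}$. Decomposing $X$ into its finitely many $G$-orbits and choosing a transversal $T$, the correspondence splits as a direct sum indexed by $T$, and the orbit of $x$ contributes the permutation module $\Z[G/G_x]=\Ind^{G}_{G_x}\Z$; since $X$ is finite, each $G_x$ has finite index in $G$, so the finite-index transfer $\mathrm{tr}^{G}_{G_x}\colon H_n(G)\to H_n(G_x)$ is defined. By Shapiro's lemma the induced map through this summand factors through $H_n(G_x)$, the passage $H_n(G)\to H_n(G_x)$ furnished by the $\Ind$--$\Res$ adjunction being exactly this transfer; the right action by restrictions then pushes forward along the virtual endomorphism via $H_n(\sigma_x)$. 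Summing over $T$ yields $\Phi_n$.

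The main obstacle is the non-freeness of the action: because the stabilizers $G_x$ are in general nontrivial, the induced map is not a plain sum of pushforwards but genuinely requires the transfer, and the chief technical burden is to track the cocycle $\sigma$ through the chain-level identifications (the Shapiro isomorphism together with the orbit decomposition) carefully enough to see that $\mathrm{tr}^{G}_{G_x}$ and $H_n(\sigma_x)$ assemble in precisely this order, and that the resulting $\Phi_n$ is independent of the chosen transversal $T$.
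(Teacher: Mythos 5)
Your second step is sound and is essentially what the paper does: decomposing the correspondence over a transversal $T$ to $G\backslash X$, identifying the passage $H_n(G)\to H_n(G_x)$ through the summand $\Z[G/G_x]=\Ind^G_{G_x}\Z$ with the finite-index transfer via Shapiro's lemma, and pushing forward along $\sigma_x$, is exactly the content of the paper's transversal decomposition of \'etale correspondences (Propositions~\ref{p:actionhomdecomposition}--\ref{p:transversal decomposition}). The genuine gap is in your first step. The degree-zero part of $\mathscr G_{(G,X)}$ under the gauge cocycle $c([pgq^*,z])=|p|-|q|$ is \emph{not} Morita equivalent to $G$: it is an increasing union of clopen subgroupoids $\mathscr H_k$, each equivalent to a germ quotient of $G\ltimes X^{\omega}$ restricted to a clopen set, and its homology is an inductive limit over word length built from the homology of stabilizers of finite words — in good cases $\varinjlim\bigl(H_n(G)\xrightarrow{\Phi_n}H_n(G)\xrightarrow{\Phi_n}\cdots\bigr)$ — not a single copy of $H_n(G)$. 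A Pimsner--Voiculescu-type sequence for the gauge grading therefore yields $\id-(\text{shift})$ on this limit; to reach $\id-\Phi_n$ on $H_n(G)$ you would still need a telescope lemma plus an identification of each layer, which is delicate when the action is not pseudofree (so $\mathscr H_0$ is a proper quotient of $G\ltimes X^{\omega}$). This is precisely the two-step gauge-invariant route of Nekrashevych, Ortega--Sanchez and Deaconu that the paper explicitly sets out to avoid because it is hard to control in homology, and your fallback of invoking ``the general homology sequence for Cuntz--Pimsner groupoids'' is circular: that sequence is the theorem being proved.

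The paper's actual mechanism replaces the gauge filtration by the Toeplitz extension modelled on the inverse semigroup $S_{(G,X)}$: the universal groupoid $\universal{S_{(G,X)}}$ satisfies $H_*(\universal{S_{(G,X)}})\cong H_*(\underlying{S_{(G,X)}})\cong H_*(G)$ on the nose, $\mathscr G_{(G,X)}$ is its reduction to the closed invariant set of tight filters, and the complementary open set is a single discrete orbit of principal filters whose reduction is again Morita equivalent to $G$. The long exact sequence for an open--closed decomposition of the unit space (Proposition~\ref{t:tool1}) then produces the sequence, and the Rukolaine correspondences (Proposition~\ref{p:ruk.prop}) identify the connecting map as $\id-H_n(\mathcal X)$, after which the transversal decomposition gives $\Phi_n=\sum_{x\in T}H_n(\sigma_x)\circ\tr^{G}_{G_x}$. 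To repair your argument you must either carry out the telescope identification in detail or switch to a Toeplitz-type model in which the ``unreduced'' object already has the homology of $G$.
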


The transfer map, which for the finite index stabilizer group $G_x$ of $x \in X$ is realised by the proper correspondence $\tr^{G}_{G_x}\colon G \to G_x$\footnote{Abusing notation slightly we write $\tr^G_{G_x}$ for both the correspondence and its induced maps in homology and K-theory.} with bispace $G$, also plays a key role in Scarparo's work on homology and K-theory of odometers~\cite{Scarparo20}. The power of Theorem \ref{Theorem A} is that it allows us to draw from the arsenal of techniques from nearly a century of development in group homology.  For example, group homology has a well-known interpretation in terms of the homology of Eilenberg--Mac Lane spaces and the transfer map has a covering space interpretation~\cite{Browncohomology}. 

For the K-theory of the Nekrashevych algebra $\mathcal O_{(G,X)}$ we have an analogous six-term sequence.

\begin{thmx}\label{Theorem B}
Let $(G,X,\sigma)$ be a self-similar group action over a finite alphabet $X$.  Let $\sigma_x\colon G_x\to G$ be the virtual endomorphism $g\mapsto g|_x$ for $x\in X$.  Then there is a six-term exact sequence
\[\begin{tikzcd}[arrow style=math font]
	{K_0(\cs(G))} & {K_0(\cs(G))} & {K_0(\mathcal O_{(G,X)})} \\
	{K_1(\mathcal O_{(G,X)})} & {K_1(\cs(G))} & {K_1(\cs(G))}
	\arrow["{1-\Phi_0}", from=1-1, to=1-2]
	\arrow[from=1-2, to=1-3]
	\arrow[from=1-3, to=2-3]
	\arrow[from=2-1, to=1-1]
	\arrow[from=2-2, to=2-1]
	\arrow["{1-\Phi_1}", from=2-3, to=2-2]
\end{tikzcd}\]
where $\Phi_i = \sum_{x \in T} K_i(\sigma_x)\circ \mathrm{tr}^{G}_{G_x}$ for $i = 0,1$ and any transversal $T$ to $G\backslash X$.
\end{thmx}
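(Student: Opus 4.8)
The plan is to realise $\mathcal O_{(G,X)}$ as a Cuntz--Pimsner algebra and feed the underlying correspondence into Pimsner's six-term exact sequence, whose two connecting maps are exactly $\id$ minus the K-theory map induced by the correspondence. Concretely, let $E$ be the C*-correspondence over $A=\cs(G)$ attached to the self-similar action: as a right Hilbert $A$-module $E=\bigoplus_{x\in X}\delta_x\cs(G)$ is free of rank $|X|<\infty$, the right action is the obvious one, and the left action is $\varphi(g)\delta_x=\delta_{g\cdot x}\,(g|_x)$ for $g\in G$, extended by linearity and continuity. Then $\mathcal O_{(G,X)}=\mathcal O_E$. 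Since $X$ is finite, $E$ is finitely generated projective, so $\varphi(A)\subseteq\mathcal K(E)$ automatically; the one hypothesis to check is injectivity of $\varphi$, which is equivalent to faithfulness of the action. Granting this, Pimsner's theorem yields a six-term exact sequence of exactly the claimed shape, with both connecting maps equal to $\id-[E]_*$, where $[E]_*\colon K_i(A)\to K_i(A)$ is the map induced by the class $[E]\in\KK(A,A)$.

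It therefore remains to identify $[E]_*$ with $\Phi_i=\sum_{x\in T}K_i(\sigma_x)\circ\tr^G_{G_x}$. First I would decompose $E$ along the $G$-orbits on $X$: writing $X=\bigsqcup_{x\in T}G\cdot x$ and $E_x=\bigoplus_{y\in G\cdot x}\delta_y\cs(G)$, the left action preserves each $E_x$, so $E=\bigoplus_{x\in T}E_x$ as correspondences and $[E]_*=\sum_{x\in T}[E_x]_*$. The heart of the argument is the bimodule isomorphism
\[E_x\ \cong\ \tr^G_{G_x}\otimes_{\cs(G_x)}N_x,\]
where $\tr^G_{G_x}$ is the transfer correspondence $\cs(G)\to\cs(G_x)$ with bispace $G$ (free of rank $[G:G_x]=|G\cdot x|$ as a right $\cs(G_x)$-module), and $N_x$ is the correspondence $\cs(G_x)\to\cs(G)$ induced by the $*$-homomorphism $\cs(\sigma_x)$. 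Choosing a transversal $\{t_1,\dots,t_n\}$ for $G/G_x$ with $t_i\cdot x$ enumerating the orbit, the explicit map is $\delta_{t_i\cdot x}\,a\mapsto \delta_{t_i}\otimes (t_i|_x)^{-1}a$; that this intertwines the left $\cs(G)$-actions is a direct computation from the self-similarity cocycle $(gh)|_y=(g|_{h\cdot y})(h|_y)$ together with the identity $h|_x=\sigma_x(h)$ for $h\in G_x$.

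Granting the isomorphism, functoriality of K-theory under Kasparov products finishes the computation: the transfer correspondence induces $\tr^G_{G_x}\colon K_i(\cs(G))\to K_i(\cs(G_x))$ (the C*-transfer for the finite-index inclusion) and $N_x$ induces $K_i(\cs(\sigma_x))=K_i(\sigma_x)$, so $[E_x]_*=K_i(\sigma_x)\circ\tr^G_{G_x}$ and hence $[E]_*=\sum_{x\in T}K_i(\sigma_x)\circ\tr^G_{G_x}=\Phi_i$. Substituting into Pimsner's sequence produces the asserted six-term exact sequence with connecting maps $1-\Phi_0$ and $1-\Phi_1$.

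The main obstacle is the correspondence isomorphism $E_x\cong\tr^G_{G_x}\otimes N_x$ and, within it, pinning down the transfer factor: one must check that the stated map is not merely a right-module isomorphism but intertwines the left $\cs(G)$-actions and preserves the $\cs(G)$-valued inner products, for which the cocycle bookkeeping and the correction terms $(t_i|_x)^{-1}$ are essential. This is the C*-algebraic mirror of the orbit/transfer decomposition underlying $\Phi_n$ in Theorem~\ref{Theorem A}; indeed, the same correspondence drives both sequences, so the identification here runs parallel to the homological one, with Pimsner's six-term sequence replacing the long exact sequence in groupoid homology. A secondary point to dispatch is the injectivity/faithfulness hypothesis needed to invoke Pimsner's sequence in its clean form rather than Katsura's more general version.
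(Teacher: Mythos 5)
Your orbit/transfer decomposition of the correspondence is correct and is exactly the mechanism the paper uses: the isomorphism $E_x \cong \tr^G_{G_x}\otimes_{\cs(G_x)}N_x$, with the correction terms $(t_i|_x)^{-1}$, is the single-object case of Proposition~\ref{p:transversal decomposition}, and summing over a transversal $T$ to $G\backslash X$ gives $[E]_* = \Phi_i$ as in Corollary~\ref{c:transitive.transfer.Ktheory}. So the second half of your argument matches the paper.

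The genuine gap is in the first half: you invoke Pimsner's six-term sequence, which requires the left action $\varphi\colon \cs(G)\to\mathcal L(E)$ to be injective, and you dispose of this by "granting" faithfulness. But the theorem carries no faithfulness hypothesis, and this is not a removable convenience: a key point of the paper is precisely that non-faithful actions are allowed, because in the main applications (e.g.\ the Grigorchuk group realised as a quotient of $\mathbb Z/2\mathbb Z\ast(\mathbb Z/2\mathbb Z\times\mathbb Z/2\mathbb Z)$, and multispinal groups generally) one works with a free product acting highly non-faithfully, where the homology and K-theory of $\cs(G)$ are actually computable. For such actions $\varphi$ is not injective, Pimsner's hypotheses fail, and his algebra $\mathcal O_E$ need not coincide with $\mathcal O_{(G,X)}$. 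The correct tool -- and the one the paper uses -- is Katsura's six-term sequence for the \emph{relative} Cuntz--Pimsner algebra $\mathcal O(J,E)$ with $J=\cs(G_\reg)=\cs(G)$ (here every vertex is regular since $2\le|X|<\infty$, so $E$ is finitely generated projective and $J$ acts by compacts); this is sequence \eqref{Katsura six term}, which has the same shape and connecting maps $[\iota]-[M_{\mathcal X}]$ with no injectivity assumption. You mention this alternative as a "secondary point", but it is the main point: without it your proof covers only the faithful case. A smaller inaccuracy: injectivity of $\varphi$ on $\cs(G)$ is not equivalent to faithfulness of the action on $X^+$; it is a level-one condition about the action on $\mathcal X=X\times G$ (compare the tight kernel $K_1$ in Theorem~\ref{t:nek.cond}), and even then injectivity on group elements does not immediately give injectivity on the full group $\cs$-algebra. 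Replacing Pimsner by Katsura's relative sequence makes all of these issues moot and completes your argument.
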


A large part of this paper is devoted to applying Theorems~\ref{Theorem A} and~\ref{Theorem B} to perform detailed computations of these invariants for many of the most famous self-similar groups.  We include here two particular sample computations.

\begin{thmx}\label{t:grig.com}
Let $\mathscr G$ be the groupoid associated to the Grigorchuk group.  Then 
\[H_n(\mathscr G) = 
    \begin{cases}
        0, & \text{if}\ n=0,\\ 
        (\mathbb Z/2\mathbb Z)^{\frac{n}{3}+1}, & \text{if}\ n\equiv 0\bmod 3, n\geq 1,\\ 
        (\mathbb Z/2\mathbb Z)^{\frac{n-1}{3}}, & \text{if}\ n\equiv 1\bmod 3,\\ 
        (\mathbb Z/2\mathbb Z)^{\frac{n+1}{3}}, & \text{if}\ n\equiv 2\bmod 3.
    \end{cases}\] 
On the other hand, its Nekrashevych algebra $\mathcal O_{\mathrm{Grig}} = \cs(\mathscr G)$ has $K_0(\mathcal O_{\mathrm{Grig}})\cong \mathbb Z $ with $[1]_0 = 0$ and $K_1(\mathcal O_{\mathrm{Grig}}) \cong \Z$.
\end{thmx}

Self-similar actions of free abelian groups generalize the $\cs$-algebras associated to dilation matrices studied in~\cite{EHR11}, as observed in~\cite{LRRW14}.

\begin{thmx}
Let $(\mathbb Z^d,X)$ be a self-similar transitive action on a set $X$ of cardinality $d\geq 2$ with virtual endomorphism $\sigma_x$ for some $x \in X$. Let $A$ the matrix of $\sigma_x\otimes 1_{\mathbb Q}$.  Then:
\begin{align*}
H_q(\mathscr G_{(\mathbb Z^d,X)}) & \cong \ker(\id -d\Lambda^{q-1}(A))\oplus \coker (\id -d\Lambda^q(A))\\
K_0(\mathcal O_{(\mathbb Z^d,X)}) & \cong \bigoplus_{q\geq 0} H_{2q}(\mathscr G_{(\mathbb Z^d,X)})\\
K_1(\mathcal O_{(\mathbb Z^d,X)}) & \cong \bigoplus_{q\geq 0} H_{2q+1}(\mathscr G_{(\mathbb Z^d,X)})
\end{align*}
 where $\Lambda^q(A)$ is the $q^{th}$-exterior power of $A$.
\end{thmx}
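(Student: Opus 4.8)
The plan is to specialise Theorems~\ref{Theorem A} and~\ref{Theorem B} to $G=\mathbb Z^d$ by computing the operator $\Phi_n$ explicitly. Since the action is transitive, $G\backslash X$ consists of a single orbit, so I may take the transversal $T=\{x\}$ and $\Phi_n=H_n(\sigma_x)\circ\tr^{G}_{G_x}$ reduces to a single term. By orbit--stabiliser, $G_x$ has index $|X|=d$ in $\mathbb Z^d$ and is therefore free abelian of rank $d$. Throughout I use the natural identification $H_n(\mathbb Z^d)\cong\Lambda^n\mathbb Z^d$, functorial in the sense that a homomorphism of free abelian groups induces its $n$-th exterior power; in particular $H_n(\sigma_x)=\Lambda^n(\sigma_x)$, and after tensoring with $\mathbb Q$ and identifying $G_x\otimes\mathbb Q=\mathbb Q^d=G\otimes\mathbb Q$ via the inclusion $\iota\colon G_x\hookrightarrow G$, it becomes $\Lambda^n(A)$.

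The crux is to show $\Phi_n=d\,\Lambda^n(A)$ as an endomorphism of the lattice $\Lambda^n\mathbb Z^d$. I would first argue rationally. The standard identity $H_n(\iota)\circ\tr^{G}_{G_x}=[G:G_x]\cdot\id=d\cdot\id$, combined with the fact that $\iota\otimes\mathbb Q$ is an isomorphism (so that $H_n(\iota)\otimes\mathbb Q$ becomes the identity of $\Lambda^n\mathbb Q^d$ under the above identification), forces $\tr^{G}_{G_x}\otimes\mathbb Q=d\cdot\id$. Hence $\Phi_n\otimes\mathbb Q=\Lambda^n(A)\circ(d\cdot\id)=d\,\Lambda^n(A)$. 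Since $\Phi_n$ is induced by maps of integral homology it preserves $\Lambda^n\mathbb Z^d$, and as it agrees with $d\,\Lambda^n(A)$ after $\otimes\mathbb Q$ it must coincide with the restriction of $d\,\Lambda^n(A)$ to the lattice. In particular $d\,\Lambda^n(A)$ carries $\Lambda^n\mathbb Z^d$ into itself, even though $A$ is only rational, which is what makes the integral kernels and cokernels in the statement well defined.

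For the homology assertion I split the long exact sequence of Theorem~\ref{Theorem A} into
\[0\longrightarrow\coker(\id-\Phi_q)\longrightarrow H_q(\mathscr G_{(\mathbb Z^d,X)})\longrightarrow\ker(\id-\Phi_{q-1})\longrightarrow 0.\]
The right-hand term is a subgroup of the free abelian group $\Lambda^{q-1}\mathbb Z^d$, hence free, so each sequence splits, yielding $H_q(\mathscr G_{(\mathbb Z^d,X)})\cong\ker(\id-d\,\Lambda^{q-1}(A))\oplus\coker(\id-d\,\Lambda^q(A))$. With the conventions $\Lambda^{-1}=0$ and $\Lambda^0(A)=1$ this recovers $H_0\cong\mathbb Z/(d-1)\mathbb Z$, a useful sanity check showing the answer is genuinely integral (and carries torsion), not a rational vector space.

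The K-theory statement runs in parallel, the extra input being $\cs(\mathbb Z^d)=C(\mathbb T^d)$ and the Chern character isomorphism $K_i(\cs(\mathbb Z^d))\cong\bigoplus_{q\ge0}H_{2q+i}(\mathbb Z^d)=\bigoplus_{q\ge0}\Lambda^{2q+i}\mathbb Z^d$. The main obstacle is to verify that this isomorphism intertwines the operator $\Phi_i$ of Theorem~\ref{Theorem B} with $\bigoplus_q\Phi_{2q+i}$: that $K_i(\sigma_x)$ corresponds summand-by-summand to $\Lambda^{2q+i}(\sigma_x)$ and that the K-theoretic transfer corresponds to the homological transfer. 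Both are instances of naturality of the Chern character (equivalently, of the Baum--Connes isomorphism for $\mathbb Z^d$) with respect to group homomorphisms and to the transfer of the finite-index inclusion $\iota$. Granting this, the six-term sequence of Theorem~\ref{Theorem B} yields the short exact sequence
\[0\longrightarrow\coker(\id-\Phi_i)\longrightarrow K_i(\mathcal O_{(\mathbb Z^d,X)})\longrightarrow\ker(\id-\Phi_{1-i})\longrightarrow 0,\]
which splits as before. Regrouping the kernel and cokernel summands by the parity of the homological degree and comparing with the formula for $H_\bullet(\mathscr G_{(\mathbb Z^d,X)})$ just obtained identifies $K_i(\mathcal O_{(\mathbb Z^d,X)})$ with $\bigoplus_{q\ge0}H_{2q+i}(\mathscr G_{(\mathbb Z^d,X)})$.
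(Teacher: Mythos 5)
Your proposal is correct, and its overall architecture is the same as the paper's (Theorems~\ref{t:homology.self.similar.free.ab} and~\ref{t:FreeAbK}): specialise the single-orbit long exact sequences of Corollaries~\ref{c:transitive.transfer} and~\ref{c:transitive.transfer.Ktheory}, identify $\Phi_n$ with $d\Lambda^n(A)$ under the natural isomorphisms $H_*(\mathbb Z^d)\cong\Lambda^*\mathbb Z^d$ and $K_*(\cs(\mathbb Z^d))\cong\Lambda^*\mathbb Z^d$, and split the resulting short exact sequences because the kernels are subgroups of free abelian groups. Where you genuinely diverge is in how the identification $\Phi_n=d\Lambda^n(A)$ is made integral. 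The paper chooses a Smith normal form basis ($f_i=d_ie_i$ a basis of $G_x$) and proves Lemma~\ref{l:ext.power.trick}, an explicit formula showing that $d\Lambda^q(\iota)\inv$ is defined over $\mathbb Z$ and computing $\tr^G_{G_x}$ on basis wedges; you instead observe that $\Phi_n$ is \emph{a priori} an endomorphism of the integral lattice $\Lambda^n\mathbb Z^d$ which agrees with $d\Lambda^n(A)$ after $\otimes\,\mathbb Q$, so it must be the restriction of $d\Lambda^n(A)$ and integrality is automatic. Your argument is shorter and basis-free; the paper's explicit lemma buys a concrete formula that it reuses verbatim in the K-theory proof and in downstream computations (e.g.\ the sausage automaton). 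In the K-theory step you appeal to naturality of the Chern character to intertwine the K-theoretic transfer and $K_i(\sigma_x)$ with their homological counterparts; the paper avoids this heavier input by running the same uniqueness argument directly in K-theory, using $\tr^G_{G_x}\circ K_*(\iota)=d\cdot\id$ from Corollary~\ref{c:inc.transfer} (applicable since $G_x$ is normal in the abelian group $\mathbb Z^d$) together with the fact that a group endomorphism acts on $K_*(\cs(\mathbb Z^d))\cong\Lambda^*\mathbb Z^d$ as a graded ring homomorphism determined by its degree-one part. Either route closes the gap you flagged as "the main obstacle", but the paper's is the more elementary and self-contained of the two.
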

The above relationship $K_i(\cs_\lambda(\mathscr G)) \cong \bigoplus_{q \geq 0} H_{2q+i}(\mathscr G)$ for $i=0,1$ is known as the HK property for $\mathscr G$, which was originally conjectured by Matui~\cite{Matui16} for certain ample groupoids. Most of our examples with torsion-free isotropy, as above, satisfy the HK property. However, torsion in the isotropy poses a problem~\cite{Scarparo20} for the HK property,\footnote{It should be noted that sufficiently high-dimensional behaviour, even for principal groupoids~\cite{Deeley}, can also pose a problem for the HK property.} and indeed most of our examples with torsion in the isotropy do not satisfy even the rational HK property, which asks for analogous isomorphisms with $\mathbb Q$-coefficients.

R\"over introduced $V(\Gamma)$ for the Grigorchuk group $\Gamma$ in~\cite{RoverGroup}, where he showed that it was a finitely presented simple group generated by Thompson's group $V$ and the Grigorchuk group.  This group was later shown to have the topological finiteness property $F_{\infty}$~\cite{RoverFinf}.  Recall that a group $G$ is acyclic if $H_n(G)=0$ for all $n\geq 1$ and rationally acyclic if $H_n(G,\mathbb Q)=0$ for all $n\geq 1$.  Brown~\cite{BrownGeometry} proved that $V$ is rationally acyclic, and Szymik and Wahl recently proved that $V$ is acyclic~\cite{SW19}. Li's results about topological full groups~\cite{li2022ample} provide a conceptual explanation for this acyclicity. They also have implications for any R\"over--Nekrashevych group $V(G)$; we highlight below the computation of the rational homology of $V(G)$ and $V(G)'$ from the rational homology of $\mathscr G_{(G,X)}$ and some vanishing implications for the integral homology. Combined with Theorem \ref{t:grig.com}, this implies that R\"over's group $V(\Gamma)$ is rationally acyclic, and enables us to prove that it is not acyclic by computing its Schur multiplier.  We also get similar results for a more general class of R\"over--Nekrashevych groups.

\begin{Thm*}[Li, Corollaries~C and ~D~\cite{li2022ample}]
Let $(G,X,\sigma)$ be a self-similar group action.  Then
\begin{align*}
H_\bullet(V(G),\mathbb Q) & \cong \Lambda(H^{\mathrm{odd}}_{\bullet}(\mathscr G_{(G,X)},\mathbb Q))\otimes \mathrm{Sym}(H^{\mathrm{even}}_{\bullet}(\mathscr G_{(G,X)},\mathbb Q))\\
H_\bullet(V(G)',\mathbb Q) & \cong \Lambda(H^{\mathrm{odd}}_{\bullet>1}(\mathscr G_{(G,X)},\mathbb Q))\otimes \mathrm{Sym}(H^{\mathrm{even}}_{\bullet}(\mathscr G_{(G,X)},\mathbb Q))
\end{align*}
as graded $\mathbb Q$-vector spaces.
Moreover, if $k>0$ with $H_q(\mathscr G_{(G,X)})=0$ for $0\leq q<k$, then $H_q(V(G))=0$ for $0<q<k$ and $H_k(V(G))\cong H_k(\mathscr G_{(G,X)})$.
\end{Thm*}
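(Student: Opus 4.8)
The plan is to leave the category of discrete groups and compute $H_\bullet(V(G))$ through stable homotopy theory, following the strategy by which Li relates topological full groups to infinite loop spaces. The self-similar structure of $\mathscr G_{(G,X)}$ — the splitting of the unit space indexed by the alphabet $X$ and the virtual endomorphism of $G$ carried by the cocycle $\sigma$ — equips the R\"over--Nekrashevych groups attached to the various ranks of the system with a symmetric monoidal (equivalently $E_\infty$) structure. First I would assemble these into a symmetric monoidal groupoid $\mathcal C_{\mathscr G}$, with product induced by the Cuntz-type disjoint-union splitting of $X^\ast$, whose automorphism objects recover $V(G)$ and its stabilizations. Group-completing the classifying space (McDuff--Segal group completion together with Quillen's plus construction with respect to the maximal perfect — here simple — subgroup $V(G)'$) then produces a connective spectrum $\mathbb K(\mathscr G)$ and a homotopy equivalence $BV(G)^{+}\simeq\Omega^\infty_0\mathbb K(\mathscr G)$ onto the base component; since $BV(G)\to BV(G)^{+}$ is a homology isomorphism, this reduces everything to the homology of an infinite loop space.

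The technical heart, and the step I expect to be the main obstacle, is to identify the homotopy of $\mathbb K(\mathscr G)$ with groupoid homology: I would show $\pi_n\mathbb K(\mathscr G)\otimes\mathbb Q\cong H_n(\mathscr G_{(G,X)},\mathbb Q)$ and, more sharply, that the connectivity of $\mathbb K(\mathscr G)$ is governed integrally by the vanishing range of $H_\bullet(\mathscr G_{(G,X)})$, with the first nonvanishing groups agreeing. The natural route is a spectral sequence of skeletal (Atiyah--Hirzebruch) type converging to $\pi_\bullet\mathbb K(\mathscr G)$ whose input is the chain complex computing $H_\bullet(\mathscr G_{(G,X)})$, the groupoid homology itself being accessible through the transfer map and the virtual endomorphism of the self-similar action. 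Proving that this spectral sequence collapses rationally, and that its edge homomorphism realizes the low-degree identification integrally, is where the real work lies; this is in essence Li's comparison between the algebraic $K$-theory spectrum of an ample groupoid and its homology.

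Granting the identification, the two homology formulas follow formally. As $\Omega^\infty_0\mathbb K(\mathscr G)$ is a connected infinite loop space it is rationally a product of Eilenberg--MacLane spaces, so by Milnor--Moore and Cartan--Serre its rational homology is the free graded-commutative $\mathbb Q$-algebra on $\pi_{\bullet>0}\mathbb K(\mathscr G)\otimes\mathbb Q\cong H_{\bullet>0}(\mathscr G_{(G,X)},\mathbb Q)$. Splitting this free algebra as an exterior algebra on the odd-degree generators tensored with a symmetric algebra on the even-degree ones, and noting that $H_0(\mathscr G_{(G,X)},\mathbb Q)=0$ so that no degree-zero generators intrude, gives exactly $H_\bullet(V(G),\mathbb Q)\cong\Lambda(H^{\mathrm{odd}}_\bullet)\otimes\mathrm{Sym}(H^{\mathrm{even}}_\bullet)$. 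For the simple group $V(G)'$ I would use that it is perfect, so that $B(V(G)')^{+}$ is the universal cover of $BV(G)^{+}$, that is $\Omega^\infty_0$ of the $1$-connected cover of $\mathbb K(\mathscr G)$; this kills precisely the degree-one homotopy $\pi_1\mathbb K(\mathscr G)\cong H_1(\mathscr G_{(G,X)},\mathbb Q)$, removing the degree-one exterior generators and leaving $\Lambda(H^{\mathrm{odd}}_{\bullet>1})\otimes\mathrm{Sym}(H^{\mathrm{even}}_\bullet)$.

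Finally, the integral ``moreover'' clause is a Hurewicz argument resting on the integral part of the identification above. If $H_q(\mathscr G_{(G,X)})=0$ for $0\le q<k$, then $\mathbb K(\mathscr G)$ is $(k-1)$-connected and hence so is $\Omega^\infty_0\mathbb K(\mathscr G)$; the Hurewicz theorem then yields $H_q(V(G))=0$ for $0<q<k$ together with $H_k(V(G))\cong\pi_k\mathbb K(\mathscr G)\cong H_k(\mathscr G_{(G,X)})$ in the first nonvanishing degree. The delicate point is exactly that this last comparison must hold integrally rather than merely rationally, which is why the spectral-sequence analysis of the second step must be pushed to integral coefficients in the critical degree.
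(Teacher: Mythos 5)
This statement is not proved in the paper under review: it is imported verbatim as Li's Corollaries~C and~D, and your sketch is essentially a faithful reconstruction of Li's own argument (a permutative category built from compact open bisections, group completion plus the plus construction with respect to the perfect simple subgroup $V(G)'$, identification of the homotopy of the resulting connective spectrum with groupoid homology via a spectral sequence with $E^2_{p,q}=H_p(\mathscr G_{(G,X)},\pi_q(\mathbb S))$ --- which collapses rationally because the positive stable stems are torsion and yields the integral low-degree comparison by an edge-homomorphism/Hurewicz argument --- followed by Milnor--Moore for the rational formulas). The only ingredient the present paper actually contributes is the verification that $\mathscr G_{(G,X)}$ satisfies Li's standing hypotheses (minimal, purely infinite, Cantor unit space), which it obtains by observing that $\mathscr G_{(G,X)}$ contains the boundary path groupoid of the bouquet of $|X|\geq 2$ loops with the same unit space; your proposal should make that verification explicit rather than folding it implicitly into the construction of the monoidal structure.
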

Here $\Lambda$ is the exterior algebra and $\mathrm{Sym}$ is the symmetric algebra.

\begin{corx}
Let $\Gamma$ be the Grigorchuk group.  Then R\"over's simple group $V(\Gamma)$ is rationally acyclic and has Schur multiplier $H_2(V(\Gamma))\cong \mathbb Z/2\mathbb Z$. More generally, if $G$ is any multispinal group~\cite{SS23} (e.g., a Gupta--Sidki group, GGS-group or \v{S}uni\'c group) or the Hanoi towers group~\cite{Hanoitowers}, then the Nekrashevych--R\"over group $V(G)$ and its commutator subgroup $V(G)'$ are rationally acyclic.  
\end{corx}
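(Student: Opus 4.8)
The plan is to combine the Theorem of Li quoted above with the groupoid homology computations. I would treat the Grigorchuk group first, where Theorem~\ref{t:grig.com} already delivers the needed input. Since every $H_n(\mathscr G_{(\Gamma,X)})$ is either $0$ or a finite elementary abelian $2$-group, flatness of $\mathbb Q$ gives $H_n(\mathscr G_{(\Gamma,X)},\mathbb Q)=H_n(\mathscr G_{(\Gamma,X)})\otimes\mathbb Q=0$ for every $n$. Both $H^{\mathrm{odd}}_\bullet$ and $H^{\mathrm{even}}_\bullet$ of the groupoid then vanish, so the exterior and symmetric factors in Li's rational formula are trivial and $H_n(V(\Gamma),\mathbb Q)=0$ for $n\geq 1$; that is, $V(\Gamma)$ is rationally acyclic. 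For the Schur multiplier I would instead use the integral clause of Li's Theorem: Theorem~\ref{t:grig.com} gives $H_0(\mathscr G_{(\Gamma,X)})=H_1(\mathscr G_{(\Gamma,X)})=0$ while $H_2(\mathscr G_{(\Gamma,X)})\cong\mathbb Z/2\mathbb Z$, so with $k=2$ we obtain $H_1(V(\Gamma))=0$ and $H_2(V(\Gamma))\cong H_2(\mathscr G_{(\Gamma,X)})\cong\mathbb Z/2\mathbb Z$. In particular $V(\Gamma)$ is not acyclic.

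For the general families the idea is to bypass the (possibly intractable) homology of $G$ itself by passing to a lift. Each multispinal group and the Hanoi towers group is contracting and, from its defining wreath recursion, is the faithful quotient of a contracting self-similar action of a free product $\widetilde G=A_1\ast\cdots\ast A_r$ of finite groups; granting the nucleus hypothesis, Theorem~\ref{Theorem C} then identifies $\mathscr G_{(G,X)}\cong\mathscr G_{(\widetilde G,X)}$, reducing everything to the rational homology of the latter. The key simplification is that a free product of finite groups satisfies $H_n(\widetilde G,\mathbb Q)\cong\bigoplus_i H_n(A_i,\mathbb Q)=0$ for all $n\geq 1$, since finite groups have no rational homology in positive degrees.

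I would then apply Theorem~\ref{Theorem A} to $\widetilde G$ and tensor its long exact sequence with $\mathbb Q$, which preserves exactness by flatness. For $n\geq 2$ the two neighbouring terms $H_n(\widetilde G,\mathbb Q)$ and $H_{n-1}(\widetilde G,\mathbb Q)$ vanish, so $H_n(\mathscr G_{(\widetilde G,X)},\mathbb Q)=0$. The only remaining degrees $0$ and $1$ are governed by $\id-\Phi_0$ on $H_0(\widetilde G,\mathbb Q)=\mathbb Q$. On $H_0$ the map $H_0(\sigma_x)$ is the identity of $\mathbb Q$ (any homomorphism acts trivially on $H_0$) and $\tr^{\widetilde G}_{\widetilde G_x}$ is multiplication by the orbit size $[\widetilde G:\widetilde G_x]$, so $\Phi_0=\sum_{x\in T}[\widetilde G:\widetilde G_x]=|X|$ by the orbit--stabilizer relation, whence $\id-\Phi_0=(1-|X|)\,\id$. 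As $|X|\geq 2$ this is invertible over $\mathbb Q$, so it has trivial kernel and cokernel and the long exact sequence forces $H_0(\mathscr G_{(\widetilde G,X)},\mathbb Q)=H_1(\mathscr G_{(\widetilde G,X)},\mathbb Q)=0$. (This is consistent with $H_0(\mathscr G_{(\Gamma,X)})=0$ in Theorem~\ref{t:grig.com}, where $|X|=2$.) Altogether $H_n(\mathscr G_{(G,X)},\mathbb Q)=0$ for all $n$.

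Feeding this total vanishing into both of Li's formulas makes the exterior and symmetric factors trivial, giving $H_n(V(G),\mathbb Q)=0=H_n(V(G)',\mathbb Q)$ for $n\geq 1$, so both $V(G)$ and its commutator subgroup are rationally acyclic. I expect the genuine obstacle to be structural rather than homological: for each family (Gupta--Sidki, GGS, \v{S}uni\'c, Hanoi towers) one must exhibit the free-product-of-finite-groups lift and check the nucleus hypothesis of Theorem~\ref{Theorem C}, so that the reduction to $\widetilde G$ is valid. Once the lift is identified the homological part above is purely formal, using only $\Phi_0$ and never the higher transfer maps $\Phi_n$.
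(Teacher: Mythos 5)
Your argument is correct, and its skeleton is the one the paper uses: lift each group to a free product of finite groups, identify the groupoids via Theorem~\ref{Theorem C} (the paper does this in Section~\ref{ss:multispinal} for multispinal groups and in Example~\ref{ex:hanoi} for Hanoi, checking the nucleus hypothesis exactly as you anticipate), establish rational vanishing of the groupoid homology, and feed the result into Li's Corollaries~C and~D; the Grigorchuk/Schur multiplier half of your write-up coincides with the paper's. Where you genuinely diverge is in how the rational vanishing is obtained for the general families. The paper proves the stronger statement (Theorem~\ref{t:multispinal}) that every integral $H_n(\mathscr G)$ is \emph{finite}, and to do so it must analyse the maps $\Phi_n$ in all degrees: Lemma~\ref{l:multispinal} uses the Mackey decomposition to put $H_n(\sigma_1)\circ\tr^{A\ast B}_{(A\ast B)_1}$ into block upper-triangular form on $H_n(A)\oplus H_n(B)$, reducing kernel and cokernel to those of $\id-\sum_{a\in A_0}H_n(\Phi_a)$ on $H_n(B)$. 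You bypass all of this by working with $\mathbb Q$-coefficients from the outset: since $H_n(A_1\ast\cdots\ast A_r,\mathbb Q)=0$ for $n\geq 1$, the long exact sequence kills $H_n(\mathscr G,\mathbb Q)$ for $n\geq 2$ with no information about $\Phi_n$ whatsoever, and only the elementary computation $\Phi_0=|X|$ is needed to dispose of degrees $0$ and $1$. For the purposes of this corollary your route is shorter and cleaner; what the paper's heavier machinery buys is the integral computation, which it needs elsewhere (e.g., for the explicit homology of \v{S}uni\'c groups, the Grigorchuk group, and the failure of the rational HK property). One cosmetic caveat: Li's Corollary~C carries hypotheses (minimality, pure infiniteness, Cantor unit space) that the paper verifies in the proof of Corollary~\ref{c:rationally acyclic}; you apply the statement as quoted in the introduction, so you should at least note that these hypotheses hold for self-similar group actions on an alphabet of size at least $2$.
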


One key point in our approach is that we do not require faithfulness of the self-similar action. We introduce the weaker notion of \emph{loose faithfulness}, which is practical to verify for contracting actions.

\begin{thmx}\label{Theorem C}
Let $(G,X,\sigma)$ be a loosely faithful self-similar group action over a finite set $X$ with faithful quotient $(\ov G,X,\sigma)$.  Then the groupoids $\mathscr G_{(G,X)}$ and $\mathscr G_{(\ov G,X)}$ are isomorphic. A contracting action $(G,X,\sigma)$ is loosely faithful if the nontrivial elements of the nucleus of $G$ act nontrivially on the tree of words. 
\end{thmx}

The $\cs$-algebra analogue of this theorem was already observed by Nekrashevych in~\cite{Nekcstar}.
The importance of this observation is that the group $\ov G$ is often more complicated than the group $G$.  For example, the Grigorchuk group is the faithful quotient of a loosely faithful contracting self-similar action of $\mathbb Z/2\mathbb Z\ast(\mathbb Z/2\mathbb Z\times \mathbb Z/2\mathbb Z)$.  The latter group has well-understood homology and K-theory, making it feasible to apply Theorems~\ref{Theorem A} and ~\ref{Theorem B}.  The Grigorchuk group, by way of contrast, has infinitely generated second homology and not much is known about the higher homology groups, nor about the K-theory of its $\cs$-algebra.

Exel and Pardo considered a generalization of self-similar group actions to self-similar group actions on graphs~\cite{ExelPardoSelf}, which were further generalized to self-similar groupoid actions on graphs in~\cite{LRRW18}.  It was observed in~\cite{AKM22} that self-similar groupoid actions are exactly self-correspondences of discrete group\-oids, and that is the language we use in this paper.  Analogues of Theorems~\ref{Theorem A} and~\ref{Theorem B} are established for self-similar groupoids.  We show that Matui's computation of the homology of graph groupoids~\cite{Matui} (see also~\cite{OrtegaNylandgraph}) and Nyland and Ortega's~\cite{OrtegaNylandKatsura} computation of the homology of Exel--Pardo--Katsura groupoids follow directly from these analogues.  

We address amenability of our groupoids, which in particular implies that our computations for the full $\cs$-algebra hold for the reduced $\cs$-algebra.
The following sufficient condition for amenability is the most general to date. 

\begin{thmx}
Let $(G,E,\sigma)$ be a self-similar groupoid action with $G,E$ countable.  If $G\ltimes \partial E$ is amenable, then so is $\mathscr G_{(G,E)}$. If $(G,E,\sigma)$ is loosely faithful, then $\mathscr G_{(G,E)}$ is amenable if and only if the groupoid of germs for $G \acts \partial E$ is amenable.
\end{thmx}

In particular, the groupoid of any contractible self-similar groupoid action is amenable. We provide both Nekrashevych's original proof~\cite{Nekcstar} and an alternative proof.

Our approach in this paper is based on the use of \'etale groupoid correspondences and their induced mappings on homology~\cite{miller2023ample} and K-theory (via~\cite{AKM22}).  Theorem~\ref{Theorem B} is an application of Katsura's six-term exact sequence for relative Cuntz--Pimsner algebras~\cite{Katsura04}.  
The main idea for Theorem~\ref{Theorem A} is to model the Toeplitz extension $I\hookrightarrow \mathcal T_{(G,X)}\twoheadrightarrow \mathcal O_{(G,X)}$ in terms of the groupoids associated to the corresponding inverse semigroup $S_{(G,X)}$.  Namely,  $\mathcal T_{(G,X)}$ is the $\cs$-algebra of the universal groupoid $\universal{S_{(G,X)}}$, $\mathcal O_{(G,X)}$ is the $\cs$-algebra of the tight groupoid $\mathscr G_{S_{(G,X)}}$ and $\universal{S_{(G,X)}}\setminus \mathscr G_{S_{(G,X)}}$ is isomorphic to the underlying groupoid 
of $S_{(G,X)}$. The underlying groupoid of $S_{(G,X)}$ is Morita equivalent to $G$, whereas the universal groupoid $\universal{S_{(G,X)}}$ has the same homology as $G$ by the first author's results~\cite{miller2023ample}.  These facts, together with the long exact sequence associated to an invariant closed subgroupoid, lead to Theorem \ref{Theorem A}.

\subsubsection*{Acknowledgements}
The first author would like to thank Kevin Aguyar Brix, Chris Bruce, Jeremy Hume, Xin Li and Mike Whittaker for valuable discussions and references. The second author would like to thank Volodymyr Nekrashevych for pointing out that his result on amenability of groupoids of contracting groups works in the non-Hausdorff setting. 
The authors would like to give special thanks to Bartosz Kwaśniewski for pointing out that the subgroupoid $\mathscr H_0$ in Theorem \ref{t:amenability} need not be closed, leading us to improve Proposition \ref{p:AmenabilityPermanence}, and for pointing out a flaw in our original definition of the tight kernel in the presence of singular vertices.


\section{$\cs$-algebras and groupoids associated to self-similar groupoids}

\subsection{Self-similar groupoids}

We recall the definitions for \'etale correspondences and their composition~\cite{AKM22}, which will play a prominent role in the sequel, mostly for discrete groups and groupoids.

\begin{Def}[\'Etale correspondence]
Let $\mathscr G$ and $\mathscr H$ be ample groupoids. An \textit{\'etale correspondence} $\Omega \colon \mathscr G \to \mathscr H$ is a $\mathscr G$-$\mathscr H$-bispace $\Omega$ such that the right action $\Omega \rightacts \mathscr H$ is free, proper and \'etale. We write $\ran \colon \Omega \to \mathscr G^0$ and $\sour \colon \Omega \to \mathscr H^0$ for the anchor maps. That $\Omega \rightacts \mathscr H$ is \'etale means that $\sour \colon \Omega \to \mathscr H^0$ is \'etale,\footnote{We use \'etale map as a synonym for local homeomorphism.} while free and proper together mean that the map $\ran \times \sour \colon \Omega \rtimes \mathscr H \to \Omega \times \Omega$ is a closed embedding. We say $\Omega$ is \textit{proper} if the induced map $\Omega/\mathscr H \to \mathscr G^0$ is proper.

The \textit{composition} $\Lambda \circ \Omega \colon \mathscr G \to \mathscr K$ of \'etale correspondences $\Omega \colon \mathscr G \to \mathscr H$ and $\Lambda \colon \mathscr H \to \mathscr K$ is an \'etale correspondence whose bispace is the \textit{fibre product} $\Omega \times_\mathscr H \Lambda$ of $\Omega$ and $\Lambda$ over $\mathscr H$. This is the quotient of $\Omega \times_{\mathscr{H}^0}\Lambda$ by the diagonal action of $\mathscr H$. We write $[\omega, \lambda]_{\mathscr H}$ for elements of $\Omega \times_{\mathscr H} \Lambda$, or $[\omega,\lambda]$ if $\mathscr H$ is understood, so that $[\omega \cdot h, \lambda]_{\mathscr H} = [\omega, h \cdot \lambda]_{\mathscr H}$ for compatible $\omega \in \Omega$, $h \in \mathscr H$ and $\lambda \in \Lambda$. The $\mathscr G$-$\mathscr K$-bispace structure on $\Omega \times_{\mathscr H} \Lambda$ is given by $g \cdot [\omega, \lambda]_{\mathscr H} = [g \cdot \omega, \lambda]_{\mathscr H}$ and $[\omega, \lambda]_{\mathscr H}\cdot k = [\omega, \lambda \cdot k]_{\mathscr H}$ whenever $\sour(g) = \ran(\omega)$ and $\ran(k) = \sour(\lambda)$.
\end{Def}

For an ample groupoid $\mathscr H$ and a free, proper, \'etale right $\mathscr H$-space $\Omega$ and $(\omega_1,\omega_2) \in \Omega \times_{\Omega/\mathscr H} \Omega$, we write $\langle \omega_1, \omega_2 \rangle$ for the unique $h \in \mathscr H$ satisfying $\omega_2 = \omega_1 h$. The map $\langle - , - \rangle \colon \Omega \times_{\Omega/\mathscr H} \Omega \to \mathscr H$ is continuous~\cite[Lemma~3.4]{AKM22}.

One formulation of the notion of self-similar groups is via a proper correspondence from a group to itself, see Nekrashevych~\cite{selfsimilar} where the term `covering bimodule'  is used.

\begin{Def}[Self-similar groupoid action via correpondences]\label{def:v.corr}
A \textit{self-similar groupoid action} $(G,\mathcal X)$ consists of a a discrete groupoid $G$ and an \'etale correspondence $\mathcal X\colon G\to G$ with anchor maps $\ran,\sour\colon \mathcal X \to G^0$. We may refer to $\mathcal X$ as a \textit{self-similarity} of $G$. The self-similar groupoid action is called row finite if $\mathcal X$ is proper. An object/vertex $v\in G^0$ is called regular if $0<|\ran\inv(v)/G|<\infty$; otherwise $v$ is called singular. One says a singular vertex $v$ is a source if $\ran\inv(v)=\emptyset$, and an infinite receiver otherwise.
\end{Def}

The regular objects form an invariant subset $G^0_\reg$ of the unit space and thus determine an invariant subgroupoid $G_\reg = G |_{G^0_\reg}$. We write $G^0_\sing$ for the singular objects. To be consistent with~\cite{ExelPardoSelf}, we say that $\mathcal X$ is pseudo\-free if the left action of $G$ is free.

There is a reformulation of this notion in terms of actions on graphs that can be found in~\cite[Example~4.4]{AKM22}.  

\begin{Def}[Self-similar groupoid action via graph actions]\label{def:v.graph}
A \textit{self-similar groupoid action} $(G,E,\sigma)$ is discrete groupoid $G$ whose unit space is the vertex set $E^0$ of a directed graph $\ran, \sour \colon E \to E^0$, with a left action\footnote{As noted in \cite{AKM22} this is not necessarily an action by graph partial automorphisms.} $G \acts E$ with anchor $\ran \colon E \to E^0$, written $(g,e) \mapsto g(e)$, and a $1$-cocycle $\sigma \colon G \times_{E^0} E \to G$, written $(g,e) \mapsto g |_e$, such that $\ran(g |_e) = \sour(g(e))$, $\sour(g |_e) = \sour(e)$ and $(hg)|_e = h |_{g(e)} g |_e$ (equivalently $\sigma \colon G \ltimes E \to G$ is a groupoid homomorphism with unit space map $\sigma |_E = \sour \colon E \to E^0$). The element $g|_e$ is called the section of $g$ at $e$.
\end{Def}

The translation between the two definitions is as follows.  If $(G,E,\sigma)$ is a self-similar groupoid action in the sense of Definition~\ref{def:v.graph}, the associated correspondence is $\mathcal X_{(G,E)} = E\times_{E^0} G$ with $\ran(e,g) = \ran(e)$, $\sour(e,g)=\sour(g)$, right action $(e,h)g =(e,hg)$ and left action $g(e,h) = (g(e),g|_eh)$.  Moreover, $E$ is a set of representatives for $\mathcal X_{(G,E)}/G$.

Conversely, if $(G,\mathcal X)$ is a self-similar groupoid action in the sense of Definition~\ref{def:v.corr}, then we can choose a transversal $E \subseteq \mathcal X$ to $\mathcal X/G$.
By freeness of the right action, each $x\in \mathcal X$ can be uniquely written in the form $eg$ with $e\in E$, $g\in G$.  Putting $E^0=G^0$, we can define $\ran,\sour\colon E\to E^0$ by restriction.  If $g\in G$ and $\sour(g)=\ran(e)$, then $ge=g(e)g|_e$ for a unique $g(e)\in E$ and $g|_e\in G$.  This defines a left action of $G$ with anchor $\ran$ isomorphic to the action of $G$ on $\mathcal X/G$,  
and $\sigma(g,e) = g|_e$ defines a $1$-cocycle with  $\ran(g|_e) = \sour(g(e))$, $\sour(g|_e) = \sour(e)$ and $(hg)|_e = h|_{g(e)}g|_e$.   Thus $(G,E,\sigma)$ is a self-similar groupoid action in the sense of Definition~\ref{def:v.graph} and $\mathcal X_{(G,E)}\to \mathcal X$ given by $(e,g)\mapsto eg$ is an isomorphism of correspondences.  It is easy to see that $\mathcal X_{(G,E)}$ is proper if and only if $E$ is row finite. Moreover, a vertex $v\in E^0$ is regular if and only if it is not a source and not an infinite receiver.

The action and cocycle extend by design to finite paths\footnote{We use here the convention that edges $e,f$ are composable with composition $ef$ if $\sour(e)=\ran(f)$.} (where vertices are viewed as paths of length $0$) by putting $g|_{\sour(g)}=g$, $g|_{ep} = (g|_e)|_p$ and $g(\sour(g))=\ran(e)$, $g(ep) = g(e)g|_e(p)$ for $e\in E$ and $p$ a path with $\sour(e)=\ran(p)$.  The action can be extended to infinite paths by putting $g(e_1e_2\cdots ) = g(e_1)g|_{e_1}(e_2)\cdots$.  
From the graph point of view, $(G,E,\sigma)$ is pseudo\-free if and only if $g(e)=e$ and $g|_e=\sour(e)$ implies $g=\sour(g)$.  

By a \emph{self-similar group action} we shall always mean in this paper $(G,\mathcal X)$ with $\mathcal X$ a nontrivial proper correspondence over the group $G$, equivalently $(G,E,\sigma)$ with finite alphabet $E$ of size $|E| \geq 2$, although the case of nonproper correspondences was considered in~\cite{SS23}.

\subsection{$\cs$-algebras, inverse semigroups and ample groupoids}

Given an \'etale correspondence $\mathcal X\colon G\to H$ of a discrete groupoids, we can extend the map $\langle - , - \rangle \colon \mathcal X \times_{\mathcal X/H} \mathcal X\to H$ to a map $\langle -,-\rangle\colon \mathcal X\times \mathcal X\to H\cup \{0\}$ by 
\[\langle x,y\rangle = \begin{cases} h, & \text{if}\ xh=y\\ 0, & \text{else.}\end{cases}\] 
Trivially, $\langle x,yh\rangle = \langle x,y\rangle h$, $\langle y,x\rangle = \langle x,y\rangle\inv$, $\langle x,x\rangle = \sour(x)$ and $\langle gx,y\rangle = \langle x,g\inv y \rangle$.  
For example, if $(G,E,\sigma)$ is a self-similar groupoid action in the sense of Definition~\ref{def:v.graph}, then for $\mathcal X_{(G,E)}$, one has $\langle (e,g),(f,h)\rangle = g\inv h\delta_{e,f}$.

The full $\cs$-algebra $\cs(\mathscr G)$ of an ample groupoid $\mathscr G$ is the universal completion of the groupoid algebra $\mathbb C \mathscr G$ with respect to $*$-representations~\cite{CZ24}. An \'etale correspondence $\Omega \colon \mathscr G \to \mathscr H$ induces a $\cs$-correspondence 
\[ \cs(\Omega) \colon \cs(\mathscr G) \to \cs(\mathscr H)\] between the associated full groupoid $\cs$-algebras by \cite[Section 7]{AKM22}. Antunes, Ko and Meyer show that this respects composition of correspondences and that $\cs(\Omega)$ is proper if and only if $\Omega$ is. We also note here that an open invariant subset $U \subseteq \mathscr G^0$ with complement $C = \mathscr G^0 \setminus U$ induces a short exact sequence
\[ 0 \to \cs(\mathscr G |_U) \to \cs(\mathscr G) \to \cs(\mathscr G |_C) \to 0.  \]
The only subtle point here is injectivity, which follows for example from \cite{AKM22} by considering the factorisation $\cs(\mathscr G|_U) \to \cs(\mathscr G) \to M(\cs(\mathscr G|_U))$ of the inclusion into the multiplier algebra.   

We write $M_{\mathcal X} \colon \cs(G) \to \cs(G)$ for the $\cs$-correspondence of a groupoid self-similarity $\mathcal X \colon G \to G$. The space $M_{\mathcal X}$ is densely spanned by elements $m_x$ for $x \in \mathcal X$, and $\cs(G)$ is densely spanned by the partial isometries $u_g \in \cs(G)$ for $g \in G \cup \{0\}$ with $u_0 = 0$, and thus the $\cs$-correspondence $M_{\mathcal X}$ is determined by the relations
\begin{itemize}
\item $u_g \cdot m_x = m_{gx} \delta_{\sour(g), \ran(x)}$
\item $m_x \cdot u_g = m_{xg} \delta_{\sour(x), \ran(g)}$
\item $\langle m_x, m_y \rangle = u_{\langle x,y \rangle}$
\end{itemize}
for $x,y \in \mathcal X$ and $g \in G$. The regular objects determine an ideal $\cs(G_\reg)$ in $\cs(G)$. Moreover, this ideal acts by compact operators on $M_{\mathcal X}$ because for each $v \in G^0_\reg$ the projection $u_v$ acts as the compact operator $\sum_{xG \in \ran_{\mathcal X/G} \inv (v)} m_xm_x^*$. The Hilbert module $M_{\mathcal X}$ is full if and only if there are no sinks.

\begin{Def}
The \textit{$\cs$-algebra} $\mathcal O_{\mathcal X}$ of a self-similar groupoid action $(G,\mathcal X)$ is the relative Cuntz--Pimsner algebra\footnote{See for example \cite[Section 11]{Katsura07}.} of the $\cs$-correspondence $M_{\mathcal X}$ over $\cs(G)$ with respect to the ideal $\cs(G_\reg)$, and the \textit{Toeplitz algebra} $\mathcal T_{\mathcal X}$ of $(G,\mathcal X)$ is the Pimsner--Toeplitz algebra of $M_{\mathcal X}$. Concretely, this means that 
 $\mathcal T_{\mathcal X}$ is the universal $\cs$-algebra generated by elements $u_g$ for $g \in G$ and $m_x$ for $x \in \mathcal X$ with the groupoid relations $u_gu_h = u_{gh} \delta_{\sour(g),\ran(h)}$ on $(u_g)_{g \in G}$ and the relations
\begin{enumerate}
\item[(T1)] $u_g m_x = m_{gx} \delta_{\sour(g),\ran(x)}$
\item[(T2)] $m_x u_g = m_{xg} \delta_{\sour(x),\ran(g)}$ 
\item[(T3)] $m_{x}^* m_{y} = u_{\langle x, y \rangle}$, where $u_0=0$,
\end{enumerate}
for $x,y \in \mathcal X$ and $g \in G$. The $\cs$-algebra $\mathcal O_{\mathcal X}$ has the additional relations
\begin{itemize}
\item[(CK)] $u_v = \sum_{xG \in \ran_{\mathcal X/G} \inv (v)} m_{x} m_{x}^*$ 
\end{itemize}
for $v \in G^0_\reg$. We note as in \cite{Katsura04} that (T2) is redundant. 
\end{Def}

\begin{Rmk}
We do not use Katsura's nonrelative Cuntz--Pimsner algebra \cite{Katsura04}, as it is okay for us that $\cs(G_\reg)$ may act non-faithfully on $M_{\mathcal X}$. In fact, this is crucial to our approach, because in many of our applications $G_\reg$ does not even act faithfully on $\mathcal X$. 
\end{Rmk}

We take a moment to spell out what this means in the graph picture. For a right $G$-transversal $E \subseteq \mathcal X$ the elements $(m_e)_{e \in E} \subseteq M_{\mathcal X}$ generate $M_{\mathcal X}$ as a Hilbert $\cs(G)$-module, so the above presentation reduces to the following.

\begin{Prop}
Let $(G,E,\sigma)$ be a self-similar groupoid action with correspondence $\mathcal X = \mathcal X_{G,E} \colon G \to G$. The Toeplitz algebra $\mathcal T_{(G,E)} = \mathcal T_{\mathcal X}$ of $(G,E,\sigma)$ is the universal $\cs$-algebra generated by elements $u_g$ for $g \in G$ and $m_e$ for $e \in E$ with the groupoid relations $u_gu_h = u_{gh} \delta_{\sour(g),\ran(h)}$ on $(u_g)_{g \in G}$ and the relations
\begin{enumerate}
\item[(T1)] $u_g m_e = \delta_{\sour(g),\ran(e)} m_{g(e)} u_{g |_e}$
\item[(T3)] $m_{e}^* m_{f} = \delta_{e,f} u_{\sour(f)}$
\end{enumerate}
for $e,f \in E$ and $g \in G$. The $\cs$-algebra $\mathcal O_{(G,E)} = \mathcal O_{\mathcal X}$ has the additional relations
\begin{itemize}
\item[(CK)] $u_v = \sum_{\ran(e) = v} m_{e} m_{e}^*$ for each regular vertex $v \in E^0_\reg$.
\end{itemize}
\end{Prop}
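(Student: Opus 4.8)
The plan is to compare the presentation of $\mathcal T_{(G,E)}$ stated in the Proposition with the presentation of $\mathcal T_{\mathcal X}$ from the definition, exploiting the identification $\mathcal X = \mathcal X_{(G,E)} = E\times_{E^0}G$ and the fact that $E$ is a transversal to $\mathcal X/G$. Concretely, each $x\in\mathcal X$ is uniquely of the form $(e,g)$ with $e\in E$, and under the Toeplitz relation (T2) the generator $m_x = m_{(e,g)}$ factors as $m_e u_g$, where I write $m_e := m_{(e,\sour(e))}$. I would then set up a bijection between Toeplitz representations of the two presentations and invoke the universal property of each, treating the Cuntz--Pimsner relation separately at the end.

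First I would treat the restriction direction. Given a representation $(u_g)_{g\in G}$, $(m_x)_{x\in\mathcal X}$ of $\mathcal T_{\mathcal X}$, I restrict to the subfamily $(m_e)_{e\in E}$. The Toeplitz relation (T3) specializes, using $\langle(e,\sour(e)),(f,\sour(f))\rangle = \sour(e)\inv\sour(f)\,\delta_{e,f} = \sour(f)\,\delta_{e,f}$, to $m_e^* m_f = \delta_{e,f}u_{\sour(f)}$, which is the stated relation (T3). The Toeplitz relation (T1) specializes at $x = (e,\sour(e))$: since $\ran(e,\sour(e)) = \ran(e)$ and the left action gives $g\cdot(e,\sour(e)) = (g(e),g|_e)$ (using $\sour(g|_e) = \sour(e)$), I obtain $u_g m_e = \delta_{\sour(g),\ran(e)}\,m_{(g(e),g|_e)} = \delta_{\sour(g),\ran(e)}\,m_{g(e)}u_{g|_e}$, which is the stated relation (T1). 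Thus every representation of $\mathcal T_{\mathcal X}$ yields one of the stated presentation.

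Next I would treat the extension direction, where the real bookkeeping lives. Given a representation $(u_g)$, $(m_e)$ of the stated presentation, I define $m_{(e,g)} := m_e u_g$ and verify the defining relations of $\mathcal T_{\mathcal X}$. Relation (T2) is immediate from the groupoid relations on the $u_g$. For (T3), $(m_e u_g)^*(m_f u_h) = u_g^*(m_e^* m_f)u_h = \delta_{e,f}\,u_{g\inv}u_{\sour(e)}u_h = \delta_{e,f}\,u_{g\inv h} = u_{\langle(e,g),(f,h)\rangle}$, where the composability constraints $\sour(e) = \ran(g) = \ran(h)$ forced by $\delta_{e,f}$ make the groupoid products collapse correctly. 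Relation (T1) is the crux: expanding $u_g(m_e u_h) = (u_g m_e)u_h = \delta_{\sour(g),\ran(e)}\,m_{g(e)}u_{g|_e}u_h$ and using $\sour(g|_e) = \sour(e) = \ran(h)$ to combine $u_{g|_e}u_h = u_{g|_e h}$, together with the left-action formula $g\cdot(e,h) = (g(e),g|_e h)$ and $\ran(e,h) = \ran(e)$, yields exactly $m_{g\cdot(e,h)}\,\delta_{\sour(g),\ran(e,h)}$. The two assignments are mutually inverse on generators, so by universality $\mathcal T_{(G,E)}\cong\mathcal T_{\mathcal X}$.

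Finally, for $\mathcal O_{(G,E)}$ I would match the Cuntz--Pimsner relations under this isomorphism. Because $E$ is a transversal to $\mathcal X/G$, for each $v\in E^0_\reg = G^0_\reg$ the cosets $xG\in\ran_{\mathcal X/G}\inv(v)$ are in bijection with the edges $e\in E$ satisfying $\ran(e) = v$, and regularity guarantees this set is finite. Since $m_x m_x^*$ depends only on the coset $xG$ — one checks $m_{eg}m_{eg}^* = m_e u_g u_g^* m_e^* = m_e u_{\ran(g)} m_e^* = m_e m_e^*$ using $\sour(e) = \ran(g)$ — the full relation (CK) $u_v = \sum_{xG}m_x m_x^*$ becomes the stated $u_v = \sum_{\ran(e)=v}m_e m_e^*$. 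I expect the main obstacle to be the anchor bookkeeping in relation (T1) of the extension direction: one must track that every Kronecker delta appearing on the two sides is forced by the \emph{same} composability condition, so that no spurious terms survive, which relies on the cocycle identities $\sour(g|_e) = \sour(e)$ and $\ran(g|_e) = \sour(g(e))$ together with uniqueness of the factorization $x = eg$.
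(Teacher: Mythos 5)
Your proposal is correct and follows the same route the paper takes: the paper's justification is the single observation that the elements $m_e$ for $e \in E$ generate $M_{\mathcal X}$ as a Hilbert $\cs(G)$-module (via the unique factorization $x = eg$ and relation (T2)), so the presentation reduces. Your write-up simply fills in the two-directional verification and the (CK) coset bookkeeping that the paper leaves implicit, and all of those computations check out.
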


As in~\cite{ExelPardoSelf,Nekcstar} we construct a groupoid model for $\mathcal O_{\mathcal X}$ (or, equivalently, $\mathcal O_{(G,E)}$), which is moreover the tight groupoid of an inverse semigroup. Let $(G,\mathcal X)$ be a self-similar groupoid action.  We first define an inverse semigroup $S_{\mathcal X}$ with $0$.  We then show that if we convert $(G,\mathcal X)$ to $(G,E,\sigma)$, with $E$ a transversal to $\mathcal X/G$, we obtain the familiar inverse semigroup.  This has the advantage that it depends only on the correspondence and not the choice of $E$.  Also, its universal property is directly apparent.

A $\ast$-representation of a discrete groupoid $G$ in a $\ast$-semigroup $S$ with $0$ is a  zero-preserving homomorphism $\pi\colon G\cup \{0\}\to S$, where $G\cup \{0\}$  is the inverse semigroup obtained from $G$ by declaring all undefined products in $G$ to be $0$.   A representation of the correspondence $\mathcal X$ over $G$ in $S$ is a pair $(\pi,t)$ where $\pi$ is a $\ast$-representation of $G$ in $S$ and $t\colon \mathcal X\to S$ is a map such that:
\begin{enumerate}
    \item $\pi(g)t(x) = t(gx)\delta_{\sour(g),\ran(x)}$.
    \item $t(x)\pi(g) = t(xg)\delta_{\sour(x), \ran(g)}$.
    \item $t(x)^*t(y) = \pi(\langle x,y\rangle)$.
\end{enumerate}
We now construct the universal representation of $\mathcal X$ in a $\ast$-semigroup, which will turn out to be an inverse semigroup.  

Let $\mathcal X^0 = G$ and $\mathcal X^{n+1} = \mathcal X^n\circ \mathcal X= \mathcal X^n\times_G \mathcal X$. Put $\mathcal X^+ = \bigsqcup_{n\geq 0} \mathcal X^n$.\footnote{We reserve the notation $\mathcal X^\ast$ for the adjoint bispace. The notation $\mathcal X^+$ is typically used for the free semigroup on $\mathcal X$; our usage mainly differs in that we allow empty paths.} Then $\mathcal X^+$ is naturally a category with object set $G^0$, source and range map inherited from the anchors of the $\mathcal X^n$ and composition $\mathcal X^+\times_{G^0} \mathcal X^+\to \mathcal X^+$ given by  $\mathcal X^n\times_{G^0} \mathcal X^m\to \mathcal X^n\times_{G} \mathcal X^m\cong \mathcal X^{n+m}$.  Associativity is immediate from the associativity of composition of correspondences. One can verify that $\mathcal X^+$ is left cancellative and singly-aligned, and that it is cancellative if and only if $\mathcal X$ is pseudo\-free, but we shall not need this fact.  

For example, if $G=G^0$, then $\ran,\sour\colon \mathcal X\to G^0$ is just a graph, and $\mathcal X^+$ is the path category of $\mathcal X$.  We put $|p|=n$ if $p\in \mathcal X^n$.  Length is a functor $\mathcal X^+\to \mathbb N$.  If $p,q\in \mathcal X^+$ and $g\in G$ with $\sour(p)=\ran(g)$, $\sour(g)=\ran(q)$, then $pgq$ is defined unambiguously as $(pg)q=p(gq)$.  Since $\mathcal X^n$ is a correspondence, we have that $\langle p,q\rangle$ makes sense for $|p|=|q|$.

Define $S_{\mathcal X} = \{0\} \cup (\mathcal X^+ \times_G (\mathcal X^+)^\ast)$.  Writing  $pq^*$ for the element $[p,q^*]_G$ given $p,q \in \mathcal X^+$ with $\sour(p) = \sour(q)$, products of nonzero elements are given by 
\[p_1 q_1^* p_2 q_2^* =\begin{cases} p_1 \langle q_1 , r \rangle s q_2^*, & \text{if}\ p_2 = rs, |r|=|q_1|, \\ p_1 (q_2 \langle p_2, r \rangle s)^*  &\text{if}\ q_1 = rs, |r|=|p_2|,\end{cases}\] where we interpret $0 u^* = 0 = v 0^*$. It is straightforward to verify that the product is well defined and that $S_{\mathcal X}$ is an inverse semigroup where $(pq^*)^* = qp^*$ and $0^*=0$. The nonzero idempotents are the elements of the form $pp^*$ with $p\in \mathcal X^+$. We may safely write $p = p \sour(p)^*$ for $p \in \mathcal X^+$, noting that $g^* = (g \sour(g)^*)^* = \sour(g) g^* = g \inv \sour(g\inv)^* = g \inv$ for $g \in \mathcal X^0 = G$. Then for $p,q \in \mathcal X^+$ we have $pq = p \langle \sour(p), \ran(q) \rangle q = pq \delta_{\sour(p),\ran(q)}$ and $p^*q = \langle p, q\rangle $ when $|p| = |q|$. Note also that for $g,h \in G$ and $p\in \mathcal X^+$, taking products $gp$ and $ph$ in $S_{\mathcal X}$ agrees with performing the actions on $\mathcal X^+$. One can verify that this is the inverse hull of $\mathcal X^+$, but we shall not need this fact.
The inverse semigroup $S_\mathcal X$ is $E^*$-unitary if and only if $\mathcal X$ is pseudo\-free.  

\begin{Thm}\label{t:toeplitz.c}
Let $(G,\mathcal X)$ be a self-similar groupoid with correspondence $\mathcal X$.  Define $\pi\colon G\cup \{0\}\to S_{\mathcal X}$ and $t\colon \mathcal X\to S_{\mathcal X}$ by $\pi(g) = g$ and $t(x) = x$.  Then $(\pi,t)$ is the universal representation of $\mathcal X$ in a $\ast$-semigroup. 
\end{Thm}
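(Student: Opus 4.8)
The statement packages two assertions: that $(\pi,t)$ is a representation of $\mathcal X$, and that it is the universal one. The first is immediate from the structural identities for $S_{\mathcal X}$ recorded just above. Since $\pi(g)=g$ and $t(x)=x$ sit inside $S_{\mathcal X}$, and since the products $gx$ and $xg$ formed in $S_{\mathcal X}$ reproduce the left and right $G$-actions on $\mathcal X$, relations~(1) and~(2) hold on the nose; relation~(3) is the identity $x^*y=\langle x,y\rangle$, valid for $|x|=|y|=1$; and $\pi$ is a $*$-representation because $gh=gh\,\delta_{\sour(g),\ran(h)}$ and $g^*=g\inv$ in $S_{\mathcal X}$. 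So only universality requires real work.

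For universality, let $(\pi',t')$ be a representation of $\mathcal X$ in a $*$-semigroup $T$. The plan is to first extend $t'$ to the whole path category $\mathcal X^+$, then to define $\Phi\colon S_{\mathcal X}\to T$ by $\Phi(0)=0$ and $\Phi(pq^*)=t'(p)\,t'(q)^*$, and finally to check that $\Phi$ is the unique $*$-homomorphism with $\Phi\circ\pi=\pi'$ and $\Phi\circ t=t'$. To extend $t'$, I would set $t'|_{\mathcal X^0}=\pi'$ and, representing $p\in\mathcal X^n$ as a composite $[x_1,\dots,x_n]$ of elements of $\mathcal X$, put $t'(p)=t'(x_1)\cdots t'(x_n)$. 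The only thing to verify is independence of the representative: since $\mathcal X^n$ is an iterated fibre product over $G$, two representatives differ by moves $[\dots,x_ig,x_{i+1},\dots]=[\dots,x_i,gx_{i+1},\dots]$, and under $t'$ these agree by relations~(2) and~(1), namely $t'(x_ig)\,t'(x_{i+1})=t'(x_i)\,\pi'(g)\,t'(x_{i+1})=t'(x_i)\,t'(gx_{i+1})$; absorption of group elements at the ends is handled in the same way together with $t'|_{\mathcal X^0}=\pi'$.

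Before analysing $\Phi$ I would record the key computational lemma, an extension of relation~(3): for $p,q\in\mathcal X^+$ with $|p|=|q|$ one has $t'(p)^*t'(q)=\pi'(\langle p,q\rangle)$. This is proved by induction on the common length, peeling off the leading edges via $t'(x_1)^*t'(y_1)=\pi'(\langle x_1,y_1\rangle)$, folding the resulting group element into the remaining path by relation~(1), and invoking the pairing identities (such as $\langle gx,y\rangle=\langle x,g\inv y\rangle$) to recognise the outcome as $\pi'(\langle p,q\rangle)$. With this in hand, well-definedness of $\Phi$ follows from freeness of the right $G$-action: if $pq^*=p'q'{}^*$ then $p'=pg$, $q'=qg$ for a unique $g$, whence $t'(p')t'(q')^*=t'(p)\pi'(g)\pi'(g)^*t'(q)^*=t'(p)t'(q)^*$, using $\pi'(g)\pi'(g)^*=\pi'(\sour(p))$ and $t'(p)\pi'(\sour(p))=t'(p)$ by relation~(2).

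The heart of the proof is multiplicativity of $\Phi$, and this is where I expect the bookkeeping to concentrate. One takes a product $p_1q_1^*\,p_2q_2^*$ in $S_{\mathcal X}$ and runs the two cases of the defining multiplication formula. In the case $p_2=rs$ with $|r|=|q_1|$, the lemma gives $t'(q_1)^*t'(p_2)=t'(q_1)^*t'(r)t'(s)=\pi'(\langle q_1,r\rangle)t'(s)=t'(\langle q_1,r\rangle s)$, the last step by relation~(1); multiplying on the left by $t'(p_1)$ and on the right by $t'(q_2)^*$ reproduces $\Phi$ of the product $p_1(\langle q_1,r\rangle s)q_2^*$, and the second case is symmetric. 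That $\Phi$ preserves $*$ is immediate, since $\Phi((pq^*)^*)=\Phi(qp^*)=t'(q)t'(p)^*=\Phi(pq^*)^*$. Finally $\Phi\circ\pi=\pi'$ and $\Phi\circ t=t'$ follow by unwinding the identifications $g=g\,\sour(g)^*$ and $\mathcal X\cong\mathcal X^1$, and uniqueness is forced because every element of $S_{\mathcal X}$ is a product of elements $t(x)$, $\pi(g)$ and their adjoints. The main obstacle is thus the multiplicativity check, which rests entirely on the extended pairing lemma; the other verifications become routine once $t'$ is known to extend consistently to $\mathcal X^+$.
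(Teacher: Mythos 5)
Your proposal is correct and takes essentially the same route as the paper's proof: extend $t'$ multiplicatively to $\mathcal X^+$, establish the pairing lemma $t'(p)^*t'(q)=\pi'(\langle p,q\rangle)$ for $|p|=|q|$ by induction on length, verify well-definedness of $\Phi$ via the diagonal $G$-action, and check multiplicativity through the two cases of the product formula, with uniqueness forced by generation. No gaps.
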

\begin{proof}
It is immediate that $\pi$ is a $\ast$-representation.   We compute $\pi(g)t(x) = g x = t(g x) \delta_{\sour(g),\ran(x)}$, and similarly we have $t(x)\pi(g) 
= t(x g)\delta_{\sour(x),\ran(g)}$.  Also, $t(x)^*t(y) =x^*y= \langle x,y\rangle = \pi(\langle x,y\rangle)$.  Thus $(\pi,t)$ is a representation of $\mathcal X$. 

Suppose that $(\pi',t')$ is a representation of $\mathcal X$ in a $\ast$-semigroup $T$. We can define an extension $t'\colon \mathcal X^+\to T$ by putting $t'(g) = \pi'(g)$ for $g\in G=\mathcal X^0$ and $t'(x_1\cdots x_n)= t'(x_1)\cdots t'(x_n)$.  This is well defined because $t'(xg)t'(g\inv y) = t'(x)\pi'(g)\pi'(g\inv)t'(y) = t'(x)t'(y)$.

Define $\tau\colon S_{\mathcal X}\to T$ by $\tau(pq^*) = t'(p)t'(q)^*$ and $\tau(0)=0$.  This is well defined because $t'(pg)t'(qg)^* = t'(p)\pi'(g)(t'(q)\pi'(g))^* = t'(p)t'(q)^*$.  Notice that $\tau(\pi(g)) = \tau(g) = t'(g)t'(\sour(g))^* = \pi'(g)$ and $\tau(t(x)) = \tau(x) = t'(x)t'(\sour(x))^* = t'(x)$.  Now we check that $\tau$ is a $\ast$-homo\-mor\-phism.
Note that $\tau((pq^*)^*) = \tau(q p^*)= t'(q)t'(p)^* = \tau(p q^*)^*$.  
One  shows by induction on length that if $|u|=|v|$, then $t'(u)^*t'(v) = \pi'(\langle u,v\rangle)$ with the case $|u|=0=|v|$ being trivial.  Else note that if $x,y\in \mathcal X$ and $u,v\in \mathcal X^n$, then $xuG =yvG$ if and only if $xG=yG$ and $uG=\langle x,y\rangle vG$ by left cancellativity of $\mathcal X^+$.  But then $xu\langle u, \langle x,y\rangle v\rangle = x\langle x,y\rangle v = yv$, and by induction $t'(xu)^*t'(yx) = t'(u)^*t'(x)^*t'(y)t'(v) = t'(u)\pi'(\langle x,y\rangle)t'(v) = t'(u)t'(\langle x,y\rangle v)=\pi'(\langle u,\langle x,y\rangle v\rangle)=\pi'(\langle xu,yv\rangle)$, as required.   Suppose that $p_1 q_1^*, p_2 q_2^*\in S_{\mathcal X}$.  Then either $|q_1|\leq |p_2|$ or $|q_1|\geq |p_2|$.  We handle just the first case, as the second is dual.  Write $p_2 = rs$ with $|r|=|q_1|$.  Then 
\begin{align*}
\tau(p_1 q_1^*)\tau(p_2 q_2^*) & = t'(p_1)t'(q_1)^*t'(p_2)t'(q_2)^*=t'(p_1)t'(q_1)^*t'(r)t'(s)t'(q_2)^* \\ & = t'(p_1)\pi'(\langle q_1,r\rangle) t'(s)t'(q_2)^* = \tau( p_1 \langle q_1 ,r\rangle s q_2^*)
\end{align*}
This completes the proof that $\tau$ is a $\ast$-homomorphism. Uniqueness follows from the observation that  $\pi(G)\cup t(\mathcal X)$ generates $S_{\mathcal X}$.  
\end{proof}

Next we show that if we use the graph theoretic formulation of self-similar groupoids, then we obtain the natural analogue of the inverse semigroup considered by Nekrashevych~\cite{Nekcstar} and Exel and Pardo~\cite{ExelPardoSelf,Self-similar-arb-graph}.

The inverse semigroup $S_{(G,E)}$ associated to a self-similar groupoid action $(G,E,\sigma)$ as per Definition~\ref{def:v.graph} consists of a zero element $0$ and all triples of the form $(p,g,q)$ where $g\in G$ and $p,q\in E^+$ are paths with $\sour(p)=\ran(g)$ and $\sour(q)=\sour(g)$.   The product of nonzero elements is given by
\[(p,g,q)(p',h,q')=\begin{cases} (pg(r),g|_rh,q'), & \text{if}\ p'=qr,\\ (p,g h|_{h\inv(s)},q'h\inv(s)), & \text{if}\ q=p's,\\ 0, & \text{else.}\end{cases}\] 
The involution is given by $(p,g,q)^* = (q,g\inv ,p)$.  Note that elements of the form $(p,v, q)$ with $v\in E^0$ and $\sour(p)=v=\sour(q)$ form an inverse subsemigroup isomorphic to the graph inverse semigroup~\cite{Paterson} $S_E$ of $E$.  The idempotents are the elements of the form $(p,\sour(p),p)$ and the maximal subgroup at this idempotent consists of all elements of the form $(p,g,p)$ with $g\in G_{\sour(p)}^{\sour(p)}$.  If we write $p$ as shorthand for $(p,\sour(p),\sour(p))$ and $g$ as shorthand for $(\ran(g),g,\sour(g))$, then $(p,g,q) =pgq^*$.

\begin{Prop}
Given a self-similar groupoid action $(G,\mathcal X)$ and a transversal $E$ to $\mathcal X/G$, there is an isomorphism $S_{\mathcal X}\cong S_{(G,E)}$.    
\end{Prop}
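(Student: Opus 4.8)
The plan is to realize the isomorphism through the universal property of $S_{\mathcal X}$ proved in Theorem~\ref{t:toeplitz.c}, rather than by matching the two product formulas directly. The first ingredient is a normal form: for each $n$ the length-$n$ paths $E^n\subseteq \mathcal X^n$ form a transversal to $\mathcal X^n/G$, so that every $x\in\mathcal X^+$ factors uniquely as $x=pg$ with $p\in E^+$ of the same length and $g\in G$. I would prove this by induction on $n$ from $\mathcal X^{n+1}=\mathcal X^n\times_G\mathcal X$: given $[w,y]_G\in\mathcal X^{n+1}$, write $w=pg$ by induction and $gy=eh$ with $e\in E$, $h\in G$ (the $n=1$ case, which is the defining property of the transversal $E$), so that $[w,y]_G=[p,gy]_G=[p,eh]_G=(pe)h$ with $pe\in E^{n+1}$; uniqueness comes from freeness of the right $G$-action on each $\mathcal X^n$, inherited from that on $\mathcal X$.

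Next I would define a representation $(\pi',t')$ of $\mathcal X$ in the $\ast$-semigroup $S_{(G,E)}$ by $\pi'(g)=(\ran g,g,\sour g)$ and, for $x=eh\in\mathcal X$ in normal form, $t'(x)=(e,h,\sour h)$. Checking that $\pi'$ is a $\ast$-representation of $G$ and verifying the three representation axioms is the technical heart of the argument. Axiom (2), $t'(x)\pi'(g)=t'(xg)\delta_{\sour x,\ran g}$, is immediate since $xg=e(hg)$ is already in normal form. Axiom (1), $\pi'(g)t'(x)=t'(gx)\delta_{\sour g,\ran x}$, is exactly where the self-similar structure enters: one rewrites $g\cdot(eh)=g(e)\,(g|_e h)$ using the factorization $ge=g(e)\,g|_e$ and checks that the product $(\ran g,g,\sour g)(e,h,\sour h)$ in $S_{(G,E)}$ collapses to $(g(e),g|_e h,\sour h)$. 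Axiom (3), $t'(x)^*t'(y)=\pi'(\langle x,y\rangle)$, uses that for $e,f\in E$ one has $\langle e,f\rangle=\delta_{e,f}\sour(e)$ (as $E$ meets each $G$-orbit once), whence $\langle eh,fk\rangle=\delta_{e,f}\,h\inv k$, which matches the product $(\sour h,h\inv,e)(f,k,\sour k)$.

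By Theorem~\ref{t:toeplitz.c}, the representation $(\pi',t')$ induces a $\ast$-homomorphism $\tau\colon S_{\mathcal X}\to S_{(G,E)}$ with $\tau(g)=\pi'(g)$ and $\tau(x)=t'(x)$. It then remains to see that $\tau$ is bijective. Surjectivity is immediate, since $t'(e)=(e,\sour e,\sour e)$, $\pi'(g)$ and their adjoints generate $S_{(G,E)}$. For injectivity I would use the normal form: a general nonzero element of $S_{\mathcal X}$ is $xy^*$ with $x=pg_1$, $y=qg_2$ ($p,q\in E^+$), and pulling the group elements to the centre gives $xy^*=p\,(g_1g_2\inv)\,q^*$, so $\tau(xy^*)=(p,\,g_1g_2\inv,\,q)$. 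Since the triple $(p,\,g_1g_2\inv,\,q)$ is manifestly a complete invariant of the class $xy^*=(xg)(yg)^*$ and conversely determines it, $\tau$ is injective and hence an isomorphism.

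I expect the main obstacle to be the bookkeeping in axioms (1) and (3): one must track the range/source constraints in the groupoid $G$, confirm that the conventions for the action and cocycle on paths (e.g. $g|_{\sour g}=g$ and $\ran(g(e))=\ran g$) force the two cases of the product formula in $S_{(G,E)}$ to collapse to the asserted values, and correctly handle the length-$0$ (vertex) degeneracies that arise when one factor has trivial path part.
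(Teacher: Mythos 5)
Your proposal is correct, but it reverses the direction of the paper's argument and replaces a direct verification by an appeal to the universal property. The paper defines $\tau\colon S_{(G,E)}\to S_{\mathcal X}$ explicitly by $(p,g,q)\mapsto pgq^*$, exhibits its inverse $pg(qh)^*\mapsto (p,gh\inv,q)$ by inspection, and then checks multiplicativity by running through the two cases of the product formula in $S_{(G,E)}$ for \emph{arbitrary} pairs of elements. You instead build a representation $(\pi',t')$ of the correspondence $\mathcal X$ in $S_{(G,E)}$ and invoke Theorem~\ref{t:toeplitz.c} to produce the map $S_{\mathcal X}\to S_{(G,E)}$; this localizes the computational burden to the three representation axioms, which involve only single edges and single groupoid elements (your axiom~(1) computation $(\ran g,g,\sour g)(e,h,\sour h)=(g(e),g|_eh,\sour h)$ is exactly the point where the cocycle enters, and your axiom~(3) uses $\langle eh,fk\rangle=\delta_{e,f}h\inv k$ as recorded in the paper). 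The price is that you must then prove bijectivity separately: surjectivity is immediate on generators, and your injectivity argument via the normal-form invariant $(p,g_1g_2\inv,q)$ is sound, since $x'=xg$, $y'=yg$ forces $p'=p$, $q'=q$ and $g_1'(g_2')\inv=g_1g_2\inv$ by uniqueness of the factorization $\mathcal X^+=E^+G$, and conversely $g=g_2\inv g_2'$ witnesses the equivalence. Both arguments rest on the same normal-form lemma (that $E^+$ is a transversal for $\mathcal X^+$, which the paper cites as standard and you prove by induction), so the underlying content coincides; what your route buys is that homomorphy need only be checked on generators, while the paper's route is shorter overall because the inverse map is visible at a glance. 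The only step you should spell out a little more is the identity $\tau(pg)=(p,g,\sour g)$ for paths $p$ of length greater than one, which needs a short induction with the product formula of $S_{(G,E)}$ before you can "pull the group elements to the centre."
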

\begin{proof}
Observe that $E^+$ is a transversal for the correspondence $\mathcal X^+$ by a standard argument (cf.~\cite{selfsimilar}).  Thus each element of $\mathcal X^+$ can be written uniquely as $pg$ with $p\in E^+$ and $g\in G$ with $\sour(p)=\ran(g)$.    Define $\tau\colon S_{(G,E)}\to  S_{\mathcal X}$ by  $(p,g,q)\mapsto pgq^* = p (qg\inv)^*$.  The inverse sends $ pg (qh)^*$ to $(p,gh\inv,q)$ (for $p,q\in E^+$, $g,h\in G$).  If $p'=qr$, then $\tau(p,g,q)\tau(p',h,q')= pgq^*  p' (q'h\inv)^* =  pgr (q'h\inv)^* = pg(r)g|_rh (q')^* = \tau((p,g,q)(p',h,q'))$.  If $q=p's$, then similarly the identity $(h \inv)s = h \inv(s) (h \inv)|_s$ implies that $\tau(p,g,q)\tau(p',h,q') 
=\tau((p,g,q)(p',h,q'))$.  Finally, if neither of these cases hold, then  the prefixes $q_0,p_0$ of $q,p$ of length $\min\{|p|,|q|\}$ are not equal.  Therefore, $\langle q_0,p_0\rangle =0$, and so $\tau(p,g,q)\tau(p',h,q') =  pg q^* p' (q'h\inv)^* =0$.  This completes the proof.
\end{proof}

For an inverse semigroup $S$ with $0$ with idempotent semilattice $E$, the \textit{underlying groupoid} $\underlying{S}$ of $S$ is the discrete groupoid of nonzero
elements $S^\times$ in $S$ with range and source maps $\ran, \sour \colon \underlying{S} \to E^\times$ given by $\sour(s)=s^*s$, $\ran(s)=ss^*$, composition given by multiplication in $S$ and the inversion by the involution in $S$. This may also be viewed as the transformation groupoid\footnote{Called by some authors groupoid of germs. We use groupoid of germs for the transformation groupoid of the pseudogroup generated by a semigroup of partial homeomorphisms.} of the conjugation action of $S$ on $E^\times$.

The \textit{universal groupoid} $\universal{S}$ of $S$ is the transformation groupoid of the conjugation action of $S$ on the filter/character space $\wh E$. Here $\wh E$ is the space of characters of $E$, that is,  nonzero, zero-preserving semigroup homomorphisms $\chi\colon E\to \{0,1\}$, equipped with the topology of pointwise convergence.  These are precisely the characteristic functions of filters (proper, nonempty, upward-closed  subsemigroups of $E$).  In particular, the characteristic function $\chi_e$ of the principal filter generated by $e$ belongs to $\wh E$.  We shall use the terms `filter' and `character' interchangeably.  

The \textit{tight} groupoid $\mathscr G_S$ of $S$ is the reduction of $\universal{S}$ to the space of tight filters/characters. A character $\chi$ is \emph{tight} in the sense of Exel if $\chi(e)=\bigvee_{i=1}^n\chi(e_i)$ whenever $e_1,\ldots, e_n\leq e$ cover $e$ in the sense that $0\neq f\leq e$ implies $fe_i\neq 0$ for some $i=1,\ldots, n$.  The subspace of tight filters is closed and invariant and can be described as the closure of the space of ultrafilters in $\wh E$. See~\cite{Exel} for details.

The \textit{full} or \textit{universal $\cs$-algebra} $\cs(S)$ of $S$ is the universal $\cs$-algebra generated by elements $t_s$ for $s \in S$ satisfying $t_0 = 0$, $t_s^* = t_{s^*}$ and $t_s t_{s'} = t_{ss'}$ for $s,s' \in S$. There is an isomorphism $\cs(S) \to \cs(\universal{S})$ by~\cite[Theorem 4.4.1]{Paterson},\footnote{We warn the reader that Paterson's notions of universal groupoid and $*$-representation of an inverse semigroup do not respect the $0$ of the inverse semigroup. However, the isomorphism still follows from his theorem, because not respecting the $0$ just means that each $\cs$-algebra has an extra copy of $\mathbb C$ as a direct summand which is respected by the isomorphism.} which sends $t_s \in \cs(S)$ to the indicator on the compact open set $\{[s,\chi] \in \universal{S} \mid \chi(s^*s) = 1\}$. 

The \textit{tight $\cs$-algebra} $\cs_{\mathrm{tight}}(S)$ is the universal $\cs$-algebra generated by elements $w_s$ for $s \in S$ satisfying $w_0 = 0$, $w_s^* = w_{s^*}$, $w_s w_t = w_{st}$ for $s,t \in S$ and $w_e=\prod_{i=1}^n(w_e-w_{e_i})$ whenever $e_1,\ldots, e_n\leq e$ cover $e$.  There is an isomorphism $\cs_{\mathrm{tight}}(S)\to \cs(\mathscr G_S)$ that sends $w_s$ to the indicator function of the compact open set $\{[s,\chi] \in \mathscr G_S \mid \chi(s^*s) = 1\}$.  This follows from~\cite[Corollary 2.14]{simplicity} and~\cite[Proposition~5.2]{CZ24}.

For a self-similar groupoid action $(G,\mathcal X)$ the universal groupoid of $S_{\mathcal X}$ will be written $\mathscr U_{\mathcal X}$ and the tight groupoid will be denoted by $\mathscr G_{\mathcal X}$. If it is presented by $(G,E,\sigma)$, we may write  $\mathscr G_{(G,E)}$. Note that since the idempotents of $S_{\mathcal X}$ are those of $S_E$ for any graph realisation $E \subseteq \mathcal X$, it follows that the filters of $S_{\mathcal X}$ are the same as those of $S_E$, and in particular the space of tight filters  is homeomorphic to the \emph{boundary path space} $\partial E$, consisting of all infinite paths in $E$ and finite paths in $E$ whose source is a singular vertex. 
The filter corresponding to a path $z\in \partial E$ is the set of all $pp^*$ with $p$ a finite prefix of $z$.  The topology on $\partial E$ has basis all sets of the form $p\partial E\setminus \bigcup_{e\in F}pe\partial E$, where $F$ is a finite set of edges $e$ with $\ran(e)=\sour(p)$.  It is shown in~\cite[Proposition~4.11]{steinberg2022stable}\footnote{This is also a consequence of the uniqueness theorem proof that $\cs(E)\cong \cs(\mathscr G_{S_E})$.} that the relations of $\cs_{\mathrm{tight}}(S_{\mathcal X})$ obtained from tight covers of idempotents are all in the ideal of $\cs(S_{\mathcal X})$ generated by elements of the form $t_v-\sum_{\ran(e)=v} t_et_e^*$ where $v \in G^0_\reg$ is a regular vertex.  
Since $S_{\mathcal X}$ is $E^*$-unitary when $\mathcal X$ is pseudo\-free, it follows that $\mathscr U_{\mathcal X}$ and $\mathscr G_{\mathcal X}$ are Hausdorff in the pseudo\-free setting.

Putting together Theorem~\ref{t:toeplitz.c} and the above discussion we obtain:

\begin{Prop}\label{Toeplitz isomorphism}
Let $(G,\mathcal X)$ be a self-similar groupoid action. There is an isomorphism $\mathcal T_{\mathcal X} \to \cs(S_{\mathcal X})$ sending $u_g$  to $v_g \in \cs(S_{\mathcal X})$ for each $g \in G$ and sending $m_x \in M_{\mathcal X}$ to $v_x \in \cs(S_{\mathcal X})$ for each $x \in \mathcal X$. Moreover, this isomorphism identifies the quotient $\mathcal O_{\mathcal X}$ with the tight $\cs$-algebra $\cs_{\mathrm{tight}}(S_{\mathcal X})$ 
of $S_{\mathcal X}$. That is, we have a commutative diagram 
\[\begin{tikzcd}[arrow style=math font]
	{\mathcal T_{\mathcal X}} & {\cs(S_{\mathcal X})} & {\cs(\mathscr U_{\mathcal X})} \\
	{\mathcal O_{\mathcal X}} & {\cs_{\mathrm{tight}}(S_{\mathcal X})} & {\cs(\mathscr G_{\mathcal X})}.
	\arrow["\cong", from=1-1, to=1-2]
	\arrow[two heads, from=1-1, to=2-1]
	\arrow["\cong", from=1-2, to=1-3]
	\arrow[two heads, from=1-2, to=2-2]
	\arrow[two heads, from=1-3, to=2-3]
	\arrow["\cong", from=2-1, to=2-2]
	\arrow["\cong", from=2-2, to=2-3]
\end{tikzcd}\]
\end{Prop}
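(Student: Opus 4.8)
The plan is to first establish the upper-left isomorphism $\mathcal T_{\mathcal X}\cong \cs(S_{\mathcal X})$ and then to obtain the rest of the diagram from the quotient identification together with the two already-cited isomorphisms $\cs(S_{\mathcal X})\cong \cs(\mathscr U_{\mathcal X})$ and $\cs_{\mathrm{tight}}(S_{\mathcal X})\cong \cs(\mathscr G_{\mathcal X})$. The point underlying the upper isomorphism is that, for any $\cs$-algebra $B$, there is a chain of natural bijections between $*$-homomorphisms $\mathcal T_{\mathcal X}\to B$, representations $(\pi',t')$ of the correspondence $\mathcal X$ in the multiplicative $*$-semigroup of $B$, and $*$-homomorphisms $\cs(S_{\mathcal X})\to B$. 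The first bijection holds because the defining relations of $\mathcal T_{\mathcal X}$ — the groupoid relations on $(u_g)_{g\in G}$ together with (T1)--(T3) — are verbatim the conditions (1)--(3) defining a representation of $\mathcal X$ in a $*$-semigroup. The second is exactly the universal property of $S_{\mathcal X}$ from Theorem~\ref{t:toeplitz.c} composed with the universal property of $\cs(S_{\mathcal X})$.

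Concretely I would build the isomorphism and its inverse by hand. The assignment $u_g\mapsto v_g$, $m_x\mapsto v_x$ respects (T1)--(T3): applying the canonical $*$-representation $s\mapsto v_s$ of $S_{\mathcal X}$ to the identities $\pi(g)t(x)=t(gx)\delta_{\sour(g),\ran(x)}$, $t(x)\pi(g)=t(xg)\delta_{\sour(x),\ran(g)}$ and $t(x)^*t(y)=\pi(\langle x,y\rangle)$ valid in $S_{\mathcal X}$ by Theorem~\ref{t:toeplitz.c} yields precisely the required relations in $\cs(S_{\mathcal X})$, so there is a $*$-homomorphism $\Psi\colon \mathcal T_{\mathcal X}\to \cs(S_{\mathcal X})$. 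Conversely $\pi'(g)=u_g$, $t'(x)=m_x$ is a representation of $\mathcal X$ in $\mathcal T_{\mathcal X}$ by (T1)--(T3), so Theorem~\ref{t:toeplitz.c} supplies a unique $*$-homomorphism $\tau\colon S_{\mathcal X}\to \mathcal T_{\mathcal X}$ with $\tau(g)=u_g$, $\tau(x)=m_x$; its values satisfy the defining relations of $\cs(S_{\mathcal X})$ and so induce $\Phi\colon \cs(S_{\mathcal X})\to \mathcal T_{\mathcal X}$. Since every element of $S_{\mathcal X}$ has the form $pq^*$ and $v_pv_q^*=v_{pq^*}$, the maps $\Phi$ and $\Psi$ agree with the identity on the respective generating sets $\{u_g,m_x\}$ and $\{v_s\}$, hence are mutually inverse.

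It remains to check that $\Psi$ carries the quotient $\mathcal T_{\mathcal X}\twoheadrightarrow \mathcal O_{\mathcal X}$ onto $\cs(S_{\mathcal X})\twoheadrightarrow \cs_{\mathrm{tight}}(S_{\mathcal X})$, which I expect to be the main obstacle. Because $\Psi(m_em_e^*)=v_ev_e^*$ and $\Psi(u_v)=v_v$, each Cuntz--Krieger element $u_v-\sum_{\ran(e)=v}m_em_e^*$ with $v\in G^0_\reg$ is sent to $v_v-\sum_{\ran(e)=v}v_ev_e^*$, so $\Psi$ identifies $\ker(\mathcal T_{\mathcal X}\to \mathcal O_{\mathcal X})$ with the ideal $J$ generated by these elements. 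One inclusion is the cited~\cite[Proposition~4.11]{steinberg2022stable}, which puts every tight-cover relation inside $J$; the reverse inclusion holds because for regular $v$ the finitely many edges with $\ran(e)=v$ yield pairwise orthogonal projections forming a tight cover of the idempotent $v$, so each generator of $J$ is itself a tight-cover relation. Hence $J=\ker(\cs(S_{\mathcal X})\to \cs_{\mathrm{tight}}(S_{\mathcal X}))$ and $\Psi$ descends to $\mathcal O_{\mathcal X}\cong \cs_{\mathrm{tight}}(S_{\mathcal X})$. Both squares then commute on generators, the remaining vertical and horizontal isomorphisms being those of~\cite{Paterson} and~\cite{simplicity,CZ24}. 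The delicate step throughout is this last ideal comparison — matching the analytic Cuntz--Krieger relations to the combinatorial tight-cover relations via the boundary path space — whereas the upper isomorphism is purely formal once Theorem~\ref{t:toeplitz.c} is in place.
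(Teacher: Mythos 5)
Your proposal is correct and follows essentially the route the paper intends: the top isomorphism is exactly the matching of universal properties via Theorem~\ref{t:toeplitz.c}, and the quotient identification is the ideal comparison the paper sketches just before the proposition, namely \cite[Proposition~4.11]{steinberg2022stable} for one inclusion and the observation that for a regular vertex $v$ the pairwise orthogonal idempotents $ee^*$ with $\ran(e)=v$ form a finite cover of $v$, so each Cuntz--Krieger element is itself a tight-cover relation, for the other. Your explicit construction of the mutually inverse maps $\Psi$ and $\Phi$ and the check on generators fills in details the paper leaves implicit, but there is no difference in substance.
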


The kernel of the quotient map $\mathcal T_{\mathcal X} \to \mathcal O_{\mathcal X}$ is Morita equivalent to $\cs(G_\reg)$. On the groupoid level, the kernel corresponds to the reduction $\universal{\mathcal X} |_U$ of the universal groupoid $\universal{\mathcal X}$ to the complement $U$ 
of the tight filters. Each filter in $U$ corresponds to a finite path $p$ beginning at a regular vertex, which means it is the principal filter $\chi_{pp^*}$ of the idempotent $pp^* \in S_{\mathcal X}$; these principal filters are isolated points. Let $F = \{ p p^* \in S_{\mathcal X} \mid p \in \mathcal X^+ \text{ a finite path with } \sour(p) \in G^0_\reg \}$. This is an invariant set of nonzero idempotents, and $\universal{\mathcal X} |_U$ is isomorphic to the reduction $\underlying{S_{\mathcal X}} |_F$ of the underlying groupoid $\underlying{S_{\mathcal X}}$ of $S_{\mathcal X}$. The regular vertices $v \in G^0_\reg$ form  a transversal for $\underlying{S_{\mathcal X}} |_F$, and therefore the inclusion $G_\reg \hookrightarrow \underlying{S_{\mathcal X}} |_F$ is a Morita equivalence.

We now show that Morita equivalent correspondences give rise to Morita equivalent inverse semigroups and hence Morita equivalent universal and tight groupoids. A correspondence $\mathcal X$ over $G$ is Morita equivalent to a correspondence $\mathcal Y$ over $H$ if there is a Morita equivalence $\Omega\colon G\to H$ such that $\Omega\times_H \mathcal Y\cong \mathcal X\times_G \Omega$. This is an equivalence relation on correspondences.

Let us recall the notion of Morita equivalence of inverse semigroups~\cite{strongmorita,FunkLawsonSteinberg}.  The characterization most useful for our purposes is the following.  Associated to any inverse semigroup $S$ is a left cancellative category $L(S)$.  Here $L(S)^0=E(S)=E$ and $L(S) = \{(f,s)\in E\times S\mid fs=s\}$.  One has $\sour(f,s)=s^*s$ and $\ran(f,s)=f$.  The product is given by $(f,s)(s^*s,t) = (f,st)$.   Note that the underlying groupoid $\underlying{S}$ embeds as the groupoid of isomorphisms of $L(S)$ via $s\mapsto (ss^*,s)$.  The inverse semigroups $S$ and $T$ are Morita equivalent if $L(S)$ is equivalent to $L(T)$ as categories.  Note that if $S$ has a zero, then $0$ is the unique initial object of $L(S)$.   Let $L(S^\times)=L(S)|_{E^\times}$.  Since equivalences preserve initial objects,  it easily follows that inverse semigroups with zero $S,T$ are Morita equivalent if and only if $L(S^\times)$ is equivalent to $L(T^\times)$.

\begin{Prop}\label{p:morita.equiv.corr}
Suppose that the correspondence $\mathcal X$ over $G$ is Morita equivalent to the  correspondence $\mathcal Y$ over $H$.  Then $S_{\mathcal X}$ and $S_{\mathcal Y}$ are Morita equivalent, and hence $\universal{\mathcal X}$ and $\universal{\mathcal Y}$ are Morita equivalent and $\mathscr G_{\mathcal X}$ and $\mathscr G_{\mathcal Y}$ are Morita equivalent.
\end{Prop}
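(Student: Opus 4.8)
The plan is to identify the category $L(S_{\mathcal X}^\times)$ with a category that can be read directly off the correspondence, and then to transport that identification along the Morita equivalence $\Omega$. The crux is a structural lemma: $L(S_{\mathcal X}^\times)$ is equivalent to the path category $\mathcal X^+$ (and likewise $L(S_{\mathcal Y}^\times)\simeq\mathcal Y^+$). To see this, recall that the nonzero idempotents of $S_{\mathcal X}$ are exactly the $pp^*$ for $p\in\mathcal X^+$, that $p\mapsto pp^*$ is a bijection, and that $pp^*\leq qq^*$ precisely when $q$ is a prefix of $p$, so the vertex idempotents $v\in G^0=\mathcal X^0$ are the maximal ones. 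Since $s=p$ realises an isomorphism $\sour(p)\to pp^*$ inside the groupoid of isomorphisms $\underlying{S_{\mathcal X}}\hookrightarrow L(S_{\mathcal X}^\times)$, every object of $L(S_{\mathcal X}^\times)$ is isomorphic to a vertex idempotent, so the full subcategory on $G^0$ is already equivalent to the whole category. A short computation finishes the lemma: a morphism $v\to w$ between vertex idempotents is a pair $(w,s)$ with $s^*s=v$ and $ws=s$; the first condition forces $s=p$ for a path $p$ with $\sour(p)=v$, and the second forces $w=\ran(p)$, so such morphisms are exactly the paths from $v$ to $w$, with composition agreeing with path composition. Hence $L(S_{\mathcal X}^\times)\simeq\mathcal X^+$.

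Next I would upgrade the hypothesis into an equivalence of the path categories $\mathcal X^+\simeq\mathcal Y^+$. Iterating the defining isomorphism $\Omega\times_H\mathcal Y\cong\mathcal X\times_G\Omega$ and using associativity of the fibre product gives, by induction on $n$ with base case $\Omega\cong\Omega$, an isomorphism $\Omega\times_H\mathcal Y^n\cong\mathcal X^n\times_G\Omega$ for every $n$; assembling these over $n$ yields $\Omega\times_H\mathcal Y^+\cong\mathcal X^+\times_G\Omega$ compatibly with the length grading. The $n=0$ piece is just the groupoid equivalence $G\simeq H$ underlying $\Omega$, which supplies the bijection $G^0/G\cong H^0/H$ on isomorphism classes of objects, while the pieces for $n\geq 1$ supply the higher morphisms. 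Choosing a unit section of the equivalence bibundle $\Omega$ (one element over each object, available because $\Omega$ is a groupoid equivalence) transports a path $r\in\mathcal Y^n$ to a path in $\mathcal X^n$ via $[\omega,r]\mapsto[p,\omega']$, with the residual ambiguity in $\omega'$ absorbed by the unique element of $G$ provided by freeness of the action. This defines an equivalence $\mathcal Y^+\simeq\mathcal X^+$.

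Combining the two steps gives $L(S_{\mathcal X}^\times)\simeq\mathcal X^+\simeq\mathcal Y^+\simeq L(S_{\mathcal Y}^\times)$, and therefore $S_{\mathcal X}$ and $S_{\mathcal Y}$ are Morita equivalent inverse semigroups. The main obstacle is the coherence in the transport step: one must check that the level-$n$ isomorphisms are mutually compatible with the category composition $\mathcal Y^n\times_H\mathcal Y^m\to\mathcal Y^{n+m}$, so that the section-dependent assignment is genuinely a functor (and an equivalence), rather than merely a family of fibrewise bijections. Everything else is routine bookkeeping with the product formula in $S_{\mathcal X}$.

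Finally, the two ``hence'' assertions follow from the general principle that Morita equivalent inverse semigroups with zero have Morita equivalent universal groupoids, together with the observation that the induced equivalence respects the idempotent semilattices and hence carries tight characters to tight characters, so that it descends to a Morita equivalence of the tight groupoids $\mathscr G_{\mathcal X}$ and $\mathscr G_{\mathcal Y}$; I would cite \cite{strongmorita,FunkLawsonSteinberg} for these facts. This yields the Morita equivalences of both $\universal{\mathcal X}$ with $\universal{\mathcal Y}$ and $\mathscr G_{\mathcal X}$ with $\mathscr G_{\mathcal Y}$.
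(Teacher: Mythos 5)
Your proposal is correct and follows essentially the same route as the paper: identify $L(S_{\mathcal X}^\times)$ with the path category $\mathcal X^+$ via the isomorphisms $p\colon\sour(p)\to pp^*$, transport the equivalence through $\Omega$ (the paper phrases this as $\mathcal X^+\cong\Omega\times_H\mathcal Y^+\times_H\Omega^*$ and projects to the middle coordinate using an $H$-transversal in $\Omega$, which is your ``unit section''), and then invoke \cite{strongmorita} for the passage from Morita equivalence of inverse semigroups to Morita equivalence of universal and tight groupoids. The coherence issue you flag is handled in the paper by giving $\Omega\times_H\mathcal Y^+\times_H\Omega^*$ an explicit category structure with multiplication $[v,p,w^*][w,q,z^*]=[v,pq,z^*]$, which makes compatibility with composition immediate.
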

\begin{proof}
First we claim that $L(S_{\mathcal X}^\times)$ is equivalent to $\mathcal X^+$.  Indeed, the underlying groupoid $D_{\mathcal X}$ embeds as the groupoid of isomorphisms of  $L(S_{\mathcal X}^\times)$, and in $D_{\mathcal X}$ each idempotent $pp^*$ is isomorphic to $\sour(p)\in G^0$.  Thus $L(S_{\mathcal X}^\times)$ is equivalent to the full subcategory on $G^0$.  But $L(S_{\mathcal X}^\times)|_{G_0}= \{(\ran(p),p)\mid p\in \mathcal X^+\}\simeq \mathcal X^+$.  Similarly, $L(S_{\mathcal Y}^\times)\simeq \mathcal Y^+$, and so it suffices to show that $\mathcal X^+$ is equivalent to $\mathcal Y^+$.  

Let $\Omega \colon G \to H$ be a Morita equivalence intertwining $\mathcal X$ and $\mathcal Y$. We obtain a $G$-bispace isomorphism $\mathcal X \cong \Omega \times_H \mathcal Y \times_H \Omega^*$ which, as $\Omega$ is a Morita equivalence, induces an isomorphism $\mathcal X^+ \cong \Omega \times_H \mathcal Y^+ \times_H \Omega^*$ of categories, where $\Omega \times_H \mathcal Y^+ \times_H \Omega^*$ has objects $\Omega/H$ and multiplication $[v, p, w^*] [w,q,z^*] = [v,pq,z^*]$.
If we pick an $H$-transversal $T$ in $\Omega$, then $\Omega \times_H \mathcal Y^+ \times_H \Omega^*\cong T\times_{H^0} \mathcal Y^+\times_{H^0} T^*$ and the projection to the middle coordinate is an equivalence  $\Omega \times_H \mathcal Y^+ \times_H \Omega^* \simeq \mathcal Y^+$. The key point is that $\sour\colon \Omega\to H^0$ is surjective, and so if $w\in H^0$, then $w=\sour(th)$ with $t\in T$ and $h\in H$.  Then $h\colon w\to \sour(t)$ is an isomorphism in $\mathcal Y^+$, showing that the projection is essentially surjective.

It is shown in~\cite{strongmorita} that a Morita equivalence of inverse semigroups with zero induces a Morita equivalence of universal groupoids that restricts to a Morita equivalence of tight groupoids.  The result follows.  
\end{proof}

\subsection{Faithful quotients of self-similar groupoids}

A self-similar group\-oid action $(G,\mathcal X)$ is \emph{faithful} if the left action of $G$ on $\mathcal X^+/G$ is faithful. The \emph{kernel} of the action is $N=\bigsqcup_{v\in G^0} N^v_v$ where $N^v_v=\{g\in G^v_v\mid gpG=pG, \forall p\in \mathcal X^+\}$. Then the action is faithful if and only if $N=G^0$. Note that $N$ is closed under conjugation and $Nx\subseteq xN$ for all $x\in \mathcal X$.  If $E$ is a right $G$-transversal,  then $(G,\mathcal X)$ is faithful precisely when the action of $G$ on $E^+$ is faithful.  Moreover, if $g\in N$, then $g|_x\in N$ for all $x$ with $\sour(g)=\ran(x)$. 

The quotient $G/N$ (which makes sense as $N$ is closed under conjugation and contains $G^0$) identifies $g,h\in G$ if they act the same on the left of $\mathcal X^+/G$ (but identifies no distinct objects). Then $\mathcal X/N$ is a correspondence over $G/N$, where $G/N$ acts on the left of $\mathcal X/N$ by $gNxN=gxN$.   If $E$ is a right $G$-transversal, then $\mathcal X/N \cong E\times_{G^0} G/N$ with left action given by $gN(e, hN) = (g(e), g|_eN\cdot hN)=(g(e),g|_ehN)$.   It is straightforward to verify that the self-similar groupoid $(G/N,\mathcal X/N)$ is faithful since if $E$ is a right $G$-transversal, then one checks inductively that $\ov g(p) = g(p)$  for $p\in E^+$.  

For a self-similar group action $(G,E,\sigma)$ over a finite alphabet $E$, any element which fixes $E^n$ for some integer $n \geq 0$ with $g|_w = 1$ for all $w \in E^n$ must be in the kernel of the action on $E^+ \cong \mathcal X_{(G,E)}^+/G$. Nekrashevych observed~\cite{Nekcstar} that if every element of the kernel satisfies this property for some $n \geq 0$, then the $\cs$-algebras associated to $(G,E,\sigma)$ and $(G/N,E,\sigma)$ are isomorphic. We shall see in fact that the groupoids are isomorphic.

Let us suppose that $\mathcal X$ is a self-similarity of $G$.  Then any element $g\in G$ that fixes $\sour(g)\mathcal X^n$ also fixes $\sour(g) \mathcal X^m$ for all $m\geq n$. If it also fixes all elements $p \in \sour(g)\mathcal X^k$ with $k < n$ and $\sour(p) \in G^0_{\sing}$ then it fixes $\sour(g) \mathcal X^+/G$. 

\begin{Def}
Let $(G,\mathcal X)$ be a self-similar action with kernel $N$. For each $n \geq 0$ consider the subgroupoid
\[ K_n = \left\{g\in G \suchthat gp=p, \forall p\in \sour(g)\mathcal X^n \sqcup \bigsqcup_{0 \leq k < n} \sour(g) \mathcal X^k G^0_\sing \right\}\]
and note that $K_0 \subseteq K_1 \subseteq \dots \subseteq N$. We call
$ K = \bigcup_{n \geq 0} K_n $
the \emph{tight kernel} of the action. We say $(G,\mathcal X)$ is \emph{loosely faithful} if $K = N$.
\end{Def}
It is straightforward to check that $G_\sing \cap K = G^0_\sing$ and that $x K_n \subseteq K_{n-1} x$ for $x \in \mathcal X$.

If $E$ is a right $G$-transversal for $\mathcal X$, we say $g \in G$ \emph{strongly fixes} $p \in \sour(g)E^n$ if $g(p) = p$ and $g|_p = \sour(p)$; this simply means $gp=p$ in $\mathcal X$. Thus $g\in K_n$ if and only if $g$ strongly fixes all $p\in \sour(g)E^n$ and all $p \in \sour(g)E^k$ with $\sour(p) \in G^0_\sing$ for $k < n$. Moreover, one may check that if we put $K_0=G^0$, then
\[ K_{n+1} = G^0_\sing \sqcup \{ g \in G_\reg \mid g(e) = e, g|_e \in K_n, \forall e \in \sour(g)E \}, \]
and so $K$ is the smallest subgroupoid containing $G^0$ such that if $g \in G_\reg$ with $g(e) = e$ and $g|_e \in K$ for each $e \in \sour(g)E$, then $g \in K$.
The quotient $G/K$ is a well-defined groupoid (as $K$ consists of isotropy, is closed under conjugation and contains $G^0$) and $\mathcal X/K$ is a correspondence over $G/K$ with left action $gK(xK) = gxK$.  If $E$ is a right $G$-transversal, then $\mathcal X/K\cong E\times_{G^0} G/K$ with left action $gK(e,hK) = (g(e), (g|_eK)hK)=(g(e),g|_ehK)$.

The proof of the following lemma is routine so we omit it.

\begin{Lemma}\label{l:functoriality.of.sorts}
Let $\p\colon S\to T$ be a surjective homomorphism of inverse semigroups, and suppose that $T$ has a nondegenerate action by partial homeomorphisms on a locally compact Hausdorff space $X$.  Then there is a surjective \'etale homomorphism $\Phi\colon S\ltimes X\to T\ltimes X$ given by $\Phi([s,x]) = [\p(s),x]$, that restricts to a homeomorphism of unit spaces.     
\end{Lemma}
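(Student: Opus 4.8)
The plan is to construct the homomorphism $\Phi$ explicitly on germs and verify the required properties in the natural order: well-definedness, that it is a homomorphism respecting the groupoid structure, surjectivity, the étale condition, and finally that it restricts to a homeomorphism of unit spaces. Recall that the transformation groupoid $S \ltimes X$ has underlying set consisting of germs $[s,x]$ where $s \in S$, $x \in X$ lies in the domain of the partial homeomorphism determined by $s^*s$, and $[s,x] = [t,y]$ precisely when $x=y$ and $s,t$ agree on a neighbourhood of $x$ in the pseudogroup action. The candidate map is $\Phi([s,x]) = [\p(s),x]$.

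\begin{proof}
Since $T$ acts nondegenerately on $X$, every point of $X$ lies in the domain of some idempotent of $T$; and as $\p$ is surjective and restricts to a surjection $E(S) \to E(T)$ of idempotent semilattices, the induced action of $S$ on $X$ (via $\p$, i.e.\ $s \cdot x := \p(s)\cdot x$) is also nondegenerate, so $S \ltimes X$ is defined. To see $\Phi$ is well defined, suppose $[s,x]=[t,x]$ in $S\ltimes X$, so that $s$ and $t$ have the same germ at $x$: there is an idempotent $e \in E(S)$ with $x$ in the domain of $e$ and $se = te$. Applying $\p$ gives $\p(s)\p(e) = \p(t)\p(e)$ with $\p(e) \in E(T)$ and $x$ in the domain of $\p(e)$ (as $x$ is fixed and the action of $e$ factors through $\p(e)$), whence $[\p(s),x]=[\p(t),x]$ in $T\ltimes X$.

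That $\Phi$ is a homomorphism is immediate from the definition of composition in a transformation groupoid together with the fact that $\p$ is a homomorphism: $[\p(s),\p(t)\cdot x][\p(t),x] = [\p(s)\p(t),x] = [\p(st),x]$, matching $\Phi$ applied to $[s,t\cdot x][t,x]=[st,x]$; compatibility of composable pairs is preserved because $s\cdot x = \p(s)\cdot x$ by construction. For surjectivity, given a germ $[t,x] \in T \ltimes X$, surjectivity of $\p$ furnishes some $s \in S$ with $\p(s)=t$; after replacing $s$ by $se$ for an idempotent $e$ whose image has $x$ in its domain, we may assume $x$ lies in the domain of $s$ under the $S$-action, and then $\Phi([s,x]) = [t,x]$.

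The étale claim and the unit-space claim I would handle together. Restricted to unit spaces, $\Phi$ sends $[e,x]$ (with $x$ in the domain of $e\in E(S)$) to $[\p(e),x]$; identifying both unit spaces with $X$ via $[e,x]\mapsto x$, the map $\Phi$ on units is the identity, hence a homeomorphism — this uses nondegeneracy to know every $x\in X$ arises. For étaleness, recall that $S\ltimes X$ is covered by the compact-open bisections $\Theta_s = \{[s,x] : x \in \operatorname{dom}(s)\}$, on each of which $\sour$ and $\ran$ are homeomorphisms onto open subsets of $X$; similarly $T \ltimes X$ is covered by the $\Theta_t$. Since $\Phi(\Theta_s) \subseteq \Theta_{\p(s)}$ and $\Phi$ intertwines the source maps with the identity on $X$, on each $\Theta_s$ the map $\Phi$ is the composite of two local homeomorphisms (the source charts), hence a local homeomorphism; as these bisections cover $S\ltimes X$, the map $\Phi$ is étale. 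The main point requiring care — and the only genuinely non-formal step — is checking that the germ relation is preserved under $\p$ in the well-definedness argument, i.e.\ that distinct germs are not forced together or pulled apart incorrectly; this is exactly where one uses that $\p$ is a homomorphism of inverse semigroups (so that it is compatible with the natural partial order and with idempotents) and that the actions are related by $s \cdot x = \p(s) \cdot x$. The remaining verifications are the routine bookkeeping the lemma's statement alludes to.
\end{proof}
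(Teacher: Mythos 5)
Your proof is correct, and it is precisely the routine verification that the paper omits (the paper says only ``The proof of the following lemma is routine so we omit it''), so there is no competing argument to compare it with. One small point of precision: your opening gloss of the identification $[s,x]=[t,y]$ as ``$s,t$ agree on a neighbourhood of $x$ in the pseudogroup action'' describes the germ relation, which the paper's footnote explicitly distinguishes from the transformation groupoid $S\ltimes X$; the relation you actually invoke in the well-definedness step (namely $se=te$ for some idempotent $e$ whose domain contains $x$) is the correct one for the transformation groupoid, so the argument itself is unaffected.
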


We now show that the groupoid of a self-similar action depends only on the quotient by the tight kernel. 

\begin{Thm}\label{t:nek.cond}
Let $(G,E,\sigma)$ be a self-similar groupoid action.   Let $N$ denote the kernel of the action of $G$ on $E^+$ and let $K$ denote the tight kernel.  Then there is an isomorphism $f\colon \mathscr G_{(G,E)}\to \mathscr G_{(G/K,E)}$.  In particular, if $(G,E,\sigma)$ is loosely faithful, then $\mathscr G_{(G,E)}\cong \mathscr G_{(G/N,E)}$. 
\end{Thm}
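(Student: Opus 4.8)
The plan is to realise $f$ as the map induced on tight groupoids by the quotient homomorphism of inverse semigroups, and then to reduce the theorem to a germ-by-germ injectivity statement that is governed by the defining property of the tight kernel.

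First I would record that the quotient groupoid homomorphism $G\to G/K$ induces a surjective homomorphism $\p\colon S_{(G,E)}\to S_{(G/K,E)}$ of inverse semigroups, given on nonzero elements by $(p,g,q)\mapsto (p,gK,q)$. This is well defined and multiplicative because $K\subseteq N$ acts trivially on $E^+$ and because the cocycle descends to $G/K$ (using $K_nx\subseteq xK_{n-1}$), as already recorded in the construction of $(G/K,\mathcal X/K)$. Crucially, $\p$ is the identity on the idempotent semilattice, since idempotents have the form $(p,\sour(p),p)$ and objects are fixed by $G\to G/K$. Consequently $S_{(G,E)}$ and $S_{(G/K,E)}$ share the same idempotent semilattice, hence the same space of tight filters, namely $\partial E$; moreover $s$ and $\p(s)$ act identically on $\partial E$, again because $K$ acts trivially on paths. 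Feeding $\p$ and the action of $S_{(G/K,E)}$ on $\partial E$ into Lemma~\ref{l:functoriality.of.sorts} produces a surjective \'etale homomorphism $f\colon \mathscr G_{(G,E)}\to \mathscr G_{(G/K,E)}$, $f([s,z])=[\p(s),z]$, restricting to the identity on the unit space $\partial E$. An \'etale bijective homomorphism is an isomorphism of \'etale groupoids, so it remains only to prove that $f$ is injective.

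Since $f$ is the identity on units, injectivity reduces to the following germ statement: if $z\in\partial E$ and $[\p(s),z]=[\p(t),z]$, then $[s,z]=[t,z]$. Unwinding the germ relation, $\p$-equality means that for some finite prefix $p$ of $z$ one has $\p(s\,pp^*)=\p(t\,pp^*)$; writing $s\,pp^*=(a,g_1,b)$ and $t\,pp^*=(a',g_2,b')$, this says $a=a'$ and $b=b'$ (so the range and source paths already agree in $S_{(G,E)}$) together with $g_1^{-1}g_2\in K$, an isotropy element of $G$ at the vertex $v=\sour(b)$. What I must produce is a (possibly longer) prefix of $z$ at which the corresponding group parts agree on the nose, for then $s$ and $t$ have a common germ at $z$. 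The whole difficulty is thus concentrated in upgrading ``$g_1^{-1}g_2\in K$'' to genuine equality by passing to a longer prefix.

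This is where I would split into the two cases that the definition of $K$ is designed to handle. If $v$ is singular, then by the definition of $K_n$ the only element of $K$ lying in the isotropy group $G^v_v$ is the identity, so $g_1^{-1}g_2=\sour(b)$ and $[s,z]=[t,z]$ already at $p$. If $v$ is regular, write $\kappa=g_1^{-1}g_2\in K_n$ for some $n$, so $\kappa$ fixes every path in $vE^n$ with trivial section. Because $v$ is regular it is not a source, so $z$ strictly extends $p$; if $z$ continues for at least $n$ further edges, let $d$ be this length-$n$ continuation, and a short cocycle computation (using $\kappa(d)=d$, $\kappa|_d=\sour(d)$ and $(g_1\kappa)|_d=g_1|_{\kappa(d)}\kappa|_d$) shows that the group parts of $s$ and $t$ at the prefix $pd$ now coincide in $G$, while the range paths continue to agree; hence $[s,z]=[t,z]$. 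If instead $z$ terminates at a singular vertex $u$ within fewer than $n$ steps, I would extend all the way to the prefix $z$ itself: testing the $\p$-equality at the minimal idempotent $zz^*$ of this finite-path filter shows the group parts at $u$ agree modulo $K$, and since $u$ is singular, $K$ meets $G^u_u$ trivially, forcing equality. In every case $s$ and $t$ acquire a common germ at $z$, proving injectivity; the ``in particular'' is then immediate, since $K=N$ gives $G/K=G/N$.

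The main obstacle is precisely the regular-vertex case combined with the possibility that a sufficiently long prolongation of $p$ is unavailable because $z$ is a finite boundary path. One must confirm that the two-tier definition of the tight kernel---nontrivial constraints only at regular vertices, triviality at singular ones---matches exactly the two ways a germ can be pinned down on $\partial E$. Verifying that range-path agreement persists under prolongation, and that the cocycle identities collapse as claimed, are the routine computations I would then carry out in full.
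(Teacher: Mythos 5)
Your proposal is correct and follows essentially the same route as the paper: both construct the surjective \'etale homomorphism $f([s,z])=[\p(s),z]$ via Lemma~\ref{l:functoriality.of.sorts} and then prove injectivity of $f$ germ by germ, splitting into the case of an infinite boundary path (where one prolongs the prefix by $n$ edges and uses that elements of $K_n$ fix $\sour(g)E^n$ with trivial sections) and the case of a finite boundary path ending at a singular vertex (where $K$ meets the isotropy trivially by definition). The only cosmetic difference is that the paper reduces injectivity to showing the preimage of the unit space is the unit space, whereas you compare two germs directly; the computations are the same.
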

\begin{proof}
If $(H,F,\sigma)$ is a self-similar groupoid over a graph $F$, then $\mathscr G_{(H,F)}$ is isomorphic to the transformation groupoid of the action of $S_{(H,F)}$ on the boundary path space $\partial F$.  Here $0$ acts as the empty map, and if $p,q\in F^+$ and $h\in H$, then $phq^*$ has domain the cylinder set $q\partial F$, range the cylinder set $p\partial F$ and action given by $phq^*(qw) = ph(w)$.  The isomorphism sends the germ $[s,w]$ with $p\in \partial F$ to $[s,\chi_w]$ where $\chi_w$ is the filter of all $pp^*$ with $p$ a finite prefix of $w$.  

Put $\ov G=G/K$ and write $\ov g$ for $gK$.  Note that $\ov g|_e = \ov{g|_e}$.  
In our setup, we have a surjective homomorphism $\p\colon S_{(G,E)}\to S_{(\ov G,E)}$ that is bijective on idempotents, and the action of $S_{(G,E)}$ on $\partial E$ factors through that of $S_{(\ov G,E)}$.  We then have an induced surjective \'etale homomorphism $\Phi\colon \mathscr G_{(G,E)}\to \mathscr G_{(\ov G,E)}$ with $\Phi([s,w]) = [\p(s),w]$ by Lemma~\ref{l:functoriality.of.sorts}.  It remains to show that $\Phi$ is injective.  Since $\Phi$ is the identity on the unit space, we must show that if $\Phi([s,w])$ is a unit, then $[s,w]$ is a unit. 
Write $s=ugv^*$.  Then $w$ must begin with $v$, so write $w=vz$.  We have $[u\ov gv^*,w]=[1,w]$.  Thus we can find a find a prefix $z_0$ of $z$ with $u\ov gv^*vz_0(vz_0)^* = vz_0(vz_0)^*$. Moreover, if $|z|<\infty$, we may assume that $z=z_0$.   But  $u\ov gv^*vz_0(vz_0)^* = u\ov g(z_0)\cdot \ov{g|_{z_0}}\cdot (vz_0)^*$.  Therefore, $u\ov g(z_0)=vz_0$ and $\ov {g|_{z_0}} =\sour(z_0)$. Since $\ov g(z_0)=g(z_0)$,
this means that    $ug(z_0) = vz_0$ and  $g|_{z_0}\in K$.  Note that since $|g(z_0)|=|z_0|$, it follows that $u=v$ and $g(z_0)=z_0$.  Thus $s=vgv^*$, $g(z_0)=z_0$ and $g|_{z_0}\in K$.  Suppose first that $z=z_0$ with $\sour(z_0)=\sour(g|_{z_0})$ singular. Then since $K \cap G_\sing = G^0_\sing$, we have $g|_{z_0} = \sour(z_0)$ and so
$svz_0(vz_0)^*  = vz_0g|_{z_0}(vz_0)^* = vz_0(vz_0)^*$, and hence $[s,w]$ is a unit.  On the other hand, if $z$ is infinite, then
by assumption, we can find $n\geq 0$ so that $g|_{z_0}(r)=r$ and  $(g|_{z_0})|_r=1$ for all $r\in \sour(z_0)E^n$. Let $z=z_0z_1z'$ with $|z_1|=n$.  Then $w\in vz_0z_1\partial E$ and $svz_0z_1(vz_0z_1)^* = vgz_0z_1(vz_0z_1)^* = vz_0g|_{z_0}z_1(vz_0z_1)^*= vz_0g|_{z_0}(z_1)(g|_{z_0})|_{z_1}(vz_0z_1)^* = vz_0z_1(vz_0z_1)^*$ by choice of $n$.  It follows that $[s,w]=[vz_0z_1(vz_0z_1)^*,w]$ is a unit, as required. 
\end{proof}

A key notion in the theory of self-similar groups is that of contraction.  The contraction phenomenon was discovered by Grigorchuk and formalized in~\cite{selfsimilar}.   A generalization of the notion for self-similar actions of groupoids on finite graphs without sources was given in~\cite{BBGHSW24}.  The groupoid associated to any self-replicating finitely contracting group is amenable~\cite{Nekcstar}, and we shall see the same is true for all contracting self-similar groupoid actions in Corollary~\ref{c:contracting.amenable}.

\begin{Def}[Contracting action]
A self-similar groupoid action $(G,E,\sigma)$ on a finite graph $E$ without sources is contracting if there is a finite subset $F\subseteq G$ such that, for all $g\in G$, there exists $n\geq 0$ such that $g|_p\in F$ for all $p\in \sour(g)E^+$ with $|p|\geq n$.  An action being contracting depends only on the correspondence and not on the choice of transversal $E$, cf.~\cite[Proposition~4.5.4]{Nekbook2}. 
\end{Def}

There is a unique smallest choice of $F$ (depending on $E$), called the \emph{nucleus} $\mathcal N$~\cite{selfsimilar, BBGHSW24}.  One has that $\mathcal N=\mathcal N\inv$, the cocycle sends $\mathcal N\times_{E^0} E\to \mathcal N$ and every object $e\in E^0$ which  is not a sink belongs to $\mathcal N$, cf.~\cite{selfsimilar} or~\cite[Lemma~3.4]{BBGHSW24}.

If $(G,E,\sigma)$ is contracting with nucleus $\mathcal N$, then the faithful quotient $(\ov G,E,\sigma)$ is contracting with nucleus contained in the image of $\mathcal N$.  

We prove the analogue of~\cite[Theorem~4.3.21]{Nekbook2}  in our context, namely that if $(G,E,\sigma)$ is contracting and every nontrivial element of the nucleus acts nontrivially on $E^+$, then the action is loosely faithful.

\begin{Cor}\label{c:contracting.case}
Let $(G,E,\sigma)$ be a contracting self-similar groupoid action on a finite graph $E$ without sources with nucleus $\mathcal N$, and let $(\ov G,E, \sigma)$ be the faithful quotient. Suppose that within the nucleus $\mathcal N$ only the units act trivially on $E^+$. Then $(G,E,\sigma)$ is loosely faithful and $\mathscr G_{(G,E)}\cong \mathscr G_{(\ov G,E)}$.
\end{Cor}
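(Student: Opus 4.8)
The plan is to deduce this corollary from the general isomorphism theorem, Theorem~\ref{t:nek.cond}, which already establishes that $\mathscr G_{(G,E)}\cong \mathscr G_{(G/K,E)}$ for the tight kernel $K$, and in particular that $\mathscr G_{(G,E)}\cong \mathscr G_{(G/N,E)}$ whenever $K=N$. Since $(\ov G, E,\sigma)$ is by definition the faithful quotient $(G/N, E, \sigma)$, the entire problem reduces to proving the single equality $K=N$ of the tight kernel and the kernel of the action. The inclusion $K\subseteq N$ is immediate from the discussion preceding Theorem~\ref{t:nek.cond} (it is noted there that $K_i\subseteq N$ for all $i$, hence $K=\bigcup_i K_i\subseteq N$), so the real content is the reverse inclusion $N\subseteq K$.

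To prove $N\subseteq K$, I would take $g\in N$ and show $g\in K$, i.e.\ that $g$ fixes $\sour(g)E^n$ pointwise with trivial restrictions for some $n$. First I would reduce to elements of the nucleus using the contraction hypothesis: since the action is contracting, there is an $n\geq 0$ with $g|_p\in\mathcal N$ for every $p\in\sour(g)E^+$ of length at least $n$. The key observation is that $N$ is stable under taking sections (as remarked just before Definition~\ref{def:v.graph}'s surrounding discussion, ``if $g\in N$, then $g|_x\in N$ for all $x$''), so each such section $g|_p$ lies in $\mathcal N\cap N$. By the standing hypothesis that within $\mathcal N$ only units act trivially on $E^+$, and since $\mathcal N\cap N$ consists precisely of nucleus elements that act trivially on $E^+$, every $g|_p$ with $|p|\geq n$ must be a unit, i.e.\ $g|_p=\sour(p)$. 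This says exactly that $g$ fixes each length-$n$ word $p\in\sour(g)E^n$ and has trivial restriction there, which is the defining condition for $g\in K_n\subseteq K$.

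The one point requiring care, and which I expect to be the main (though modest) obstacle, is verifying that $g\in N$ actually fixes the words $p\in\sour(g)E^n$ themselves and not merely their $G$-orbit classes, so that the condition ``$g(p)=p$ and $g|_p=\sour(p)$'' defining $K_n$ in the graph picture is met on the nose. Here one uses that $g\in N$ means $gpG=pG$ for all $p\in\mathcal X^+$, so $g(p)=p$ in $E^+$ after choosing the transversal $E$ (membership in the kernel forces the orbit representative to be fixed), combined with the triviality of the sections just established. A clean way to organize this is to argue inductively on the length: for $|p|=n$ one has $g(p)=g(e)\,(g|_e)(p')$ for $p=ep'$, and freeness of the transversal together with $g|_p$ being a unit propagates the fixing down the path. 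I would also note explicitly that the contraction constant $n$ can be chosen uniformly enough for the $K_n$ membership, or simply observe that $g\in K_m$ for the relevant $m$ suffices since $K=\bigcup K_m$. With $N\subseteq K$ established and $K\subseteq N$ automatic, we conclude $K=N$, and Theorem~\ref{t:nek.cond} then delivers $\mathscr G_{(G,E)}\cong\mathscr G_{(\ov G,E)}$ as claimed.
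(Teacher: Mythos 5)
Your proposal is correct and follows essentially the same route as the paper: reduce via Theorem~\ref{t:nek.cond} to showing $N\subseteq K$, then use contraction to force every sufficiently deep section $g|_w$ of a kernel element $g$ into $\mathcal N\cap N$, whence the hypothesis on the nucleus makes these sections units and $g\in K_n$. The only difference is that you spend extra care on the point that $g\in N$ fixes paths in $E^+$ on the nose; this is immediate from the definition of the kernel in the graph picture, which is why the paper dismisses it with ``clearly.''
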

\begin{proof}
It suffices by Theorem~\ref{t:nek.cond} to show that $(G,E,\sigma)$ is loosely faithful.   This is the case because for $g$ in the kernel of the action there is an integer $n \geq 0$ such that $g|_w\in \mathcal N$ belongs to the nucleus for all $w\in \sour(g)E^n$.  Clearly, $g$ fixes $\sour(g)E^n$ and each of these sections $g |_w$ acts trivially on $E^+$, and so by assumption belongs to $G^0$.  Thus $g\in K_n$.
\end{proof}

We shall exploit Corollary~\ref{c:contracting.case} because many complicated self-similar groups, like the Grigorchuk group, are faithful quotients of actions of much nicer groups, like free products of finite groups.

Let us consider the isotropy of $\mathscr G_{(G,E)}$ for a self-similar action $(G,E,\sigma)$.

\begin{Lemma}\label{l:torsion.free}
Let $(G,E,\sigma)$ be a self-similar groupoid action and suppose that for each $z \in \partial E$ the stabilizer $G_z$ is torsion-free. Then every isotropy group of $\mathscr G_{(G,E)}$ is torsion-free.
\end{Lemma}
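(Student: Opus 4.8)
The plan is to realise $\mathscr G_{(G,E)}$ as the groupoid of germs of the action of $S_{(G,E)}$ on $\partial E$, exactly as in the proof of Theorem~\ref{t:nek.cond}, and to show that the isotropy group at an arbitrary $w\in\partial E$ is torsion-free. Throughout I write $w = w_n w^{(n)}$, where $w_n$ is the length-$n$ prefix and $w^{(n)}$ the corresponding tail; note that every tail $w^{(n)}$ is again a boundary path, so the hypothesis applies to each stabilizer $G_{w^{(n)}}$. I will repeatedly use that the idempotents of $S_{(G,E)}$ are exactly the elements $pp^*$ with $p\in E^+$, so that a germ $[s,w]$ is a unit precisely when $s\cdot pp^* = pp^*$ for some prefix $p$ of $w$.

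First I would introduce the degree homomorphism $\deg\colon S_{(G,E)}^\times\to\Z$ given by $\deg(pgq^*)=|p|-|q|$; it is multiplicative and constant on germs, so it induces a continuous groupoid homomorphism $c\colon\mathscr G_{(G,E)}\to\Z$ with $c[pgq^*,z]=|p|-|q|$. Since $\Z$ is torsion-free, any torsion element of the isotropy group lies in $\ker c$. It therefore suffices to prove that the length-preserving isotropy at $w$ is torsion-free.

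Next I would identify the length-preserving isotropy. If $[pgq^*,w]$ is isotropy with $|p|=|q|$, then writing $w=qz$ and using $pgq^*(w)=pg(z)=w$, comparison of the length-$|p|$ prefixes forces $p=q$; hence $p=q=w_n$ with $n=|q|$, and $g$ fixes the tail $z=w^{(n)}$, i.e. $g\in G_{w^{(n)}}$. Conversely every such germ is length-preserving isotropy. A direct computation, using the relation $gu' = g(u')\,g|_{u'}$ in $S_{(G,E)}$ together with $g(u')=u'$ for $u'=w_{[n,n+m)}$, gives $w_n g w_n^*\cdot w_{n+m}w_{n+m}^* = w_{n+m}\,g|_{u'}\,w_{n+m}^*$, which equals $w_{n+m}w_{n+m}^*$ if and only if $g|_{u'}$ is a unit. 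Thus $[w_n g w_n^*,w]$ is a unit exactly when $g|_{w_{[n,n+m)}}$ is a unit for some $m\geq 0$.

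Finally, suppose $x=[w_n g w_n^*,w]$ with $g\in G_{w^{(n)}}$ satisfies $x^k=\text{unit}$ for some $k\geq 1$; then $x^k=[w_n g^k w_n^*,w]$ is trivial. Because $g$ fixes the tail we have $g^j(u')=u'$, so the cocycle identity yields $(g^k)|_{u'}=(g|_{u'})^k$, and the triviality of $x^k$ gives $(g|_{u'})^k=\text{unit}$ for some $m$. Since $g$ fixes $u'w^{(n+m)}=w^{(n)}$, its section $g|_{u'}$ fixes $w^{(n+m)}$, i.e. $g|_{u'}\in G_{w^{(n+m)}}$, which is torsion-free by hypothesis; hence $g|_{u'}$ is a unit and $x$ is trivial. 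This shows the length-preserving isotropy, and therefore all isotropy, is torsion-free. The step demanding the most care is pinning down the germ-triviality criterion: because $\mathscr G_{(G,E)}$ is built from the inverse-semigroup germ relation $se=te$ rather than from topological germs, triviality of $[w_n g w_n^*,w]$ means that a section $g|_{u'}$ equals a \emph{group unit} (not merely that it acts trivially on $\partial E$), and it is precisely this sharper criterion that lets the torsion-freeness of the stabilizers $G_z$ be applied.
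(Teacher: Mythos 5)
Your argument is correct, but it takes a genuinely different route from the paper. The paper's proof is a three-line reduction: it invokes an external result (Lemma~3.4 of the first author's paper on isomorphisms in K-theory) asserting that any finite-order isotropy element of the tight groupoid is the germ $[s,z]$ of a finite-order element $s$ of a maximal subgroup of $S_{(G,E)}$; since the maximal subgroups are $\{pgp^*\mid g\in G^{\sour(p)}_{\sour(p)}\}$, the element $g$ itself has finite order and stabilizes $p^*z$, hence is trivial by hypothesis. You instead replace that citation with two self-contained ingredients: the degree cocycle $c([pgq^*,z])=|p|-|q|$ (already used in the paper's amenability argument) to reduce to degree-zero isotropy, and an explicit computation showing that $[w_ngw_n^*,w]$ is a unit iff some section $g|_{w_{[n,n+m)}}$ is a unit of $G$. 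The extra work in your last paragraph --- passing from $(g^k)|_{u'}=(g|_{u'})^k$ being a unit to $g|_{u'}$ having finite order in $G_{w^{(n+m)}}$ --- is exactly what the cited lemma short-circuits: finite order of the germ does not immediately give finite order of the representing semigroup element, and your section argument bridges that gap by hand. What your approach buys is independence from the external reference and an explicit description of the degree-zero isotropy; what the paper's buys is brevity. Your closing remark about the unit criterion being ``equals a group unit'' rather than ``acts trivially'' is also the right point to emphasize, since it is what makes the hypothesis on the groups $G_z$ (rather than on the faithful quotient) the correct one.
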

\begin{proof}
Suppose that $\gamma \in \mathscr G_{(G,E)}$ is an isotropy element of finite order at $z \in \partial E$. Then by \cite[Lemma 3.4]{miller2024isomorphisms}  there is a maximal subgroup of $S_{(G,E)}$ with an element $s$ of finite order such that $\gamma = [s,z]$. The unit of the maximal subgroup is $pp^*$ for some $p \in E^+$ and $s = pgp^*$ for some $g \in G^{\sour(p)}_{\sour(p)}$. It follows that $g$ has finite order and stabilizes $p^* z \in \partial E$ and is therefore trivial by assumption.
\end{proof}

\subsection{Amenability}

Let us address the amenability of $\mathscr G_{\mathcal X}$. It is argued in \cite[Corollary 10.18]{ExelPardoSelf} that, for amenable $G$, since $\mathcal O_{\mathcal X} = C^*(\mathscr G_{\mathcal X})$ is nuclear, the groupoid $\mathscr G_{\mathcal X}$ must be amenable. The cited result \cite[Theorem 5.6.18]{BrownOzawa} is only present in the literature for Hausdorff groupoids, although experts seem to be aware that it should hold in the locally Hausdorff setting.\footnote{Since this article was first put online, this result has indeed been extended to the locally Hausdorff setting~\cite{BM25, BGHL25}.} We have decided to present here a direct proof of amenability.  In fact, we derive a more general result that also encompasses Nekrashevych's result on amenability of the groupoid of a self-replicating contracting group~\cite[Theorem~5.6]{Nekcstar}.\footnote{This theorem is only stated in~\cite{Nekcstar} for Hausdorff groupoids, but Nekrashevych informed the second author (private communication) that it holds in general, and indeed the result Nekrashevych uses was generalized to non-Hausdorff groupoids in~\cite{RenaultScand}.}

Let $(G,E,\sigma)$ be a self-similar groupoid action. 
Let us 
assume that $E$ and $G$ are countable.  In what follows we identify the space of tight filters on $S_{(G,E)}$  with $\partial E$ in the usual way.
Let $\mathscr H_0=\{[g,z]\in \mathscr G_{(G,E)}\mid g\in G\}$.  Then $\mathscr H_0$ is an open subgroupoid and it is the quotient of $G\ltimes \partial E$ by the equivalence relation identifying $(g,z)$ with $(h,z')$ if and only if $z=z'$ and there is a prefix $z_0$ of $z$ with $g(z_0)=h(z_0)$ and $g|_{z_0}=h|_{z_0}$.   The quotient map $G\ltimes \partial E\to \mathscr H_0$ is an \'etale homomorphism that restricts to a homeomorphism of unit spaces. It is an isomorphism if and only if $(G,E,\sigma)$ is pseudo\-free.   In the case that $G$ acts faithfully on $E^+$, or more generally $(G,E,\sigma)$ is loosely faithful, the groupoid $\mathscr H_0$ is isomorphic to the groupoid of germs of the action of $G$ on $\partial E$, that is, to  the quotient of $G\ltimes \partial E$  obtained by identifying $(g,z)$ and $(h,z')$ if $z=z'$ and $g,h$ agree on a neighborhood of $z$.

We first state some permanence properties of amenability that we use. These are mostly covered in \cite{AmenableGroupoids,Williams19} for Hausdorff groupoids, but we cannot find them all written explicitly in the literature in the locally Hausdorff \'etale setting. The main idea, from \cite{RenaultScand}, is that amenability depends only on the underlying Borel structure, and therefore we can deduce the results from their Borel counterparts. 

\begin{Prop}\label{p:AmenabilityPermanence}
For second countable locally Hausdorff \'etale groupoids $\mathscr G$ and $\mathscr H$, amenability satisfies the following permanence properties:
\begin{enumerate}
\item If $\mathscr H \hookrightarrow \mathscr G$ is a topological embedding and $\mathscr G$ is amenable, then $\mathscr H$ is amenable.
\item If $\varphi \colon \mathscr G \to \mathscr H$ is a surjective \'etale homomorphism which is a homeomorphism on unit spaces and $\mathscr G$ is amenable, then $\mathscr H$ is amenable.
\item If $\mathscr G$ is amenable and $\mathscr H$ is Morita equivalent to $\mathscr G$, then $\mathscr H$ is amenable.
\item If $\mathscr G = \bigcup_{k \geq 0} \mathscr G_k$ is the increasing union of open amenable subgroupoids $\mathscr G_k$ with $\mathscr G^0 \subseteq \mathscr G_k$, then $\mathscr G$ is amenable.
\item If $K$ is a countable discrete amenable groupoid and $c \colon \mathscr G \to K$ is a continuous groupoid homomorphism with $\mathscr H = c\inv(K^0)$ amenable, then $\mathscr G$ is amenable.
\item If $U\subseteq \mathscr G^0$ is open and invariant and $\mathscr G|_U$, $\mathscr G|_{\mathscr G^0\setminus U}$ are amenable, then $\mathscr G$ is amenable.
\end{enumerate}
\end{Prop}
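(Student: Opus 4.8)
The plan is to reduce each of the six assertions to its measured (Borel) analogue, exploiting the principle of Renault \cite{RenaultScand} that for a second countable locally Hausdorff \'etale groupoid topological amenability coincides with Borel amenability; concretely, $\mathscr G$ is amenable if and only if its underlying standard Borel groupoid, equipped with the counting measures along the range fibres, is amenable when tested against every quasi-invariant measure on $\mathscr G^0$. The virtue of this translation is that the Borel structure does not see the failure of Hausdorffness, so the classical permanence properties of measured amenability from \cite{AmenableGroupoids} (and \cite{Williams19}) apply once one checks that the construction in each item is carried faithfully to the level of the associated Borel groupoids. I would therefore open by fixing this dictionary and noting that closed subgroupoids, images under \'etale homomorphisms, Morita equivalences, increasing unions, $K$-gradings, and invariant partitions of the unit space all correspond to their evident Borel counterparts.

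Granting the dictionary, four of the items are immediate citations. For (1), a closed embedding exhibits $\mathscr H$ as a Borel subgroupoid of $\mathscr G$, and measured amenability passes to Borel subgroupoids. For (3), a Morita equivalence of \'etale groupoids induces an equivalence of the underlying measured groupoids, and measured amenability is invariant under equivalence. For (6), invariance of $U$ decomposes every quasi-invariant measure as the sum of its restrictions to $U$ and to $C=\mathscr G^0\setminus U$, each quasi-invariant for the corresponding reduction, so approximate invariant means over $\mathscr G|_U$ and $\mathscr G|_C$ assemble measurably into one over $\mathscr G$. For (5), the homomorphism $c$ presents $\mathscr G$ as a groupoid graded by the discrete amenable groupoid $K$ with ``kernel'' $\mathscr H=c\inv(K^0)$, and an extension of an amenable measured groupoid by an amenable one is amenable.

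Two items I would handle directly. For (4), I would stay topological: since the $\mathscr G_k$ are clopen, increasing, and exhaust $\mathscr G$, any compact subset lies in a single $\mathscr G_k$, so the approximate invariant means for the $\mathscr G_k$, extended by zero, feed directly into the compact-open characterisation of amenability for $\mathscr G$. For (2), I would push an approximate invariant mean $(\xi_i)\subseteq C_c(\mathscr G)$ forward along $\varphi$ by $\eta_i(h)=\sum_{\varphi(\gamma)=h}\xi_i(\gamma)$; since $\varphi$ is \'etale its fibres are discrete, hence finite on the compact support of $\xi_i$, so $\eta_i\in C_c(\mathscr H)$, and because $\varphi$ is surjective and a homeomorphism on units the range-sum and asymptotic-invariance estimates transfer. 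Equivalently, since $\varphi$ is a homeomorphism on units its kernel lies in the isotropy and is normal, so $\mathscr H$ is a quotient of $\mathscr G$, and quotients of amenable measured groupoids are amenable.

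The main obstacle is not any single estimate but making the Borel reduction rigorous in the locally Hausdorff category: one must confirm that the standard Borel groupoid underlying a second countable locally Hausdorff \'etale groupoid is indeed standard Borel with the counting measures forming a Borel Haar system, and that each of the topological constructions above is transported faithfully to the measured setting, so that the permanence results of \cite{AmenableGroupoids} genuinely apply. This verification, flagged in the discussion preceding the statement, is where I would concentrate the effort.
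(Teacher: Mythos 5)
Your overall strategy---passing to the underlying Borel groupoid via Renault's equivalence of topological, Borel and measurewise amenability and then invoking the permanence properties of \cite{AmenableGroupoids}---is exactly the paper's, and items (1), (3) and (6) go through essentially as you describe (for (6) the paper splices topological invariant densities for $\mathscr G|_U$ and $\mathscr G|_{\mathscr G^0\setminus U}$ into a single Borel approximate invariant density rather than decomposing quasi-invariant measures, but both routes are sound).

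The genuine gap is in item (5). You appeal to ``an extension of an amenable measured groupoid by an amenable one is amenable,'' but $K$ is a countable discrete amenable \emph{groupoid}, not a group, and the cocycle permanence result available in the literature (\cite[Corollary 4.5]{RenaultWilliamsAmenability}) is stated for continuous cocycles with values in amenable \emph{groups}. To reduce to that situation the paper must first fix a transversal $T \subseteq K^0$ together with arrows $k_x \colon t(x) \to x$, replace $c$ by the conjugated cocycle $\ov c(g) = k_{\ran(c(g))}^{-1} c(g) k_{\sour(c(g))}$ with values in the amenable group $\bigoplus_{x \in T} K^x_x$, and then---this is the nontrivial step your sketch omits---prove that $\mathscr H' = \ov c^{-1}(1)$ is Morita equivalent to $\mathscr H = c^{-1}(K^0)$ via the clopen bispace $X = \bigcup_{x \in K^0} c^{-1}(k_x)$, so that the hypothesis that $\mathscr H$ is amenable actually transfers to the subgroupoid $\mathscr H'$ to which the group-valued cocycle theorem applies. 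Without this reduction the stated hypothesis does not plug into any citable result. A secondary caution: in (2) and (4) your direct manipulations of compactly supported functions are delicate for non-Hausdorff groupoids, where elements of $C_c(\mathscr G)$ need not be continuous and supports need not be compact in the naive sense; the paper stays at the Borel level, citing the strong-surjectivity criterion \cite[Corollary 5.3.32]{AmenableGroupoids} for (2)---where one must additionally verify that the left orbit space of the graph of $\varphi$ is homeomorphic to $\mathscr H^0$---and \cite[Corollary 5.3.37]{AmenableGroupoids} for (4).
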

\begin{proof}
Corollaries~2.15 and~2.16 in~\cite{RenaultScand} say that a second countable locally Hausdorff \'etale groupoid $\mathscr G$ is amenable if and only if it is Borel amenable, if and only if $(\mathscr G,\mu)$ is an amenable measured groupoid for any quasi-invariant measure $\mu$ on $\mathscr G^0$ (i.e., $\mathscr G$ is measurewise amenable). We can therefore apply the Borel and measured groupoid versions of the above permanence properties.

We claim that any topological embedding $\mathscr H \hookrightarrow \mathscr G$ induces a proper embedding of the underlying Borel groupoids, so  item (1) follows from~\cite[Corollary 5.3.22]{AmenableGroupoids}. To see this, note that $\mathscr G^{\mathscr H^0}$ is basic as a left $\mathscr H$-space in the sense of~\cite[Definition 2.7]{AKM22} as the map $\mathscr H\times_{\mathscr H^0}\mathscr G^{\mathscr H^0}\to \mathscr G^{\mathscr H^0}\times \mathscr G^{\mathscr H^0}$ given by $(h,g)\mapsto (hg,g)$ is a homeomorphism onto its image (with inverse map $(g',g)\mapsto (g'g\inv,g)$).
Then~\cite[Lemma 2.10]{AKM22} says that the orbit space projection $p \colon \mathscr G^{\mathscr H^0} \to \mathscr H \backslash \mathscr G^{\mathscr H^0}$ is a local homeomorphism. Let $(U_i)_{i \in \mathbb N}$ be a countable open cover of $\mathscr G^{\mathscr H^0}$ on which $p$ is injective (using second-countability). Then $s \colon \mathscr H \backslash \mathscr G^{\mathscr H^0} \to \mathscr G^{\mathscr H^0}$ defined on $U_i \setminus \bigcup_{j < i} U_j$ by $p|_{U_i}\inv$ is a Borel section to $p$. Thus~\cite[Lemma 9.65]{Williams19} implies that $\mathscr G^{\mathscr H^0}$ is proper as a Borel $\mathscr H$-space.

For a surjective \'etale homomorphism $\varphi \colon \mathscr G \to \mathscr H$ which is a homeomorphism on unit spaces, the underlying Borel homomorphism is strongly surjective. Moreover, the left orbit space $\mathscr G \backslash (\mathscr G^0 \times_{\mathscr H^0} \mathscr H)$ of the graph of $\varphi$ is homeomorphic to 
$\mathscr H^0$ via the map induced by the source map. 
Item (2) therefore follows from \cite[Corollary 5.3.32]{AmenableGroupoids}. 

A Morita equivalence of $\mathscr G$ and $\mathscr H$ becomes a Borel equivalence of the underlying Borel groupoids, so \cite[Theorem 3.2.16]{AmenableGroupoids} proves item (3). Item (4) is implied by \cite[Corollary 5.3.37]{AmenableGroupoids} since we have already shown that any topological embedding of \'etale groupoids is a proper embedding of underlying Borel groupoids.

For item (5), fix a $K$-transversal  $T\subseteq K^0$ and fix $k_x\colon t(x)\to x$ in $K$ for each $x\in K^0$, with $t(x)\in T$.  Define $\ov c\colon \mathscr G\to \bigoplus_{x\in T}K^x_x$ by $\ov c(g) = k_{\ran(c(g))}\inv c(g)k_{\sour(c(g))}\in K^{t(\sour(c(g)))}_{t(\sour(c(g)))}$.  This is a continuous cocycle. The group $K^x_x$ is a closed subgroupoid of $K$ and hence amenable by (1). 
As the class of amenable groups is closed under finite direct products and direct limits, it follows that $\bigoplus_{x\in T}K^x_x$ is an amenable group.    Setting $\mathscr H'=\ov c\inv(1)$, it suffices to show that $\mathscr H'$ is amenable by \cite[Corollary 4.5]{RenaultWilliamsAmenability}.  Now $\mathscr H$, $\mathscr H'$ are clopen subgroupoids of $\mathscr G$, hence \'etale. Then $X=\bigcup_{x\in K^0}c\inv(k_x)$ is a clopen subset of $\mathscr G^0$, and one checks that $X$ is an $\mathscr H'$-$\mathscr H$-bispace providing a Morita equivalence between these groupoids.  
As we are assuming that $\mathscr H$ is amenable, we deduce that $\mathscr H'$ is amenable by (3), as was required.

As observed in the proof of~\cite[Proposition~9.83]{Williams19},  if $\{f_n^U\}$ and $\{f_n^{\mathscr G^0\setminus U}\}$ are topological invariant densities for $\mathscr G|_U$ and $\mathscr G|_{\mathscr G^0\setminus U}$, respectively, then 
\[f_n(g) = \begin{cases} f_n^U(g), & \text{if}\ g\in \mathscr G|_U,\\ f_n^{\mathscr G^0\setminus U}(g), & \text{if}\ g\in \mathscr G|_{\mathscr G^0\setminus U},\end{cases}\] is a Borel approximate invariant density for $\mathscr G$, yielding item (6).
\end{proof}

\begin{Thm}\label{t:amenability}
Let $(G,E,\sigma)$ be a self-similar groupoid action with $G$ and $E$ a countable.  Then $\mathscr G_{(G,E)}$ is amenable if and only if the open subgroupoid \[\mathscr H_0=\{[g,z]\in \mathscr G_{(G,E)} \mid g\in G, z \in \partial E \}\] is amenable.  In particular, if $G\ltimes \partial E$ is amenable (e.g., if $G$ is amenable), then $\mathscr G_{(G,E)}$ is amenable.
\end{Thm}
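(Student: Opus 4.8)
The plan is to slice $\mathscr G_{(G,E)}$ along the degree cocycle to $\mathbb Z$ and then recognise its kernel as an increasing union of clopen subgroupoids, each Morita equivalent to $\mathscr H_0$, so that the permanence properties of Proposition~\ref{p:AmenabilityPermanence} do all the work. The forward implication is immediate: since $\mathscr H_0$ is clopen, its inclusion into $\mathscr G_{(G,E)}$ is a closed embedding, and amenability descends to it by Proposition~\ref{p:AmenabilityPermanence}(1). For the converse I would first introduce the degree cocycle $c\colon \mathscr G_{(G,E)}\to\mathbb Z$, well defined on germs by $c([pgq^*,z])=|p|-|q|$; the product rule in $S_{(G,E)}$ shows that $c$ is a continuous groupoid homomorphism. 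As $\mathbb Z$ is an amenable discrete group, Proposition~\ref{p:AmenabilityPermanence}(5) reduces the task to proving amenability of the core $\mathscr C=c^{-1}(0)$, consisting of the germs with $|p|=|q|$.

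Next I would exhaust $\mathscr C$. Writing $\mathcal P_n=\{[p,w]\mid |p|\le n\}$ for the set of path-inclusion germs of length at most $n$, one sees that $\mathcal P_n$ is clopen, because along a convergent sequence of germs the length-$\le n$ prefixes must eventually stabilise. Setting $\mathscr H_n=(\mathcal P_n\,\mathscr H_0\,\mathcal P_n^{-1})\cap\mathscr C$, one checks that each $\mathscr H_n$ is a clopen subgroupoid containing the unit space $\partial E$, that $\mathscr H_n\subseteq\mathscr H_{n+1}$, and that $\mathscr C=\bigcup_n\mathscr H_n$, since a core germ $[pgq^*,z]$ with $|p|=|q|=m$ lies in $\mathscr H_m$. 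Proposition~\ref{p:AmenabilityPermanence}(4) then reduces the problem to the amenability of each individual $\mathscr H_n$.

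The crux, and the step I expect to be the main obstacle, is to show that $\mathscr H_n$ is Morita equivalent to $\mathscr H_0$. The self-similar structure supplies an isomorphism of $\mathscr H_n$ with the pullback of $\mathscr H_0$ along the length-$n$ shift $z=p\zeta\mapsto\zeta$, realised on germs by $[pgq^*,q\zeta]\mapsto [g,\zeta]$; since this shift is a surjective local homeomorphism onto a clopen invariant subset $V_n\subseteq\partial E$, the pullback is Morita equivalent to the reduction $\mathscr H_0|_{V_n}$, which is itself a clopen (hence closed) subgroupoid of $\mathscr H_0$ and so amenable by Proposition~\ref{p:AmenabilityPermanence}(1). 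Amenability of $\mathscr H_n$ then follows from Proposition~\ref{p:AmenabilityPermanence}(3). The technical points to be handled carefully here are the bookkeeping for singular vertices and finite boundary paths, which sit only in the unit space at the relevant levels, the verification that $V_n$ is invariant, and the identification of the pullback as a genuine Morita equivalence. Assembling the reductions gives amenability of $\mathscr C$, and hence of $\mathscr G_{(G,E)}$.

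Finally, for the last assertion I would invoke the surjective \'etale homomorphism $G\ltimes\partial E\to\mathscr H_0$ recorded just before the theorem, which is a homeomorphism on unit spaces; by Proposition~\ref{p:AmenabilityPermanence}(2), amenability of $G\ltimes\partial E$ forces amenability of $\mathscr H_0$, whence of $\mathscr G_{(G,E)}$ by the equivalence just proved. The parenthetical case follows because the transformation groupoid of an action of an amenable discrete group is amenable, for instance by applying Proposition~\ref{p:AmenabilityPermanence}(5) to the canonical cocycle $G\ltimes\partial E\to G$, whose kernel is the unit space $\partial E$.
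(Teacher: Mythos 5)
Your overall strategy is the same as the paper's: amenability descends to the clopen subgroupoid $\mathscr H_0$ by permanence under closed embeddings; conversely one slices along the cocycle $c([pgq^*,z])=|p|-|q|$, exhausts $c\inv(0)$ by the clopen subgroupoids of germs with $|p|=|q|\leq n$, and relates the level-$n$ piece to a reduction of $\mathscr H_0$ by shifting off prefixes of length $n$ (the paper packages this as a Morita equivalence with bispace $\{[pg,z]\mid |p|=n\}$ rather than as a pullback, but that is only a difference of presentation). In the row-finite, source-free case your argument is complete and coincides with the paper's.

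There is, however, a genuine gap in the general case, precisely at the point you flag as "bookkeeping for singular vertices and finite boundary paths." Your claim that these "sit only in the unit space at the relevant levels" is false. The boundary path space $\partial E$ contains finite paths $q$ with $\sour(q)$ singular, and over such a point the groupoid $\mathscr H_n$ (for $n>|q|$) contains all germs $[pgq^*,q]$ with $|p|=|q|$ and $g$ ranging over isotropy of $G$ at the singular vertex $\sour(q)$; these are not units. The length-$n$ shift is undefined on such points, so your pullback only identifies $\mathscr H_n|_{U_n}$ with a reduction of $\mathscr H_0$, where $U_n=E^n\cdot\partial E$, and says nothing about $\mathscr H_n|_{\partial E\setminus U_n}$. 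This remaining piece genuinely carries group-theoretic content: the paper must separately prove that the discrete groupoid $G\setminus G_\reg$ is amenable (by identifying its isotropy groups $G^x_x$ at singular $x$ with the closed subgroups $(\mathscr H_0)^x_x$ of $\mathscr H_0$), then map $\mathscr H_n|_{\partial E\setminus U_n}$ to it by a continuous functor whose kernel is a closed subgroupoid of the amenable graph groupoid $\mathscr G_E$, and finally glue the two pieces with the open/closed decomposition permanence property (Proposition~\ref{p:AmenabilityPermanence}(6)), which your outline never invokes. Without an argument of this kind your reduction does not establish amenability of $\mathscr H_n$, and hence of $\mathscr G_{(G,E)}$, beyond the row-finite source-free setting. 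The final assertion of the theorem is handled correctly: Proposition~\ref{p:AmenabilityPermanence}(2) applied to $G\ltimes\partial E\to\mathscr H_0$, and your use of the cocycle $G\ltimes\partial E\to G$ with Proposition~\ref{p:AmenabilityPermanence}(5) is a legitimate substitute for the citation the paper uses for the parenthetical case.
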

\begin{proof}
Suppose first that $\mathscr G_{(G,E)}$ is amenable.  Then by permanence of amen\-a\-bil\-i\-ty under open subgroupoids, $\mathscr H_0$  is amenable.

For the converse, assume that $\mathscr H_0$ is amenable.
Let $c \colon \mathscr G_{(G,E)} \to \Z$ be the cocycle defined by $c([pgq^*,x]) = |p| - |q|$ and set $\mathscr H = c \inv(0)$. Since $\Z$ is an amenable group, by Proposition~\ref{p:AmenabilityPermanence} (5), it suffices to show that $\mathscr H$ is amenable.  

For $k\geq 0$, let $\mathscr H_k=\{[pgq^*,qz]\mid |p|=|q|=k\}$.  
These are open subgroupoids of $\mathscr H$, and one has that the $\mathscr H'_k = \bigcup_{i=0}^k \mathscr H_k$, with $k\geq 0$, are open subgroupoids with $\mathscr H^0\subseteq \mathscr H'_k\subseteq \mathscr H'_{k+1}$ and $\mathscr H=\bigcup_{l=0}^\infty \mathscr H_l'$.  In the case that $E$ is row finite, without sources, $\partial E$ consists of all infinite paths in $E$, and so $\mathscr H_k\subseteq \mathscr H_{k+1}$, whence $\mathscr H'_k=\mathscr H_k$.

It suffices to prove that each $\mathscr H_k'$ is amenable by Proposition~\ref{p:AmenabilityPermanence}~(4).
By hypothesis $\mathscr H_0$  is amenable.  We claim that $\mathscr H_k$  is Morita equivalent to an open subgroupoid of $\mathscr H_0$ for all $k\geq 0$. It will then follow that $\mathscr H_k$ is amenable by  Proposition~\ref{p:AmenabilityPermanence}~(1) and~(3). 
Consider the subset $C_k=\{z\in \partial E\mid  \exists g\in G,  E^kg(z)\neq \emptyset\}$ of $\partial E$.  
Note that $C_k$ is clopen and invariant in $\mathscr H_0$.  Indeed, if $z\notin C_k$, then $\sour(z)\partial E$ is a neighborhood of $z$ disjoint from $C_k$, whereas if $z\in C_k$, then $\sour(z)\partial E$ is a neighborhood of $z$ contained in $C_k$.
 An equivalence of $\mathscr H_k$ and $\mathscr H_0|_{C_k}$ is given by the $\mathscr H_k$-$\mathscr H_0|_{C_k}$-bispace $\{ [pg,z] \in \mathscr G_{(G,E)} \mid |p| = k \}$, an open subspace of $\mathscr G_{(G,E)}$.
It now follows that $\mathscr H$ is amenable if $E$ is row finite without sources, as $\mathscr H_k'=\mathscr H_k$. 

For the general case, let $U_k\subseteq \partial E$ be the $\mathscr H$-invariant open set $E^k\cdot \partial E$.  Then $\mathscr H'_k|_{U_k} = \mathscr H_k$ and hence is amenable.  Thus it suffices, by Proposition~\ref{p:AmenabilityPermanence}~(6) to show that $\mathscr H'_k|_{\partial E\setminus U_k}$ is amenable.  Note that  $\mathscr H'_k|_{\partial E\setminus U_k}= \{[pgq^*,q]\mid |p|=|q|<k, \sour(q)\in G_{\sing}^0\}$.  First of all, we claim that $G_\sing$ is amenable.  Indeed, $G_\sing$ is discrete and hence amenable if and only if all its isotropy subgroups are amenable.  Now if $x\in G^0_\sing$, 
then $G^x_x$ is isomorphic to $(\mathscr H_0)^x_x$, which is a closed subgroupoid and hence amenable by Proposition~\ref{p:AmenabilityPermanence}~(1).  There is a continuous functor $c\colon \mathscr H'_k|_{\partial E\setminus U_k}\to G_\sing$ given by $c([pgq^*,q])= g$ and $c\inv(G^0_\sing) = \{[pq^*,q]\mid |p|=|q|<k, \sour(q)\in G^0_\sing\}$.  But this latter groupoid is isomorphic to a closed subgroupoid of the boundary path groupoid $\mathscr G_E$, and hence is amenable by Proposition~\ref{p:AmenabilityPermanence}~(1) since $\mathscr G_E$ is amenable (cf.~\cite[Proposition~3.1]{RenaultScand}, or use that $\mathscr G_E$ is Hausdorff, and $C^*(E)$ is nuclear).   

 The `in particular' statement follows because if $G\ltimes \partial E$ is amenable, then so is $\mathscr H_0$ by Proposition \ref{p:AmenabilityPermanence}~(2).  
 If $G$ is amenable, then $G\ltimes \partial E$ is amenable by~\cite[Corollary~2.2.10]{AmenableGroupoids}.
\end{proof}

As a corollary we obtain that the groupoid associated to a contracting self-similar groupoid is always amenable, generalizing  slightly~\cite[Theorem~5.6]{Nekcstar}, who considered only self-replicating contracting groups. We offer two proofs, the first following Nekrashevych~\cite{Nekcstar} and the second novel.

\begin{Cor}\label{c:contracting.amenable}
Let $(G,E,\sigma)$ be a contracting self-similar groupoid action where $E$ is a finite graph without sources.  Then $\mathscr G_{(G,E)}$ is amenable.  Moreover, if $H$ is the subgroupoid of $G$ generated by  $G^0$ and the nucleus $\mathcal N$, then $(H,E,\sigma|_{H\times E})$ is a contracting self-similar action with $H$  finitely generated and $\mathscr G_{(G,E)}\cong \mathscr G_{(H,E)}$.   
\end{Cor}
\begin{proof}
We begin with the final statement. 
Using that $\mathcal N\cup G^0$ is closed under sections, it is easy to see that $H$ is closed under sections.   By construction it is contracting with nucleus $\mathcal N$.  Note that $S_E\subseteq S_{(H,E)}\subseteq S_{(G,E)}$ and $S_E$ contains all the idempotents.  Hence to show that $\mathscr G_{(G,E)}\cong \mathscr G_{(H,E)}$ it suffices to show that each element of $\mathscr G_{(G,E)}$ can be written in the form $[uhv^*,z]$ with $h\in H$ and $z\in \partial E$.  Let $[pgq^*,qz]\in \mathscr G_{(G,E)}$.   Then we can find $n$ with $g|_w\in \mathcal N$ for all $w\in \sour(g)E^n$.  Let $v$ be the prefix of $z$ of length $n$.  Then $[pgq^*,qz] = [pg(v)g|_v(qv)^*,qz]$ and $g|_v\in H$.  
Henceforth, replacing $G$ by $H$ if necessary, we assume that $G$ is finitely generated and thus countable.
   
For the first proof that $\mathscr G_{(G,E)}$ is amenable, we show that $G\ltimes \partial E$ is amenable, which suffices by Theorem~\ref{t:amenability}. 
   It was shown by Nekrashevych that, for finitely generated contracting self-similar groups,  the Schreier graph for the action on the set of infinite words has polynomial growth~\cite[Proposition~2.13.6]{selfsimilar} (see also~\cite[Proposition~4.3.14]{Nekbook2}). The proof works \textit{mutatis mutandis} for finitely generated contracting self-similar groupoids.  It follows that $G\ltimes \partial E$ is amenable by~\cite[Corollary~3.17]{RenaultScand}.  

 For the second proof, we prove that $\mathscr H_0 = \{[g,z]\in \mathscr G_{(G,E)} \mid g\in G\}$ is amenable and apply Theorem~\ref{t:amenability}.  It was observed by Nekrashevych~\cite{selfsimilar} that the isotropy groups of $\mathscr H_0$ are finite of cardinality bounded by $|\mathcal N|$.  Indeed, suppose that $A=\{[g_a,w]\in (\mathscr H_0)_w^w \}$ is a finite set with more than $|\mathcal N|$ elements.  Then we can find a finite prefix $v$ of $w$ such that $g_a|_v\in \mathcal N$ for all $a$. Since $|A|>|\mathcal N|$, there exist $[g_{a_1},w]\neq [g_{a_2},w]\in A$ with $g_{a_1}|_v=g_{a_2}|_v$, a contradiction as $g_{a_1}(v) = v=g_{a_2}(v)$.  Therefore, $\mathscr H_0$ has amenable isotropy groups.   Next we claim that the equivalence relation on $\partial E$ associated to  $\partial E/\mathscr H_0$ is hyperfinite, and hence amenable. It will then follow that $\mathscr H_0$ is amenable by ~\cite[Corollary~5.3.33 and Theorem~5.3.42]{AmenableGroupoids} and the fact that topological amenability coincides with measurewise amenability~\cite[Corollary 2.16]{RenaultScand}. 

 Since $\mathscr H^0\subseteq \mathscr H_0\subseteq \mathscr H$, it suffices to show that the equivalence relation $R$ on $\partial E$ of being in the same $\mathscr H$-orbit is hyperfinite, since hyperfiniteness passes to Borel subequivalence relations, cf.~\cite[Proposition~1.3]{JKL02}. Let $T\colon \partial E\to \partial E$ be the shift map. We claim that $x\mathrel{R} y$ if and only if there is $k\geq 0$ and $n\in \mathcal N$ with $n(T^k(x))=T^k(y)$.  Indeed, given $k$ and $n$, if $x_0,y_0$ are the prefixes of length $k$ of $x,y$, respectively, then $[y_0nx_0^*,x]\colon x\to y$.  Conversely, if $[pgq^*,qw]\colon x\to y$ with $|p|=|q|$, then choosing a prefix $w_0$ of $w$ sufficiently long that  $g|_{w_0}\in \mathcal N$, we get $y=pg(w_0)g|_{w_0}(T^{|q|+|w_0|}(x)))$.  Thus $g|_{w_0}(T^{|q|+|w_0|}(x))=T^{|q|+|w_0|}(y)$.  
 Note that if $n(T^k(x)) = T^k(y)$ with $n\in \mathcal N$, then $n|_e(T^{k+1}(x)) = T^{k+1}(y)$ where $e$ is the first edge of $T^k(x)$ and $n|_e\in \mathcal N$. 

 Let $R'$ be the equivalence relation on $\partial E$ given by $x\mathrel{R'} y$ if $T^k(x)=T^k(y)$ for some $k\geq 0$.  Then $R'$ is hyperfinite and $R'\subseteq R$.  We claim that each $R$-class contains no more than $|\mathcal N|$ $R'$-classes.    It will then follow that $R$ is hyperfinite, cf.~\cite[Proposition~1.3]{JKL02}.  Indeed, suppose that $\{[x_i]_{R'}\mid i\in J\}$ with $|\mathcal N|<|J|<\infty$ are distinct $R'$-classes contained in an $R$-class. Fix $i_0\in J$.   Then we can find a single $k\geq 0$ and $n_j\in \mathcal N$, with $n_j(T^k(x_{i_0}))=T^k(x_j)$ by the last sentence of the previous paragraph.  By assumption on $J$, $n_i=n_{i'}$ for some $i\neq i'$.  It follows that $[x_i]_{R'}=[x_{i'}]_{R'}$, a contradiction.  This completes the proof. 
\end{proof}

\section{The Dramatis Person\ae}

This section lists our favorite examples. 

\subsection{Miscellaneous examples}
\begin{Example}[Graphs]\label{Ex:Graphs} If $\ran,\sour\colon E\to E^0$ is a directed graph, then we can view it as a correspondence over the discrete groupoid $G$ with $G=G^0=E^0$.  The resulting inverse semigroup $S_E$ is the usual graph inverse semigroup of $E$~\cite{Paterson}, $\mathscr G_{(G,E)}$ is the usual boundary path groupoid and $\mathcal O_E$ is the graph $\cs$-algebra $\cs(E)$.
\end{Example}

Our next example is the self-similar actions Exel and Pardo introduced to model Katsura algebras~\cite[Section~18]{ExelPardoSelf}. 
Due to our convention on graphs, the adjacency matrix $A$ of a graph is defined by $A_{ij}=|\ran\inv(i)\cap \sour\inv(j)|$.  

\begin{Example}[Exel--Pardo--Katsura]\label{ex:EPK}
Let $A,B$ be integer matrices over some index set $J$ (perhaps infinite) such that $A$ is the adjacency matrix of a row finite graph.  We assume that $A_{ij}=0$ implies $B_{ij}=0$ (but not conversely).   Our groupoid $G_{A,B}$ is $\mathbb Z\times J$ where $J$ is viewed as a groupoid with $J^0=J$.  Let  $E = \{e_{i,j,\ov n}\mid A_{ij}\neq 0,  \ov n\in \mathbb Z/A_{ij}\mathbb Z\}$. We make $E$ a graph via $\sour(e_{i,j}) = j$ and $\ran(e_{i,j}) = i$. 

Define an action of $\mathbb Z\times J$ on $E$ by $(m,i)e_{i,j, \ov n}=e_{i,j,\ov{mB_{i,j}+n}}$.
The $1$-cocycle is given by the rule $(m,i)|_{e_{i,j,\ov n}}=(k,j)$  where $mB_{ij}+n = kA_{ij}+r$ with $0\leq n<A_{ij}$ and $0\leq r<A_{ij}$.  Note that $(m,i)(e_{i,j,\ov n}) = e_{i,j,\ov r}$ with the above notation. 

It is known that $\mathcal O_{(G_{A,B},E)}$  is Katsura's algebra $\mathcal O_{A,B}$. We compute the homology and K-theory for this example in Subsection \ref{subs:EPK}. Since $\mathbb Z\times J$ is amenable, $\mathscr G_{(G_{A,B},E)}$ is amenable.
\end{Example}

\begin{Example}
The following is~\cite[Example~2.2]{BBGHSW24}.  Let $G$ be the combinatorial fundamental groupoid of the graph on the left and let $E$ be the graph on the right in:
\[
\begin{tikzpicture}[->,shorten >=1pt,%
auto,node distance=3cm,semithick,
inner sep=5pt,bend angle=30]
\begin{scope}[xshift=-4cm]
\node[state] (A) {$v$};
\node[state] (B) [right of=A] {$w$};
\path (A) edge [bend left]  node [above]  {$a$} (B)
      (B) edge [bend left] node [below] {$b$} (A);
\end{scope}
\begin{scope}[scale=.5, xshift=4cm]
\node[state] (A) {$v$};
\node[state] (B) [right of=A] {$w$};
\path   (A) edge [loop left] node [left] {$1$} (A) 
        (A) edge [left]  node [above]  {$3$} (B)
        (A) edge [bend left]  node [above]  {$4$} (B)
        (B) edge [bend left] node [below] {$2$} (A);
\end{scope}
\end{tikzpicture}
\]
The proper correspondence $\mathcal X$ is $E\times_{E^0} G$ with left action $a(1,g) = (4,g)$, $a(2,g) = (3,bg)$, $b(3,g) = (1,g)$, $b(4,g) = (2,ag)$.  It was shown in~\cite{BBGHSW24} that $\mathcal O_{\mathcal X}$ is isomorphic to the Cuntz--Pimsner algebra of the dyadic odometer using the Kirchberg--Phillips Theorem.  We give a more direct proof that they are Morita equivalent using the correspondence viewpoint. 

The isotropy group $G^v_v=\langle ba\rangle$ is infinite cyclic.  Consider the dyadic odometer action of $ba$ on $\{0,1\}$ given by $ba(0)=1$, $ba(1)=0$, $(ba)|_0 = v$, $(ba)|_1 = ba$; the induced action on $\{0,1\}^{\mathbb N}$ is adding $1$ to a dyadic integer. Write $\mathcal Y = \{0,1\}\times \langle ba\rangle$ for the associated proper correspondence.   The inclusion $\langle ba\rangle\hookrightarrow G$ is a Morita equivalence with bispace $\Omega=vG$.  It suffices to show that $\Omega\times_G \mathcal X\cong \mathcal Y\times_H \Omega$ by Proposition~\ref{p:morita.equiv.corr}. 
Note that $\Omega\times_G\mathcal X\cong vE\times_{E_0} G$ and $\mathcal Y\times_H \Omega\cong \{0,1\}\times vG$ with the obvious $G_v^v$-$G$-biactions.  The isomorphism $\psi\colon  vE\times_{E_0} G\to \{0,1\}\times vG$ is given by $\psi(1,g) = (1,g)$, $\psi(2,g) = (0,bg)$.  We omit the routine verification.
\end{Example}


The rest of our examples are self-similar group actions on finite alphabets \cite{selfsimilar}, as they are the primary motivation for this theory.  
Hence we deal only with proper correspondences over a group.  So, a self-similar action is given by a group $G$ acting on a finite set $X$ together with a $1$-cocycle $G\times X\to G$ written $g\mapsto g|_x$.  We do \emph{not} require the action of $G$ on $X^+$ to be faithful.

The inverse semigroup $S_{(G,X)}$, the groupoid $\mathscr G_{(G,X)}$ and the $\cs$-algebra $\mathcal O_{(G,X)}$ are precisely the inverse semigroup, groupoid and $\cs$-algebra considered by Nekrashevych in~\cite{Nekcstar}.  We now proceed to describe several families of self-similar groups.

\begin{Example}[The Aleshin automaton]\label{ex:aleshin}
Aleshin~\cite{Aleshin} considered the following self-similar action of the free group ${F_3}$ on $a,b,c$ on the two-letter alphabet $\{0,1\}$.  Namely, $a(0)=1=b(0)$, $a(1)=0=b(1)$ and $c(0)=0$, $c(1)=1$.  The sections are given by $a|_0 =c$, $a|_1=b$, $b|_0= b$, $b|_1=c$, $c|_0=a$ and $c|_1=a$.   It was shown by Vorobets and Vorobets~\cite{Vorobets} that this self-similar action of ${F_3}$ on $\{0,1\}^+$ is faithful.  The groupoid associated to the Aleshin automaton has torsion-free isotropy by Lemma~\ref{l:torsion.free}, and the action is pseudo\-free~\cite{SVV}, whence the groupoid is Hausdorff.

We compute the homology and K-theory for this example in Subsection \ref{subs:Aleshin}.
\end{Example}

Cocycles for free products can be defined on the factors.

\begin{Lemma}\label{l:cocycle.extension}
Let $A,B$ be groups acting on a set $X$ and let $H$ be a group.  Suppose that $\sigma_1\colon A\times X\to H$ and $\sigma_2\colon B\times X\to H$ are $1$-cocycles.  Then there is a unique $1$-cocycle $\sigma\colon (A\ast B)\times X\to H$ extending $\sigma_1,\sigma_2$.   
\end{Lemma}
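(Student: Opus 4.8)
The plan is to prove uniqueness directly from the cocycle identity and to obtain existence by reinterpreting cocycles as group homomorphisms into a wreath-type group, so that the universal property of the free product does all the work. Throughout, the actions of $A$ and $B$ on $X$ combine by the universal property into a unique action $\alpha\colon A\ast B\to \mathrm{Sym}(X)$, and any extending cocycle must be compatible with $\alpha$.

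For uniqueness, I would note that a $1$-cocycle $\sigma$ extending $\sigma_1,\sigma_2$ is completely determined on $A\ast B$ by its restrictions: writing an element as a word $w=s_1\cdots s_n$ with each $s_i\in A\cup B$, repeated application of the cocycle identity $\sigma(gg',x)=\sigma(g,g'(x))\,\sigma(g',x)$ yields
\[ \sigma(w,x)=\sigma(s_1,(s_2\cdots s_n)(x))\,\sigma(s_2,(s_3\cdots s_n)(x))\cdots\sigma(s_n,x), \]
and every factor is a value of $\sigma_1$ or $\sigma_2$. Thus at most one extension exists; the real content of the lemma is that this prescription is well defined, independent of the expression of $w$ as a word.

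To dispatch well-definedness without manipulating words, I would encode cocycles as actions. For a group $G$ acting on $X$, set $\mathcal F=H\times X$, regarded as a free right $H$-set via $(h,x)\cdot h'=(hh',x)$ with quotient $\mathcal F/H\cong X$. A short computation shows that $1$-cocycles $\sigma\colon G\times X\to H$ compatible with a given $\alpha\colon G\to\mathrm{Sym}(X)$ correspond bijectively to group homomorphisms $\rho\colon G\to\operatorname{Aut}_H(\mathcal F)$ lifting $\alpha$, where $\operatorname{Aut}_H(\mathcal F)$ is the group of right-$H$-equivariant bijections of $\mathcal F$ and $\mathrm{pr}\colon\operatorname{Aut}_H(\mathcal F)\to\mathrm{Sym}(X)$ is the homomorphism induced on $\mathcal F/H\cong X$. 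Explicitly $\rho(g)(h,x)=(\sigma(g,x)h,\alpha(g)(x))$, and one checks that the homomorphism property of $\rho$ is exactly the cocycle identity, while $\mathrm{pr}\circ\rho=\alpha$ records compatibility with the action.

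With this dictionary the result follows formally. The data $\sigma_1,\sigma_2$ give homomorphisms $\rho_1\colon A\to\operatorname{Aut}_H(\mathcal F)$ and $\rho_2\colon B\to\operatorname{Aut}_H(\mathcal F)$ lifting $\alpha_A,\alpha_B$, and by the universal property of the free product there is a unique homomorphism $\rho\colon A\ast B\to\operatorname{Aut}_H(\mathcal F)$ restricting to $\rho_1$ and $\rho_2$. Since $\mathrm{pr}\circ\rho$ and $\alpha$ agree on $A$ and on $B$, they coincide, so $\rho$ lifts $\alpha$ and hence corresponds to a unique $1$-cocycle $\sigma\colon(A\ast B)\times X\to H$, which by construction restricts to $\sigma_1$ and $\sigma_2$. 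The only step needing care is the claimed bijection between cocycles and lifts $\rho$ (in particular that $\operatorname{Aut}_H(\mathcal F)$ is a group and each $\rho(g)$ is a bijection), a routine verification; once it is in place, the free product universal property eliminates any need to reason about word representations, which is precisely where a direct computational argument would become delicate.
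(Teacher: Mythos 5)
Your proof is correct and follows essentially the same route as the paper: the paper encodes a cocycle $c$ compatible with an action $\varphi$ as a homomorphism $g\mapsto(\varphi(g),c_g)$ into the permutational wreath product $S_X\wr H=S_X\ltimes H^X$ and then invokes the universal property of the free product, and your group $\operatorname{Aut}_H(H\times X)$ of right-$H$-equivariant bijections is exactly this wreath product in different packaging. The only cosmetic difference is that you phrase the dictionary via free right $H$-sets rather than via the semidirect product directly.
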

\begin{proof}
Let $G$ be a group acting on $X$ via a homomorphism $\p\colon G\to S_X$ and $c\colon G\times X\to H$ be a mapping with $H$ a group. The permutational wreath product $S_X\wr H$ is the semidirect product $S_X\ltimes H^X$ where the right action of $S_X$ is on $H^X$ is given by precomposition.
Define a map $\Phi\colon G\to S_X\wr H$ by $\Phi(g) = (\p(g),c_g)$ where $c_g(x) = c(g,x)$.  It is well known and easy to see that $c$ is a $1$-cocycle if and only if $\Phi$ is a homomorphism.  
The lemma now follows immediately from the universal property of a free product.
\end{proof}

\begin{Example}[Hanoi towers group]\label{ex:hanoi}
The \emph{Hanoi towers} group $H$ is a self-similar group which models the classical Towers of Hanoi puzzle  and was first studied by Grigorchuk and \v{S}uni\'{c}~\cite{Hanoitowers}.  It is also the iterated monodromy group of the rational function $z^2-\frac{16}{27z}$, whose Julia set is a Sierpi\'nski gasket. 

Let $A,B,C$ be cyclic groups of order $2$ generated by $a,b,c$, respectively.  Then one can define a contracting self-similar action of $A\ast B\ast C$ on $\{0,1,2\}$ where $a$ acts by $(01)$, $b$ acts by $(02)$ and $c$ acts by $(12)$.  One has $a|_2=a$, $b|_1=b$ and $c|_0=c$.  All remaining sections are trivial. Lemma~\ref{l:cocycle.extension} implies that this gives a self-similar action of $A\ast B\ast C$. The nucleus is $a,b,c$ and the identity.  The faithful quotient is the Hanoi towers group $H$.  The groupoid associated to $H$ is minimal, effective, Hausdorff and amenable.

We compute the homology and K-theory for this example in Subsection \ref{subs:hanoi}.
\end{Example}

\subsection{Multispinal self-similar groups}\label{ss:multispinal}  We consider here multi\-spinal gr\-oups \cite{SS23}, a family of contracting groups generalizing the famous Grigorchuk group\cite{Grigtorsion,griggrowth}, the Gupta--Sidki groups~\cite{GuptaSidki}, as well as \v{S}uni\'c groups \cite{sunicgroups}.

A multispinal group~\cite{SS23} consists of the following data.  Two finite groups $A,B$, with $|A|\geq 2$, and a map $\Phi\colon A\to \mathrm{Aut}(B)\cup \mathrm{Hom}(B,A)$ such that with $A_0 = \Phi\inv(\mathrm{Aut}(B))$ and $A_1=A\setminus A_0$,
\begin{enumerate}
    \item $A_0\neq \emptyset$.
    \item $\Phi(A_1)\cdot \langle \Phi(A_0)\rangle\subseteq\mathrm{Hom}(B,A)$ separates points of $B$.
\end{enumerate}
We can then define a self-similar action of $A\ast B$ on the alphabet $A$ as follows.  The group $A$ acts by left multiplication on $A$.  The group $B$ acts trivially on $A$.  If $a\in A$, then $a|_x=1$.  If $b\in B$, then $b|_a=\Phi_a(b)$ where we write $\Phi_a$ instead of $\Phi(a)$. Trivially, this gives $1$-cocycles $A\times A\to A\ast B$ and $B\times A\to A\ast B$, which 
extend uniquely to a $1$-cocycle $(A\ast B)\times A\to A\ast B$ by Lemma~\ref{l:cocycle.extension}.   The action of $A\ast B$ is not usually faithful.  The corresponding faithful self-similar group $(G_{(A,B)},A,\sigma)$ is then the multispinal group\footnote{In~\cite{SS23} a more general class of groups is called multispinal; we are restricting here to those multispinal groups that act transitively on the alphabet.  It should also be mentioned that \cite{SS23} only considers the faithful quotient of the action of $A\ast B$.} associated to this data.        The action of $A\ast B$ is contracting with nucleus contained in $A\cup B$.  Moreover, (2) implies that the nontrivial elements of $A,B$ act nontrivially~\cite{SS23}.  Hence the action is loosely faithful and the groupoids for the self-similar action of the free product and the multispinal group coincide by Corollary~\ref{c:contracting.case}.  The groupoid of a multispinal group is Hausdorff if and only if each element of $\Phi(A_1)$ is injective.  
Note that it is known precisely when the algebra over a field and the $\cs$-algebra of the groupoid of a multispinal group is simple~\cite{SS23,Yoshida21,GNSV}.

Since multispinal groups are contracting their groupoids are amenable by Corollary~\ref{c:contracting.amenable}.
General results on the homology and K-theory for multispinal groups appear in Section \ref{s:multispinal}.

Let us now present a number of examples of multispinal groups.

\begin{Example}[\v{S}uni\'c groups]\label{ex:sunic.groups}
 The following family of multispinal groups was introduced by \v{S}uni\'c as generalizations of the Grigorchuk group~\cite{sunicgroups}.    Let $A=\mathbb Z/p\mathbb Z$ and $B=(\mathbb Z/p\mathbb Z)^n$ with $p$ prime, $\Phi_0$ be the projection to the last coordinate, $\Phi_i=0$, for $i=1,\ldots,p-2$ and $\Phi_{p-1}(b) = C_fb$ where $C_f \in M_n(\mathbb F_p)$ is the companion matrix of a degree $n\geq 1$ polynomial $f(x)\in \mathbb F_p[x]$ with $f(0)\neq 0$.  The corresponding multispinal group is denoted $G_{p,f}$.  We write $\mathscr G_{p,f}$ for the associated ample groupoid.

Homology and K-theory computations for \v{S}uni\'c groups groups appear in Subsection \ref{subs:Sunic}.
\end{Example}

Of particular importance is the special case of a primitive polynomial $f$.  This means that $f$ is the minimal polynomial of a primitive element of a field extension $\mathbb F_q/\mathbb F_p$.  In this case, $C_f$ acts transitively on $B\setminus \{0\}$, since $B$ can be identified with $\mathbb F_q$ and the action of $C_f$ corresponds to multiplication by a generator of the cyclic group $\mathbb F_q^\times$.  If $f$ is a primitive polynomial of degree at least $2$, then $G_{p,f}$ is an infinite $p$-group of intermediate growth~\cite{sunicgroups}.

\begin{Example}[Infinite dihedral group]\label{ex:infinite.dihedral.gp}
The group $G_{2,x-1}$ is the infinite dihedral group $D_{\infty}$.   We write $a,b$ for the respective generators of $A,B$.   Writing $e$ for the identity of $D_{\infty}$, we have $a(0)=1$, $a(1)=0$, $a|_0=e=a|_1$, $b(0)=0$, $b(1)=1$, $b|_0 =a$, $b|_1=b$.  The K-theory of $\cs(\mathscr G_{2,x-1})$ and a partial computation of the homology of $\mathscr G_{2,x-1}$ was obtained in~\cite{ortegadihedral}.  The group $G_{3,x-1}$ is known as the Fabrykowski--Gupta group~\cite{FG85}, which has intermediate growth, and the groups $G_{p,x-1}$ were studied in general by Grigorchuk~\cite{justinfinite}.
\end{Example}

The most famous self-similar group is the Grigorchuk group.

\begin{Example}[The Grigorchuk group]\label{ex:grigorchuk.gr}
The Grigorchuk group is $G_{2,1+x+x^2}$.  This is a primitive polynomial, and so the group is an infinite torsion group of intermediate growth~\cite{Grigtorsion}.  It was the first example of a group of intermediate growth~\cite{griggrowth}.  It is also just infinite, meaning that all its nontrivial normal subgroups have finite index.  The companion matrix  is  \[C_{1+x+x^2} = \begin{bmatrix}0 & 1\\ 1& 1\end{bmatrix}.\]   If we set $a$ to be the generator of $A=\mathbb Z/2\mathbb Z$, $b= (0,1)$, $c=(1,1)$, $d=(1,0)$ and $e=(0,0)$, then $G_{2,1+x+x^2}=\langle a,b,c,d\rangle$ with action given by $a(0)=1$, $a(1)=0$, $a|_0=e=a|_1$, $b,c,d$ acting trivially on $\{0,1\}$ and $b|_0=a$, $b|_1=c$, $c|_0= a$, $c|_1=d$, $d|_0=e$, $d|_1=b$.  The groupoid $\mathscr G_{2,1+x+x^2}$ is amenable, minimal, effective and non-Hausdorff.  It was proved in~\cite{nonhausdorffsimple} that $\cs(\mathscr G_{2,1+x+x^2})$ is simple and the  algebra $K\mathscr G_{2,1+x+x^2}$ is simple for any field $K$ of characteristic different than $2$, but not over fields of characteristic $2$; see also~\cite{Nekrashevychgpd,SS23}. 

The homology and K-theory for this example is computed in Subsection \ref{subs:Grig}.
\end{Example}

The next group was constructed by Grigorchuk, and its rate of intermediate growth was analyzed very precisely by Erschler~\cite{Erschler}, and hence it is widely known as the Grigorchuk--Erschler group.  

\begin{Example}[Grigorchuk--Erschler group]\label{ex:Grig-Erschler}
 The Grigorchuk--Erschler group is $G_{2,1+x^2}$.    The companion matrix  is \[C_{1+x^2} = \begin{bmatrix}0 & 1\\ 1& 0\end{bmatrix}.\]   If we set $a$ to be the generator of $A=\mathbb Z/2\mathbb Z$, $b= (0,1)$, $c=(1,1)$, $d=(1,0)$ and $e=(0,0)$, then $G_{2,1+x+x^2}=\langle a,b,c,d\rangle$ with action given by $a(0)=1$, $a(1)=0$, $a|_0=e=a|_1$, $b,c,d$ acting trivially on $\{0,1\}$ and $b|_0=a$, $b|_1=d$, $c|_0= a$, $c|_1=c$, $d|_0=e$, $d|_1=b$.  The groupoid $\mathscr G_{2,1+x^2}$ for $G_{2,1+x^2}$ is amenable, minimal, effective and non-Hausdorff.  It was observed by Nekrashevych that $\cs(\mathscr G_{2,1+x^2})$ is not simple and that the algebra $K\mathscr G_{2,1+x^2}$ is not simple for any field $K$; see~\cite{SS23} for a proof.

The homology and K-theory for this example is computed in Subsection \ref{subs:GE}.
\end{Example}

\begin{Example}[GGS groups]\label{ex:Gupta-Sidki groups}
A GGS-group is a multispinal group where $A=\mathbb Z/m\mathbb Z$, $B=C_m=\langle b\rangle$ is cyclic of order $m$, $\Phi_{m-1}=\id_B$ and $\Phi_i\colon B\to A$ for $0\leq i\leq m-2$.  Condition (2) in the definition of a multispinal group is satisfied if and only if $\gcd(\Phi_0(b),\cdots,\Phi_{m-2}(b),m)=1$~\cite{branchgroups}, which we henceforth assume.  For example, \v{S}uni\'c groups of the form $G_{p,x-1}$ are GGS-groups.   
Let $p$ be an odd prime.  The Gupta--Sidki group $G_p$ is the GGS group with $\Phi_0(b)=1$, $\Phi_1(b)=-1$, $\Phi_k(b)=0$ for $2\leq k<p-1$ and $\Phi_{p-1}=\id_B$.   The group $G_p$ is an infinite $p$-group~\cite{GuptaSidki}.

Homology and K-theory computations for GGS groups appear in Subsection \ref{subs:Sunic}.
\end{Example}

\subsection{Solvable self-similar groups}
In this subsection we consider some solvable self-similar groups.  Since solvable groups are amenable, the groupoids associated to self-similar actions of solvable groups are amenable.

\begin{Example}[Lamplighter groups]\label{ex:lamplighter}
Let $F$ be a finite group.  Then the restricted wreath product $F\wr \mathbb Z$ is $\bigoplus_{i\in \mathbb Z} F\delta_i\rtimes \mathbb Z$, where the generator of $\mathbb Z$ acts via the shift $a\delta_i\mapsto a\delta_{i+1}$, is the called the $F$-lamplighter group. 
Grigorchuk and \.{Z}uk~\cite{GrigZuk} famously realized  $\mathbb Z/2\mathbb Z\wr \mathbb Z$ as a self-similar group, which led to a counterexample to the strong Atiyah conjecture on $\ell_2$-Betti numbers~\cite{strongatiyah}.  The second author and Silva~\cite{SilvaSteinberg} generalized this construction to give a faithful self-similar realization of $A\wr \mathbb Z$ for any finite abelian group $A$.  Further self-similar actions of $A\wr \mathbb Z$ were given by the second author and Skipper~\cite{SkipperSteinberg}.  Note that if $F$ is nonabelian, then $F\wr \mathbb Z$ is not residually finite and hence cannot have a faithful self-similar representation.   

The construction in~\cite{SkipperSteinberg} is as follows.  We view $A$ as the additive group of a ring $R$ and identify $A^{\mathbb N}$ with $R[\!{[}x]\!{]}$.  If $f\in R[\!{[}x]\!{]}$, then $\alpha_f,\mu_f\colon  R[\!{[}x]\!{]}\to R[\!{[}x]\!{]}$ are given by addition and multiplication by $f$ respectively.  We fix a rational function 
\[f= r\left(\frac{1-ax}{1-bx}\right)\] with $r\in R^\times$ and $a-b\in R^\times$; note that $f$ is a multiplicative unit in $R[\!{[}x]\!{]}$. The generator $t$ of $\mathbb Z$ acts on $R[\!{[}x]\!{]}$ via $\mu_f$, and $s\delta_i$, with $s\in A$, acts by $\alpha_{s(-ar+bf)f^i}$. The action is faithful. Note that $t$ acts on the alphabet $A$ as multiplication by $r$ and $s \delta_i$ acts as addition by $s(b-a)r^{i+1}$.
It is shown in the proof of~\cite[Proposition~3.3 and Theorem~3.6]{SkipperSteinberg}  that $t|_d = \alpha_{-dra}\mu_f\alpha_{db}=(d\delta_0)t$.  The paper~\cite{SilvaSteinberg} uses a direct product of rings of the form $\mathbb Z/n\mathbb Z$ and $f=1-x$, with Grigorchuk and \.{Z}uk using the ring $\mathbb Z/2\mathbb Z$~\cite{GNS}. 

Notice that the nontrivial elements of $\bigoplus_{i\in \mathbb Z}A\delta_i$ act as translations on $R[\!{[}x]\!{]}$ and hence have no fixed points. It follows that  the isotropy groups of the associated groupoid $\mathscr G$ are torsion-free by Lemma~\ref{l:torsion.free}.  Also, the action of $A\wr \mathbb Z$ on $A$ is pseudo\-free. Indeed, suppose that $g\in A\wr \mathbb Z$ with $g(a)=a$ and $g|_a=1$.  By construction, $g$ acts on $R[\!{[}x]\!{]}$ as $f\mapsto hf+d$ for some $h,d\in R[\!{[}x]\!{]}$.  From $g(a)=a$, we have that $ha+d=a$. By assumption $a+x=h(a+x)+d = ha+d+hx=a+hx$, that is, $0=(h-1)x$.  It follows that $h=1$ and $d=0$.     The associated groupoid $\mathscr G$ is therefore Hausdorff, effective and amenable. 

Notice that if $g\in R[\!{[}x]\!{]}$, then $\alpha_g(d+xh(x)) = d+xh(x)+g(x) = d+g(0)+x(h(x)+\frac{g(x)-g(0)}{x})$, from which it follows that $\alpha_g|_d$ does not depend on $d$ and is a translation by $\frac{g(x)-g(0)}{x}$.  In particular, since $\bigoplus_{i \in \mathbb Z}A \delta_i$ acts on $R[\!{[}x]\!{]}$ by translations, and only these elements of $A\wr \mathbb Z$ act by translations, it follows that the $1$-cocycle $\sigma\colon A\wr\mathbb Z\times A\to A\wr \mathbb Z$ restricts to a $1$-cocycle $\til \sigma\colon (\bigoplus_{i \in \mathbb Z}A \delta_i)\times A\to \bigoplus_{i \in \mathbb Z}A\delta_i$ with the property that $g|_d=\til \sigma(g,d)= \til \sigma(g,0)=g|_0$ for all $d\in A$.  This will play a crucial role when we later compute the homology for the groupoid $\mathscr G$. 

Homology and K-theory computations for 
lamplighter groups appear in Subsection \ref{subs:lamp}.
\end{Example}

\begin{Example}[Solvable Baumslag--Solitar groups]\label{ex:BS.group}
    The group with the presentation $\langle a,b\mid bab\inv =a^m\rangle$ is the Baumslag--Solitar group $BS(1,m)$.  It can be identified with the semidirect product $\mathbb Z[1/m]\rtimes \mathbb Z$ where the generator of $\mathbb Z$ acts on $\mathbb Z[1/m]$ via multiplication by $m$.
We will view $BS(1,m)$ as the group of affine transformations of $\mathbb Z[1/m]$ of the form $x\mapsto m^kx+c$ with $k\in \mathbb Z$ and $c\in \mathbb Z[1/m]$. From this viewpoint, $a$ corresponds to the transformation of addition by $1$ and $b$ corresponds to the transformation of multiplication by $m$.

Let $n\geq 2$ be relatively prime to $m$.   Then Bartholdi and \v{S}uni\'c \cite{selfsimilarBS} defined a faithful self-similar action of $BS(1,m)$ on the alphabet $\mathbb Z/n\mathbb Z$.   Under their construction, $a(\ov i) =\ov{i+1}$, $a|_{\ov i} =1$ if $0\leq i<n-1$ and $a|_{\ov{n-1}} = a$.  One has $b(\ov i) = \ov {mi}$ and $b|_{\ov i} =  a^jb$ where $j=\lfloor mi/n\rfloor$ for $0\leq i\leq n-1$.   We shall write $\mathscr G_{(m,n)}$ for the groupoid associated to this self-similar group.  It is easy to check that $\mathscr G_{(m,n)}$ is Hausdorff, minimal, effective and each isotropy group is torsion-free. Indeed, $(\mathbb Z/n\mathbb Z)^{\mathbb N}$ can be identified with the ring of $n$-adic integers $\mathbb Z_n$ as a topological space, and the action of $a$ is by adding $1$ and the action of $b$ is multiplication by $m$~\cite{selfsimilarBS}. The action of $BS(1,m)$  is therefore pseudo\-free, and hence $\mathscr G_{(m,n)}$ is Hausdorff. Indeed, if $g(\ov i)=\ov i$ and $g|_{\ov i}=1$ with $g=a^cb^k$, then $m^k\ov i+c = \ov i$ and $\ov i+n=m^k(\ov i+n)+c = \ov i +m^kn$, that is $(m^k-1)n=0$ in $\mathbb Z_n$.  It follows that $k=0$, $c=0$, i.e., $g=1$. The isotropy is torsion-free by Lemma~\ref{l:torsion.free} as $BS(1,m)$ is torsion-free.

Homology and K-theory computations for Baumslag--Solitar groups appear in Subsection \ref{subs:BS}.
\end{Example}

\begin{Example}[Free abelian groups]\label{Ex:FreeAb}
Let $G=\mathbb Z^n$ be a free abelian group of rank $n$.  The self-similar actions of $G$ on an alphabet $X$ of size $d$, which are transitive on $X$, are described in~\cite{selfsimilar}. Fix $x\in X$.  Then $[G:G_x]=d$.  By the Smith normal form theorem, we can find a basis $e_1,\ldots, e_n$ for $G$ and $d_1,\dots,d_n \in \mathbb N$ such that $d_1e_1,\ldots, d_ne_n$ is a basis for $G_x$.   Note that $d=d_1\cdots d_n$.  The virtual endomorphism $\sigma_x\colon G_x\to G$ can be described by an $n\times n$-matrix $B$ with respect to these bases.   Tensoring with $\mathbb Q$ we obtain an endomorphism $\sigma_x\otimes 1_{\mathbb Q}$ of $\mathbb Q^n$, which is given by a matrix $A\in M_n(\mathbb Q)$ in the $e_i$-basis.  Notice that $B$ is obtained by multiplying column $i$ of $A$ by $d_i$ for $i=1,\ldots, n$.

The isotropy groups are torsion-free by Lemma~\ref{l:torsion.free}.
Nekrashevych prov\-ed that the action of $G$ on $X^+$ is faithful if and only if no eigenvalue of $A$ is an algebraic integer and the action is contracting if and only if the spectral radius of $A$ is less than one; see~\cite{selfsimilar}.  If the action is faithful, it is pseudo\-free and the groupoid is Hausdorff.

A self-similar group action $(G,X,\sigma)$ is called \emph{self-replicating}\footnote{The obsolete terminology ``recurrent" is used in~\cite{selfsimilar}.} if $G$ acts transitively on $X$ and the virtual endomorphism $\sigma_x\colon G_x\to G$ is onto for some (equivalently, all) $x\in X$.  It follows that a transitive self-similar action of $\mathbb Z^n$ is self-replicating if and only if the virtual endomorphism is an isomorphism $G_x\to \mathbb Z^n$.  In this case $A\inv\in M_n(\mathbb Z)$, and is the matrix of the inverse of $\sigma_x$ (viewed as a map to $\mathbb Z^n$, rather than $G_x$).   Recall that $(\mathbb Z^n,X)$ is contracting if and only if  the spectral radius of $A$ is less than $1$.  This is equivalent to $A\inv$ having spectral radius greater than $1$, and hence $A$ is a dilation.  In this case, $\cs(\mathscr G_{(\mathbb Z^n,X)})$ can be viewed as the Exel crossed product for the transpose of $A\inv$; see~\cite[Section~3.1]{LRRW14} and~\cite{EHR11}. 

Homology and K-theory computations for free abelian groups appear in Subsection \ref{subs:freeabelian}.
\end{Example}

\begin{Example}[Sausage automaton]\label{ex:sausage}
There is a faithful self-similar action of $\mathbb Z^n$ over the alphabet $\{0,1\}$ given as follows.  Let $e_0,\ldots, e_{n-1}$ be the standard basis of $\mathbb Z^n$.  Then $e_0$ acts on $\{0,1\}$ as the nontrivial permutation and $e_1,\ldots, e_{n-1}$ act trivially.  The $1$-cocycle is given by $e_0|_0= 0$, $e_0|_1=e_{n-1}$ and $e_i|_j = e_{i-1}$ for $1\leq i\leq n-1$ and $j\in \{0,1\}$.  Note that the stabilizer of $0$ is $\langle 2e_0,e_1,\ldots, e_{n-1}\rangle$ and the virtual endomorphism $\sigma_0$ is given by $2e_0\mapsto e_{n-1}$ and $e_i\mapsto e_{i-1}$ for $1\leq i\leq n-1$.  Thus the matrix for $\sigma_0\otimes 1_{\mathbb Q}$ is given by 
\[A= \begin{bmatrix}0 & 1 & 0 &\cdots & 0\\ 0 & 0 & 1 &\cdots & 0\\ \vdots & \vdots & \ddots &\ddots & \vdots\\ 0 & 0 & \cdots& \ddots& 1\\ 1/2 & 0 &\cdots& \cdots & 0\end{bmatrix}.\]   Clearly, $\sigma_0$ is surjective and $A$ has spectral radius $1/2$.   Thus the action is contracting and self-replicating.
\end{Example}

\section{Tools for computing homology and K-theory}

\subsection{Homology of ample groupoids}
 If $X$ is a space with a basis of compact\footnote{In this paper compactness includes the Hausdorff axiom.} open sets and $A$ is an abelian group (written additively), then $AX$ denotes the abelian group of mappings $X\to A$ spanned by elements of the form $a1_U$ where $a \in A$ and $1_U$ is the characteristic function of a compact open set $U \subseteq X$. If $X$ is Hausdorff, these are precisely the compactly supported locally constant mappings $X\to A$.  
We shall use, frequently without comment, that $AX \cong \Z X\otimes_{\Z} A$; cf.~\cite[Corollary~2.3]{li2022ample}.

The construction $X\mapsto AX$ is functorial with respect to \'etale maps  and contravariantly functorial with respect to proper maps.
If $f\colon X\to Y$ is  \'etale, then $f_\ast\colon AX\to AY$ is given by 
$f_*(g)(y)= \sum_{x\in f\inv(y)}g(x)$. If $p\colon X\to Y$ is a proper map, then $p^*\colon AY \to AX $ is given by $p^*(f) =f\circ p$.
Notice that an open inclusion $i$ is \'etale with $i_*$ extension by $0$,   and a closed inclusion $i$ is proper with $i^*$ restriction of functions.

We recall now the definition of the homology~\cite{CMhomology00} of an ample groupoid $\mathscr G$ with coefficients in an abelian group $A$ via the formulation of Matui~\cite{Matui}.  There are \'etale maps $d_i\colon \mathscr G^n\to \mathscr G^{n-1}$ for $n\geq 2$ and  $i=0,\ldots, n$ given by
\begin{equation}\label{eq:face.maps}
d_i(g_1,\ldots,g_n) = \begin{cases}(g_2,\ldots, g_n), & \text{if}\ i=0\\ (g_1,\ldots,g_ig_{i+1},\ldots,g_{n}), & \text{if}\ 1\leq i\leq n-1,\\ (g_1,\ldots, g_{n-1}), & \text{if}\ i=n.\end{cases}
\end{equation}
We define $d_0,d_1\colon \mathscr G^1\to \mathscr G^0$ by $d_0(g) = \sour(g)$ and $d_1(g)=\ran(g)$, which are again \'etale.
It is well known that these maps satisfy the semisimplicial identities, 
and so we can define a chain complex $C_\bullet(\mathscr G,A)$ with $C_n(\mathscr G,A) =  A \mathscr G^n$ for $n\geq 0$, and $\partial_n\colon C_n(\mathscr G,A)\to C_{n-1}(\mathscr G,A)$ given by $\partial_n=\sum_{i=0}^n(-1)^i(d_i)_*$ for $n\geq 1$.  As usual, we take $\partial_0=0$.  The homology of this chain complex is denoted $H_\bullet(\mathscr G,A)$.  When $A=\mathbb Z$, we often write $C_\bullet(\mathscr G)$ and $H_\bullet(\mathscr G)$. There is a picture of groupoid homology in terms of the Tor functor with $H_\bullet(\mathscr G,A) \cong \Tor^{\Z \mathscr G}_\bullet(\Z \mathscr G^0, A \mathscr G^0)$ \cite[Theorem 2.5]{li2022ample}, and for general $\Z \mathscr G$-modules $M$ we set $H_\bullet(\mathscr G ; M) = \Tor^{\Z \mathscr G}_\bullet(\Z \mathscr G^0, M)$;\footnote{We warn readers of the subtlety that the $\Z \mathscr G$-module associated to an abelian group $A$ is $A\mathscr G^0$.}
this is the left derived functor of the $\mathscr G$-coinvariants $M\mapsto M_{\mathscr G}=M/\langle 1_Um-1_{\sour(U)}m\rangle$ where $U$ ranges over compact open bisections of $\mathscr G$ and $m$ ranges over $M$. Already in \cite{CMhomology00} it was shown that groupoid homology is invariant under Morita equivalence.

We shall need later the Universal Coefficient Theorem for the homology of ample groupoids.  Since we could not find a reference in the literature, we record the proof here.

\begin{Thm}[Universal Coefficient Theorem]\label{T:uct}
Let $\mathscr G$ be an ample groupoid and $A$ an abelian group.   Then, for all $n\geq 0$, there is an exact sequence (natural in $\mathscr G$ with respect to \'etale homomorphisms and proper ones) \[0\to H_n(\mathscr G)\otimes_{\mathbb Z} A\to H_n(\mathscr G,A)\to \Tor_1^{\mathbb Z}(H_{n-1}(\mathscr G),A)\to 0\] which splits, but not naturally.
\end{Thm}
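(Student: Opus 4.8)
The plan is to reduce the statement to the classical algebraic Universal Coefficient Theorem for a chain complex of free abelian groups. The two ingredients I need are that $C_\bullet(\mathscr G,A)$ is naturally isomorphic to $C_\bullet(\mathscr G)\otimes_{\mathbb Z}A$, and that each $C_n(\mathscr G)=\mathbb Z\mathscr G^n$ is free abelian.

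First I would record the chain-level identification. Using the natural isomorphism $A\mathscr G^n\cong \mathbb Z\mathscr G^n\otimes_{\mathbb Z}A$ quoted above, together with the observation that the pushforward $(d_i)_\ast$ along an \'etale map is defined by the same formula for any coefficient group and therefore corresponds to $(d_i)_\ast\otimes\id_A$, one checks that $\partial_n^{A}=\partial_n^{\mathbb Z}\otimes\id_A$. Hence $C_\bullet(\mathscr G,A)\cong C_\bullet(\mathscr G)\otimes_{\mathbb Z}A$ as chain complexes, naturally in both the coefficient group and in $\mathscr G$.

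Second --- and this is the one point that needs care in the present generality --- I would show $\mathbb Z\mathscr G^n$ is free abelian, even when $\mathscr G$ is non-Hausdorff and not second countable. By definition $\mathbb Z\mathscr G^n$ is spanned by indicator functions $1_U$ of compact open Hausdorff subsets $U\subseteq\mathscr G^n$; regarded as elements of the group $B(\mathscr G^n,\mathbb Z)$ of bounded $\mathbb Z$-valued functions on the underlying \emph{set} $\mathscr G^n$, these generate a subgroup. By N\"obeling's theorem $B(\mathscr G^n,\mathbb Z)$ is free abelian, and since subgroups of free abelian groups are free (of arbitrary rank), $\mathbb Z\mathscr G^n$ is free as well. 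This avoids any appeal to second countability or to the topology of the (possibly non-Hausdorff) spaces $\mathscr G^n$.

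Granting these two points, the theorem is the algebraic UCT applied to the free complex $C_\bullet(\mathscr G)$: writing $Z_n$ for the cycles and $B_{n-1}$ for the boundaries, the short exact sequences $0\to Z_n\to C_n(\mathscr G)\to B_{n-1}\to 0$ are degreewise split because the $B_{n-1}$ are subgroups of free groups, hence free. Tensoring with $A$ and passing to the associated long exact sequence --- whose connecting maps are induced by the inclusions $B_{n-1}\hookrightarrow Z_{n-1}$ --- one identifies the cokernel term with $H_n(\mathscr G)\otimes_{\mathbb Z} A$ by right-exactness of $-\otimes A$, and the kernel term with $\Tor_1^{\mathbb Z}(H_{n-1}(\mathscr G),A)$ via the Tor long exact sequence of $0\to B_{n-1}\to Z_{n-1}\to H_{n-1}(\mathscr G)\to 0$ together with freeness of $Z_{n-1}$. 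The splitting is the usual one: a choice of section of $\partial_n$ yields a chain map $C_\bullet(\mathscr G)\to H_\bullet(\mathscr G)$ (with zero differential) retracting cycles onto homology, which after $\otimes A$ splits the sequence but depends on the chosen sections and so is not natural. Naturality of the sequence itself with respect to \'etale homomorphisms (inducing chain maps $\varphi_\ast$) and proper homomorphisms (inducing chain maps via pullback) is inherited from naturality of the algebraic UCT with respect to chain maps of free complexes. The only genuine obstacle is the freeness of $\mathbb Z\mathscr G^n$ in full generality; once secured by N\"obeling's theorem, the remainder is the textbook argument.
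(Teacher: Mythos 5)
Your proposal is correct and follows essentially the same route as the paper's proof: both establish $C_\bullet(\mathscr G,A)\cong C_\bullet(\mathscr G)\otimes_{\mathbb Z}A$, deduce freeness of each $\mathbb Z\mathscr G^n$ from N\"obeling's theorem on bounded integer-valued functions, and then invoke the classical Universal Coefficient Theorem for chain complexes of free abelian groups. The only difference is that you spell out the textbook UCT argument and the chain-level tensor identification in detail where the paper simply cites Rotman and Li's Corollary~2.3, respectively.
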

\begin{proof}
A classical result of N\"obeling says that if $X$ is a set, then the additive group of bounded functions from $X\to \mathbb Z$ is free abelian; see~\cite[Corollary~1.2]{B72}.  Since $\mathbb ZX$ consists of bounded functions, it follows that it is free abelian, and hence $C_\bullet(\mathscr G)$ is a chain complex of free abelian groups.  Since $C_\bullet(\mathscr G,A) = C_\bullet(\mathscr G)\otimes_{\mathbb Z} A$ by~\cite[Corollary~2.3]{li2022ample},  the result follows from the Universal Coefficient Theorem for chain complexes of free abelian groups, cf.~\cite[Corollary~7.56]{Rotmanhom}. 
\end{proof}

In particular, $H_n(\mathscr G,\mathbb Q)\cong H_n(\mathscr G)\otimes_{\mathbb Z}\mathbb Q$.

\subsection{Six-term and long exact sequences}

Our goal is to compute the K-theory of $\mathcal O_{\mathcal X}$ and the groupoid homology of $\mathscr G_{\mathcal X}$. For the K-theory, we apply the six-term sequence associated to the relative Cuntz--Pimsner algebra \cite[Proposition 8.7]{Katsura04}:
\begin{equation}\label{Katsura six term}
\begin{tikzcd}[arrow style=math font, column sep = 1.9cm]
	{K_0(\cs(G_\reg))} & {K_0(\cs(G))} & {K_0(\mathcal O_{\mathcal X})} \\
	{K_1(\mathcal O_{\mathcal X})} & {K_1(\cs(G))} & {K_1(\cs(G_\reg))}
	\arrow["{[\iota] - [M_{\mathcal X}]}", from=1-1, to=1-2]
	\arrow[from=1-2, to=1-3]
	\arrow[from=1-3, to=2-3]
	\arrow[from=2-1, to=1-1]
	\arrow[from=2-2, to=2-1]
	\arrow["{[\iota] - [M_{\mathcal X}]}", from=2-3, to=2-2]
\end{tikzcd}
\end{equation}
Here $[M_{\mathcal X}] \colon K_i(\cs(G_\reg)) \to K_i(\cs(G))$ is the map in K-theory induced by the proper correspondence $M_{\mathcal X} \colon \cs(G_\reg) \to \cs(G)$, and the unmarked horizontal maps are induced by the nondegenerate $*$-homomorphism $\cs(G) \to \mathcal O_{\mathcal X}$. In general, a proper $\cs$-correspondence $E \colon A \to B$ induces a map in K-theory $[E] \colon K_i(A) \to K_i(B)$, e.g., by \cite[Remark B.4]{Katsura04}. It is straightforward to check that this is compatible with isomorphism and composition of $\cs$-correspondences, and that when $E$ is isomorphic to $B^n$ as a Hilbert $B$-module, this is simply induced by the resulting $*$-homomorphism $A \to M_n(B)$.

For groupoid homology we will construct an analogue of the six-term K-theory sequence \eqref{Katsura six term}. The Toeplitz extension is modelled at the groupoid level by the universal groupoid $\universal{\mathcal X}$ of the inverse semigroup $S_{\mathcal X}$, and $\mathscr G_{\mathcal X}$ is the reduction to the closed invariant set  of tight filters. 

The analogue of the K-theoretic isomorphism $K_*(\cs(G)) \to K_*(\mathcal T_{\mathcal X})$ is as follows. It was observed in \cite[Example 3.10]{miller2023ample} that $H_*(\universal{\mathcal X})$ is isomorphic to the homology $H_*(\underlying{S_{\mathcal X}})$ of the underlying groupoid $\underlying{S_{\mathcal X}}$ of $S_{\mathcal X}$. The underlying groupoid $\underlying{S_{\mathcal X}}$ is Morita equivalent to $G$ and therefore $H_*(\universal{\mathcal X}) \cong H_*(G)$. Moreover, the reduction of $\universal{\mathcal X}$ to the complement of the tight filters is Morita equivalent to $G_\reg$, so all the groupoid homology groups analogous to those in the six-term K-theory sequence \eqref{Katsura six term} are present. The groupoid homology analogue of the six-term exact sequence in K-theory of an extension of $\cs$-algebras is the following long exact sequence.

\begin{Prop}\label{t:tool1}
Let $A$ be an abelian group, $\mathscr G$ an ample groupoid and $C\subseteq \mathscr G^0$ a closed invariant subspace with complement $U=\mathscr G^0\setminus C$.  Then there is a long exact sequence in homology
\[\cdots\to H_{n+1}(\mathscr G|_C,A)\to  H_n(\mathscr G|_U,A)\to H_n(\mathscr G,A)\to H_n(\mathscr G|_C,A)\to \cdots\]
induced by the inclusions $U\to \mathscr G^0$ and $C\to \mathscr G^0$ of $\mathscr G$-spaces.  
\end{Prop}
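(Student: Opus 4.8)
The plan is to produce a short exact sequence of chain complexes
\[0\to C_\bullet(\mathscr G|_U,A)\to C_\bullet(\mathscr G,A)\to C_\bullet(\mathscr G|_C,A)\to 0\]
and to read off the asserted long exact sequence from the associated long exact sequence in homology. The geometric input is that, because $U$ and $C=\mathscr G^0\setminus U$ are invariant, every composable string $(g_1,\dots,g_n)\in\mathscr G^n$ has all of its units $\ran(g_1),\sour(g_1),\dots,\sour(g_n)$ in a single $\mathscr G$-orbit; hence they lie entirely in $U$ or entirely in $C$. This yields a decomposition $\mathscr G^n=(\mathscr G|_U)^n\sqcup(\mathscr G|_C)^n$ in which $(\mathscr G|_U)^n$ is open and $(\mathscr G|_C)^n$ is closed, for every $n$ (the cases $n=0,1$ being immediate from invariance of $U$).

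First I would apply the functor $A(-)$ to these open/closed decompositions. The open inclusion $i_n\colon(\mathscr G|_U)^n\hookrightarrow\mathscr G^n$ is \'etale, giving extension-by-zero $(i_n)_*\colon A(\mathscr G|_U)^n\to A\mathscr G^n$, and the closed inclusion $j_n\colon(\mathscr G|_C)^n\hookrightarrow\mathscr G^n$ is proper, giving restriction $(j_n)^*\colon A\mathscr G^n\to A(\mathscr G|_C)^n$. I claim
\[0\to A(\mathscr G|_U)^n\xrightarrow{(i_n)_*}A\mathscr G^n\xrightarrow{(j_n)^*}A(\mathscr G|_C)^n\to 0\]
is exact. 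It suffices to treat $A=\Z$: since $\Z(\mathscr G|_C)^n$ is free abelian by N\"obeling's theorem (as in the proof of Theorem~\ref{T:uct}), the $\Z$-sequence splits, and tensoring a split short exact sequence with $A$ preserves exactness (using $AX\cong\Z X\otimes_\Z A$). For the $\Z$-sequence, injectivity of extension-by-zero and vanishing of the composite are immediate, and surjectivity of restriction follows because a compact open subset of $(\mathscr G|_C)^n$ has the form $W\cap(\mathscr G|_C)^n$ for a compact open $W\subseteq\mathscr G^n$, which restricts onto it.

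Exactness in the middle is the point requiring care, and is where the non-Hausdorffness of $\mathscr G$ (e.g.\ for the Grigorchuk groupoid) makes the argument nontrivial: a function $f\in\Z\mathscr G^n$ with $f|_{(\mathscr G|_C)^n}=0$ is a finite combination of indicators of compact open \emph{Hausdorff} sets, and one must show it already lies in the image of extension-by-zero from the open part $(\mathscr G|_U)^n$. The saving feature is that a Hausdorff compact open set cannot contain two points that fail to be topologically separated, which forces the contributions along $(\mathscr G|_C)^n$ to cancel and $f$ to be carried by compact opens inside $(\mathscr G|_U)^n$. I expect this to be the main obstacle, and would either establish it directly or invoke the corresponding property of the functor $\Z(-)$ from~\cite{miller2023ample,li2022ample}.

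It then remains to check that $(i_n)_*$ and $(j_n)^*$ assemble into chain maps. The face maps $d_i$ of \eqref{eq:face.maps} restrict to the face maps of $\mathscr G|_U$ and $\mathscr G|_C$, and invariance gives the pullback relations $d_i^{-1}\bigl((\mathscr G|_C)^{n-1}\bigr)=(\mathscr G|_C)^n$ and likewise for $U$. Covariant functoriality of $(-)_*$ for \'etale maps shows $(i_n)_*$ commutes with $\partial$, while for $(j_n)^*$ the pullback relation is exactly the base change needed so that restriction commutes with the \'etale pushforwards defining $\partial$. With the short exact sequence of complexes in hand, the long exact sequence in homology is formal; its maps $H_n(\mathscr G|_U,A)\to H_n(\mathscr G,A)\to H_n(\mathscr G|_C,A)$ are those induced on homology by the inclusions $U\to\mathscr G^0$ and $C\to\mathscr G^0$ of $\mathscr G$-spaces, and the connecting homomorphism supplies the maps $H_{n+1}(\mathscr G|_C,A)\to H_n(\mathscr G|_U,A)$, matching the stated indexing.
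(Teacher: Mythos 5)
Your overall architecture coincides with the paper's: use invariance to decompose $\mathscr G^n=(\mathscr G|_U)^n\sqcup(\mathscr G|_C)^n$ into an open and a closed part, establish the termwise short exact sequence $0\to A(\mathscr G|_U)^n\to A\mathscr G^n\to A(\mathscr G|_C)^n\to 0$, check compatibility with the face maps, and take the associated long exact sequence. Your reduction to $A=\Z$ via N\"obeling's theorem and splitting is a legitimate (and slightly tidier) variant of the paper's direct treatment of general coefficients, and the surjectivity and chain-map steps are essentially as in the paper.

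The genuine gap is precisely the step you flag and then defer: exactness in the middle of the termwise sequence. In the non-Hausdorff case this \emph{is} the content of the proposition --- the paper states it could not find this written in the literature and supplies a proof, and later uses it to repair an unproven assertion in~\cite{gcrccr} --- so it cannot simply be ``invoked'' from \cite{miller2023ample} or \cite{li2022ample}; all that is available there is the presentation of $A\mathscr G^n$ by generators and relations (\cite[Lemma~2.2]{li2022ample}). Your heuristic (``a Hausdorff compact open set separates its points, which forces the contributions along $(\mathscr G|_C)^n$ to cancel'') does not convert into an argument: the difficulty is not separating points inside a single chart but comparing distinct compact open Hausdorff sets whose traces on the closed part agree. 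The paper's actual argument constructs an inverse to the induced map $A\mathscr G^n/A(\mathscr G|_U)^n\to A(\mathscr G|_C)^n$ on generators, and the crux is the lemma that if $W_1,W_2$ are compact open with $W_1\cap (\mathscr G|_C)^n=W_2\cap(\mathscr G|_C)^n$ then $a1_{W_1}-a1_{W_2}\in A(\mathscr G|_U)^n$: using compactness of $W_1\cap(\mathscr G|_C)^n$ and Hausdorffness of $W_1$ one finds a compact open $W\subseteq W_1\cap W_2$ containing $W_1\cap(\mathscr G|_C)^n$, so that $a1_{W_1}-a1_{W_2}=a1_{W_1\setminus W}-a1_{W_2\setminus W}$ with both sets compact open and contained in $(\mathscr G|_U)^n$; one must then also verify compatibility with the disjoint-union relations defining $A\mathscr G^n$. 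Until this (or an equivalent) argument is supplied, your short exact sequence of complexes is not established, and everything downstream, while formally correct, rests on it.
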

This is immediate for Hausdorff $\mathscr G$ because it is clear then that $0 \to A(\mathscr G|_U)^n \to A\mathscr G^n \to A(\mathscr G |_C)^n \to 0$
is exact, as $(\mathscr G |_C)^n = \mathscr G^n \setminus (\mathscr G |_U)^n$. The long exact sequence is presumably well-known outside of the Hausdorff setting too, and there are numerous ways to see this, but we present a proof for convenience along the above lines, as a consequence of \cite[Lemma 2.2]{li2022ample}. This states that if $X$ is a space with an open cover $\mathcal O$ by locally compact Hausdorff totally disconnected spaces, then $AX$ is $(\bigoplus_{V\in \mathcal U} A1_V)/N$ where $\mathcal U$ consists of all compact open subsets contained in some element of $\mathcal O$, and $N$ is generated by all $a1_V+a1_{V'} - a1_{V\cup V'}$ with $V,V'$ disjoint compact open subsets of some common element of $\mathcal O$. 

\begin{Prop}
Let $X$ be a space with a basis of compact open sets,
and let $U \subseteq X$ be an open subspace with complement $C = X \setminus U$. Then for any abelian group $A$, the sequence $0 \to AU \to AX \to AC \to 0$ is exact, where the first map is extension by $0$ 
and the second restriction. 
\end{Prop}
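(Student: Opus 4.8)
The three outer properties are routine and I would dispatch them first. The left map $i_* \colon AU \to AX$ is extension by zero: as $U$ is open, a compact open $V \subseteq U$ is a compact open subset of $X$, so $1_V \mapsto 1_V$ is well-defined, and it is injective because elements of $AX$ are genuine functions and extending a nonzero function by zero keeps it nonzero. The right map $i^* \colon AX \to AC$ is restriction: for $V$ compact open in $X$ the set $V \cap C$ is compact (being closed in the compact set $V$) and open in $C$, so $1_V \mapsto 1_{V \cap C} \in AC$. Clearly $i^* i_* = 0$, since $i_*(g)$ is supported off $C$, and $i^*$ is surjective by a compactness argument: a compact open $W \subseteq C$ equals $C \cap V'$ for some open $V' \subseteq X$; covering $W$ by finitely many basic compact opens contained in $V'$ and taking their union $V$ gives a compact open set with $V \cap C = W$, so $i^*(1_V) = 1_W$.

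The only real content is exactness at $AX$, that is, that $f \in AX$ with $f|_C = 0$ lies in $\image(i_*)$, equivalently that $f|_U \in AU$. For Hausdorff $X$ this is immediate: there $f$ is locally constant, so $\supp f$ is clopen, and being compact and contained in the open set $U$ it is a compact open subset of $U$, whence $f = f \cdot 1_{\supp f} \in AU$. The obstacle in the general locally Hausdorff case is precisely that a compact open set need not be closed, so $f$ need not be locally constant and $\supp f$ is in general neither open nor closed; repairing this is exactly the role of Lemma~2.2 of~\cite{li2022ample}. I would fix the cover $\mathcal O$ of $X$ by its Hausdorff compact open subsets, use that lemma both to write $f = \sum_{i=1}^n a_i 1_{V_i}$ with each $V_i$ compact open and to treat elements of $AX$ as honest functions, and then replace $X$ by the compact open set $K = \bigcup_i V_i \supseteq \supp f$ (so that $U, C$ become $K \cap U, K \cap C$, which does not affect membership in $AU$). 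This reduces to the case where $X$ is covered by finitely many Hausdorff compact opens $V_1, \dots, V_n$, on which I would induct on $n$, the case $n=1$ being the Hausdorff statement above.

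For the inductive step I would split $X = O \cup X'$ with $O = V_n$ a Hausdorff piece and $X' = V_1 \cup \dots \cup V_{n-1}$. The crux is the cosheaf content of Lemma~2.2: the Mayer--Vietoris sequence
\[ 0 \to A(O \cap X') \to AO \oplus AX' \to AX \to 0 \]
of extension-by-zero maps is short exact for this two-piece open cover, surjectivity coming from inclusion--exclusion inside the Hausdorff compact opens of $X$ and middle exactness from the fact that a section of $AO$ supported in $O \cap X'$ has compact open support. Intersecting this with the decomposition $X = U \sqcup C$ and invoking the open/closed sequences for $O$ and $O \cap X'$ (the Hausdorff case) and for $X'$ (the induction hypothesis) produces a short exact sequence of the associated two-term Mayer--Vietoris complexes, each of which is acyclic in degree $1$; its long exact homology sequence then collapses to the desired $0 \to AU \to AX \to AC \to 0$, closing the induction. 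I expect the Mayer--Vietoris short exact sequence to be the main obstacle, as it is the step where Lemma~2.2 and the Hausdorffness of individual compact open sets do the essential work.
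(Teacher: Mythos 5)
Your treatment of exactness at $AX$ is a valid alternative to the paper's, but the surjectivity argument you dispatch as routine contains a genuine error. You cover a compact open $W\subseteq C$ by finitely many basic compact opens of $X$ contained in $V'$ and declare their union $V$ to be compact open with $i^*(1_V)=1_W$. Under the paper's convention compact means quasicompact \emph{and Hausdorff}, and a finite union of compact open sets in a locally Hausdorff space need not be Hausdorff; worse, its indicator need not lie in $AX$ at all. For instance, glue two copies of $Y=\{0\}\cup\{1/n : n\geq 1\}$ along the open subset $\{1/n : n\geq 1\}$, and let $C=\{0_1,0_2\}$ be the doubled origin. No compact open $V$ satisfies $V\cap C=C$ (such a $V$ would contain both origins and hence fail to be Hausdorff), and if $V_1,V_2$ are compact open neighbourhoods of $0_1,0_2$ then $1_{V_1\cup V_2}\notin AX$, as one sees from the functional $f\mapsto f(0_1)+f(0_2)-\lim_n f(1/n)$, which vanishes on every generator $a1_V$ of $AX$ but not on $1_{V_1\cup V_2}$. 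Surjectivity is still true --- here $1_C$ is the image of $1_{V_1}+1_{V_2}$ once the $V_i$ are shrunk so that $V_i\cap C=\{0_i\}$ --- but the correct argument is the paper's: by Lemma~2.2 of~\cite{li2022ample} it suffices to hit indicators of compact opens of $C$ contained in a single Hausdorff chart, and those extend to a single compact open of $X$ inside that chart, where finite unions of compact opens are again compact open. (Alternatively, surjectivity drops out of your own inductive argument, so you could simply delete the direct proof.)

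The rest of the proposal is correct and takes a genuinely different route from the paper. The paper proves exactness at $AX$ by using Li's Lemma~2.2 to present $AC$ by generators $1_V$ with $V$ a compact open of $C$ inside a Hausdorff chart, and then constructs an explicit inverse $AC\to AX/AU$; the key checks are that any two compact open extensions of such a $V$ differ by an element of $AU$ and that the assignment respects disjoint unions. Your route --- restrict to the union of the finitely many Hausdorff compact opens supporting a given $f$, then induct on the number of charts via the Mayer--Vietoris sequence $0\to A(O\cap X')\to AO\oplus AX'\to AX\to 0$ and its analogues for $U$ and $C$, finishing with the nine lemma --- is sound: surjectivity of the Mayer--Vietoris maps is exactly Li's Lemma~2.2 for the two-chart cover, middle exactness uses the Hausdorffness of $O$ as you say, and the remaining columns are handled by the Hausdorff base case and the induction hypothesis. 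Both proofs lean on Lemma~2.2 at the same pressure point; yours is the more systematic cosheaf-style descent argument at the cost of verifying three Mayer--Vietoris sequences, while the paper's is shorter and avoids the induction.
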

\begin{proof}
Let $\mathcal O$ be an open cover of $X$ by locally compact Hausdorff totally disconnected spaces.
We write $\mathcal U$ for the set of compact open subspaces of $X$ which are contained within some member of $\mathcal O$, and we write $\mathcal U_C$ for the set of compact open subspaces of $C$ contained in some element of $\mathcal O$.

We first claim that each $V \in \mathcal U_C$ is of the form $W \cap C$ for some $W \in \mathcal U$. Well $V = W_0 \cap C$ for some open Hausdorff $W_0$ contained in an element of $\mathcal O$. By considering compact open neighbourhoods construct a compact open set $W \subseteq W_0$ containing $V$, so that $W\cap C=V$.  It follows that restriction $AX\to AC$ is surjective by \cite[Lemma 2.2]{li2022ample}, and $AU$ is contained in the kernel.  We construct an inverse to the induced map $AX/AU \to AC$.

To achieve this, we claim that given $W_1, W_2 \in \mathcal U$ with $W_1 \cap C = W_2 \cap C$, then $a1_{W_1} - a1_{W_2} \in AU$. Since $W_1$ is Hausdorff and $W_1\cap C$ is closed in $W_1$, hence compact, by considering compact open neighbourhoods construct a compact open set $W \subseteq W_1 \cap W_2$ containing $C \cap W_1$. Then $W$ is clopen in $W_i$, whence $W_i \setminus W \subseteq U$ is compact open, for $i=1,2$, and $a1_{W_1} - a1_{W_2} = a1_{W_1 \setminus W} - a1_{W_2 \setminus W}\in AU$.

It follows that there is a well-defined map $\bigoplus_{V\in \mathcal U_C}A1_V \to  AX/AU$ that sends  $a1_V$ to $a1_W + AU$ where $W\in \mathcal U$ with $W\cap C=V$. All we need is to check that this respects disjoint unions within a fixed member of $\mathcal O$. So suppose $V_1,V_2 \in \mathcal U_C$ are disjoint with $V_1 \sqcup V_2 \in \mathcal U_C$. Find $W \in \mathcal U$ with $W \cap C = V_1 \sqcup V_2$.   By considering compact open neighbourhoods and using that $W$ is Hausdorff, construct a compact open set $W_2 \subseteq W \setminus V_1$ containing $V_2$. Setting $W_1 = W \setminus W_2$ and noting that $W_2$ is clopen in $W$, whence $W_1$ is compact open, we see that $a1_{V_1 \sqcup V_2}$ is sent to $a1_W +AU= a1_{W_1} + a1_{W_2}+AU$, which is the sum of what $a1_{V_1}$ and $a1_{V_2}$ are sent to, so it is indeed compatible with disjoint unions.

Thus by \cite[Lemma 2.2]{li2022ample} we obtain a homomorphism $\psi\colon AC \to AX/AU$ with the property that if $W\in \mathcal U$, then $\psi(a1_{W\cap C}) = a1_W+AU$. It then follows that $\psi(f|_C) = f+AU$ for all $f\in AX$, and so if $f|_C=0$, then $f\in AU$, as required.
\end{proof}

In the discussion proceeding~\cite[Proposition~5.3]{gcrccr} it is shown that that if $U$ is an open invariant subspace of $\mathscr G^0$ and $C=\mathscr G^0\setminus U$, then restriction of functions gives a surjective $K$-algebra homomorphism $\pi\colon K\mathscr G\to K\mathscr G|_C$, and it is asserted without proof that $\ker \pi=K\mathscr G|_U$. We may now remedy this gap using the above result.  

\begin{Cor}
Let $\mathscr G$ be an ample groupoid and let $C\subseteq \mathscr G^0$ be a closed invariant subspace with complement $U=\mathscr G^0\setminus C$.  Let $K$ be any commutative ring with unit.  Then $K\mathscr G|_U$ is an ideal of $K\mathscr G$ and $K\mathscr G/K\mathscr G|_U\cong K\mathscr G|_C$.
\end{Cor}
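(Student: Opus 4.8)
The plan is to deduce the statement directly from the preceding Proposition, applied to the arrow space of $\mathscr G$, together with the first isomorphism theorem; essentially all of the substantive work has already been carried out in that Proposition.

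First I would record the topological setup. Since $U$ is open and invariant, the reduction $\mathscr G|_U = \sour\inv(U) = \ran\inv(U)$ is an open subset of the arrow space $\mathscr G$, and its complement is exactly $\mathscr G|_C = \sour\inv(C)$, which is closed. As $\mathscr G$ is ample, its arrow space has a basis of compact open sets, so the hypotheses of the preceding Proposition are satisfied with $X = \mathscr G$, open subspace $\mathscr G|_U$, and closed complement $\mathscr G|_C$.

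Taking $A = K$, viewed merely as an abelian group, the Proposition then yields a short exact sequence of abelian groups
\[ 0 \to K\mathscr G|_U \xrightarrow{\iota} K\mathscr G \xrightarrow{\rho} K\mathscr G|_C \to 0, \]
in which $\iota$ is extension by zero and $\rho$ is restriction of functions. The key observation is that $\rho$ coincides, as a map of sets, with the surjective $K$-algebra homomorphism $\pi$ of~\cite{gcrccr}, since both send a function to its restriction to $\mathscr G|_C$. Exactness therefore gives $\ker\pi = \ker\rho = \image\iota = K\mathscr G|_U$ as $K$-submodules of $K\mathscr G$.

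Finally, because $\pi$ is a ring homomorphism, its kernel is automatically a two-sided ideal; hence $K\mathscr G|_U$ is an ideal of $K\mathscr G$. The first isomorphism theorem then gives $K\mathscr G/K\mathscr G|_U = K\mathscr G/\ker\pi \cong \image\pi = K\mathscr G|_C$, using surjectivity of $\pi$. I do not anticipate a genuine obstacle here: the content has been front-loaded into the Proposition, and the only point requiring a moment's care is the identification of the set map $\rho$ with the algebra homomorphism $\pi$, which is immediate from their common description as restriction of functions.
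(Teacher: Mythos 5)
Your proof is correct and follows exactly the route the paper intends: apply the preceding Proposition to the arrow space $\mathscr G$ with the open subset $\mathscr G|_U=\sour\inv(U)$ and closed complement $\mathscr G|_C$, identify the restriction map with the surjective $K$-algebra homomorphism $\pi$ of~\cite{gcrccr}, and conclude via the first isomorphism theorem. The only point worth making explicit is that invariance of $U$ gives $\mathscr G|_U=\sour\inv(U)$ open and $\mathscr G|_C$ closed, which you do note, so there is nothing to add.
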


\subsection{\'Etale correspondences}

A proper \'etale correspondence $\Omega \colon G \to H$ of ample groupoids induces a map 
\[ H_*(\Omega) \colon H_*(\mathscr G) \to H_*(\mathscr H) \]
by \cite{miller2023ample}. This is functorial with respect to composition of correspondences, so when $\Omega$ is a Morita equivalence $H_*(\Omega)$ is an isomorphism.  Moreover, isomorphic correspondences induce the same maps on homology (and similarly for K-theory). 
The induced map $H_\ast(\Omega)$ can be understood as follows.   The right $\Z\mathscr H$-module $\Z\Omega$ is flat by~\cite[Proposition~2.7]{miller2023ample}. There is a natural $\mathscr H$-equivariant map $(\Z \Omega)_{\mathscr G}  \to \Z \mathscr H^0$ induced by $\sour$, and hence we have a natural transformation $(\Z\Omega\otimes_{\Z \mathscr H} (-))_{\mathscr G}\to (-)_{\mathscr H}$, which induces a natural homomorphism $H_\bullet(\mathscr G;\Z\Omega\otimes_{\Z\mathscr H} M)\to H_\bullet(\mathscr H;M)$ in $\Z\mathscr H$-modules $M$ since $\Z \Omega \otimes_{\Z \mathscr H} (-)$ is exact.  In the case that $M=A\mathscr H^0$, $\Z\Omega\otimes_{\Z\mathscr H}A\mathscr H^0\cong (\Z\Omega/\mathscr H) \otimes_{\mathbb Z}A$.  The proper map $\Omega/\mathscr H\to \mathscr G^0$ induced by $\ran$ yields a $\Z\mathscr G$-module homomorphism $\Z\mathscr G^0\to \Z\Omega/\mathscr H$, and hence a homomorphism $A\mathscr G^0\to (\Z\Omega/\mathscr H)\otimes_{\mathbb Z}A$, which induces a homomorphism $H_\bullet(\Omega,A)\colon H_\bullet(\mathscr G,A)\to H_\bullet(\mathscr G; \Z\Omega\otimes_{\Z\mathscr H}A\mathscr H^0)\to H_\bullet(\mathscr H,A)$.  It is shown in~\cite{miller2023ample} that the map induced on homology with $\mathbb Z$-coefficients commutes with composition.  The proof 
works \textit{mutatis mutandis} with coefficients $A$ by tensoring all relevant diagrams with $A$.  In particular, since Morita equivalences are given by invertible \'etale correspondences,  $H_\bullet(-,A)$ is invariant under Morita equivalence for any coefficient group $A$.

 Note that a finite disjoint union of proper \'etale correspondences $\Omega_i \colon \mathscr G \to \mathscr H$ is again a proper \'etale correspondence and that disjoint unions become sums on the level of homology (and also in K-theory). 
 Let us describe $H_*(\Omega)$ in a few key examples. 

An \'etale homomorphism $\varphi \colon \mathscr G \to \mathscr H$ induces an \'etale correspondence $\Omega_\varphi \colon \mathscr G \to \mathscr H$ with bispace $\mathscr G^0 \times_{\mathscr H^0} \mathscr H$. For each $n \geq 0$ we obtain an \'etale map $\varphi^n \colon \mathscr G^n \to \mathscr H^n$, which together induce a chain map $(\varphi_\bullet)_* \colon C_\bullet(\mathscr G,A) \to C_\bullet(\mathscr H,A)$. Adding coefficients to \cite[Example 3.8]{miller2023ample}, $H_*(\Omega_\varphi,A)$ is induced by the homology of this chain map.

For an action $\mathscr G \acts X$ on a (totally disconnected) locally compact Hausdorff space $X$ the space $\mathscr G \ltimes X$ is the bispace for an \'etale correspondence $\mathscr G \ltimes X \colon \mathscr G \to \mathscr G \ltimes X$ which we call the associated \textit{action correspondence}. This is proper if and only if the anchor map $\tau \colon X \to \mathscr G^0$ is proper. In this case we obtain a proper map $\tau_n \colon (\mathscr G \ltimes X)^n \to \mathscr G^n$ for each $n \geq 0$, which together induce a chain map $(\tau_\bullet)^* \colon C_\bullet(\mathscr G,A) \to C_\bullet(\mathscr G \ltimes X,A)$. This induces $H_*(\mathscr G \ltimes X,A) \colon H_*(\mathscr G,A) \to H_*(\mathscr G \ltimes X,A)$ in homology by \cite[Example 3.9]{miller2023ample}.  

We can therefore compute the induced map in homology of a proper \'etale correspondence $\mathcal X \colon \mathscr G \to\mathscr H$ if we have a decomposition of $\mathcal X$ into a proper action correspondence and an \'etale homomorphism. We may obtain a decomposition exactly of this form from an $\mathscr H$-transversal $E \subseteq \mathcal X$. We say $E$ is a \textit{continuous} $\mathscr H$-transversal if the transversal map $\mathcal X \to E$ is continuous.
\begin{Prop}\label{p:actionhomdecomposition}
Let $\mathcal X \colon \mathscr G \to \mathscr H$ be an \'etale correspondence and let $E \subseteq \mathcal X$ be a continuous $\mathscr H$-transversal. Then there is an action $\mathscr G \acts E$ with anchor $\ran$, written $(g,e) \mapsto g(e)$, determined by $g(e) \in ge\mathscr H$ and an \'etale homomorphism $\sigma \colon \mathscr G \ltimes E \to \mathscr H$, written $(g,e) \mapsto g |_e$, determined by $ge = g(e) g |_e$. Moreover, the associated \'etale correspondences compose to form $\mathcal X \colon \mathscr G \to \mathscr H$.
\end{Prop}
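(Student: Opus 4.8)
The plan is to read off both the action and the cocycle from the unique factorization of $ge$ against the transversal, to deduce the groupoid-theoretic identities from that uniqueness, and finally to exhibit $\mathcal X$ as the composite by an explicit formula. Since the right action $\mathcal X \rightacts \mathscr H$ is free and proper, for $g \in \mathscr G$ and $e \in E$ with $\sour(g) = \ran(e)$ the element $ge \in \mathcal X$ has a unique expression $ge = f h$ with $f \in E$ and $h \in \mathscr H$, and I would set $g(e) = f$, $g|_e = h$, so that $ge = g(e)\, g|_e$ and $g(e) \in ge\mathscr H$ by construction. Comparing the two factorizations of $(g_1 g_2)e = g_1\bigl(g_2(e)\,g_2|_e\bigr) = \bigl(g_1\, g_2(e)\bigr)g_2|_e = g_1(g_2(e))\, g_1|_{g_2(e)}\, g_2|_e$ and invoking uniqueness yields at once the action identity $(g_1 g_2)(e) = g_1(g_2(e))$ and the cocycle identity $(g_1 g_2)|_e = g_1|_{g_2(e)}\, g_2|_e$; units act trivially with $\ran(e)|_e = \sour(e)$, so $\sigma$ is a groupoid homomorphism with $\sigma^0 = \sour|_E$. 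Continuity is where the continuous-transversal hypothesis enters: writing $r \colon \mathcal X \to E$ for the transversal map, I have $g(e) = r(ge)$, continuous since the left action and $r$ are, and $g|_e = \langle g(e), ge \rangle$, continuous because $\langle -, - \rangle$ is continuous on $\mathcal X \times_{\mathcal X/\mathscr H} \mathcal X$ by \cite[Lemma~3.4]{AKM22}. I expect all of this to be routine bookkeeping.

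The main obstacle is showing that $\sigma$ is \'etale. The key preliminary observation is that the continuous transversal induces a homeomorphism $\mathcal X \cong E \times_{\mathscr H^0} \mathscr H$ via $x \mapsto (r(x), \langle r(x), x \rangle)$, intertwining the right $\mathscr H$-actions and carrying $\sour \colon \mathcal X \to \mathscr H^0$ to $(e,h) \mapsto \sour(h)$. Since $\mathscr H^0$ is open in $\mathscr H$, the set $E \times_{\mathscr H^0} \mathscr H^0$ is open in $E \times_{\mathscr H^0} \mathscr H$, and under the homeomorphism this is precisely $E$; hence $E$ is open in $\mathcal X$ and $\sour|_E$ is \'etale as the restriction of the \'etale map $\sour$ to an open set. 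Consequently the projection $E \times_{\mathscr H^0} \mathscr H \to \mathscr H$ is \'etale, and precomposing with the multiplication map $\mathscr G \ltimes E \to \mathcal X$, $(g,e) \mapsto ge$ — which is a local homeomorphism because left multiplication by a compact open bisection of $\mathscr G$ is a homeomorphism onto an open subset of $\mathcal X$ — expresses $\sigma$ as a local homeomorphism, i.e.\ an \'etale homomorphism. This step is the only genuinely delicate one; I regard the openness of $E$ in $\mathcal X$ as its crux.

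For the final claim I would define $\Psi$ on the composite bispace $(\mathscr G \ltimes E) \times_{\mathscr G \ltimes E} (E \times_{\mathscr H^0} \mathscr H)$ by $\Psi([(g,e),(e,h)]) = geh$. This respects the defining relation of the fibre product, as $(g g'')e''\, h = g\bigl(g''(e'')\, g''|_{e''}\bigr)h$, and it is manifestly a map of $\mathscr G$-$\mathscr H$-bispaces. Using the relation to absorb the $\mathscr G$-factor, every class rewrites as $[(\ran(f),f),(f,h')]$ with $f = g(e)$ and $h' = g|_e h$, so that $\Psi$ sends it to $fh'$ whose transversal factorization recovers exactly $(f,h')$; this gives injectivity, while surjectivity follows from $x = r(x)\,\langle r(x),x\rangle$. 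Continuity of $\Psi$ and of its inverse follows from continuity of the actions, of $r$, and of $\langle -,-\rangle$, so $\Psi$ is an isomorphism of correspondences identifying $\Omega_\sigma \circ (\mathscr G \ltimes E)$ with $\mathcal X$, completing the proof.
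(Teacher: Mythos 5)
Your proof is correct and follows essentially the same route as the paper: define $g(e)$ and $g|_e$ by unique factorization of $ge$ against the transversal, obtain continuity from the transversal map and $\langle-,-\rangle$, and identify the composite bispace with $\mathcal X$ via $(e,h)\mapsto eh$. The one place you go beyond the paper's write-up is in justifying that $\sigma$ is \'etale --- the paper simply notes that $\sigma$ restricts to $\sour$ on the unit space, while you supply the supporting observation that $E$ is open in $\mathcal X$ (via the homeomorphism $\mathcal X\cong E\times_{\mathscr H^0}\mathscr H$), so that $\sour|_E$ really is a local homeomorphism; that is a worthwhile detail to have spelled out.
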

\begin{proof}
For each $g \in \mathscr G$ and $e \in E$ with $\sour(g) = \ran(e)$ there is a unique $g(e) \in E \cap ge\mathscr H$ by transversality, and this assignment is continuous by continuity of $E$. This defines our action $\mathscr G \acts E$ with anchor $\ran$. Given $(g,e) \in \mathscr G \ltimes E$ there is a unique $g |_e \in \mathscr H$ with $ge = g(e) g|_e$ because the action $\mathcal X \rightacts \mathscr H$ is free. This defines our homomorphism $\sigma \colon \mathscr G \ltimes E \to \mathscr H$ which is continuous by continuity of $\langle - , - \rangle \colon \mathcal X \times_{\mathcal X/\mathscr H} \mathcal X \to \mathscr H$ and \'etale because it restricts to $\sour$ on the unit space.

The \'etale correspondence of $\sigma \colon \mathscr G \ltimes E \to \mathscr H$ has bispace $E \times_{\mathscr H^0} \mathscr H$, which is $\mathscr H$-equivariantly homeomorphic to $\mathcal X$ via the map $(e,h) \mapsto eh$. Through this homeomorphism $(g,e) \in \mathscr G \ltimes E$ acts on $x \in \mathcal X$ with $x = eh$ by $(g,e)x = g(e) g|_e h = gx$. Composition with the action correspondence $\mathscr G \to \mathscr G \ltimes E$ does not change the underlying $\mathscr H$-space $\mathcal X$, and the left action becomes $(g,x) \mapsto gx$, which is to say we recover $\mathcal X \colon \mathscr G \to \mathscr H$.
\end{proof}

In this setting we typically write without mention $(g,e) \mapsto g(e)$ for the action $\mathscr G \acts E$ and $(g,e) \mapsto g |_e$ for the homomorphism $\sigma \colon \mathscr G \ltimes E \to \mathscr H$. Putting this decomposition together with the above description of the map induced in homology by proper action correspondences and \'etale homomorphisms, we obtain: 

\begin{Prop}\label{p:description of map in homology}
Let $\mathcal X \colon \mathscr G \to \mathscr H$ be a proper \'etale correspondence with continuous $\mathscr H$-transversal $E \subseteq \mathcal X$. Then the induced map in homology
\[H_*(\mathcal X) \colon H_*(\mathscr G,A) \to H_*(\mathscr H,A)\] 
is induced by the chain map $C_\bullet(\mathscr G) \to C_\bullet(\mathscr H)$ given at $n \geq 0$ by $\xi \mapsto \tilde \xi \colon C_n(\mathscr G) \to C_n(\mathscr H)$, where 
\begin{equation}\label{ample description of map}
\tilde \xi  \colon (h_1, \dots, h_n) \mapsto \sum_{e \in E, \sour(e) = \sour(h_n)} \sum_{\substack{g_\bullet \in \mathscr G^n, \sour(g_n) = \ran(e) \\ g_i |_{g_{i+1}\cdots g_n(e)} = h_i }} \xi(g_1,\dots, g_n).
\end{equation}
If $\mathscr G = G$ and $\mathscr H = H$ are discrete, this can be expressed as
\begin{align}\label{discrete description of map}
C_n(G) & \to C_n(H) \notag \\ 
(g_1,\dots,g_n) & \mapsto \sum_{e \in E, \ran(e) = \sour(g_n)} (g_1 |_{g_2\cdots g_n(e)}, \dots, g_n |_e).
\end{align}
\end{Prop}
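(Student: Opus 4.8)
The plan is to read off $H_*(\mathcal{X})$ from the decomposition supplied by Proposition~\ref{p:actionhomdecomposition}, combined with the functoriality of $H_*(-)$ under composition of correspondences and the two chain-level descriptions recorded above. The continuous transversal $E$ presents $\mathcal{X}\colon\mathscr{G}\to\mathscr{H}$ as the composite of the action correspondence $\mathscr{G}\to\mathscr{G}\ltimes E$, whose anchor is $\ran\colon E\to\mathscr{G}^0$ and which is proper precisely because $\mathcal{X}$ is, followed by the \'etale homomorphism $\sigma\colon\mathscr{G}\ltimes E\to\mathscr{H}$, $(g,e)\mapsto g|_e$. Since $H_*(-)$ respects composition (and since isomorphic correspondences induce equal maps), $H_*(\mathcal{X})$ is induced by the composite chain map $(\sigma_\bullet)_*\circ(\tau_\bullet)^*$, where $\tau=\ran$ and $(\tau_\bullet)^*$ is the pullback attached to the action correspondence while $(\sigma_\bullet)_*$ is the pushforward attached to the \'etale homomorphism.

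First I would fix coordinates on the simplicial degrees of $\mathscr{G}\ltimes E$. An $n$-simplex is a composable string $((g_1,e_1),\dots,(g_n,e_n))$, and composability forces $e_i=g_{i+1}\cdots g_n(e)$ with $e:=e_n$; hence such strings are in natural bijection with pairs $(g_\bullet,e)\in\mathscr{G}^n\times_{\mathscr{G}^0}E$ subject to $\sour(g_n)=\ran(e)$. Under this bijection $\tau_n$ is the projection to $g_\bullet$, which exhibits it as the pullback of $\ran$ along $\sour\circ\mathrm{pr}_n$, confirming that it is proper and that $(\tau_n)^*(\xi)$ evaluated at such a string simply returns $\xi(g_1,\dots,g_n)$.

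Next I would compute the pushforward along the \'etale map $\sigma^n$, which sends the string above to $(g_1|_{e_1},\dots,g_n|_{e_n})$. Evaluating $(\sigma^n)_*(\tau_n)^*(\xi)$ at $(h_1,\dots,h_n)$ then amounts to summing $\xi(g_1,\dots,g_n)$ over the fibre $(\sigma^n)^{-1}(h_1,\dots,h_n)$, that is, over all $(g_\bullet,e)$ with $\sour(g_n)=\ran(e)$ and $g_i|_{g_{i+1}\cdots g_n(e)}=h_i$ for each $i$. Using the cocycle identity $\sour(g|_e)=\sour(e)$ from Definition~\ref{def:v.graph}, the constraint on the last coordinate forces $\sour(e)=\sour(h_n)$, so the fibre is indexed exactly by the double sum in \eqref{ample description of map}, giving the claimed formula. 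The discrete case \eqref{discrete description of map} then drops out by taking $\xi$ to be the indicator of a single tuple $(g_1,\dots,g_n)$: the inner sum collapses to one term and only the sum over $e\in E$ with $\ran(e)=\sour(g_n)$ survives.

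The only genuinely delicate points are bookkeeping. I expect the main obstacle to be verifying that composability in $\mathscr{G}\ltimes E$ really does determine every $e_i$ from the single datum $e=e_n$, so that the simplices are parametrized correctly and the fibre of $\sigma^n$ matches the index set of \eqref{ample description of map}; alongside this one must check that properness of $\mathcal{X}$ is exactly what makes $(\tau_\bullet)^*$ defined while $\sigma$ being \'etale makes $(\sigma_\bullet)_*$ defined. Both are routine within the established framework, and no idea beyond functoriality and the two model computations is needed.
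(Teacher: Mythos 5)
Your proposal is correct and follows exactly the route the paper intends: decompose $\mathcal X$ via Proposition~\ref{p:actionhomdecomposition} into the proper action correspondence $\mathscr G \to \mathscr G \ltimes E$ and the \'etale homomorphism $\sigma$, invoke functoriality of $H_*$ under composition, and compute the composite chain map $(\sigma_\bullet)_* \circ (\tau_\bullet)^*$ using the identification of $(\mathscr G \ltimes E)^n$ with pairs $(g_\bullet, e)$. Your bookkeeping (composability forcing $e_i = g_{i+1}\cdots g_n(e)$, properness of $\tau_n$ as a pullback of $\ran\colon E \to \mathscr G^0$, and the constraint $\sour(e)=\sour(h_n)$ coming from $\sour(g_n|_e)=\sour(e)$) is all accurate.
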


Note that the projection $\mathcal X\to \mathcal X/\mathscr H$ is \'etale and $\mathcal X/\mathscr H$ is locally compact, Hausdorff and totally disconnected~\cite{AKM22}.  Thus if $\mathcal X$ is $\sigma$-compact, then it admits a continuous $\mathscr H$-transversal by a standard argument.

Proposition~\ref{p:description of map in homology}, naturality of the Universal Coefficient Theorem and the short five lemma imply that if $\mathcal X\colon \mathscr G\to \mathscr H$ is a proper \'etale correspondence with a continuous $\mathscr H$-transversal and $H_*(\mathcal X)\colon H_*(\mathscr G)\to H_*(\mathscr H)$ is an isomorphism, then $H_*(\mathcal X)\colon H_*(\mathscr G,A)\to H_*(\mathscr H,A)$ is an isomorphism for all abelian groups $A$.

Up to Morita equivalence, discrete groupoids are given by disjoint unions of groups. Explicitly, given a transversal $T \subseteq G^0$ for a discrete groupoid $G$, the inclusion of the isotropy groups $\bigsqcup_{v \in T} G^v_v \hookrightarrow G$ is a Morita equivalence, and thus we get isomorphisms
\begin{align*}
H_*(G) & \cong \bigoplus_{v \in T} H_*(G^v_v) \\
K_*(\cs(G)) & \cong \bigoplus_{v \in T} K_*(\cs(G^v_v))
\end{align*}
in homology and K-theory. Through this principle an \'etale correspondence $\mathcal X \colon G \to H$ of discrete groupoids can be broken down into group-theoretic information. We start with action correspondences:

\begin{Prop}\label{p:actiondecomposition}
Let $G$ be a discrete groupoid with a discrete $G$-space $E$, suppose that $T_G \subseteq G^0$ is a transversal for $G$ and pick a $G$-transversal $T_E \subseteq E$ with $\ran(T_E) \subseteq T_G$. For $t \in T_E$ consider the stabilizer group $G_t = \{ g \in G^{\ran(t)}_{\ran(t)} \mid g t = t \}$, which includes into $G \ltimes E$ as the isotropy group at $t$. Then there is a commutative diagram up to isomorphism
\[\begin{tikzcd}[arrow style=math font]
	G & {} & {G \ltimes E} \\
	{\bigsqcup_{v \in T_G} G^v_v} && {\bigsqcup_{t \in T_E} G_t}
	\arrow[from=1-1, to=1-3]
	\arrow[hook, from=2-1, to=1-1]
	\arrow["{\bigsqcup_{t \in T_E} \tr_t}", from=2-1, to=2-3]
	\arrow[hook, from=2-3, to=1-3]
\end{tikzcd}\]
where $\tr_t\colon G^{\ran(t)}_{\ran(t)} \to G_t$ is the \'etale correspondence with bispace $G^{\ran(t)}_{\ran(t)}$. 
\end{Prop}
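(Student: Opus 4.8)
The plan is to verify that the square commutes up to isomorphism by computing each of the two composite correspondences $\bigsqcup_{v\in T_G}G^v_v \to G\ltimes E$ explicitly as a bispace and then exhibiting an isomorphism between them. Recall that each vertical arrow is the Morita equivalence attached to an inclusion of isotropy groups: the left one has bispace $\{g\in G\mid \ran(g)\in T_G\}$, with the evident left $\bigsqcup_v G^v_v$-action by the isotropy at $\ran(g)$ and right $G$-action by multiplication, and the right one has bispace $\{(g,e)\in G\ltimes E\mid g(e)\in T_E\}$, with left $\bigsqcup_t G_t$-action and right $(G\ltimes E)$-multiplication. Since Morita equivalences are invertible in the correspondence category, producing a single bispace isomorphism is exactly what ``commutes up to isomorphism'' asks for.

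First I would compute the up-then-right composite, namely $(G\ltimes E)\circ \Omega$ where $\Omega=\{g\in G\mid \ran(g)\in T_G\}$, whose underlying bispace is the fibre product $\Omega\times_G(G\ltimes E)$. Using the balancing relation $[\omega h,\lambda]=[\omega, h\lambda]$ over the middle groupoid $G$, I would normalise a class $[g,(h,e)]$ to the form $[\gamma,(\ran(e),e)]$ with $\gamma=gh$, where $\ran(e)$ denotes the unit at $\ran(e)$; a short check shows this representative is unique. This identifies the bispace with $\{(\gamma,e)\mid \ran(\gamma)\in T_G,\ \sour(\gamma)=\ran(e)\}$, where $x\cdot(\gamma,e)=(x\gamma,e)$ for $x$ in the appropriate isotropy group and $(\gamma,e)\cdot(c,e')=(\gamma c,e')$ whenever $e=c\cdot e'$.

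Next I would treat the right-then-up composite $\Omega'\circ\bigl(\bigsqcup_t\tr_t\bigr)$, whose elements are classes $[k,(g,e)]$ with $t:=g(e)\in T_E$ and $k\in G^{\ran(t)}_{\ran(t)}$, balanced over $\bigsqcup_t G_t$. I would define the candidate isomorphism by $\Psi([k,(g,e)])=(kg,e)$ into the bispace from the previous paragraph. Note $\ran(kg)=\ran(k)=\ran(t)\in\ran(T_E)\subseteq T_G$, so the target is correct, and $\Psi$ is well defined because for $a\in G_t$ one has $kag=k(ag)$ while $a$ sends $(g,e)$ to $(ag,e)$. The crucial step is surjectivity: given $(\gamma,e)$ with $\ran(\gamma)\in T_G$, choose the orbit representative $t\in T_E$ of $\gamma\cdot e$ and $h\in G$ with $h\cdot t=\gamma\cdot e$; then $\ran(h)=\ran(\gamma)\in T_G$ and $\sour(h)=\ran(t)\in T_G$, so $h$ is an isomorphism between two objects of the transversal $T_G$, forcing those objects to coincide and hence $h$ to be \emph{isotropy}. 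Setting $k=h$ and $g=h^{-1}\gamma$ gives $g(e)=t\in T_E$ and $kg=\gamma$. This is precisely where the hypothesis $\ran(T_E)\subseteq T_G$ is indispensable.

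The main obstacle is the non-freeness of the action $G\acts E$: the stabilisers $G_t$ are exactly what make the bottom map a genuine transfer rather than an isomorphism, and they must be tracked carefully in the well-definedness and injectivity arguments. For injectivity I would use that if $k_1g_1=k_2g_2$ over a common $e$, then $\gamma\cdot e=k_1t_1=k_2t_2$ lies in a single orbit, so $t_1=t_2$ and $k_2^{-1}k_1\in G_t$, which is absorbed by the balancing relation. The remaining verifications that $\Psi$ is left $\bigsqcup_v G^v_v$-equivariant (immediate from $\Psi([xk,(g,e)])=(xkg,e)$) and right $(G\ltimes E)$-equivariant (from $[k,(g,e)]\cdot(c,e')=[k,(gc,e')]$ with $g(e)=(gc)(e')$ when $e=c\cdot e'$) are then routine, completing the identification of the two composites.
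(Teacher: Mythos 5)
Your proposal is correct and follows essentially the same route as the paper: both compute the two composite bispaces and identify them via the map $[k,(g,e)]\mapsto (kg,e)$, with the paper writing down the inverse $(g,e)\mapsto[gh,(h^{-1},e)]$ where $e=ht$ in place of your surjectivity/injectivity verification. Your treatment is merely more explicit about where the transversality hypotheses $\ran(T_E)\subseteq T_G$ and the balancing over $G_t$ enter.
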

\begin{proof}
For $t \in T_E$ the composition of $\tr_t \colon G^{\ran(t)}_{\ran(t)} \to G_t$ with the inclusion $G_t \hookrightarrow G \ltimes E$ has bispace $G^{\ran(t)}_{\ran(t)} \times_{G_t} t(G \ltimes E)$, which is isomorphic to \[ G^{\ran(t)}_{\ran(t)} t (G \ltimes E) = \{ (g,e) \in G \ltimes E \mid \ran(g) = \ran(t), e \in Gt \} \]
via the map $[g,(h,e)]_G \mapsto (gh,e)$. The inverse map is $(g,e)\mapsto [gh,(h\inv,e)]_G$ where $e=ht$.  If we fix $v \in T_G$ and take the disjoint union over $t \in T_E \cap \ran \inv(v)$ we obtain $\{ (g,e) \in G \ltimes E \mid \ran(g) = v \}$ which is the bispace for the composition $G^v_v \to G \to G \ltimes E$.
\end{proof}
Note that the correspondence $\tr_t \colon G^{\ran(t)}_{\ran(t)} \to G_t$ is proper if and only if $G_t$ has finite index in $G^{\ran(t)}_{\ran(t)}$, which happens for every $t \in T_E$ if and only if $\ran \colon E \to G^0$ is proper. We combine Propositions \ref{p:actionhomdecomposition} and \ref{p:actiondecomposition} into a single statement for convenience. 

\begin{Prop}\label{p:transversal decomposition}
Let $\mathcal X \colon G \to H$ be an \'etale correspondence of discrete groupoids, and let $T_G \subseteq G^0$ and $T_H \subseteq H^0$ be transversals for $G$ and $H$. Pick an $H$-transversal $E \subseteq \mathcal X$ and, for each $w \in H^0$, write $t_w \in T_H$ for its image under the transversal map and pick $h_w \in H$ with $\ran(h_w) = w$ and $\sour(h_w) = t_w$. Pick a $G$-transversal $T_E \subseteq E$ with $\ran(T_E) \subseteq T_G$ and for $e \in T_E$ consider the stabilizer group $G_e = \{ g \in G^{\ran(e)}_{\ran(e)} \mid ge \in eH \}$. 

Then there is a commutative diagram up to isomorphism
\[ \begin{tikzcd}[arrow style=math font, column sep = 1.5cm]
	G & & H \\
	{\bigsqcup_{v \in T_G} G^v_v}  & & {\bigsqcup_{w \in T_H} H^w_w}
	\arrow["{\mathcal X}", from=1-1, to=1-3]
	\arrow[hook, from=2-1, to=1-1]
        \arrow["{\bigsqcup_{e \in T_E} C_{\sour(e)} \circ \Sigma_e \circ \tr_e }", from=2-1, to=2-3]
	\arrow[hook, from=2-3, to=1-3]
\end{tikzcd} \]
where for $e \in T_E$ the \'etale correspondence $\tr_e \colon G^{\ran(e)}_{\ran(e)} \to G_t$ has bispace $G^{\ran(e)}_{\ran(e)}$, $\Sigma_e \colon G_e \to H^{\sour(e)}_{\sour(e)}$ is the \'etale correspondence of the homomorphism $\sigma_e\colon G_e \to H^{\sour(e)}_{\sour(e)}$ given by $g \mapsto g |_e$ and, for $w \in \sour(E)$, $C_w \colon H^{w}_{w} \to H^{t_{w}}_{t_{w}}$ is the \'etale correspondence of the homomorphism $c_w\colon H^{w}_{w} \to H^{t_{w}}_{t_{w}}$ given by $h \mapsto  h_{w} \inv h h_{w} $.
\end{Prop}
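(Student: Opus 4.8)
The plan is to obtain the diagram by splicing together the two preceding decompositions and then resolving the one genuinely new ingredient, namely the behaviour of the \'etale homomorphism $\sigma$ under reduction to isotropy groups. First I would invoke Proposition~\ref{p:actionhomdecomposition} with the $H$-transversal $E \subseteq \mathcal X$ (continuity of the transversal being automatic in the discrete setting) to factor $\mathcal X \colon G \to H$ as the composite of the action correspondence of $G \acts E$ followed by the \'etale homomorphism $\sigma \colon G \ltimes E \to H$, $(g,e) \mapsto g|_e$. Since induced maps of correspondences compose functorially and Morita equivalences are invertible, it suffices to reduce each of the two factors to isotropy separately and then splice the results along $\bigsqcup_{e \in T_E} G_e$.

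Next I would apply Proposition~\ref{p:actiondecomposition} to the action $G \acts E$ with the chosen transversals $T_G$ and $T_E$. This identifies the reduction of the action correspondence with $\bigsqcup_{e \in T_E} \tr_e \colon \bigsqcup_{v \in T_G} G^v_v \to \bigsqcup_{e \in T_E} G_e$, where $G_e$ is the isotropy group of $G \ltimes E$ at $e$. Here I would record that, since $g(e)$ is by construction the unique element of $E \cap geH$, one has $G_e = \{ g \in G^{\ran(e)}_{\ran(e)} \mid g(e) = e \} = \{ g \in G^{\ran(e)}_{\ran(e)} \mid ge \in eH \}$, so that the stabilizer appearing here is exactly the group named in the statement.

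The hard part is the reduction of $\sigma$, which is where the conjugation correspondences $C_{\sour(e)}$ enter, and I would verify it by a direct bispace computation. Using $T_E$ as transversal for $G \ltimes E$ and $T_H$ for $H$, the correspondence $\Omega_\sigma$ has bispace $E \times_{H^0} H$ with left action $(g,e)\cdot(e,h) = (g(e), (g|_e)h)$ and right action by multiplication. Reducing on the left to $G_e$ picks out the slice $\{e\} \times \ran^{-1}(\sour(e))$, which is a $G_e$-$H$-bispace via $\sigma_e \colon g \mapsto g|_e$. Since $\sour(e)$ need not lie in $T_H$, reducing on the right to $H^{t_{\sour(e)}}_{t_{\sour(e)}}$ forces a transport along the chosen arrow $h_{\sour(e)}$ from $t_{\sour(e)}$ to $\sour(e)$: applying the homeomorphism $h \mapsto h_{\sour(e)}^{-1}h$ exhibits the reduced slice as the bispace of $C_{\sour(e)} \circ \Sigma_e$, the intertwining being the identity $h_{\sour(e)}^{-1}(g|_e)h = \bigl(h_{\sour(e)}^{-1}(g|_e)h_{\sour(e)}\bigr)\bigl(h_{\sour(e)}^{-1}h\bigr)$, whose first factor is precisely $c_{\sour(e)}(\sigma_e(g))$.

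Composing the two reductions along $\bigsqcup_{e \in T_E} G_e$ then yields $\bigsqcup_{e \in T_E} C_{\sour(e)} \circ \Sigma_e \circ \tr_e$ as the reduction of $\mathcal X$ through the Morita equivalences $\bigsqcup_{v \in T_G} G^v_v \hookrightarrow G$ and $\bigsqcup_{w \in T_H} H^w_w \hookrightarrow H$, which is exactly the claimed commutative diagram up to isomorphism. I expect the only real subtlety to be the bookkeeping of the conjugation correspondence in the last step; everything else is a routine assembly of the two quoted propositions together with functoriality of induced maps under composition of \'etale correspondences.
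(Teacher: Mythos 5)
Your proposal is correct and follows essentially the same route as the paper: both factor $\mathcal X$ through $G \ltimes E$ via Proposition~\ref{p:actionhomdecomposition}, handle the left square with Proposition~\ref{p:actiondecomposition}, and then account for the conjugations in the right square. The only cosmetic difference is that you verify the right square by a direct bispace computation (transporting the slice $\sour(e)Ht_{\sour(e)}$ along $h_{\sour(e)}$), whereas the paper packages the same content as a functor $\psi(h) = h_{\ran(h)}\inv h h_{\sour(h)}$ whose composite with the inclusion is naturally isomorphic to $\id_H$.
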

\begin{proof}
Through Proposition \ref{p:actionhomdecomposition} we construct the following diagram.
\begin{equation}\label{eq:bicategory.commuting}
\begin{tikzcd}[arrow style=math font, column sep = 1.2cm]
	G & {} & {G \ltimes E} && H \\
	{\bigsqcup_{v \in T_G} G^v_v} && {\bigsqcup_{e \in T_E} G_e} && {\bigsqcup_{w \in T_H}H^w_w}
	\arrow[from=1-1, to=1-3]
	\arrow["\sigma", from=1-3, to=1-5]
	\arrow[hook, from=2-1, to=1-1]
	\arrow["{\bigsqcup_{e \in T_E} \tr_e}", from=2-1, to=2-3]
	\arrow[hook, from=2-3, to=1-3]
	\arrow["{\bigsqcup_{e \in T_E} C_{\sour(e)} \circ \Sigma_e}", from=2-3, to=2-5]
	\arrow[hook, from=2-5, to=1-5]
\end{tikzcd}
\end{equation}
The left square commutes up to isomorphism by Proposition \ref{p:actiondecomposition}.  By construction, there is a commutative diagram of functors
\[\begin{tikzcd}[arrow style=math font, column sep = 1.2cm]
	{G\ltimes E} && H \\
	 {\bigsqcup_{e \in T_E} G_e} && {\bigsqcup_{w \in T_H}H^w_w}
	\arrow["\sigma", from=1-1, to=1-3]
	\arrow[hook, from=2-1, to=1-1]
	\arrow["{\bigsqcup_{e \in T_E} c_{\sour(e)} \circ \sigma_e}", from=2-1, to=2-3]
	\arrow["{\psi}", from=1-3, to=2-3]
\end{tikzcd}\]
where $\psi(h) = h_{\ran(h)}\inv hh_{\sour(h)}$.  Moreover, the composition of $\psi$ with the inclusion $\bigsqcup_{w \in T_H}H^w_w\hookrightarrow  H$ is naturally isomorphic to the identity functor on $H$.  Therefore, 
the right square of \eqref{eq:bicategory.commuting} commutes up to isomorphism.  
\end{proof}

\subsection{The transfer map}
Let us justify the notation $\tr_e \colon G^{\ran(e)}_{\ran(e)} \to G_e$. For a group $G$ with a subgroup $H$ the \textit{transfer correspondence} $\tr^G_H \colon G \to H$ is the \'etale correspondence with bispace $G$ via left and right multiplication. This is proper if and only if $H$ has finite index, in which case it induces maps in K-theory and homology. 
Given a group $G$ with a finite index subgroup $H$ and a $\Z G$-module $M$, the transfer map $\mathrm{tr}^{G}_H\colon H_n(G;M)\to H_n(H;M)$ is a homomorphism that can be described in a number of equivalent ways~\cite{Browncohomology}. For example, a slight modification of ~\cite[Page~81, (C)]{Browncohomology} says that for a projective resolution $P_\bullet \to M$ of $\Z G$-modules, $\tr^G_H$ is induced by the chain map 
\begin{equation}\label{transfer equation}
[x]_G \mapsto \sum_{gH \in G/H} [g^{-1} \cdot x]_H \colon (P_\bullet)_G \to (P_\bullet)_H. 
\end{equation}

This is related to the transfer correspondence $\tr^G_H \colon G \to H$ as follows. Consider the $\mathbb ZG$-module map
\begin{align*}
\iota_M \colon M & \to \mathbb Z G \otimes_{\mathbb ZH} M \\
m & \mapsto \sum_{gH \in G/H} g \otimes g^{-1} \cdot m.
\end{align*}
Following \cite[Theorem 3.5]{miller2023ample}, $\tr^G_H \colon G \to H$ and $\iota_M \colon M \to \mathbb Z G \otimes_{\mathbb ZH} M$ induce a map $H_*(\tr^G_H; \iota_M) \colon H_*(G;M) \to H_*(H;M)$ in homology. Moreover, for $M = A$, an abelian group with trivial action, this is the map $H_*(\tr^G_H,A) \colon H_*(G,A) \to H_*(H,A)$ induced by $\tr^G_H \colon G \to H$ as a proper \'etale correspondence.
\begin{Prop}
Let $G$ be a group with a finite index subgroup $H$, and let $M$ be a $\Z G$-module. Then the transfer map $\tr^G_H \colon H_*(G;M) \to H_*(H;M)$ is equal to $H_*(\tr^G_H; \iota_M)$. In particular, $\tr^G_H \colon H_*(G,A) \to H_*(H,A)$ is the map induced by the proper \'etale correspondence $\tr^G_H \colon G \to H$ for any abelian group $A$.
\end{Prop}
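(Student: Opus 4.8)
The plan is to exhibit a single chain-level model in which both maps are realised by the same chain map, using the formula~\eqref{transfer equation} as the definition of the classical transfer. Fix a projective resolution $P_\bullet \to M$ of $M$ by $\Z G$-modules. Since $\Z G$ is free (hence flat) as a right $\Z H$-module, each $P_n$ is also $\Z H$-projective, so $P_\bullet \to M$ is simultaneously a $\Z H$-projective resolution. Thus both $H_*(G;M) = H_*((P_\bullet)_G)$ and $H_*(H;M) = H_*((P_\bullet)_H)$ are computed from this one resolution, and by~\eqref{transfer equation} the classical transfer is the map induced on homology by $\tau \colon (P_\bullet)_G \to (P_\bullet)_H$, $[x]_G \mapsto \sum_{gH \in G/H} [g^{-1} x]_H$.

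First I would give a chain-level description of the correspondence-induced map $H_*(\tr^G_H) \colon H_*(G; \Z G \otimes_{\Z H} M) \to H_*(H;M)$. For the transfer correspondence the bispace is $\Omega = G$, and the $\sour$-induced map of the excerpt gives an isomorphism $(\Z \Omega)_G \cong \Z$ with trivial residual right $H$-action; associativity of the tensor product then turns the natural transformation $(\Z\Omega \otimes_{\Z H}(-))_{G} \to (-)_{H}$ into the natural isomorphism $\Theta_N \colon (\Z G \otimes_{\Z H} N)_G \xrightarrow{\cong} N_H$ given by $[g \otimes n]_G \mapsto [n]_H$. Applying $\Z G \otimes_{\Z H}(-)$ to $P_\bullet \to M$ yields, by flatness, a $\Z G$-projective resolution $\Z G \otimes_{\Z H} P_\bullet \to \Z G \otimes_{\Z H} M$, which is exactly the resolution produced from the $\Z H$-resolution $P_\bullet$ in the construction of $H_*(\tr^G_H)$. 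Hence $H_*(\tr^G_H)$ is induced by $\Theta_{P_\bullet} \colon (\Z G \otimes_{\Z H} P_\bullet)_G \to (P_\bullet)_H$.

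Next I would lift $\iota_M$ along the chosen resolutions. The formula $\iota_{P_n}(x) = \sum_{gH \in G/H} g \otimes g^{-1} x$ defines a $\Z G$-linear chain map $\iota_{P_\bullet} \colon P_\bullet \to \Z G \otimes_{\Z H} P_\bullet$ lying over $\iota_M$; $\Z G$-linearity follows from the reindexing $g \mapsto hg$ of coset representatives, and it commutes with the differentials by naturality of $\iota$ in the module variable. Therefore $H_*(G;\iota_M)$ is induced by $(\iota_{P_\bullet})_G$, and the composite $H_*(\tr^G_H;\iota_M)$ is induced by $\Theta_{P_\bullet} \circ (\iota_{P_\bullet})_G$. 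Evaluating on a class, $[x]_G \mapsto \sum_{gH}[g \otimes g^{-1}x]_G \mapsto \sum_{gH}[g^{-1}x]_H$, which is precisely $\tau([x]_G)$. Since both maps are induced by the same chain map, they agree on homology.

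The main obstacle is purely bookkeeping: reconciling the two a priori different resolution conventions — resolving $M$ over $\Z G$ for the transfer side versus over $\Z H$ for the correspondence side — and checking that $\iota_{P_\bullet}$ is a genuine chain lift computing $H_*(G;\iota_M)$ and that $\Theta_{P_\bullet}$ computes $H_*(\tr^G_H)$ on this specific resolution. For the final assertion, specialising to $M = A$ with trivial $G$-action, $\iota_A$ coincides with the $\Z G$-module map $A\mathscr G^0 \to (\Z\Omega/\mathscr H)\otimes_{\Z} A$ used in the excerpt to define the correspondence map $H_*(\tr^G_H,A)$ (both send $a \mapsto \sum_{gH} g \otimes a$), so $H_*(\tr^G_H;\iota_A) = H_*(\tr^G_H,A)$ and the general statement recovers the case of an abelian coefficient group.
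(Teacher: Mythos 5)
Your proof is correct and follows essentially the same route as the paper: both arguments use a single $\Z G$-projective resolution $P_\bullet \to M$ (viewed also as a $\Z H$-projective resolution), take $\iota_{P_\bullet}$ as the chain lift of $\iota_M$, and compose with the map $[g \otimes x]_G \mapsto [x]_H$ to recover the classical transfer formula \eqref{transfer equation} on the chain level. Your additional checks (that $P_\bullet$ restricts to a $\Z H$-projective resolution and that $\iota_{P_\bullet}$ is $\Z G$-linear) are correct elaborations of points the paper leaves implicit.
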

\begin{proof}
Let us explain in some more detail the construction of 
\[ H_*(\tr^G_H; \iota_M) \colon H_*(G;M) \to H_*(H;M).\]
There is a right $\Z H$-module map $\delta_{\tr^G_H} \colon (\Z G)_G \to \Z$ which sends $[g]_G$ to $1$ (see \cite[Proposition 3.3]{miller2023ample}). 
Then for any $H$-module $N$, after the identifications $(\Z G)_G = \Z \otimes_{\Z G} \Z G$ and $N_H = \Z \otimes_{\Z H} N$, consider the map $\delta_{\tr^G_H} \otimes \id_N \colon (\Z G \otimes_{\Z H} N)_G \to N_H$, which sends $[g \otimes n]_G$ to $[n]_H$. 
For any projective resolution $P_\bullet \to M$ of $\Z G$-modules and projective resolution $Q_\bullet \to M$ of $\Z H$-modules and any chain map $f \colon P_\bullet \to \Z G \otimes_{\Z H} Q_\bullet$ of $\Z G$-modules over $\iota_M \colon M \to \Z G \otimes_{\Z H} M$, then $H_*(\tr^G_H ; \iota_M)$ is induced by the chain map \[(\delta_{\tr^G_H} \otimes \id_{Q_\bullet}) \circ f_G \colon (P_\bullet)_G \to (Q_\bullet)_H.\]
We may take any projective resolution $P_\bullet \to M$ of $\Z G$-modules and then set $Q_\bullet = P_\bullet$ and $f = \iota_{P_\bullet}$. The composition is then given by
\begin{align*}
(\delta_{\tr^G_H} \otimes \id_{P_\bullet}) \circ (\iota_{P_\bullet})_G \colon (P_\bullet)_G & \to (P_\bullet)_H \\
[x]_G & \mapsto \sum_{gH \in G/H} [g^{-1} \cdot x]_H,
\end{align*}
which is precisely the map given in \eqref{transfer equation}.
\end{proof}

We also write $\tr^G_H \colon K_*(\cs(G)) \to K_*(\cs(H))$ for the induced map in K-theory. We make use of the following description of this map.

\begin{Prop}\label{p:transfer.monomial.form}
Let $G$ be a group and $H$ a finite index subgroup with a (finite) transversal $T \subseteq G$. Then the transfer map $\mathrm{tr}^G_H \colon K_i(\cs(G)) \to K_i(\cs(H))$ is induced by the $*$-homomorphism $\psi \colon \cs(G) \to M_T(\cs(H))$ given for $g \in G$ by 
\[ \psi(u_g) =  \left( u_{t_1^{-1}gt_2}  \delta_{t_1 H, g t_2 H} \right)_{t_1,t_2 \in T}. \]
In particular, $\tr^G_H([1]_0) = [G:H][1]_0$.
\end{Prop}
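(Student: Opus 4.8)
The plan is to identify the K-theory transfer map with the map induced by the $\cs$-correspondence $\cs(\tr^G_H) \colon \cs(G) \to \cs(H)$ of the transfer correspondence $\tr^G_H \colon G \to H$, and then to exhibit that $\cs$-correspondence concretely as a free right Hilbert module with orthonormal basis indexed by $T$. Since the bispace underlying $\tr^G_H$ is $G$ itself with left and right multiplication, $\cs(\tr^G_H)$ is densely spanned by elements $m_x$ for $x \in G$, subject to the relations $u_g \cdot m_x = m_{gx}$, $m_x \cdot u_h = m_{xh}$ for $h \in H$, and $\langle m_x, m_y \rangle = u_{\langle x,y\rangle}$, where $\langle x,y\rangle = x\inv y$ if $xH = yH$ and $0$ otherwise. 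This is the specialisation to discrete groupoids of the general construction $\cs(\Omega)$, exactly as recorded for $M_{\mathcal X}$ earlier.

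First I would check that $(m_t)_{t\in T}$ is an orthonormal basis. As $T$ is a transversal, every $x \in G$ factors uniquely as $x = th$ with $t \in T$ and $h \in H$, so $m_x = m_t \cdot u_h$; hence $\bigoplus_{t\in T} m_t\, \cs(H)$ spans $\cs(\tr^G_H)$ as a right Hilbert $\cs(H)$-module. Moreover $\langle m_t, m_{t'}\rangle = u_{t\inv t'}\delta_{tH,t'H} = \delta_{t,t'}\,1$, since distinct elements of $T$ lie in distinct cosets. Thus $\cs(\tr^G_H)\cong \cs(H)^T$ as a right Hilbert $\cs(H)$-module, and in particular the correspondence is proper. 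Next I compute the $*$-homomorphism $\psi \colon \cs(G) \to \mathcal L(\cs(H)^T) = M_T(\cs(H))$ implementing the left action via its matrix entries $\psi(u_g)_{t_1,t_2} = \langle m_{t_1}, u_g\cdot m_{t_2}\rangle$. Since $u_g \cdot m_{t_2} = m_{gt_2}$, the inner-product formula gives
\[
\psi(u_g)_{t_1,t_2} = \langle m_{t_1}, m_{gt_2}\rangle = u_{t_1\inv g t_2}\,\delta_{t_1 H,\, g t_2 H},
\]
which is precisely the asserted formula. By the general principle recorded in the excerpt — that a proper $\cs$-correspondence isomorphic to $B^n$ as a Hilbert $B$-module induces in K-theory the map of the resulting $*$-homomorphism into $M_n(B)$ — the transfer map $\tr^G_H \colon K_i(\cs(G)) \to K_i(\cs(H))$ is induced by $\psi$, since $\tr^G_H$ is by definition the K-theory map of the transfer correspondence.

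Finally, for the ``in particular'' statement I evaluate the formula at $g = e$: because $t_1 H = t_2 H$ forces $t_1 = t_2$ for $t_1,t_2 \in T$, we get $\psi(1)_{t_1,t_2} = \delta_{t_1,t_2}\,1$, so $\psi(1) = I_T$ is the identity matrix of $M_T(\cs(H))$. Under the stability isomorphism $K_0(M_T(\cs(H)))\cong K_0(\cs(H))$ the class $[I_T]_0$ corresponds to $|T|\,[1]_0 = [G:H]\,[1]_0$, giving $\tr^G_H([1]_0) = [G:H][1]_0$. I do not expect any genuine obstacle here: the content is entirely in the first two steps, and the only delicate points are the bookkeeping of the row/column convention in the matrix of $\psi$ (handled cleanly by computing inner products directly) and confirming that the inner-product formula $\langle x,y\rangle = x\inv y\,\delta_{xH,yH}$ is correct for the transfer bispace.
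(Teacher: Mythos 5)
Your proof is correct and follows essentially the same route as the paper: both decompose the transfer bispace $G$ as a right $H$-set into $\bigsqcup_{t\in T}tH$, identify the Hilbert module of $\cs(\tr^G_H)$ with $\cs(H)^T$, and read off the left action of $u_g$ as the matrix $\psi(u_g)$. Your phrasing via the orthonormal basis $(m_t)_{t\in T}$ and the inner-product formula for matrix entries is just a mild repackaging of the paper's computation with $s_T$ and $s_H$, and your explicit verification of the ``in particular'' statement is a harmless addition.
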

\begin{proof}
Write $s_T \colon G \to T$ and $s_H \colon G \to H$ for functions satisfying $g = s_T(g)s_H(g)$ for each $g \in G$. As a right $H$-set, the transfer correspondence decomposes as a disjoint union $G = \bigsqcup_{t \in T} tH \cong T \times H$. Through this the Hilbert $\cs(H)$-module of the proper $\cs$-correspondence $\cs(\mathrm{tr}^G_H) \colon \cs(G) \to \cs(H)$ is isomorphic to $\bigoplus_T \cs(H)$. For $g \in G$, the action of $u_g \in \cs(G)$ on $\bigoplus_T \cs(H)$ is given at $(t,h) \in \mathbb C(T \times H) \subseteq \bigoplus_T \cs(H)$ by 
\[ u_g \cdot (t,h) = (s_T(gth),s_H(gth)). \]   Note that $s_T(gth)=s_T(gt)$ is the unique element $t_1\in T$ with $t_1\inv gt\in H$, and $s_H(gth) = s_H(gt)h=t_1\inv gth$.  
Thus $u_g$ acts as the matrix $\psi(u_g)$.
\end{proof}

\begin{Rmk}
The definition of $\psi(u_g)$ can be written more succinctly as $\psi(g)=T\inv (u_g\id)P_gT$  where, abusing notation, $T$ is the diagonal matrix with entries $u_t$, $t\in T$, and $P_g$ is the permutation matrix for the action of $g$ on $T \cong G/H$.  From this, it is immediate that a change of transversal results in a unitarily equivalent $\ast$-homomorphism.
\end{Rmk}

\begin{Prop}[Mackey decomposition]\label{p:mackey}
Let $G$ be a group and $H,K$ be subgroups with $H$ of finite index.  Let $\iota_K\colon K\to G$ be the inclusion.  Then \[\mathrm{tr}^G_H\circ I_K\cong\bigsqcup_{KsH\in K\backslash G/H} C_s\circ \tr^K_{K\cap sHs\inv}\] where $I_K$ is the correspondence associated to $\iota_K$ and $C_s$ is the correspondence associated to $c_s:K\cap sHs\inv\to H$, $c_s(k)=s\inv ks$.  
\end{Prop}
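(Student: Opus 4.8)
The plan is to work entirely at the level of bispaces. Since $K$, $G$, $H$ are discrete groups, all the topological requirements on an \'etale correspondence are automatic, so proving the asserted isomorphism amounts to producing a $K$-$H$-equivariant bijection between the underlying bispaces, and disjoint unions of correspondences are disjoint unions of bispaces.

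First I would compute the left-hand side. The bispace of $I_K\colon K\to G$ is $G$, with $K$ acting on the left by multiplication through $\iota_K$ and $G$ acting on the right by multiplication, and the bispace of $\tr^G_H\colon G\to H$ is $G$, with $G$ acting on the left and $H$ on the right, both by multiplication. Their composite $\tr^G_H\circ I_K$ has bispace the fibre product $I_K\times_G\tr^G_H$, i.e.\ the quotient of $G\times G$ by the relation $[ag,b]=[a,gb]$ for $g\in G$. I claim the multiplication map $[a,b]\mapsto ab$ descends to a bijection onto $G$: well-definedness is immediate since $ag\cdot b=a\cdot gb$, and a short computation recovers both surjectivity and injectivity. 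Tracing the actions through this bijection shows that $\tr^G_H\circ I_K$ is, as a $K$-$H$-bispace, simply $G$ with $K$ acting on the left and $H$ on the right by multiplication.

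Next I would invoke the double coset decomposition $G=\bigsqcup_{KsH\in K\backslash G/H}KsH$. Each double coset $KsH$ is closed under left multiplication by $K$ and right multiplication by $H$, so this exhibits $G$ as a disjoint union of $K$-$H$-sub-bispaces, matching the disjoint union on the right-hand side. It then remains to identify, for each representative $s$, the single piece $KsH$ with $C_s\circ\tr^K_{L_s}$, where $L_s=K\cap sHs\inv$. Unwinding the composition, the bispace of $C_s\circ\tr^K_{L_s}$ is the quotient of $K\times H$ by the relation $[al,h]=[a,s\inv l s\,h]$ for $l\in L_s$ (the left $L_s$-action on $C_s$ being through $c_s$), with $K$ acting on the left factor and $H$ on the right. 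I would define $\phi_s\colon[a,h]\mapsto ash$ into $KsH$; this is well defined since both representatives $[al,h]$ and $[a,s\inv l s\,h]$ are sent to $alsh$, and it is manifestly $K$-$H$-equivariant.

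The crux of the argument — and the step I expect to be the only real obstacle — is checking that each $\phi_s$ is a bijection onto $KsH$. Surjectivity is clear. For injectivity, from $a_1sh_1=a_2sh_2$ one obtains $a_2\inv a_1=s(h_2h_1\inv)s\inv$, which lies in $K\cap sHs\inv=L_s$; setting $l=a_2\inv a_1\in L_s$ and using the defining relation of the fibre product then yields $[a_1,h_1]=[a_2,h_2]$. This is exactly the classical Mackey double-coset bookkeeping, and the only place where the index and intersection groups interact. Assembling the three identifications gives $\tr^G_H\circ I_K\cong\bigsqcup_{KsH}KsH\cong\bigsqcup_{KsH}C_s\circ\tr^K_{K\cap sHs\inv}$ as $K$-$H$-correspondences, which is the claim. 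Finiteness of the index of $H$ guarantees the disjoint union is finite and that all the transfer correspondences involved are proper, consistent with the induced maps in homology and K-theory.
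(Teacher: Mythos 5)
Your proposal is correct and follows essentially the same route as the paper: identify the bispace of $\tr^G_H\circ I_K$ with $G$ as a $K$-$H$-bispace, decompose it into double cosets, and exhibit the $K$-$H$-equivariant bijection $[k,h]\mapsto ksh$ from $K\times_{K\cap sHs\inv}H$ onto $KsH$, with the same injectivity computation. The only difference is that you spell out the reduction of the fibre product $I_K\times_G\tr^G_H$ to $G$ explicitly, which the paper takes as immediate.
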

\begin{proof}
As a $K$-$H$-bispace $G=\bigsqcup_{KsH\in K\backslash G/H}KsH$.  So it suffices to observe that $K\times_{K\cap sHs\inv} H\to KsH$ given by $[k,h]\mapsto ksh$ is a well defined $K$-$H$-bispace map with inverse $ksh\mapsto [k,h]$. Indeed, if $g\in K\cap sHs\inv$, then $kgsh =ks(s\inv gs)h$.  If $ksh=k'sh'$, then $k\inv k' = sh(h')\inv s\inv\in K\cap sHs\inv$ and $(k(k\inv k'),s\inv(sh(h')\inv s\inv)\inv sh)=(k',h')$. Therefore, these two maps are well defined, and they are clearly inverse.
\end{proof}

Specializing to $K=H$ a normal subgroup, we obtain.
\begin{Cor}\label{c:inc.transfer}
Let $G$ be a group and $N$ a finite index normal subgroup.  Let $\iota_N\colon N\to G$ be the inclusion.  Then $\mathrm{tr}^G_N\circ K_i(\iota_N)= [G:N]\id$ for $i=0,1$ and $\tr^G_N \circ H_n(\iota_N) = [G:N] \id$ for $n \geq 0$.   
\end{Cor}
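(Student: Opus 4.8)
The plan is to specialize the Mackey decomposition of Proposition~\ref{p:mackey} to the case $K=H=N$ and then read off the induced maps. Applying Proposition~\ref{p:mackey} with $K=H=N$, normality of $N$ makes every double coset $NsN$ collapse to a single coset $sN=Ns$, so the indexing set $N\backslash G/N$ is just $G/N$ and has exactly $[G:N]$ elements; moreover $N\cap sNs\inv=N$ for every $s$, so the inner transfer $\tr^N_{N\cap sNs\inv}=\tr^N_N$ is the identity correspondence on $N$. Hence Mackey yields a correspondence isomorphism
\[\tr^G_N\circ I_N\cong \bigsqcup_{sN\in G/N} C_s,\]
where $I_N$ is the inclusion correspondence of $\iota_N$ and $C_s\colon N\to N$ is the correspondence of the conjugation automorphism $c_s\colon k\mapsto s\inv ks$ of $N$.

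Next I would pass to homology and K-theory. Finite disjoint unions of proper \'etale correspondences become sums on the level of homology and K-theory, and both $H_\bullet$ and $K_\bullet$ are functorial for composition of correspondences, so the displayed isomorphism gives
\[\tr^G_N\circ H_n(\iota_N)=\sum_{sN\in G/N}H_n(C_s),\qquad \tr^G_N\circ K_i(\iota_N)=\sum_{sN\in G/N}K_i(C_s).\]
By the chain-level description \eqref{discrete description of map} of the map induced by a homomorphism correspondence, $H_n(C_s)$ is precisely the map $H_n(c_s)$ induced functorially by the automorphism $c_s$, and likewise $K_i(C_s)=K_i(c_s)$.

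The crux, and the step I expect to be the main obstacle, is to prove that each $C_s$ induces the identity on $H_n(N)$ and on $K_i(\cs(N))$; granting this, the two sums above each collapse to $[G:N]\,\id$ and the proof is complete. The natural tool is the classical principle that an inner automorphism induces the identity on group homology and on the K-theory of the group $\cs$-algebra (where conjugation by $u_s$ implements an inner automorphism of the $\cs$-algebra). The delicate point is that $c_s$ is conjugation by an element $s$ of the \emph{ambient} group $G$: it is the restriction to $N$ of the inner automorphism $\gamma_s$ of $G$, so it is manifestly trivial on the invariants of $N$ only when $c_s$ is inner in $N$. I would therefore aim to establish the triviality at the level of correspondences, either by producing from $s$ an $N$-$N$-equivariant isomorphism $C_s\cong\id_N$, or by transporting to $N$ the $2$-isomorphism $\gamma_s\cong\id_G$ implemented by $s$ in the correspondence bicategory and then invoking invariance of $H_\bullet$ and $K_\bullet$. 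Pinning down this triviality is the heart of the matter; the remaining bookkeeping with cosets and disjoint unions is routine.
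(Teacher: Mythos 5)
Your decomposition $\tr^G_N\circ I_N\cong\bigsqcup_{sN\in G/N}C_s$ is exactly the paper's first step, and you have correctly located the crux: for $s\notin N$ the map $c_s\colon n\mapsto s\inv ns$ is inner in $G$ but in general only an \emph{outer} automorphism of $N$, so the classical triviality of inner automorphisms on $H_\bullet$ and $K_\bullet$ does not apply to it. The gap you flag cannot be closed. No $N$-$N$-equivariant isomorphism $C_s\cong\id_N$ exists unless $s\in C_G(N)N$: right-equivariance forces any such map to be $x\mapsto ax$ with $a=f(1)\in N$, and left-equivariance then forces $as\inv\in C_G(N)$. Indeed the corollary is false as stated. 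Take $G=S_3$ and $N=A_3$: conjugation by a transposition acts on $H_1(A_3)\cong\mathbb Z/3\mathbb Z$ as $-1$, so $\sum_{sN\in G/N}H_1(c_s)=\id-\id=0\neq 2\cdot\id$; equivalently, $H_1(\iota_N)\colon\mathbb Z/3\mathbb Z\to H_1(S_3)\cong\mathbb Z/2\mathbb Z$ is zero, so the composite $\tr^G_N\circ H_1(\iota_N)$ vanishes. The same example breaks the $K_0$ statement: $K_0(\cs(A_3))$ is the representation ring of $A_3$, conjugation by a transposition swaps the two nontrivial characters, and the norm map sends such a character $\chi$ to $\chi+\overline{\chi}\neq 2\chi$.

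What the Mackey decomposition actually yields is $\tr^G_N\circ H_n(\iota_N)=\sum_{sN\in G/N}H_n(c_s)$, the norm map of the induced $G/N$-action on $H_n(N)$ (well defined because conjugation by elements of $N$ \emph{is} trivial on $H_n(N)$); this equals $[G:N]\id$ precisely when that action is trivial, and similarly for $K_i$. The paper's own one-line proof commits exactly the error you were worried about --- it invokes triviality of inner automorphisms for $c_s$, which is not inner in $N$ --- so your instinct here is sharper than the source. Fortunately the corollary is only ever applied in the paper with $G=\mathbb Z^n$ abelian, where every $c_s$ is literally the identity and the conclusion is correct, so nothing downstream is affected; but the general statement needs the additional hypothesis that $G/N$ acts trivially on $H_n(N)$ (respectively on $K_i(\cs(N))$).
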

\begin{proof}
By the Mackey decomposition, $\tr^G_N\circ I_N\cong \bigsqcup_{sN\in G/N} C_s$ where $C_s$ is the correspondence associated to $c_s\colon N\to N$ given by $c_s(n) = s\inv ns$.  Since inner automorphisms are trivial on homology and K-theory,  we deduce that $\mathrm{tr}^G_N\circ K_i(\iota_N)=[G:N]\id$ and $\tr^G_N \circ H_n(\iota_N) = [G:N] \id$.  
\end{proof}

\subsection{The Rukolaine map}

The missing ingredient in the long exact sequence in homology for a self-similar groupoid action $(G,\mathcal X)$, compared to the six-term sequence in K-theory, is a description of the map between the `known' quantities. In K-theory, this map is given by \[\id - [M_{\mathcal X}] \colon K_*(\cs(G_\reg)) \to K_*(\cs(G)),\] so our natural goal is to show that in homology we get \[\id - H_*(\mathcal X_{\reg}) \colon H_*(G_\reg) \to H_*(G),\] where $\mathcal X_\reg \colon G_\reg \to G$ is the restriction of $\mathcal X$ to $G_\reg$, which is then proper.

\begin{Prop}\label{p:Morita.equiv}
Let $(G,\mathcal X)$ be a self-similar groupoid action.  Then there is a Morita equivalence $G\to \underlying{S_{\mathcal X}}$, restricting to a Morita equivalence $G_{\mathrm{reg}}\to \underlying{S_{\mathcal X}}|_F$ where $F=\{pp^\ast\mid \sour(p)\in G^0_{\mathrm{reg}}\}$.
\end{Prop}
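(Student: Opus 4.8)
The plan is to realise both Morita equivalences at once, by exhibiting $G^0$ (respectively $G^0_\reg$) as a subset of the unit space of $\underlying{S_{\mathcal X}}$ (respectively of $\underlying{S_{\mathcal X}}|_F$) that meets every orbit, and identifying the full subgroupoid it supports with $G$ (respectively $G_\reg$). Recall that $\underlying{S_{\mathcal X}}$ is the groupoid of nonzero elements of $S_{\mathcal X}$ with $\sour(s)=s^\ast s$ and $\ran(s)=ss^\ast$, so its unit space is the set of nonzero idempotents $\{pp^\ast\mid p\in\mathcal X^+\}$. The first observation is that the path $p$, regarded as an element $p=p\sour(p)^\ast$ of $S_{\mathcal X}^\times$, has source $p^\ast p=\sour(p)\in G^0$ and range $pp^\ast$, hence is an arrow $\sour(p)\to pp^\ast$ of $\underlying{S_{\mathcal X}}$. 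Thus every unit of $\underlying{S_{\mathcal X}}$ is isomorphic within the groupoid to a unit lying in $G^0$ (the length-$0$ idempotents), so $G^0$ meets every orbit.

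Next I would identify the full subgroupoid $\underlying{S_{\mathcal X}}|_{G^0}$. A nonzero element $s=pq^\ast$ has $\sour(s)=qq^\ast$ and $\ran(s)=pp^\ast$; these are units in $G^0$ exactly when $p,q\in\mathcal X^0=G$, in which case $s=pq^{-1}\in G$. Using the product formulas for $S_{\mathcal X}$ and the embedding $G\hookrightarrow S_{\mathcal X}$, the assignment $pq^\ast\mapsto pq^{-1}$ is then a groupoid isomorphism $\underlying{S_{\mathcal X}}|_{G^0}\cong G$. Since the inclusion of a full subgroupoid whose units meet every orbit is a Morita equivalence — the principle already used for the disjoint-union-of-groups model of a discrete groupoid and for $G_\reg\hookrightarrow\underlying{S_{\mathcal X}}|_F$ — the inclusion $G\cong\underlying{S_{\mathcal X}}|_{G^0}\hookrightarrow\underlying{S_{\mathcal X}}$ is a Morita equivalence, implemented by the linking bispace $\Omega=\{s\in\underlying{S_{\mathcal X}}\mid\sour(s)\in G^0\}$ with the evident left $\underlying{S_{\mathcal X}}$- and right $G$-actions.

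For the restriction I would use that $G^0_\reg$ is $G$-invariant, so under $\underlying{S_{\mathcal X}}|_{G^0}\cong G$ it corresponds to $G_\reg$ and $\underlying{S_{\mathcal X}}|_{G^0_\reg}\cong G_\reg$. The key point is that $F=\{pp^\ast\mid\sour(p)\in G^0_\reg\}$ is exactly the saturation of $G^0_\reg$ inside the unit space: any $pp^\ast\in F$ is isomorphic to $\sour(p)\in G^0_\reg$ via the arrow $p$, and no further orbits occur because two idempotents $pp^\ast,qq^\ast$ are isomorphic in $\underlying{S_{\mathcal X}}$ only if $\sour(p),\sour(q)$ lie in a common $G$-orbit, which preserves $G^0_\reg$. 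Restricting $\Omega$ over the regular units gives $\Omega|_{G^0_\reg}=\{s\mid\sour(s)\in G^0_\reg\}$, a Morita equivalence $G_\reg\to\underlying{S_{\mathcal X}}|_F$ that is manifestly the restriction of the first.

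The genuine content, beyond bookkeeping, is twofold: the identification $\underlying{S_{\mathcal X}}|_{G^0}\cong G$, which leans on the multiplication rules of $S_{\mathcal X}$ and on $G$ embedding as the length-$0$ part; and the verification that $F$ is precisely the saturation of $G^0_\reg$, so that cutting the linking bispace down to the regular units recovers the second equivalence. I expect this latter compatibility to be the main obstacle, since it requires ruling out extra orbits in $F$, which rests on the $G$-invariance of $G^0_\reg$ together with the fact that isomorphic idempotents $pp^\ast,qq^\ast$ force $\sour(p)$ and $\sour(q)$ into a common $G$-orbit.
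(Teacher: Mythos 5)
Your proposal is correct and follows essentially the same route as the paper: the paper's proof simply observes that $G\hookrightarrow S_{\mathcal X}$ embeds $G$ as a full subgroupoid of $\underlying{S_{\mathcal X}}$ and that $p$ is an arrow from $\sour(p)$ to $pp^*$, so every unit is in the orbit of one from $G^0$ (and $F$ is the saturation of $G^0_\reg$, using invariance of $G^0_\reg$). You have merely spelled out the same argument in more detail, including the identification $\underlying{S_{\mathcal X}}|_{G^0}\cong G$ and the orbit computation, all of which checks out.
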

\begin{proof}
The inclusion $G \hookrightarrow S_{\mathcal X}$ becomes an embedding into the underlying groupoid $\underlying{S_{\mathcal X}}$.  Since $p$ is an arrow from $\sour(p)$ to $pp^*$, we see that this embedding is a Morita equivalence, restricting to a Morita equivalence  $G_{\mathrm{reg}}\to \underlying{S_{\mathcal X}}|_F$.    
\end{proof}

The long exact sequence arises from the universal groupoid $\universal{\mathcal X}$, the closed invariant set $X_{\mathrm{tight}} \subseteq \universal{\mathcal X}^0$ of tight filters and its open, discrete complement $U = \universal{\mathcal X}^0 \setminus X_{\mathrm{tight}}$. Recall that $U$ consists of the principal filters $\chi_{pp^*}$ associated to finite paths $p \in \mathcal X^+$ beginning at regular vertices, and the inclusion of $G_\reg$ into $\universal{\mathcal X} |_U$ which sends $g \in G_\reg$ to $[g,\chi_{\sour(g)}]$ is a Morita equivalence as $\universal{\mathcal X} |_U\cong \underlying{\mathcal X}|_F$. There is an isomorphism $H_*(\universal{\mathcal X},A) \cong H_*(G,A)$, which, following \cite[Example 3.10]{miller2023ample}, is induced by a proper \'etale correspondence $\Omega_{S_{\mathcal X}} \colon \underlying{S_{\mathcal X}} \to \universal{\mathcal X}$ and the Morita equivalence $\underlying{S_{\mathcal X}} \sim_M G$.

Let us describe the proper \'etale correspondence $\Omega_S \colon \underlying{S} \to \universal{S}$ for an arbitrary inverse semigroup $S$ with $0$ with idempotent semilattice $E$. The semigroup ring $\Z E$\footnote{For semigroups $S$ with zero, we understand the semigroup ring $\Z S$ to have basis $S^\times$ where the product extends that on $S\subseteq \Z S$.} 
is isomorphic to the function ring $\Z \wh E$ via the map which sends $e \in E$ to the indicator on the compact open set $U_e = \{ \chi \in \wh E \mid \chi(e) = 1 \}$, cf.~\cite{mygroupoidalgebra}. As abelian groups we may view $\Z E$ as the homology or K-theory of the discrete space $E^\times$ and $\Z \wh E$ as the homology or K-theory of $\wh E$, and then this isomorphism is induced by the proper \'etale correspondence
\[ \bigsqcup_{e \in E^\times} U_e \colon E^\times \to \widehat E. \]
Here, the left anchor map picks out the index of the disjoint union and the right anchor map includes each compact open $U_e$ into $\widehat E$. That the correspondence is \'etale and proper is reflected respectively by the openness and compactness of each $U_e$. Moreover, there is an action of $S$ on $\bigsqcup_{e \in E^\times} U_e$ given by $s \cdot (e,\chi) = (ses^*, s \cdot \chi)$ for $s \in S$, $0 \ne e \leq s^* s$ and $\chi \in U_e$. This forms an $S$-equivariant \'etale correspondence in the following sense:

\begin{Def}
Let $S$ be an inverse semigroup and let $X$ and $Y$ be totally disconnected locally compact Hausdorff $S$-spaces. An \textit{$S$-equivariant topological correspondence} $Z \colon X \to Y$ is an \'etale correspondence equipped with an action $S \acts Z$ such that the range and source maps $\rho \colon Z \to X$ and $\sigma \colon Z \to Y$ are $S$-equivariant, and $\rho^{-1}(\mathrm{dom}_X(s)) = \mathrm{dom}_Z(s)$ for each $s \in S$.
\end{Def}

The condition that $\rho^{-1}(\mathrm{dom}_X(s)) = \mathrm{dom}_Z(s)$ enables us to construct an action $S \ltimes X \acts Z$ yielding $S \ltimes Z$, and the source map $\sigma \colon Z \to Y$ induces an \'etale homomorphism $\sigma \colon S \ltimes Z \to S \ltimes Y$. We write $\tilde Z \colon S \ltimes X \to S \ltimes Y$ for the resulting \'etale correspondence, which has bispace $Z \times_Y (S \ltimes Y)$. For the $S$-equivariant proper \'etale correspondence $\bigsqcup_{e \in E^\times} U_e \colon E^\times \to \widehat E$, we obtain the proper \'etale correspondence $\Omega_S \colon \underlying{S} \to \universal{S}$. 

A key feature of the idempotent $pp^*$ associated to a finite path $p$ on a graph which begins at a regular vertex is that it is \textit{pseudofinite}. Following Munn, an idempotent $e$ in an inverse semigroup $S$ is pseudofinite if there is a finite set $J$ of idempotents such that $f<e$ if and only if $f\leq j$ for some $j\in J$. We may, of course, assume that the elements of $J$ are incomparable, in which case they must be the set of maximal elements $\max(e)$ below $e$. It was observed in~\cite{mygroupoidalgebra} that a principal filter $\chi_e$ is isolated in $\wh E$ if and only if $e$ is pseudofinite. 

Consider an $S$-invariant set $F$ of nonzero pseudofinite idempotents and consider the set $\til F = \{\chi_e \in \widehat E \mid e \in F \}$ of principal filters. For $e \in F$ we have $\{ \chi_e \} = U_e \setminus \bigcup_{d \in \max(e)} U_d$. If $Y \subseteq \max(e)$, we put $e_Y=\prod_{e\in Y}e$, with the convention that $e_\emptyset = e$.  Munn calls 
\[ \til e = \prod_{d \in \max(e)}(e-d) = \sum_{Y \subseteq \max(e)} (-1)^{\lvert Y \rvert} e_Y \in \Z E \]
the \emph{Rukolaine idempotent} associated to $e$, as these were first considered by Rukolaine in~\cite{Rukolaine}. For example, if $v$ is a regular vertex of a graph $E$, then for  $S_E$, $\til v = v-\sum_{\ran(e)=v}ee^*$, since the idempotents $ee^*$ in the sum are pairwise orthogonal. Under the isomorphism $\mathbb Z E \to \mathbb Z \wh E$, the image of $\til e$ is $1_{U_e\setminus\bigcup_{d\in \max(e)}U_d} = 1_{\{\chi_e\}}$, using the principle of inclusion-exclusion. Thus the Rukolaine idempotent $\til e$ expresses algebraically the element of the abelian group $\Z E$ to which $1_{\chi_e} \in \Z \til F$ is sent. 

We now mimic the Rukolaine idempotent on the level of \'etale correspondences. For a pseudofinite idempotent $e \in E$, we write 
\begin{align*}
P_+(e) & = \{ Y \subseteq \max(e) \mid e_Y \ne 0, \lvert Y \rvert \text{ even} \}, \\
P_-(e) & = \{ Y \subseteq \max(e) \mid e_Y \ne 0, \lvert Y \rvert \text{ odd} \}.
\end{align*}
Given an $S$-invariant set $F \subseteq E^\times$ of nonzero pseudofinite idempotents, we obtain correspondences $\bigsqcup_{f \in F} P_\pm(f) \colon F \to E^\times$ with anchor maps $\rho(e,Y) = e$ and $\sigma(e,Y) = e_Y$. These are $S$-equivariant topological correspondences and therefore induce \'etale correspondences
\[ R_\pm = \bigsqcup_{f \in F} P_\pm(f) \times_{E^\times} \underlying{S} \colon \underlying{S} |_F \to \underlying{S} \]
which we call the \textit{Rukolaine correspondences}. We call the resulting map in homology
\[ H_*( R_+,A) - H_*( R_-,A) \colon H_*(\underlying{S} |_F,A) \to H_*(\underlying{S},A) \]
the \textit{Rukolaine map}.

\begin{Example}[Inverse semigroups associated to self-similar groupoids]\label{ex:ssgroupoid.ruk}
For a self-similar groupoid action $(G,\mathcal X)$ we consider the set $F = \{ p p^* \in S_{\mathcal X} \mid p \in \mathcal X^+, \sour(p) \in G^0_\reg \}$ of idempotents in $S_{\mathcal X}$ associated to paths beginning  at a regular vertex. This is an invariant set of pseudofinite idempotents, and for a path $p \in \mathcal X^+$ which begins at a regular vertex $v \in G^0_\reg$ we have $P_+(pp^*) = \{\emptyset\}$ and $P_-(pp^*) = \{ \{ px x^* p^* \} \mid x \in \mathcal X, \ran(x) = \sour(p) \}$. Thus $R_+ \colon \underlying{S_{\mathcal X}} |_F \to \underlying{S_{\mathcal X}}$ is given by the inclusion and $R_- \colon \underlying{S_{\mathcal X}} |_F \to \underlying{S_{\mathcal X}}$ has bispace 
\[ \bigsqcup_{pp^* \in F} \{ pxq^* \mid x \in \mathcal X, q \in \mathcal X^+, \ran(x) = \sour(p), \sour(q) = \sour(x) \}. \]
The range map $\ran \colon R_- \to F$ picks out the index of the disjoint union and the source map sends $pxq^*$ to $qq^* \in \underlying{S_{\mathcal X}}^0$. Note that $R_+$ restricts to the inclusion of $G_\reg\hookrightarrow G$.  Also,
there is a commutative diagram up to isomorphism
\[ \begin{tikzcd}[arrow style=math font, column sep = 1.5cm]
	{\underlying{S_{\mathcal X}}|_F}  & {\underlying{S_{\mathcal X}}}\\
	  {G_{\reg}} & {G}
    \arrow["R_-", from=1-1, to=1-2]
	\arrow["{\mathcal X_{\reg}}", from=2-1, to=2-2]
	\arrow[hook, from=2-1, to=1-1]
	\arrow[hook, from=2-2, to=1-2]
\end{tikzcd} \] 
where the inclusions are Morita equivalences and  $\mathcal X_\reg \colon G_\reg \to G$ is the restriction of $\mathcal X$ to $G_\reg$.
Indeed, the left-hand composition is isomorphic to  $\bigsqcup_{v \in G^0_\reg} \{ xq^* \mid x \in \mathcal X, \ran(x) = v, \sour(q)=\sour(x)\}$ and the right-hand composition is isomorphic to $\{xq^* \mid x \in \mathcal X, \ran(x)\in G^0_{\reg}, \sour(q)=\sour(x)\}$.
\end{Example}

\begin{Prop}\label{p:ruk.prop}
Let $S$ be an inverse semigroup with idempotent semilattice $E$ and let $F \subseteq E^\times$ be an invariant set of nonzero pseudofinite idempotents. Consider the Rukolaine correspondences $R_\pm \colon \underlying{S} |_F \to \underlying{S}$ and the set $\til F = \{ \chi_e \mid e \in F \}$ of principal filters from $F$. Then for each $n \geq 0$ and abelian group $A$ the following diagram commutes.
\[\begin{tikzcd}[arrow style=math font, column sep = 2cm]
	{H_n(\universal{S} |_{\til F},A)} && {H_n(\universal{S},A)} \\
	{H_n(\underlying{S} |_F,A)} && {H_n(\underlying{S},A)}
	\arrow["{H_n(\iota,A)}", from=1-1, to=1-3]
	\arrow["\cong", from=2-1, to=1-1]
	\arrow["{H_n(R_+,A) - H_n(R_-,A)}"', from=2-1, to=2-3]
	\arrow["\cong", "{H_n(\Omega_S,A)}"', from=2-3, to=1-3]
\end{tikzcd}\]
\end{Prop}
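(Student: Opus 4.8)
The plan is to deduce the statement from the functoriality and additivity of the homology maps induced by proper \'etale correspondences, reducing it to a single chain-level identity governed by the Rukolaine inclusion--exclusion formula. First I would rewrite the two composites around the square as maps induced by correspondences $\underlying{S}|_F \to \universal{S}$. Writing $\phi\colon \underlying{S}|_F \to \universal{S}|_{\til F}$ for the groupoid isomorphism inducing the left vertical map (given by $s \mapsto [s,\chi_{s^*s}]$), the composite $H_n(\iota,A)\circ H_n(\phi,A)$ equals $H_n(\iota\circ\phi,A)$, and since $H_*(-,A)$ respects composition of correspondences, the composite $H_n(\Omega_S,A)\circ(H_n(R_+,A)-H_n(R_-,A))$ equals $H_n(\Omega_S\circ R_+,A) - H_n(\Omega_S\circ R_-,A)$. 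Thus the proposition is equivalent to the identity
\[ H_n(\iota\circ\phi,A) = H_n(\Omega_S\circ R_+,A) - H_n(\Omega_S\circ R_-,A) \colon H_n(\underlying{S}|_F,A) \to H_n(\universal{S},A). \]
Unwinding the fibre products (composition of correspondences is a fibre product, and $\underlying{S} = S \ltimes E^\times$, $\universal{S} = S \ltimes \widehat E$), each of these three correspondences is of the form $\tilde Z$ for an explicit $S$-equivariant topological correspondence $Z \colon F \to \widehat E$: the correspondence $\iota\circ\phi$ comes from $\bigsqcup_{f \in F} \{\chi_f\}$, while $\Omega_S\circ R_\pm$ comes from $\bigsqcup_{f\in F}\bigsqcup_{Y \in P_\pm(f)} U_{f_Y}$, in each case with range map the projection onto the index $f$ and source map the inclusion of the relevant clopen subset of $\widehat E$.

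Next I would compute the chain map induced by such a $\tilde Z$. Since each $Z = \bigsqcup_f Z_f$ has proper range map (the fibres $Z_f$ are compact, being either a point $\{\chi_f\}$ or a finite disjoint union of the compact open sets $U_{f_Y}$), the correspondence $\tilde Z$ factors as a proper action correspondence $\underlying{S}|_F \to \underlying{S}|_F \ltimes Z$ followed by an \'etale homomorphism $\sigma \colon \underlying{S}|_F \ltimes Z \to \universal{S}$, by Proposition~\ref{p:actionhomdecomposition} and the accompanying properness discussion. Consequently the induced chain map $C_n(\underlying{S}|_F,A) \to C_n(\universal{S},A)$ is $(\sigma_\bullet)_* \circ (\tau_\bullet)^*$, a pushforward along an \'etale map composed with a pullback along a proper map. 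The crucial observation is that this chain map depends on $Z$ only through the $S$-equivariant assignment $f \mapsto 1_{Z_f} \in \Z\widehat E$, and does so $\Z$-linearly: the proper pullback $(\tau_\bullet)^*$ sends a fixed chain to the characteristic function of its preimage, whose part over the $f$-component is cut out by $1_{Z_f}$, and the \'etale pushforward $(\sigma_\bullet)_*$ is additive; hence the value of the chain map on any fixed generator is a $\Z$-linear function of the $1_{Z_f}$.

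Granting this linearity, the identity follows from the Rukolaine/inclusion--exclusion formula already recorded above: under $\Z E \cong \Z\widehat E$ the Rukolaine idempotent $\til f = \sum_{Y \subseteq \max(f)}(-1)^{|Y|} f_Y$ maps to $1_{\{\chi_f\}}$, that is,
\[ 1_{\{\chi_f\}} = \sum_{Y \in P_+(f)} 1_{U_{f_Y}} - \sum_{Y \in P_-(f)} 1_{U_{f_Y}} \quad\text{in } \Z\widehat E. \]
Feeding this relation into the $\Z$-linear dependence gives the desired chain-level identity, and passing to homology yields the reduced identity, hence the proposition. The main obstacle I anticipate is making the $\Z$-linearity precise through the \'etale pushforward: because the clopen sets $U_{f_Y}$ genuinely overlap (they are nested as $Y$ grows) and the corresponding action groupoids $\underlying{S}|_F \ltimes U_{f_Y}$ differ, one must verify that the pullback and pushforward operations factor uniformly through the characteristic-function data in $\Z\widehat E$, so that the signed sum telescopes at the chain level and not merely in degree $0$. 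This is precisely where the compactness and $S$-invariance built into the pseudofinite idempotent $f$ and its finite set $\max(f)$ of maximal subidempotents are used.
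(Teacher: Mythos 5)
Your proposal is correct and follows essentially the same route as the paper: both reduce the square to comparing the composite correspondences $\Omega_S\circ R_\pm$ with the \'etale homomorphism $s\mapsto[s,\chi_{\sour(s)}]$, compute the induced chain maps via the transversal decomposition of Proposition~\ref{p:description of map in homology} (equivalently, your action-correspondence-plus-homomorphism factorization), and observe that on a fixed generator the signed sum of the resulting indicator functions telescopes by inclusion--exclusion exactly as $\sum_{Y}(-1)^{|Y|}1_{U_{f_Y}}=1_{\{\chi_f\}}$ does in $\Z\widehat E$. The injectivity of $x_n\mapsto([s_1,x_1],\dots,[s_n,x_n])$ on $U_{\sour(s_n)}$ is what makes your ``$\Z$-linearity in $1_{Z_f}$'' precise, and this is exactly the step the paper's proof carries out with the sets $V_{s,Y}$.
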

\begin{proof}
The fact that $H_n(\Omega_S)$ is an isomorphism is~\cite[Example~3.10]{miller2023ample}.
We will use the description of the induced maps in homology from Proposition \ref{p:description of map in homology}, which give us chain maps $C_\bullet(\underlying{S} |_F) \to C_\bullet(\universal{S})$, which induce the corresponding chain maps with coefficients $A$, so it suffices to handle the case $A=\Z$. So, fix $s = (s_1,\dots,s_n) \in C_n(\underlying{S} |_F)$. The compositions $\Omega_S \circ R_\pm \colon \underlying{S} |_F \to \universal{S}$ have underlying bispaces
\[ \bigsqcup_{e \in F, Y \in P_\pm(e)} U_{e_Y}  \times_{\hat E} \universal{S}, \]
which have continuous transversals $Z_\pm = \bigsqcup_{e \in F, Y \in P_\pm(e)} U_{e_Y}$. Following \eqref{ample description of map}, $s = (s_1,\dots,s_n) \in C_n(\underlying{S} |_F)$ is sent to
\[ \sum_{(e,Y) \in Z_\pm, \sour(s_n) = e} 1_{V_{s,Y}} = \sum_{Y \in P_\pm(\sour(s_n))} 1_{V_{s,Y}} \in C_n(\universal{S}) \]
where
\[ V_{s,Y} = \{ ([s_1,x_1],\dots,[s_n,x_n]) \in (\universal{S})^n \mid x_n\in U_{e_Y}, x_{i} = s_{i+1} \cdot x_{i+1} \}. \]
Each element of $V_{s,Y}$ is uniquely specified by an arbitrary $x_n \in U_{e_Y}$. By inclusion-exclusion, 
\[ \sum_{Y \in P_+(\sour(s_n))} 1_{V_{s,Y}} - \sum_{Y \in P_-(\sour(s_n))} 1_{V_{s,Y}} = 1_{([s_1,\chi_{\sour(s_1)}],\dots,[s_n,\chi_{\sour(s_n)}])}. \]
The map $H_n(\Omega_S) \circ (H_n(R_+) - H_n(R_-))$ is therefore induced by the inclusion $\underlying{S} |_F \hookrightarrow \universal{S}$ which sends $s$ to $[s,\chi_{\sour(s)}]$.
\end{proof}

\subsection{Eilenberg--Mac Lane spaces for groups}

We recall the definition and basic properties of Eilenberg--Mac Lane spaces in the setting of a discrete group $G$. A $K(G,1)$, or Eilenberg--Mac Lane space, for $G$ is a CW complex $X$ with $\pi_1(X)\cong G$ and a contractible universal cover. In this case, $H_n(G,A)\cong H_n(X,A)$ for any abelian group $A$, where the right-hand side can be computed via cellular homology~\cite{Browncohomology}.


If $f\colon G\to H$ is a group homomorphism and $X$, $Y$ are Eilenberg--Mac Lane spaces for $G$, $H$, respectively, then there is a unique, up to homotopy, basepoint-preserving cellular map $\p\colon X\to Y$ such that $\p_*=f$.  The induced map $H(\p,A)\colon H_\bullet(X,A)\to H_\bullet(Y,A)$ agrees with $H(f,A)\colon H_\bullet(G,A)\to H_\bullet(H,A)$ under the identification of cellular homology with group homology.

One construction of a $K(G,1)$ is the classifying space $BG$, which  is the geometric realization of the simplicial set $\mathcal NG$ with $(\mathcal NG)_n = G^n$, where the face maps are as in \eqref{eq:face.maps} and the $i^{th}$-degeneracy inserts the identity at the $i^{th}$ object.    The map $B(f)\colon BG\to BH$ associated to a group homomorphism is given by the induced maps $f^n\colon G^n\to H^n$.


If $X$ is a $K(G,1)$ and $Y$ is a $K(H,1)$, then $X\times Y$ is a $K(G\times H,1)$, where $X\times Y$ is given the compactly generated topology.  The $q$-cells of $X\times Y$ are products of the form $e_i\times f_{q-i}$ where $e_i$ is an $i$-cell of $X$ and $f_{q-i}$ is a $(q-i)$-cell of $Y$.  If $K$ is any commutative ring, there is an isomorphism $C_\bullet(X\times Y,K)\cong C_\bullet(X,K)\otimes C_\bullet(Y,K)$ of cellular chain complexes, where we take the usual tensor product of chain complexes of $K$-modules with $n^{th}$-$K$-module $\bigoplus_{i+j=n}C_i(X,K)\otimes_K C_j(Y,K)$. Given orientations of $e_i$ and $f_{q-i}$, the appropriately oriented cell $e_i\times f_{q-i}$ is mapped to $e_i\otimes f_{q-i}$.

\section{Main theorems}
In this section we state the main results concerning homology and K-theory of groupoids and $\cs$-algebras associated to self-similar groupoid actions.  We then apply these tools in the remainder of the paper.

Note that if $G$ is a discrete groupoid and $F$ is an invariant subset, then $G = G|_F \sqcup G|_{G^0 \setminus F}$.
It follows that the inclusion $G|_F\hookrightarrow G$ induces an inclusion $H_\bullet(G|_F)\to H_\bullet(G)$ as a direct summand.  We can therefore write $\id\colon H_\bullet(G|_F)\to H_\bullet(G)$ to mean the inclusion, and similarly for K-theory.

\begin{Thm}\label{t:main.homology}
Let $(G,\mathcal X)$ be a self-similar groupoid action. Then, for each abelian group $A$, there is a long exact sequence in homology
\[ \cdots \to H_{n+1}(\mathscr G_{\mathcal X},A) \to H_n(G_\reg,A) \to H_n(G,A) \to H_n(\mathscr G_{\mathcal X},A) \to \cdots \]
where the middle map is $\id - H_n(\mathcal X_\reg,A)$, with $\mathcal X_\reg\colon G_\reg\to G$ the restriction of $\mathcal X$ to $G_\reg$.
\end{Thm}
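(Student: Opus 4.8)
The plan is to realise the asserted sequence as an instance of the long exact sequence of Proposition~\ref{t:tool1} applied to the universal groupoid $\universal{\mathcal X}$, split along the closed invariant set of tight filters, and then to translate each of the three terms and the connecting map into group-theoretic language using the identifications already assembled in this section.

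First I would apply Proposition~\ref{t:tool1} to $\mathscr G=\universal{\mathcal X}$, taking $C=X_{\mathrm{tight}}\subseteq\universal{\mathcal X}^0$ to be the closed invariant subspace of tight filters, so that $\universal{\mathcal X}|_C=\mathscr G_{\mathcal X}$, and $U=\universal{\mathcal X}^0\setminus X_{\mathrm{tight}}$ its open complement. This produces a long exact sequence
\[\cdots\to H_{n+1}(\mathscr G_{\mathcal X},A)\to H_n(\universal{\mathcal X}|_U,A)\to H_n(\universal{\mathcal X},A)\to H_n(\mathscr G_{\mathcal X},A)\to\cdots\]
whose middle map is induced by the inclusion $U\hookrightarrow\universal{\mathcal X}^0$ of $\universal{\mathcal X}$-spaces.

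Next I would identify the two `known' terms. The isomorphism $H_*(\universal{\mathcal X},A)\cong H_*(\underlying{S_{\mathcal X}},A)$ realised by $H_*(\Omega_{S_{\mathcal X}},A)$ from~\cite{miller2023ample}, combined with the Morita equivalence $G\to\underlying{S_{\mathcal X}}$ of Proposition~\ref{p:Morita.equiv}, gives $H_n(\universal{\mathcal X},A)\cong H_n(G,A)$. Since $U$ consists precisely of the isolated principal filters $\chi_{pp^*}$ with $\sour(p)\in G^0_\reg$ and $\universal{\mathcal X}|_U\cong\underlying{S_{\mathcal X}}|_F$ for $F=\{pp^*\mid\sour(p)\in G^0_\reg\}$, the Morita equivalence $G_\reg\to\underlying{S_{\mathcal X}}|_F$ of Proposition~\ref{p:Morita.equiv} gives $H_n(\universal{\mathcal X}|_U,A)\cong H_n(G_\reg,A)$. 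Substituting these isomorphisms yields the stated shape of the sequence.

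The crux, and the step I expect to require the most care, is computing the middle map under these identifications. The inclusion-induced map $H_n(\universal{\mathcal X}|_U,A)\to H_n(\universal{\mathcal X},A)$ is exactly the situation analysed in Proposition~\ref{p:ruk.prop}: under the Morita identifications it becomes the Rukolaine map $H_n(R_+,A)-H_n(R_-,A)\colon H_n(\underlying{S_{\mathcal X}}|_F,A)\to H_n(\underlying{S_{\mathcal X}},A)$. I would then invoke Example~\ref{ex:ssgroupoid.ruk}, where $R_+$ restricts to the inclusion $G_\reg\hookrightarrow G$, inducing the direct-summand inclusion $\id$, while the commutative square there identifies $R_-$ with $\mathcal X_\reg\colon G_\reg\to G$, inducing $H_n(\mathcal X_\reg,A)$. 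Combining these gives the middle map $\id-H_n(\mathcal X_\reg,A)$, as claimed. The one piece of genuine bookkeeping is to verify that the several Morita equivalences and the isomorphism $H_*(\Omega_{S_{\mathcal X}},A)$ are mutually compatible, so that the identifications of Proposition~\ref{p:ruk.prop} and Example~\ref{ex:ssgroupoid.ruk} align on the nose; this follows by chasing the commuting diagrams already established, and is where I would be most attentive.
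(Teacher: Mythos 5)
Your proposal is correct and follows essentially the same route as the paper: apply Proposition~\ref{t:tool1} to $\universal{\mathcal X}$ split along the tight filters, identify the two open/closed pieces with $G_\reg$ and $G$ via Proposition~\ref{p:Morita.equiv} and the correspondence $\Omega_{S_{\mathcal X}}$, and compute the middle map as the Rukolaine map using Proposition~\ref{p:ruk.prop} and Example~\ref{ex:ssgroupoid.ruk}. The only difference is presentational — the paper assembles the compatibility of these identifications into a single commutative diagram before invoking the long exact sequence, which is exactly the bookkeeping you flag at the end.
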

\begin{proof}
Let $F=\{pp^*\mid \sour(p)\in G_{\mathrm{reg}}^0\}$ and let $\til F=\{\chi_f\mid f\in F\}$.  By Proposition ~\ref{p:Morita.equiv}, Proposition~\ref{p:ruk.prop} and Example~\ref{ex:ssgroupoid.ruk} we have a commutative diagram
\[\begin{tikzcd}[arrow style=math font, column sep = 2cm]
	{H_n(\universal{\mathcal{X}} |_{\til F},A)} && {H_n(\universal{\mathcal X},A)} \\
	{H_n(\underlying{S_\mathcal X} |_F,A)} && {H_n(\underlying{S_\mathcal X},A)}\\
    {H_n(G_\reg,A)} && {H_n(G,A)}
	\arrow["{H_n(\iota,A)}", from=1-1, to=1-3]
	\arrow["\cong", from=2-1, to=1-1]
	\arrow["{H_n(R_+,A) - H_n(R_-,A)}"', from=2-1, to=2-3]
	\arrow["\cong", "{H_n(\Omega_S,A)}"', from=2-3, to=1-3]
    \arrow["\cong", from=3-1, to=2-1]
    \arrow["\cong", from=3-3, to=2-3]
    \arrow["{\id-H_n(\mathcal X_\reg,A)}", from=3-1, to=3-3]
\end{tikzcd}\]
The result now follows from the long exact sequence of Proposition~\ref{t:tool1} applied to the exact sequence of groupoids $\universal{\mathcal X}|_{\til F}\hookrightarrow \universal{\mathcal X}\twoheadrightarrow \mathscr G_{\mathcal X}$.
\end{proof}

Applying  Propositions~\ref{p:description of map in homology} and~\ref{p:transversal decomposition} to $H_n(\mathcal X_\reg)$ in Theorem~\ref{t:main.homology}, yields the following `groups only' description of the long exact sequence.

\begin{Cor}\label{c:homology.transversal}
Let $(G,E,\sigma)$ be a self-similar groupoid action on a graph $E$ with cocycle $\sigma$. Let $T^0 \subseteq G^0$ be a transversal for $G$ and set $T^0_\reg = T^0 \cap E^0_\reg$. Then, for each abelian group $A$, there is a long exact sequence in homology 
\[\begin{tikzcd}[arrow style=math font,cells={nodes={text height=2ex,text depth=0.75ex}}, column sep = 1cm]
	{\cdots} &  {H_{n+1}(\mathscr G_{(G,E)},A)} \arrow[draw=none]{d}[name=Y, shape=coordinate]{}  & {\bigoplus\limits_{v \in T^0_\reg} H_n(G^v_v,A)} \\
	{\bigoplus\limits_{w \in T^0} H_n(G^w_w,A)} &  {H_n(\mathscr G_{(G,E)},A)} & {\cdots}  
	\arrow[from=1-1, to=1-2]
        \arrow[from=1-2, to=1-3]
	\arrow[from=1-3, to=2-1, labcurarrow2=Y]
	\arrow[from=2-1, to=2-2]
	\arrow[from=2-2, to=2-3]
\end{tikzcd}\]
where $\Phi_n$ admits the following description. Fix, for each $v\in T^0_\reg$, a left $G^v_v$-transversal $T_v$ to $vE$. Consider, for $e \in T_v$, the virtual homomorphism $\sigma_e \colon G_e \to G^{\sour(e)}_{\sour(e)}$, $\sigma_e(g)=g|_e$. 
For each $w \in \sour(\bigsqcup_{v\in T^0_\reg}T_v)$,  pick $h_w \in G$ with $\sour(h_w) = w$ and $\ran(h_w) = t(w) \in T^0$ and set $c_w \colon G^w_w \to G^{t(w)}_{t(w)}$ to be the homomorphism $g \mapsto h_w\inv g h_w$. 
Then \[ \Phi_n = \bigoplus_{v\in T^0_\reg}\sum_{e \in T_v} H_n(c_{\sour(e)},A)\circ H_n(\sigma_e,A)\circ \mathrm{tr}^{G_v^v}_{G_e},\] and it is induced by the chain map
\begin{align*}
\bigoplus_{v \in T^0_\reg} C_\bullet(G^v_v) & \to \bigoplus_{w \in T^0} C_\bullet(G^w_w) \\
(g_1,\dots,g_m) & \mapsto \sum_{\ran(e)=\sour(g_m)} (c_{\sour(e)}(g_1|_{g_2\cdots g_m(e)}),\ldots, c_{\sour(e)} (g_m|_e)).
\end{align*}
In particular, $(\Phi_0)_{w,v} =|\ran\inv(v)\cap \sour\inv(Gw)| \colon H_0(G^v_v,A) \to H_0(G^w_w,A)$.
\end{Cor}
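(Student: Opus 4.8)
The plan is to read off the statement from the long exact sequence of Theorem~\ref{t:main.homology} by identifying its three terms $H_n(G_\reg,A)$, $H_n(G,A)$ and its middle map $\id-H_n(\mathcal X_\reg,A)$ in purely group-theoretic terms. The two outer terms are handled by Morita invariance of groupoid homology, while the middle map is computed by feeding the transversal decomposition of Proposition~\ref{p:transversal decomposition} and the explicit chain description of Proposition~\ref{p:description of map in homology} into $H_n(\mathcal X_\reg,A)$. Thus $\Phi_n$ will be exactly $H_n(\mathcal X_\reg,A)$ expressed through isotropy groups.

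First I would fix compatible transversals. Since $G_\reg=G|_{E^0_\reg}$, the set $T^0_\reg=T^0\cap E^0_\reg$ is a transversal for $G_\reg$, and the inclusions of isotropy groups $\bigsqcup_{v\in T^0_\reg}G^v_v\hookrightarrow G_\reg$ and $\bigsqcup_{w\in T^0}G^w_w\hookrightarrow G$ are Morita equivalences. Applying $H_n(-,A)$ gives the identifications $H_n(G_\reg,A)\cong\bigoplus_{v\in T^0_\reg}H_n(G^v_v,A)$ and $H_n(G,A)\cong\bigoplus_{w\in T^0}H_n(G^w_w,A)$ appearing in the statement. Under these, the summand $\id\colon H_n(G_\reg,A)\to H_n(G,A)$ of Theorem~\ref{t:main.homology} becomes the inclusion of the sub-sum indexed by $T^0_\reg\subseteq T^0$, as required.

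Second I would identify $H_n(\mathcal X_\reg,A)$ with $\Phi_n$. The correspondence $\mathcal X_\reg\colon G_\reg\to G$ is proper by regularity of the vertices in $E^0_\reg$, and $E_\reg=\{e\in E\mid\ran(e)\in E^0_\reg\}$ is a $G$-transversal in $\mathcal X_\reg$. Choosing the $G_\reg$-transversal $\bigsqcup_{v\in T^0_\reg}T_v\subseteq E_\reg$, Proposition~\ref{p:transversal decomposition} decomposes $\mathcal X_\reg$ up to isomorphism of correspondences as $\bigsqcup_e C_{\sour(e)}\circ\Sigma_e\circ\tr_e$. Using the identification of the transfer correspondence $\tr_e$ with the transfer map $\tr^{G^v_v}_{G_e}$ established in the transfer subsection, together with the fact that the correspondences $\Sigma_e$ and $C_{\sour(e)}$ of the homomorphisms $\sigma_e$ and $c_{\sour(e)}$ induce $H_n(\sigma_e)$ and $H_n(c_{\sour(e)})$, and that disjoint unions of correspondences become sums on homology, one obtains the stated formula for $\Phi_n$. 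The chain-level formula is then precisely the discrete description \eqref{discrete description of map} of Proposition~\ref{p:description of map in homology} applied to $\mathcal X_\reg$, with the conjugations $c_{\sour(e)}$ inserted so as to land in the chosen transversal isotropy groups.

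Finally I would verify the $\Phi_0$ computation. On $H_0(-,A)\cong A$ every group homomorphism induces the identity, so $H_0(\sigma_e,A)$ and $H_0(c_{\sour(e)},A)$ are the identity, while the transfer $\tr^{G^v_v}_{G_e}$ is multiplication by the index $[G^v_v:G_e]$, immediate from \eqref{transfer equation} applied to a $0$-chain. Since $ge=g(e)\,g|_e$ shows $G_e=\{g\in G^v_v\mid g(e)=e\}$ is the stabilizer of $e$ for the action $g\mapsto g(e)$ on $\ran\inv(v)=vE$, orbit--stabilizer gives $[G^v_v:G_e]=\lvert G^v_v\cdot e\rvert$, and $c_{\sour(e)}$ contributes to the $w$-summand exactly when $\sour(e)\in Gw$. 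As $\sour(g(e))=\ran(g|_e)\in G\sour(e)$, each $G^v_v$-orbit in $vE$ lies over a single $G$-orbit of sources, so grouping the orbits by source yields $(\Phi_0)_{w,v}=\sum_{e\in T_v,\ \sour(e)\in Gw}[G^v_v:G_e]=\lvert\ran\inv(v)\cap\sour\inv(Gw)\rvert$. The result is thus an assembly of the prior propositions; the only point requiring genuine care is matching the two transversal systems in the second step so that the decomposition of Proposition~\ref{p:transversal decomposition} applies verbatim, which is where I would expect the main bookkeeping effort to lie.
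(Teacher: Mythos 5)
Your proposal is correct and follows the paper's own route exactly: the paper derives this corollary in one sentence by applying Propositions~\ref{p:description of map in homology} and~\ref{p:transversal decomposition} to $H_n(\mathcal X_\reg)$ in Theorem~\ref{t:main.homology}, which is precisely the assembly you carry out (with the Morita identifications of $H_n(G_\reg,A)$ and $H_n(G,A)$ and the orbit--stabilizer bookkeeping for $\Phi_0$ filled in). No gaps.
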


\begin{Rmk}\label{r:graph transversal remark}
Viewing a self-similar groupoid action on a graph $(G,E,\sigma)$ as a choice of transversal $E \subseteq \mathcal X$ for the right action in a self-similar groupoid $(G, \mathcal X)$, the above maps $\Phi_n$ are independent of the choice of transversal. Note also that is always possible to pick $E \subseteq \mathcal X$ such that $\sour(E) \subseteq T^0$, in which case the conjugation homomorphisms $c_w \colon G^w_w \to G^{t(w)}_{t(w)}$ are unnecessary and may be chosen to be trivial.
\end{Rmk}

In most of our applications we are in the following situation, where the statement becomes considerably simpler.

\begin{Cor}\label{c:transitive.transfer}
Let $(G,E,\sigma)$ be a self-similar group action on a finite alphabet $E$ of cardinality at least $2$ with cocycle $\sigma$. For $e\in E$ and let $\sigma_e\colon G_e\to G$ be the virtual endomorphism $\sigma_e(g)=g|_e$. Then there is a long exact sequence
\[\begin{tikzcd}[arrow style=math font,cells={nodes={text height=2ex,text depth=0.75ex}}, column sep = 1cm]
	{\cdots} &  {H_{n+1}(\mathscr G_{(G,E)},A)} \arrow[draw=none]{d}[name=Y, shape=coordinate]{}  & {H_n(G,A)} \\
	{H_n(G,A)} &  {H_n(\mathscr G_{(G,E)},A)} & {\cdots}  
	\arrow[from=1-1, to=1-2]
        \arrow[from=1-2, to=1-3]
	\arrow[from=1-3, to=2-1, labcurarrow2=Y]
	\arrow[from=2-1, to=2-2]
	\arrow[from=2-2, to=2-3]
\end{tikzcd}\]
where $\Phi_n = \sum_{e \in T} H_n(\sigma_e,A)\circ \mathrm{tr}^{G}_{G_e}$ for any $G$-transversal $T \subseteq E$ and is induced by the chain map 
\begin{align*}
C_\bullet(G) & \to  C_\bullet(G) \\
(g_1,\dots,g_m) & \mapsto \sum_{e \in E} (g_1|_{g_2\cdots g_m(e)},\ldots, g_m| _e ).
\end{align*}
In particular, $H_0(\mathscr G_{(G,E)})\cong \mathbb Z/(|E|-1)\mathbb Z$ and $H_1(\mathscr G_{(G,E)}))\cong \coker (\id-\Phi_1)$ where $\Phi_1\colon \ab{G}\to \ab{G}$ is given by $\Phi_1(g[G,G]) = \sum_{e\in E} g|_e[G,G]$. 
\end{Cor}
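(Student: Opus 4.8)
The plan is to deduce the entire statement by specializing Corollary~\ref{c:homology.transversal} to the case where $G$ is a group, so that the unit space consists of a single object $v$. First I would record the structural simplifications. Since $E$ is finite with $|E|\geq 2$, the unique vertex $v$ is regular, whence $G_\reg = G$; a transversal for $G$ is the single object, so $T^0 = T^0_\reg = \{v\}$ and both sums $\bigoplus_{v\in T^0_\reg} H_n(G^v_v,A)$ and $\bigoplus_{w\in T^0} H_n(G^w_w,A)$ collapse to $H_n(G,A)$. Every edge satisfies $\sour(e) = v = t(v)$, so the conjugation homomorphisms $c_w$ may be chosen trivial (take $h_v = 1$), and the left $G^v_v$-transversal $T_v$ to $vE$ becomes a $G$-transversal $T\subseteq E$ for the action on $E$. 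With these identifications Corollary~\ref{c:homology.transversal} yields at once the long exact sequence with middle map $\id - \Phi_n$, where $\Phi_n = \sum_{e\in T} H_n(\sigma_e,A)\circ \tr^G_{G_e}$, together with the asserted chain-map formula $(g_1,\dots,g_m)\mapsto \sum_{e\in E}(g_1|_{g_2\cdots g_m(e)},\dots,g_m|_e)$, the conjugation factors having vanished.

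For the ``in particular'' clauses I would read off the low-degree end of the sequence with $A = \mathbb Z$. The first concrete step is to compute $\Phi_0$: evaluating the chain map in degree $0$ sends the generator of $C_0(G) = \mathbb Z$ to the sum over the $|E|$ edges of their common source $v$, so $\Phi_0$ is multiplication by $|E|$ on $H_0(G) = \mathbb Z$. (Equivalently, one may use $(\Phi_0)_{v,v} = |\ran\inv(v)\cap\sour\inv(Gv)| = |E|$ from Corollary~\ref{c:homology.transversal}, or that $\tr^G_{G_e}$ acts as multiplication by $[G:G_e]$ on $H_0$ and $\sum_{e\in T}[G:G_e] = |E|$ by orbit--stabilizer.) As the sequence terminates in $\cdots\to H_0(G)\xrightarrow{\id-\Phi_0} H_0(G)\to H_0(\mathscr G_{(G,E)})\to 0$, we obtain $H_0(\mathscr G_{(G,E)}) = \coker(\id-\Phi_0)\cong \mathbb Z/(|E|-1)\mathbb Z$.

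Next I would extract $H_1$. Because $|E|\geq 2$, the map $\id-\Phi_0 = -(|E|-1)$ on $\mathbb Z$ is injective, so the connecting map $H_1(\mathscr G_{(G,E)})\to H_0(G)$ has image $\ker(\id-\Phi_0) = 0$ and hence vanishes. Exactness of $H_1(G)\xrightarrow{\id-\Phi_1} H_1(G)\to H_1(\mathscr G_{(G,E)})\to 0$ then gives $H_1(\mathscr G_{(G,E)})\cong \coker(\id-\Phi_1)$. It remains to identify $\Phi_1$ on $H_1(G) = \ab{G}$ under the standard isomorphism $[g]\mapsto g[G,G]$: the degree-$1$ chain map sends $(g)\mapsto \sum_{e\in E}(g|_e)$, so $\Phi_1(g[G,G]) = \sum_{e\in E} g|_e[G,G]$, as claimed.

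I do not expect a genuine obstacle: the substantive work lies in Theorem~\ref{t:main.homology} and Corollary~\ref{c:homology.transversal}, and the present corollary is their specialization plus two bookkeeping computations in degrees $0$ and $1$. The only points requiring care are verifying $\Phi_0 = |E|$ (that the transfer contributes the index $[G:G_e]$ in degree $0$ and that these indices sum to $|E|$) and the injectivity of $\id-\Phi_0$ that forces the connecting map into $H_0(G)$ to vanish.
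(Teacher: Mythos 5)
Your proposal is correct and follows the same route as the paper: the long exact sequence and the formula for $\Phi_n$ are exactly the specialization of Corollary~\ref{c:homology.transversal} to a one-object groupoid, and the ``in particular'' clauses are read off from the tail of the sequence using that $\id-\Phi_0 = (1-|E|)\id$ on $H_0(G)\cong\mathbb Z$ is injective when $|E|\geq 2$. Your extra bookkeeping (that the transfer contributes $[G:G_e]$ in degree $0$ and these indices sum to $|E|$) is the same computation the paper packages into the last line of Corollary~\ref{c:homology.transversal}.
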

\begin{proof}
Corollary~\ref{c:homology.transversal} provides everything except the `in particular' statement.   The long exact sequence yields the exact sequence
\[\ab G\to \ab G\xrightarrow{\id -\Phi_1} H_1(\mathscr G_{(G,E)})\to \mathbb Z\xrightarrow{(1-|E|)\id} \mathbb Z\to H_0(\mathscr G_{(G,E))})\to 0.\]  Since $(1-|E|)\id$ is injective because $|E|>1$, we see that $H_0(\mathscr G_{(G,E)})\cong \mathbb Z/(|E|-1)\mathbb Z$ and $H_1(\mathscr G_{(G,E)})\cong \coker (\id -\Phi_1)$.
\end{proof}

The `in particular' statement of Corollary~\ref{c:transitive.transfer} can also be found in~\cite[Theorem~4.3.21]{Nekbook2}.

Proposition~\ref{p:transversal decomposition} also yields a `groups only' description of the six-term sequence in K-theory \eqref{Katsura six term}:

\begin{Thm}\label{t:main.k}
Let $(G,E,\sigma)$ be a self-similar groupoid action on a graph $E$ with cocycle $\sigma$. Let $T^0 \subseteq G^0$ be a transversal for $G$ and set $T^0_\reg = T^0 \cap E^0_\reg$. Then there is a six-term sequence in K-theory
\[\begin{tikzcd}[arrow style=math font]
	{\bigoplus\limits_{v \in T^0_\reg} K_0(\cs(G^v_v))} & {\bigoplus\limits_{w \in T^0} K_0(\cs(G^w_w))} & {K_0(\mathcal O_{\mathcal X})} \\
	{K_1(\mathcal O_{\mathcal X})} & {\bigoplus\limits_{w \in T^0} K_1(\cs(G^w_w))} & {\bigoplus\limits_{v \in T^0_\reg} K_1(\cs(G^v_v))}
	\arrow["{1-\Phi_0}", from=1-1, to=1-2]
	\arrow[from=1-2, to=1-3]
	\arrow[from=1-3, to=2-3]
	\arrow[from=2-1, to=1-1]
	\arrow[from=2-2, to=2-1]
	\arrow["{1-\Phi_1}", from=2-3, to=2-2]
\end{tikzcd}\]
where $\Phi_i$ admits the following description for $i=0,1$. Fix, for each $v\in T^0_\reg$, a left $G^v_v$-transversal $T_v$ to $vE$. Consider for $e \in T_v$ the virtual homomorphism $\sigma_e \colon G_e \to G^{\sour(e)}_{\sour(e)}$, $\sigma_e(g)=g|_e$. 
For each $w \in \sour(\bigsqcup_{v\in T^0_\reg}T_v)$,  pick $h_w \in G$ with $\sour(h_w) = w$ and $\ran(h_w) = t(w) \in T^0$ and set $c_w \colon G^w_w \to G^{t(w)}_{t(w)}$ to be the homomorphism $g \mapsto h_w\inv g h_w$. 
Then \[ \Phi_i = \bigoplus_{v\in T^0_\reg}\sum_{e \in T_v} K_i(c_{\sour(e)}) \circ K_i(\sigma_e)\circ \mathrm{tr}^{G}_{G_e}. \]
\end{Thm}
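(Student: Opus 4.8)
The plan is to obtain the six-term sequence directly from Katsura's sequence \eqref{Katsura six term}, the only real content being to identify the abstract map $[\iota] - [M_{\mathcal X}]$ with $1 - \Phi_i$ after passing to isotropy groups via Morita equivalence. In contrast to the homology case (Theorem~\ref{t:main.homology}), the exactness is supplied for free by \cite{Katsura04}, so all the work lies in computing the connecting maps.

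First I would fix the transversal $T^0 \subseteq G^0$ and note that $T^0_\reg = T^0 \cap E^0_\reg$ is a transversal for $G_\reg$. The inclusions $\bigsqcup_{w \in T^0} G^w_w \hookrightarrow G$ and $\bigsqcup_{v \in T^0_\reg} G^v_v \hookrightarrow G_\reg$ are Morita equivalences, inducing the identifications $K_i(\cs(G)) \cong \bigoplus_{w \in T^0} K_i(\cs(G^w_w))$ and $K_i(\cs(G_\reg)) \cong \bigoplus_{v \in T^0_\reg} K_i(\cs(G^v_v))$ appearing in the statement. Under these, the ideal inclusion $\cs(G_\reg) \hookrightarrow \cs(G)$ --- equivalently the inclusion correspondence $G_\reg \hookrightarrow G$ --- becomes the canonical inclusion of the summands indexed by $T^0_\reg \subseteq T^0$, which is the map denoted $1$. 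It remains to identify $[M_{\mathcal X}]$ with $\Phi_i$.

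Next I would observe that $M_{\mathcal X}$, viewed as a $\cs(G_\reg)$-$\cs(G)$-correspondence, is $\cs(\mathcal X_\reg)$, where $\mathcal X_\reg = \ran\inv(G^0_\reg) \colon G_\reg \to G$ is the restriction of the self-similarity to regular vertices: indeed $\cs(G_\reg) \cdot M_{\mathcal X}$ is densely spanned by the $m_x$ with $\ran(x) \in G^0_\reg$, which is exactly the $\cs$-correspondence of $\mathcal X_\reg$, and $\mathcal X_\reg$ is proper because $0 < |\ran\inv(v)/G| < \infty$ for regular $v$. Hence $[M_{\mathcal X}]$ is the K-theory map of the proper \'etale correspondence $\mathcal X_\reg$, using that $\cs(-)$ is functorial for composition of correspondences \cite{AKM22}. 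I would then apply Proposition~\ref{p:transversal decomposition} to $\mathcal X_\reg$, with the $G_\reg$-transversal $\bigsqcup_{v \in T^0_\reg} T_v \subseteq \mathcal X_\reg$ and the transversals $T^0_\reg, T^0$; this exhibits $\mathcal X_\reg$, up to isomorphism of \'etale correspondences, as $\bigsqcup_{v \in T^0_\reg} \bigsqcup_{e \in T_v} C_{\sour(e)} \circ \Sigma_e \circ \tr_e$. Each factor is proper --- $\tr_e$ because $G_e$ has finite index in $G^{\ran(e)}_{\ran(e)}$ at the regular vertex $\ran(e)$, and the homomorphism correspondences $\Sigma_e$, $C_{\sour(e)}$ because homomorphism correspondences of discrete groupoids are proper. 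Applying $\cs(-)$, which preserves isomorphism and composition, and then $K_i$, which sends proper $\cs$-correspondences to composable K-theory maps and disjoint unions to sums, yields $[M_{\mathcal X}] = \bigoplus_{v \in T^0_\reg} \sum_{e \in T_v} K_i(c_{\sour(e)}) \circ K_i(\sigma_e) \circ \tr^{G}_{G_e} = \Phi_i$. Substituting these identifications into \eqref{Katsura six term} gives the claimed sequence.

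The step I expect to be the main obstacle is the middle one: checking cleanly that restricting $M_{\mathcal X}$ to the ideal $\cs(G_\reg)$ recovers precisely $\cs(\mathcal X_\reg)$, and that the ``commutes up to isomorphism'' statement of Proposition~\ref{p:transversal decomposition} --- which lives in the bicategory of \'etale groupoid correspondences --- descends correctly under the passage from $\mathcal X$ to $\cs(\mathcal X)$ to $K_i(\cs(\mathcal X))$, so that all the properness and naturality hypotheses needed for the induced K-theory maps to be well defined and composable are genuinely met. The parallel homology computation of Corollary~\ref{c:homology.transversal} serves as a template here, the essential simplification being that Katsura's six-term sequence already supplies exactness, leaving only the identification of the connecting map.
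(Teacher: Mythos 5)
Your proposal is correct and is essentially the paper's own argument: the paper offers no separate proof of Theorem~\ref{t:main.k}, introducing it only with the remark that Proposition~\ref{p:transversal decomposition} applied to Katsura's sequence \eqref{Katsura six term} yields the `groups only' description, which is exactly the route you take (identifying $M_{\mathcal X}$ over the ideal $\cs(G_\reg)$ with $\cs(\mathcal X_\reg)$ and decomposing $\mathcal X_\reg$ through the chosen transversals). Your flagged concern about properness and compatibility of $\cs(-)$ with composition and isomorphism of correspondences is already covered by the cited results of Antunes--Ko--Meyer \cite{AKM22}, as the paper notes in its preliminaries.
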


As in Remark \ref{r:graph transversal remark}, the maps $\Phi_i$ are independent of the graph action presentation of the underlying self-similar groupoid action.

\begin{Cor}\label{c:transitive.transfer.Ktheory}
Let $(G,E,\sigma)$ be a self-similar group action on a finite alphabet $E$ with cocycle $\sigma$.  For $e\in E$ let $\sigma_e\colon G_e\to G$ be the virtual endomorphism $\sigma_e(g)=g|_e$.  Then there is a long exact sequence
\[\begin{tikzcd}[arrow style=math font]
	{K_0(\cs(G))} & {K_0(\cs(G))} & {K_0(\mathcal O_{\mathcal X})} \\
	{K_1(\mathcal O_{\mathcal X})} & {K_1(\cs(G))} & {K_1(\cs(G))}
	\arrow["{1-\Phi_0}", from=1-1, to=1-2]
	\arrow[from=1-2, to=1-3]
	\arrow[from=1-3, to=2-3]
	\arrow[from=2-1, to=1-1]
	\arrow[from=2-2, to=2-1]
	\arrow["{1-\Phi_1}", from=2-3, to=2-2]
\end{tikzcd}\]
where $\Phi_i = \sum_{e \in T} K_i(\sigma_e)\circ \mathrm{tr}^{G}_{G_e}$ for $i = 0,1$ and any $G$-transversal $T \subseteq E$.
\end{Cor}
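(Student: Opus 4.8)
The plan is to obtain this as the specialization of Theorem~\ref{t:main.k} to a self-similar group action. First I would note that when the underlying groupoid $G$ is a group, its unit space $G^0$ is a single object $v$, so the transversal $T^0 = \{v\}$ is forced and the isotropy group is $G^v_v = G$. Since $E$ is a finite alphabet with $|E| \ge 2$, every edge has range $v$ and $0 < |E/G| < \infty$; thus $v$ is regular (it is neither a source nor an infinite receiver) and $T^0_\reg = T^0 = \{v\}$. Feeding this into the six-term sequence of Theorem~\ref{t:main.k}, each of the direct sums $\bigoplus_{v\in T^0_\reg} K_i(\cs(G^v_v))$ and $\bigoplus_{w\in T^0} K_i(\cs(G^w_w))$ collapses to a single copy of $K_i(\cs(G))$, producing exactly the displayed six-term sequence with the two Cuntz--Pimsner terms $K_i(\mathcal O_{\mathcal X})$.

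It then remains to simplify the maps $\Phi_i$ furnished by Theorem~\ref{t:main.k}. The required left $G^v_v$-transversal $T_v$ to $vE$ is, since $G^v_v = G$ and $vE = E$, precisely a $G$-transversal $T \subseteq E$ to the orbit set $G\backslash E$, and for $e \in T$ the map $\sigma_e \colon G_e \to G^{\sour(e)}_{\sour(e)} = G$ is the virtual endomorphism $g \mapsto g|_e$ with $G_e$ the stabilizer of $e$. Because $\sour(E) = \{v\} = T^0$, Remark~\ref{r:graph transversal remark} permits me to choose each $h_w = 1$, so the conjugation homomorphisms $c_{\sour(e)}$ are trivial and disappear from the formula. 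Using that $\mathrm{tr}^{G^v_v}_{G_e} = \mathrm{tr}^G_{G_e}$, this leaves $\Phi_i = \sum_{e \in T} K_i(\sigma_e)\circ \mathrm{tr}^G_{G_e}$, as claimed.

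I expect no substantive obstacle, as the whole content is already packaged in Theorem~\ref{t:main.k}; the two small points to verify are routine. First, the transfer maps $\mathrm{tr}^G_{G_e}$ are defined because each orbit $Ge \subseteq E$ is finite, forcing $[G : G_e] < \infty$. Second, if one does not wish to invoke Remark~\ref{r:graph transversal remark} and instead keeps a general $h_v \in G^v_v = G$, then $c_v(g) = h_v^{-1} g h_v$ is an inner automorphism of $G$, implemented on $\cs(G)$ by conjugation by the unitary $u_{h_v}$, and hence induces the identity on K-theory; either way $K_i(c_v) = \id$ and the conjugations contribute nothing.
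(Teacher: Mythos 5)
Your proposal is correct and follows exactly the route the paper intends: the corollary is stated as an immediate specialization of Theorem~\ref{t:main.k} to the case where $G$ is a group, so the unit space is a single (regular) vertex, the direct sums collapse, and the conjugation homomorphisms $c_w$ become inner automorphisms of $G$, which act trivially on K-theory. Your two verifications (finiteness of $[G:G_e]$ from finiteness of $E$, and triviality of $K_i(c_v)$) are precisely the routine points that make the specialization go through.
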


\section{Computations: miscellaneous examples}
Throughout the computation sections we shall frequently need the well-known computation of the homology of finite cyclic groups; see~\cite[Page~35]{Browncohomology}.
\begin{equation}\label{eq:cyclic.homology}
    H_n(\mathbb Z/m\mathbb Z)= \begin{cases} \mathbb Z, & \text{if}\ n=0, \\ 0, & \text{if}\ n\in 2\mathbb Z,\\ \mathbb Z/m\mathbb Z, & \text{if}\ n\in 2\mathbb Z+1.\end{cases}
\end{equation}

\subsection{Graphs}\label{subs:graph}
We perform here the computations for Example~\ref{Ex:Graphs}.
Matui~\cite{Matui} computed the homology of a graph groupoid for a finite graph.  This was extended to arbitrary graphs by Nyland and Ortega~\cite{OrtegaNylandgraph}.  We handle the case of an arbitrary graph using our methods, giving an easier proof.
Of course, the K-theory of graph $\cs$-algebras is well known.

\begin{Thm}\label{t:graph.case}
Let $E$ be an arbitrary graph.  Let $A$ be the $E^0_{\mathrm{reg}}\times E^0$-matrix with $A_{v,w}$ the number of edges from $w$ to $v$.  Then $H_0(\mathscr G_E)\cong \coker (\id -A^T)$, $H_1(\mathscr G_E)\cong \ker (\id -A^T)$ and $H_n(\mathscr G_E)=0$ for $n\geq 2$.  Moreover, $K_0(\cs(E))\cong  \coker (\id -A^T)$ and $K_1(\cs(E))\cong \ker(\id -A^T)$.
\end{Thm}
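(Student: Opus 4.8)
The plan is to recognise this as the special case of the general machinery in which the coefficient groupoid $G$ is \emph{discrete with no non-unit arrows}. Following Example~\ref{Ex:Graphs}, I would view $E$ as a correspondence over the discrete groupoid $G$ with $G = G^0 = E^0$, so that $\mathscr G_{\mathcal X} = \mathscr G_E$ and $\mathcal O_{\mathcal X} = \cs(E)$. The first step is to record the homology and K-theory of the coefficient groupoids $G$ and $G_\reg$. Since $G$ is simply the unit space $E^0$, one has $H_0(G) \cong \mathbb Z E^0$ and $H_n(G) = 0$ for $n \geq 1$ (the complex $C_\bullet(G)$ has every term equal to $\mathbb Z E^0$, with boundary maps alternating between $0$ and $\id$), and likewise $H_0(G_\reg) \cong \mathbb Z E^0_\reg$ with $H_n(G_\reg) = 0$ for $n \geq 1$. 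On the K-theoretic side $\cs(G) \cong c_0(E^0)$, so $K_0(\cs(G)) \cong \mathbb Z E^0$, $K_1(\cs(G)) = 0$, and similarly $K_0(\cs(G_\reg)) \cong \mathbb Z E^0_\reg$, $K_1(\cs(G_\reg)) = 0$.

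Next I would feed these into the long exact sequence of Theorem~\ref{t:main.homology}. Because $H_n(G) = H_n(G_\reg) = 0$ for all $n \geq 1$, every segment with $n \geq 2$ is squeezed between two zero groups, giving $H_n(\mathscr G_E) = 0$ for $n \geq 2$ at once. The tail of the sequence collapses to the four-term exact sequence
\[ 0 \to H_1(\mathscr G_E) \to \mathbb Z E^0_\reg \xrightarrow{\ \id - H_0(\mathcal X_\reg)\ } \mathbb Z E^0 \to H_0(\mathscr G_E) \to 0. \]
It then remains only to identify the middle map with $\id - A^T$. For this I would invoke the explicit formula of Corollary~\ref{c:homology.transversal}, which gives the matrix entry $(\Phi_0)_{w,v} = \lvert \ran\inv(v) \cap \sour\inv(Gw) \rvert$. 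As $G$ has no non-unit arrows, $Gw = \{w\}$, so this count is exactly the number of edges from $w$ to $v$, namely $A_{v,w}$; hence $H_0(\mathcal X_\reg) = A^T \colon \mathbb Z E^0_\reg \to \mathbb Z E^0$, with $\id$ read as the inclusion of the $E^0_\reg$-summand. The four-term sequence then yields $H_1(\mathscr G_E) \cong \ker(\id - A^T)$ and $H_0(\mathscr G_E) \cong \coker(\id - A^T)$.

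The K-theory statement follows in exactly parallel fashion from the six-term sequence of Theorem~\ref{t:main.k}. Substituting $K_1(\cs(G)) = K_1(\cs(G_\reg)) = 0$, the hexagon degenerates into the short exact sequence
\[ 0 \to K_1(\cs(E)) \to \mathbb Z E^0_\reg \xrightarrow{\ 1 - \Phi_0\ } \mathbb Z E^0 \to K_0(\cs(E)) \to 0. \]
Here $\Phi_0$ is assembled from transfer maps, the maps $K_0(\sigma_e)$ and conjugations, but since every isotropy group $G^w_w$ is trivial all of these reduce to the identity on $\mathbb Z = K_0(\mathbb C)$, and the same multiplicity count as above identifies $\Phi_0$ with $A^T$. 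Reading off kernel and cokernel gives $K_1(\cs(E)) \cong \ker(\id - A^T)$ and $K_0(\cs(E)) \cong \coker(\id - A^T)$, completing the proof.

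I expect essentially no serious obstacle, since all the homological and analytic content is carried by Theorems~\ref{t:main.homology} and~\ref{t:main.k} together with the vanishing of the higher homology and $K_1$ of the unit groupoid. The only point requiring genuine care is the bookkeeping of conventions: verifying that the combinatorial map $\Phi_0$ produced by Corollary~\ref{c:homology.transversal} is the transpose $A^T$ rather than $A$, and that the symbol $\id$ in $\id - A^T$ denotes the inclusion $\mathbb Z E^0_\reg \hookrightarrow \mathbb Z E^0$ as a direct summand rather than a literal identity matrix.
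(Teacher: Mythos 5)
Your proposal is correct and follows essentially the same route as the paper: specialize the long exact sequence of Theorem~\ref{t:main.homology} (via Corollary~\ref{c:homology.transversal}) and the six-term sequence of Theorem~\ref{t:main.k} to the trivial coefficient groupoid $G=G^0=E^0$, use vanishing of higher homology and $K_1$ to collapse to four-term sequences, and identify $\Phi_0$ with $A^T$ from the edge count $(\Phi_0)_{w,v}=|vEw|=A_{v,w}$. Your cautionary remarks about the transpose and about $\id$ meaning the inclusion of the $E^0_{\reg}$-summand are exactly the right points of care and match the paper's conventions.
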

\begin{proof}
In this case, the groupoid $G=G^0=E^0$ has trivial isotropy, $\cs(G)=C_0(E^0)$ and $\cs(G_{\mathrm{reg}})= C_0(E^0_{\mathrm{reg}})$.   Therefore, by Corollary~\ref{c:homology.transversal} and Theorem~\ref{t:main.k} we have $H_q(\mathscr G_E)=0$ for, $q\geq 2$,  and exact sequences 
\[
\begin{tikzcd}[arrow style=math font, column sep = 0.5cm]
0\ar{r} & H_1(\mathscr G_E)\ar{r} & \bigoplus\limits_{v\in E_{\mathrm{reg}}^0}\mathbb Z\ar{rr}{\id -\Phi_0} & {} & \bigoplus\limits_{w\in E^0}\mathbb Z\ar{r}& H_0(\mathscr G_E)\ar{r} & 0 \\ 0\ar{r} & K_1(\cs(E))\ar{r}  &\bigoplus\limits_{v\in  E_{\mathrm{reg}}^0}\mathbb Z\ar{rr}{\id -\Phi_0} & {} & \bigoplus\limits_{w\in E^0}\mathbb Z\ar{r} & K_0(\cs(E))\ar{r}& 0
\end{tikzcd}
\]
where $(\Phi_0)_{w,v} = |vEw|=A_{v,w}$.  The result follows.
\end{proof}

\subsection{Exel--Pardo--Katsura algebras}\label{subs:EPK}
We generalize the result of Nyland and Ortega~\cite{OrtegaNylandKatsura} on homology of groupoids associated to Katsura algebras considered in Example~\ref{ex:EPK}.   Our results allow arbitrary cardinality row finite graphs and sources, while previous results stuck to countable row finite graphs and no sources.   

\begin{Lemma}\label{l:cyclic.transfer}
Let $G$ be an infinite cyclic group with generator $a \in G$ and let $H$ be a group. Let $\mathcal X\colon G\to H$ be a proper \'etale correspondence and $E$ a right $H$-transversal for $\mathcal X$. 
The maps $K_i(\mathcal X)\colon K_i(\cs(G))\to K_i(\cs(H))$  are given by $K_0(\mathcal X)([1]_0) = |E|\cdot [1]_0$ and  $K_1(\mathcal X)([u_a]_1)=\sum_{e\in E}[u_{a|_e}]_1$.
\end{Lemma}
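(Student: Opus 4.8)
The plan is to reduce $K_i(\mathcal X)$ to a sum of group-theoretic transfer-then-induction maps using Proposition~\ref{p:transversal decomposition}, and then to evaluate these on the two canonical generators $[1]_0$ and $[u_a]_1$ of $K_*(\cs(\mathbb Z))$. First I would note that properness of $\mathcal X$ forces $E$ to be finite: since $H^0$ is a single point and $\sour\colon\mathcal X\to H^0$ is \'etale, $\mathcal X$ is discrete, so $\mathcal X/H\cong E$ is discrete, and properness of $\mathcal X/H\to G^0$ makes it compact, hence finite. Specializing Proposition~\ref{p:transversal decomposition} to the groups $G$ and $H$ (so $T_G,T_H$ are singletons and the conjugation correspondences $C_{\sour(e)}$ may be taken trivial), and taking $T_E\subseteq E$ to be a set of representatives for the left $G$-orbits on $E$, yields
\[ K_i(\mathcal X)=\sum_{e\in T_E}K_i(\sigma_e)\circ \tr^G_{G_e}, \]
where $G_e=\{g\in G\mid g(e)=e\}$ is the point stabilizer and $\sigma_e\colon G_e\to H$ is $g\mapsto g|_e$. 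Since $G=\langle a\rangle\cong\mathbb Z$ and the orbit of $e$ is finite of size $n_e=[G:G_e]$, we have $G_e=\langle a^{n_e}\rangle$ and $\sum_{e\in T_E}n_e=|E|$.

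For $i=0$, I would invoke the last assertion of Proposition~\ref{p:transfer.monomial.form}, giving $\tr^G_{G_e}([1]_0)=n_e[1]_0$, followed by $K_0(\sigma_e)([1]_0)=[1]_0$ since $\sigma_e$ is unital; summing over $T_E$ then gives $K_0(\mathcal X)([1]_0)=|E|\,[1]_0$.

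For $i=1$ the central computation is $\tr^G_{G_e}([u_a]_1)$. Using the monomial description of Proposition~\ref{p:transfer.monomial.form} with transversal $\{1,a,\dots,a^{n_e-1}\}$ of $G_e$ in $G$, the unitary $\psi(u_a)\in M_{n_e}(\cs(G_e))$ factors as $\psi(u_a)=PD$, where $P$ is the scalar cyclic permutation matrix and $D=\mathrm{diag}(1,\dots,1,u_{a^{n_e}})$. As $P$ lies in the connected component of the identity in $U_{n_e}(\mathbb C)$, we get $[P]_1=0$ and hence $\tr^G_{G_e}([u_a]_1)=[\psi(u_a)]_1=[u_{a^{n_e}}]_1$. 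Applying $K_1(\sigma_e)$ produces $[u_{(a^{n_e})|_e}]_1$, and the cocycle identity $(gh)|_e=g|_{h(e)}h|_e$ telescopes to
\[ (a^{n_e})|_e=a|_{a^{n_e-1}(e)}\cdots a|_{a(e)}\,a|_e. \]
Additivity of the $K_1$-class on products of group unitaries then gives $[u_{(a^{n_e})|_e}]_1=\sum_{k=0}^{n_e-1}[u_{a|_{a^k(e)}}]_1$.

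The final step is a reindexing: as $e$ ranges over $T_E$ and $k$ over $0,\dots,n_e-1$, the elements $a^k(e)$ sweep out each $G$-orbit of $E$ exactly once, so $\sum_{e\in T_E}\sum_{k=0}^{n_e-1}[u_{a|_{a^k(e)}}]_1=\sum_{e\in E}[u_{a|_e}]_1$, which is the claimed formula. The main obstacle is precisely this $K_1$ computation: correctly identifying $\tr^G_{G_e}([u_a]_1)$ with $[u_{a^{n_e}}]_1$ through the permutation-times-diagonal factorization, and then using the cocycle telescoping to convert the sum over orbit representatives (each weighted by the stabilizer generator $a^{n_e}$) into the unweighted sum over all edges of $E$.
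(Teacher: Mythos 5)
Your proposal is correct and follows essentially the same route as the paper: reduce via Proposition~\ref{p:transversal decomposition} to $\sum_{e\in T_E}K_i(\sigma_e)\circ\tr^G_{G_e}$, evaluate the transfer on $[1]_0$ and on $[u_a]_1$ via the monomial form of Proposition~\ref{p:transfer.monomial.form} (killing the permutation factor in $K_1$), and then telescope the cocycle $(a^{n_e})|_e=a|_{a^{n_e-1}(e)}\cdots a|_e$ and reindex over orbits. The only additions beyond the paper's argument are your explicit verification that $E$ is finite and the explicit appeal to additivity of $K_1$-classes on products of unitaries, both of which the paper leaves implicit.
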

\begin{proof}
Note that $K_0(C^*(G))\cong \mathbb Z$ with generator $[1]_0$ and  $K_1(\cs(G))\cong \mathbb Z$ with generator $[u_a]_1$.  
Without loss of generality, we may assume $\mathcal X=E\times H$ with the left action $g(e,h) = (g(e),g|_eh)$.  Let $T$ be a transversal to $G\backslash E$. Then $K_i(\mathcal X)=\sum_{t\in T}K_i(\sigma_t)\circ \tr^G_{G_t}$ by Proposition~\ref{p:transversal decomposition}. Then $K_0(\mathcal X)([1]_0) = \sum_{t\in T}[G:G_t][1]_0 = |E|\cdot [1]_0$ by Proposition~\ref{p:transfer.monomial.form}. 
Fix $t\in T$.  Supposing that $[G:G_t]=m_t$, we have $G_t=\langle a^{m_t}\rangle$.  We then compute that $\sigma_t(a^{m_t}) = a|_{a^{m_t-1}(t)}\cdots a|_{a(t)}\cdot a|_t=\prod_{e\in Gt}a|_e$. 
Choosing $1,a,\ldots, a^{m_t-1}$ as our transversal to $G/G_t$,  the map $\psi_t\colon \cs(G)\to M_{m_t}(\cs(G_t))$ induced by transfer sends $u_a$ to the matrix $\mathrm{diag}(1,1,\ldots, 1,u_{a^{m_t}})P$ where $P\in M_{m_t}(\mathbb C)$ is the $m_t \times m_t$-permutation matrix obtained by cyclically permuting the columns of the identity matrix to the left. It follows that $\mathrm{tr}^G_{G_t} ([u_a]_1)= [u_{a^{m_t}}]_1$.  Therefore,  $K_1(\mathcal X)([u_a]_1) = \sum_{t\in T}K_1(\sigma_t)([u_{a^{m_t}}]_1) = \sum_{e\in E}[u_{a|_e}]_1$. 
\end{proof}

\begin{Cor}\label{c:Katsura}
Let $A,B$ be integer matrices over some index set $J$ with $A$ the adjacency matrix of a row finite graph such that $A_{ij}=0$ implies $B_{ij}=0$.  Let $J'\subseteq J$ be the set of indices of zero rows. Let $A',B'$ be the matrices obtained from $A,B$, respectively, by removing the rows corresponding to indices in $J'$. Then we have:
\begin{enumerate}
  \item $H_0(\mathscr G_{A,B})\cong \coker (\id -(A')^T)$;
  \item $H_1(\mathscr G_{A,B})\cong \ker(\id -(A')^T)\oplus \coker (\id -(B')^T)$;
  \item $H_2(\mathscr G_{A,B})\cong \ker (\id -(B')^T)$;
\end{enumerate}
and $H_n(\mathscr G_{A,B})=0$ for $n\geq 3$.  Moreover, $K_0(\mathcal O_{A,B})\cong \coker (\id -(A')^T)\oplus \ker (\id -(B')^T)$ and $K_1(\mathcal O_{A,B)}) \cong \ker(\id -(A')^T)\oplus \coker (\id -(B')^T)$.
\end{Cor}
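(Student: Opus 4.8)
The plan is to feed the groupoid $G = G_{A,B} = \mathbb Z \times J$ of Example~\ref{ex:EPK} into the ``groups only'' long exact sequence of Corollary~\ref{c:homology.transversal} and the six-term sequence of Theorem~\ref{t:main.k}. The unit space is $J$, the groupoid is a disjoint union of the isotropy groups $G^j_j = \mathbb Z$ over $j \in J$, so I would take $T^0 = J$, with each $G^j_j$ infinite cyclic generated by $a_j = (1,j)$. Since $A$ is row finite, $\ran^{-1}(j)$ is finite for every $j$, and hence $j$ is regular precisely when $\ran^{-1}(j) \neq \emptyset$, i.e. when row $j$ of $A$ is nonzero; thus $T^0_\reg = J \setminus J'$. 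Because $H_n(\mathbb Z) = 0$ for $n \geq 2$ and $K_i(\cs(\mathbb Z)) \cong \mathbb Z$ for $i = 0,1$, the long exact sequence at once gives $H_n(\mathscr G_{A,B}) = 0$ for $n \geq 3$ and collapses to
\[ 0 \to H_2 \to \bigoplus_{J \setminus J'}\mathbb Z \xrightarrow{\id - \Phi_1} \bigoplus_J \mathbb Z \to H_1 \to \bigoplus_{J \setminus J'}\mathbb Z \xrightarrow{\id - \Phi_0} \bigoplus_J \mathbb Z \to H_0 \to 0, \]
where here and below $H_i$ abbreviates $H_i(\mathscr G_{A,B})$ and each ``$\id$'' is the direct-summand inclusion $\bigoplus_{J\setminus J'}\mathbb Z \hookrightarrow \bigoplus_J \mathbb Z$. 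The six-term sequence of Theorem~\ref{t:main.k} is built from the same data, with corner maps $\id-\Phi_0$ on $K_0$ and $\id-\Phi_1$ on $K_1$.

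The computational heart is to identify $\Phi_0$ with $(A')^T$ and $\Phi_1$ with $(B')^T$. For $\Phi_0$, the formula $(\Phi_0)_{w,v} = |\ran^{-1}(v) \cap \sour^{-1}(Gw)|$ of Corollary~\ref{c:homology.transversal} counts (since every orbit $Gw = \{w\}$ is a single vertex) the edges from $w$ to $v$, giving $(\Phi_0)_{w,v} = A_{v,w}$, and the same edge count realises $\Phi_0$ on $K_0$ through Lemma~\ref{l:cyclic.transfer}. For $\Phi_1$ I would evaluate the explicit chain map of Corollary~\ref{c:homology.transversal} on the generator $a_v$ with $v$ regular: as each $G^w_w$ is abelian the conjugations $c_{\sour(e)}$ are trivial, so $a_v \mapsto \sum_{\ran(e)=v}(a_v|_e)$. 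For an edge $e = e_{v,j,\bar n}$ the cocycle of Example~\ref{ex:EPK} gives $a_v|_{e_{v,j,\bar n}} = (k,j)$ with $k = \lfloor (B_{vj}+n)/A_{vj}\rfloor$, contributing $k\,[a_j]$ on $H_1$. Summing over $n = 0, \dots, A_{vj}-1$ and using the elementary identity $\sum_{n=0}^{A-1}\lfloor (B+n)/A \rfloor = B$ (valid because $B, B+1, \dots, B+A-1$ is a complete residue system modulo $A$) would yield $\Phi_1(a_v) = \sum_j B_{vj}[a_j]$, i.e. $(\Phi_1)_{w,v} = B_{v,w}$. The same floor computation, fed through Lemma~\ref{l:cyclic.transfer}, shows $\Phi_1$ is $(B')^T$ on $K_1$ as well. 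Restricting to $v \in J \setminus J'$ is exactly what replaces $A, B$ by the row-deleted matrices $A', B'$.

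With both maps identified I would simply read off the conclusions. The collapsed homology sequence gives $H_0 \cong \coker(\id - (A')^T)$, $H_2 \cong \ker(\id - (B')^T)$, and a short exact sequence $0 \to \coker(\id - (B')^T) \to H_1 \to \ker(\id - (A')^T) \to 0$; since $\ker(\id - (A')^T)$ is a subgroup of the free abelian group $\bigoplus_{J\setminus J'}\mathbb Z$ it is free, so the sequence splits and $H_1 \cong \ker(\id - (A')^T) \oplus \coker(\id - (B')^T)$. Likewise the six-term sequence yields $0 \to \coker(\id - (A')^T) \to K_0(\mathcal O_{A,B}) \to \ker(\id - (B')^T) \to 0$ and $0 \to \coker(\id - (B')^T) \to K_1(\mathcal O_{A,B}) \to \ker(\id - (A')^T) \to 0$, each splitting for the same freeness reason, which gives the stated K-theory.

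The step I expect to be the main obstacle is the $\Phi_1$ computation: one must carefully unwind the Exel--Pardo--Katsura cocycle into the floor function and verify that summing over the residue classes $\bar n \in \mathbb Z/A_{vj}\mathbb Z$ recovers precisely $B_{vj}$, regardless of the orbit structure of the $\mathbb Z$-action $\bar n \mapsto \overline{n+B_{vj}}$ on each fibre. The saving grace is that on $H_1$ (resp. $K_1$) only the aggregate winding number $\sum_n k_{j,n}$ survives, which collapses that orbit structure. A secondary bookkeeping point is that $\id - (A')^T$ and $\id - (B')^T$ are maps between the differently-indexed groups $\bigoplus_{J\setminus J'}\mathbb Z$ and $\bigoplus_J \mathbb Z$, and keeping these index sets straight is what makes the deletion of the source rows $J'$ surface correctly in the final formulas.
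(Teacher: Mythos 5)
Your proposal is correct and follows essentially the same route as the paper: the long exact sequence of Corollary~\ref{c:homology.transversal} and the six-term sequence of Theorem~\ref{t:main.k} over the groupoid $\mathbb Z \times J$, identification of $\Phi_0$ with $(A')^T$ by edge-counting and of $\Phi_1$ with $(B')^T$ via Lemma~\ref{l:cyclic.transfer} and the cocycle of Example~\ref{ex:EPK}, and splitting of the resulting short exact sequences by freeness of the kernels. Your complete-residue-system justification of $\sum_{n=0}^{A-1}\lfloor (B+n)/A\rfloor = B$ is the same argument the paper phrases as the action of $(1,j)$ reordering the sum over $\mathbb Z/A_{ji}\mathbb Z$.
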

\begin{proof}
Recall that $G=\mathbb Z\times J$ and $G_{\mathrm{reg}}= \mathbb Z\times J\setminus J'$.
It follows from Corollary~\ref{c:homology.transversal}, Theorem~\ref{t:main.k} and the fact that $H_q(\mathbb Z)=0$ for $q\geq 2$ and $K_0(\cs(\mathbb Z))\cong \mathbb Z\cong K_1(\cs(\mathbb Z))$, that we have exact sequences 
\[\begin{tikzcd}[arrow style=math font,cells={nodes={text height=2ex,text depth=0.75ex}}, column sep = 1cm]
	{0} &  {H_2(\mathscr G_{A,B})} \arrow[draw=none]{d}[name=Y, shape=coordinate]{}  & {\bigoplus\limits_{j\in J\setminus  J'}\mathbb Z} \\
	{\bigoplus\limits_{j\in J}\mathbb Z} &  {H_1(\mathscr G_{A,B})} \arrow[draw=none]{d}[name=Z, shape=coordinate]{} & {\bigoplus\limits_{j\in J\setminus J'}\mathbb Z}  \\
 	{\bigoplus\limits_{j\in J}\mathbb Z} &  {H_0(\mathscr G_{A,B})} & {0} 
	\arrow[from=1-1, to=1-2]
        \arrow[from=1-2, to=1-3]
	\arrow[from=1-3, to=2-1, labcurarrow3=Y]
	\arrow[from=2-1, to=2-2]
	\arrow[from=2-2, to=2-3]
        \arrow[from=2-3, to=3-1, labcurarrow4=Z]
	\arrow[from=3-1, to=3-2]
	\arrow[from=3-2, to=3-3]
\end{tikzcd}\]
and 

\[\begin{tikzcd}[arrow style=math font]
	{\bigoplus\limits_{j\in J\setminus J'} \mathbb Z} & {\bigoplus\limits_{j\in J} \mathbb Z} & {K_0(\mathcal O_{A,B})}\\
	{K_1(\mathcal O_{A,B})} & {\bigoplus\limits_{j\in J} \mathbb Z} & {\bigoplus\limits_{j\in J\setminus J'} \mathbb Z}
	\arrow["{1-\Phi_0}", from=1-1, to=1-2]
	\arrow[from=1-2, to=1-3]
	\arrow[from=1-3, to=2-3]
	\arrow[from=2-1, to=1-1]
	\arrow[from=2-2, to=2-1]
	\arrow["{1-\Phi_1}", from=2-3, to=2-2]
\end{tikzcd}\]

Note that for both homology and K-theory we have by Corollary~\ref{c:homology.transversal} and by Lemma~\ref{l:cyclic.transfer},
 $(\Phi_0)_{i,j} = |\ran\inv(j))\cap \sour\inv(i)|=A'_{ji}$,  whereas  $(\Phi_1)_{i,j}=0=B'_{ji}$ if $A_{ji}=0$, and  otherwise, for $j\in J\setminus J'$, we have  $(\Phi_1)_{i,j} =  \sum_{n=0}^{A_{ji}-1} (1,j)|_{e_{j,i,\ov n}}$.

 Notice that \[\sum_{n=0}^{A_{ji}-1}(B_{ji}+n)= \sum_{n=0}^{A_{j,i}-1} ((1,j)|_{e_{j,i,\ov n}}A_{ji}+(1,j)(e_{j,i,\ov n}))\] where we identify $\mathbb Z/A_{j,i}\mathbb Z$ with $\{0,\ldots, A_{ji}-1\}$ when convenient.  But $\sum_{n=0}^{A_{ji}-1} n = \sum_{n=0}^{A_{ji}-1}(1,j)(e_{j,i,\ov n})$ since the right hand sum is a reordering of the left hand sum.  Thus $A_{ji}B_{ji} = A_{ji}\sum_{n=0}^{A_{j,i}-1}(1,j)|_{e_{j,i,\ov n}}$.  Since $A_{ji}>0$, we have $(\Phi_1)_{i,j} = B'_{ji}$.  The result now follows for homology and for $K$-theory, as well, upon noting that the images of $H_1(\mathscr G_{A,B})$ and $K_i(\mathcal O_{A,B})$ must be free abelian, and so the short exact sequences extracted from the above exact sequences must split.
\end{proof}

The groupoids $\mathscr G = \mathscr G_{A,B}$ enjoy the HK property.

\subsection{The Aleshin automaton}\label{subs:Aleshin}

The rest of our computations are of the homology and K-theory of groupoids and algebras associated to self-similar groups. Consider the Aleshin automaton for a self-similar action of the free group ${F_3}$ in Example~\ref{ex:aleshin}.

\begin{Thm}
Let $\mathscr G$ be the groupoid associated to the Aleshin automaton.  Then $H_1(\mathscr G)\cong \mathbb Z/2\mathbb Z$ and $H_n(\mathscr G)=0$ for $n\neq 1$. Moreover, $K_0(\cs(\mathscr G))=0$ and $K_1(\cs(\mathscr G))\cong \mathbb Z/2 \Z$.
\end{Thm}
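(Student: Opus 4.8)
The plan is to apply the long exact sequence of Corollary~\ref{c:transitive.transfer} and the six-term sequence of Corollary~\ref{c:transitive.transfer.Ktheory} to the self-similar action $(F_3,\{0,1\})$ of Example~\ref{ex:aleshin}, exploiting that $F_3$ is free. As $F_3$ is free of rank $3$, we have $H_0(F_3)=\mathbb Z$, $H_1(F_3)=\ab{F_3}=\mathbb Z^3$ and $H_n(F_3)=0$ for $n\geq 2$; by K-amenability of free groups together with Pimsner--Voiculescu, $K_0(\cs(F_3))\cong\mathbb Z$ (generated by $[1]_0$) and $K_1(\cs(F_3))\cong\mathbb Z^3$, where the isomorphism $\ab{F_3}\to K_1(\cs(F_3))$, $\bar g\mapsto [u_g]_1$, sends the basis $\bar a,\bar b,\bar c$ to the generators $[u_a]_1,[u_b]_1,[u_c]_1$. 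The action is transitive with $|E|=2$, so the stabiliser $G_0$ has index $2$ and I may take the singleton transversal $T=\{0\}$; both sequences then reduce to analysing the single endomorphism $\Phi_n$ in each degree.

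The computational heart is $\Phi_1$ on $\ab{F_3}=\mathbb Z^3$. In homology, Corollary~\ref{c:transitive.transfer} supplies the transparent formula $\Phi_1(\bar g)=\sum_{e\in\{0,1\}}\overline{g|_e}$, so reading off the sections $a|_0=c,\ a|_1=b,\ b|_0=b,\ b|_1=c,\ c|_0=c|_1=a$ gives $\Phi_1(\bar a)=\bar b+\bar c$, $\Phi_1(\bar b)=\bar b+\bar c$ and $\Phi_1(\bar c)=2\bar a$. For K-theory I would show the same matrix appears: since $\Phi_1=K_1(\mathcal X)$ and each generator $[u_x]_1$ is the image under the inclusion $\iota_x\colon\langle x\rangle\hookrightarrow F_3$ of the generator of $K_1(\cs(\langle x\rangle))\cong\mathbb Z$, functoriality of $K_1$ under composition of correspondences lets me rewrite $\Phi_1([u_x]_1)=K_1(\mathcal X)\circ K_1(\iota_x)([u_x]_1)=K_1(\mathcal X\circ I_{\langle x\rangle})([u_x]_1)$ and apply Lemma~\ref{l:cyclic.transfer} to the proper correspondence $\mathcal X\circ I_{\langle x\rangle}\colon\langle x\rangle\to F_3$ (with right $F_3$-transversal $E=\{0,1\}$). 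This yields $\Phi_1([u_x]_1)=\sum_{e}[u_{x|_e}]_1$, exactly the homological formula.

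In both pictures $\id-\Phi_1$ is represented on the basis $\bar a,\bar b,\bar c$ by $\left(\begin{smallmatrix} 1 & 0 & -2 \\ -1 & 0 & 0 \\ -1 & -1 & 1 \end{smallmatrix}\right)$, whose determinant is $-2$; hence $\id-\Phi_1$ is injective with cokernel $\mathbb Z/2\mathbb Z$ (cyclic since $2$ is prime). In degree $0$, $\Phi_0$ is multiplication by $[F_3:G_0]=|E|=2$ (via $\tr^{F_3}_{G_0}([1]_0)=2[1]_0$ from Proposition~\ref{p:transfer.monomial.form}), so $\id-\Phi_0=-1$ is an isomorphism. Feeding this into the long exact sequence and using $H_n(F_3)=0$ for $n\geq 2$ collapses everything: $H_n(\mathscr G)=0$ for $n\geq 3$; injectivity of $\id-\Phi_1$ forces $H_2(\mathscr G)=0$; injectivity of $\id-\Phi_0$ forces $H_1(\mathscr G)\cong\coker(\id-\Phi_1)\cong\mathbb Z/2\mathbb Z$; and $H_0(\mathscr G)\cong\coker(\id-\Phi_0)=0$. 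In the six-term sequence, $\id-\Phi_0=-1$ being invertible kills the connecting maps adjacent to $K_*(\cs(F_3))$ in degree $0$, giving $K_0(\mathcal O_{\mathcal X})=0$ and $K_1(\mathcal O_{\mathcal X})\cong\coker(\id-\Phi_1)\cong\mathbb Z/2\mathbb Z$; since $\cs(\mathscr G)=\mathcal O_{\mathcal X}$ by Proposition~\ref{Toeplitz isomorphism}, this is precisely the assertion.

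The main obstacle is pinning down $\Phi_1$ and, in particular, confirming that the K-theoretic map coincides with the clean homological formula. The subtlety is that $\tr^{F_3}_{G_0}$ a priori lands in $K_1(\cs(G_0))$ with $G_0\cong F_5$ (Nielsen--Schreier), and naively one would have to track the virtual endomorphism $\sigma_0$ through an explicit Schreier basis. The device of precomposing $\mathcal X$ with the cyclic inclusions $\langle x\rangle\hookrightarrow F_3$ and invoking Lemma~\ref{l:cyclic.transfer} is what lets me sidestep that bookkeeping and see at once that both copies of $\Phi_1$ are given by $\bar g\mapsto\sum_e\overline{g|_e}$.
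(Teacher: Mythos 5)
Your proposal is correct and follows essentially the same route as the paper: Corollary~\ref{c:transitive.transfer} with the section data giving the matrix $\left(\begin{smallmatrix}1&0&-2\\-1&0&0\\-1&-1&1\end{smallmatrix}\right)$ of determinant $-2$ for the homology, and precomposition of $\mathcal X$ with the cyclic inclusions $\langle x\rangle\hookrightarrow F_3$ together with Lemma~\ref{l:cyclic.transfer} to identify $\Phi_1$ in K-theory. The only cosmetic difference is that you spell out $\id-\Phi_0=-1$ to kill the connecting maps, where the paper invokes the `in particular' clause of Corollary~\ref{c:transitive.transfer} directly.
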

\begin{proof}
By Corollary~\ref{c:transitive.transfer}, and since $H_q({F_3})=0$ for $q\geq 2$, we obtain that $H_q(\mathscr G)=0$ for $q\neq 1,2$ and  an exact sequence $0\to H_2(\mathscr G)\to \ab{F_3}\xrightarrow{\id - \Phi_1}\ab{F_3}\to H_1(\mathscr G)\to 0$ where $\Phi_1(g[{F_3},{F_3}]) = g|_0[{F_3},{F_3}]+g|_1[{F_3},{F_3}]$.  Writing $\ov g$ for $g[{F_3},{F_3}]$, we have $\Phi_1(\ov a) = \ov c+\ov b$, $\Phi_1(\ov b) = \ov b+\ov c$ and $\Phi_1(\ov c) = 2\ov a$.  Thus $\id-\Phi_1$ is given by the matrix \[A= \begin{bmatrix} 1 & 0 & -2\\ -1 & 0 & 0\\ -1  & -1 & 1\end{bmatrix}\] which has determinant $-2$.  Thus $H_1(\mathscr G)\cong \coker (\id-\Phi_1)\cong \mathbb Z/2\mathbb Z$ and $H_2(\mathscr G)\cong \ker (\id -\Phi_1)=0$. 

It is well known, cf.~\cite{Cuntz_K_amenability}, that $K_0(\cs({F_3}))\cong\mathbb Z$ generated by $[1]_0$ and $K_1(\cs({F_3}))\cong \mathbb Z^3$ with basis $[u_a]_1,[u_b]_1,[u_c]_1$.  Let $x\in \{a,b,c\}$, and let $i_x\colon \langle x\rangle\to {F_3}$ be the inclusion. The correspondence of the self-similar action is $\mathcal X=\{0,1\}\times {F_3}$, and if we compose this with the correspondence $I_x$  with bispace ${F_3}$ corresponding to $i_x$, we obtain $\mathcal X\circ I_x={F_3}\times_{{F_3}} \mathcal X\cong {}_{\langle x\rangle}\mathcal X$, which is the bispace $\mathcal X$ with left action restricted to $\langle x\rangle$. It follows that $K_0(\mathcal X)([1]_0) = K_0(\mathcal X)\circ K_0(i_x)([1]_0) = K_0({}_{\langle x\rangle}\mathcal X)([1]_0) = 2[1]_0$  and $K_1(\mathcal X)([u_x]_1) =  K_1(\mathcal X)\circ K_1(i_x)([u_x]_1) = K_1({}_{\langle x\rangle}\mathcal X)([u_x]_1) = [u_{x|_0}]_1+[u_{x|_1}]_1$ by Lemma~\ref{l:cyclic.transfer}.  We therefore have an exact sequence 
\[\begin{tikzcd}[arrow style=math font]
	{\mathbb Z} & {\mathbb Z} & {K_0(\cs(\mathscr G))}\\
	{K_1(\cs(\mathscr G))} & {\mathbb Z^3} & {\mathbb Z^3}
	\arrow["{-\id}", from=1-1, to=1-2]
	\arrow[from=1-2, to=1-3]
	\arrow[from=1-3, to=2-3]
	\arrow[from=2-1, to=1-1]
	\arrow[from=2-2, to=2-1]
	\arrow["{A}", from=2-3, to=2-2]
\end{tikzcd}\]
where we retain the previous notation.  It follows that $K_0(\cs(\mathscr G))\cong \ker A=0$ and $K_1(\cs(\mathscr G))\cong \coker A\cong \mathbb Z/2\mathbb Z$.
\end{proof}

\subsection{The Hanoi towers group}\label{subs:hanoi}
We compute here the homology and K-theory for the Hanoi towers group $H$ from Example~\ref{ex:hanoi}. The associated groupoid $\mathscr G$ is minimal, effective, amenable and Hausdorff.

\begin{Thm}\label{t:Hanoi}
Let $\mathscr G$ be the ample groupoid associated to the Hanoi tower group $H$.  Then
\[H_n(\mathscr G) = \begin{cases}\mathbb Z/2\mathbb Z, & \text{if}\ n=0,\\ (\mathbb Z/2\mathbb Z)^3, & \text{if}\ n\geq 1,\end{cases}\] 
and $K_0(\cs(\mathscr G))\cong \mathbb Z^3\cong K_1(\cs(\mathscr G))$, with $[1]_0 = 0 \in K_0(\cs(\mathscr G))$.  
\end{Thm}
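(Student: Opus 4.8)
The plan is to replace the Hanoi towers group by the free product $G = A \ast B \ast C \cong \mathbb{Z}/2\mathbb{Z} \ast \mathbb{Z}/2\mathbb{Z} \ast \mathbb{Z}/2\mathbb{Z}$ of which it is the faithful quotient. The action of $A \ast B \ast C$ on $\{0,1,2\}^+$ is contracting with nucleus $\{1,a,b,c\}$, and the nontrivial elements $a,b,c$ act as transpositions, so Corollary~\ref{c:contracting.case} gives $\mathscr G \cong \mathscr G_{(A\ast B\ast C,\,\{0,1,2\})}$. I would therefore run Corollary~\ref{c:transitive.transfer} and Corollary~\ref{c:transitive.transfer.Ktheory} with $G = A\ast B\ast C$, whose invariants are far simpler than those of $H$. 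Here $H_0(G) = \mathbb Z$, $H_n(G) \cong (\mathbb{Z}/2\mathbb{Z})^3$ for odd $n \geq 1$ and $H_n(G) = 0$ for even $n \geq 2$, from the wedge decomposition $BG \simeq BA \vee BB \vee BC$; and, since $G$ is K-amenable (a free product of finite groups), the standard Mayer--Vietoris sequence for the free-product decomposition gives $K_0(\cs(G)) \cong \mathbb Z^4$ and $K_1(\cs(G)) = 0$.

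The crux is to compute the maps $\Phi_n$. As the action on $\{0,1,2\}$ is transitive I take the single transversal element $e = 0$, with $G_0 = \mathrm{Stab}(0)$ of index $3$, so $\Phi_n = H_n(\sigma_0)\circ \tr^{G}_{G_0}$ where $\sigma_0(g) = g|_0$. I would evaluate $\Phi_n$ on each free factor $X \in \{A,B,C\}$ through the Mackey decomposition of Proposition~\ref{p:mackey}: the double cosets $X\backslash G/G_0$ are the $X$-orbits on $\{0,1,2\}$, and for each factor exactly one orbit is a fixed point (namely $\{2\}$ for $A$, $\{1\}$ for $B$, $\{0\}$ for $C$), contributing a term $C_s \circ \tr^{X}_{X} = C_s$, while the other orbit has trivial stabilizer and hence contributes $0$ in positive-degree homology since $\tr^{X}_{1}$ vanishes there. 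Composing with $\sigma_0$ reduces to the direct section computations $(bab)|_0 = a$, $(aba)|_0 = b$ and $c|_0 = c$, which identify $\sigma_0 \circ c_s$ with the inclusion $\iota_X$. Thus $\Phi_n \circ H_n(\iota_X) = H_n(\iota_X)$ for every factor and every $n \geq 1$, whence $\Phi_n = \id$ on $H_n(G)$ for all $n \geq 1$. Feeding this into the long exact sequence of Corollary~\ref{c:transitive.transfer}, together with $\id - \Phi_0 = (1-|E|)\id = -2$ on $H_0(G) = \mathbb Z$, the sequence breaks into pieces $0 \to \coker(\id - \Phi_n) \to H_n(\mathscr G) \to \ker(\id - \Phi_{n-1}) \to 0$; since one end vanishes in each degree by parity, these collapse to $H_0(\mathscr G) = \mathbb{Z}/2\mathbb{Z}$ and $H_n(\mathscr G) \cong (\mathbb{Z}/2\mathbb{Z})^3$ for $n \geq 1$.

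For K-theory I would run the same Mackey analysis on $K_0(\cs(G)) \cong \mathbb Z^4$. Writing $u = [1]_0$ and $p_X^{\pm}$ for the images of the trivial/sign representation classes of $X$, a convenient basis is $\{u, p_A^-, p_B^-, p_C^-\}$ (with $p_X^+ = u - p_X^-$). From $\tr^{G}_{G_0}([1]_0) = 3[1]_0$ (Proposition~\ref{p:transfer.monomial.form}) I get $\Phi_0(u) = 3u$, while on each $p_X^-$ \emph{both} double cosets now contribute: the fixed-point orbit again gives $K_0(\sigma_0 \circ c_s)([\mathrm{sign}_X]) = p_X^-$, and the free orbit gives $u$, because $\tr^{X}_{1}$ sends every one-dimensional class to $1 = [1]_0$ and $\sigma_0$ carries the unit of $\cs(G_0)$ to $u$. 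Hence $\Phi_0(p_X^-) = u + p_X^-$, so $1 - \Phi_0$ sends $u \mapsto -2u$ and each $p_X^- \mapsto -u$, with image the rank-one summand $\mathbb Z u$. As $K_1(\cs(G)) = 0$, the six-term sequence of Corollary~\ref{c:transitive.transfer.Ktheory} yields $K_0(\mathcal O_{\mathcal X}) \cong \coker(1 - \Phi_0) \cong \mathbb Z^3$ and $K_1(\mathcal O_{\mathcal X}) \cong \ker(1 - \Phi_0) \cong \mathbb Z^3$; since $u \in \image(1-\Phi_0)$, its image $[1]_0$ in $K_0$ is $0$. Amenability of $\mathscr G$ (Corollary~\ref{c:contracting.amenable}) identifies $\mathcal O_{\mathcal X} = \cs(\mathscr G)$, giving the stated values.

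The main obstacle is the computation of the transfer maps, especially $\tr^{G}_{G_0}$ on the nontrivial representation classes $p_X^-$ in K-theory: describing $G_0$ and the virtual endomorphism directly by Reidemeister--Schreier is cumbersome, and the computation only becomes transparent once it is routed through the Mackey decomposition, which replaces the global transfer by a sum of conjugation-and-restriction terms indexed by the $X$-orbits on the three-point alphabet. After that, the section computations $(bab)|_0 = a$, $(aba)|_0 = b$, $c|_0 = c$ and the bookkeeping of the two exact sequences are routine.
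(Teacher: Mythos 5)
Your proposal is correct and follows essentially the same route as the paper: reduce to $A\ast B\ast C$ via Corollary~\ref{c:contracting.case}, compute $\Phi_n$ factor-by-factor through the Mackey decomposition of $\tr^G_{G_0}$ (with the same double-coset analysis and section computations $(bab)|_0=a$, $(aba)|_0=b$, $c|_0=c$), and feed the resulting identity map in positive-degree homology and the rank-one matrix $1-\Phi_0$ on $K_0(\cs(G))\cong\mathbb Z^4$ into the two exact sequences. The only cosmetic difference is that the paper packages the three factors into a single commutative diagram of correspondences, whereas you treat them separately.
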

\begin{proof}
We may work with $G=A\ast B\ast C$ instead of $H$ by Theorem~\ref{t:nek.cond}.
There is the following commutative diagram of correspondences
\[\begin{tikzcd}[arrow style=math font, column sep = 1.2cm]
	{A\sqcup B\sqcup C} & {} & {A\sqcup B\sqcup C\sqcup 1\sqcup 1\sqcup 1} & {A \sqcup B\sqcup C} \\
	{A \ast B\ast C} & {} & {(A \ast B\ast C)_0} & {A \ast B\ast C}
	\arrow["{\id \sqcup \mathrm{tr}^A_1\sqcup \mathrm{tr}^B_1\sqcup \mathrm{tr}^C_1}", from=1-1, to=1-3]
	\arrow[from=1-1, to=2-1]
	\arrow["{\id\sqcup \iota}", from=1-3, to=1-4]
	\arrow["{c_b\sqcup c_a\sqcup \id_C\sqcup \lambda}", from=1-3, to=2-3]
	\arrow[from=1-4, to=2-4]
	\arrow["\tr^{A\ast B\ast C}_{(A\ast B\ast C)_0}", from=2-1, to=2-3]
	\arrow["\sigma_0", from=2-3, to=2-4]
\end{tikzcd}\]
up to isomorphism where the downward maps from $A\sqcup B\sqcup C$ separately include $A,B,C$, $\iota \colon 1\sqcup 1\sqcup 1\to A\sqcup B\sqcup C$ is the inclusion, $c_b \colon A \to (A \ast B\ast C)_0$ is the conjugation map $a \mapsto b\inv ab$, $c_a\colon B\to (A\ast B\ast C)_0$ is $b\mapsto a\inv ba$, $\lambda$ maps all three identities of $1\sqcup 1\sqcup 1$ to the identity and the downward maps and right-hand square should be viewed as the correspondences associated to the displayed groupoid homomorphisms.     

The commutativity of the right-hand square is immediate from the definition of the cocycle $\sigma$ and the action as $\sigma_0(b\inv ab)=a$, $\sigma_0(a\inv ba)=b$ and $\sigma_0(c)=c$.   The commutativity of the left-hand square follows  by Proposition~\ref{p:mackey}.  Indeed,    $A\backslash G/G_0= \{AG_0, AbG_0\}$,  $A\cap G_0=\{1\}$ and $A\cap bG_0b\inv = A$, yielding $\tr^{G}_{G_0}\circ I_A = \iota_1\circ \tr^A_1\sqcup c_b$, where $\iota_1$ includes $1$, as $\tr^A_A$ is the identity correspondence on $A$.  Similarly, $B\backslash G/G_0= \{BG_0, BaG_0\}$,   $B\cap G_0=\{1\}$ and $B\cap aG_0a\inv = B$ implies $\tr^{G}_{G_0}\circ I_B = \iota_1\circ \tr^B_1\sqcup c_a$.  Finally,  $C\backslash G/G_0=\{CG_0, CaG_0\}$, $C\cap G_0=C$ and $C\cap aG_0a\inv=\{1\}$, whence $\tr^{G}_{G_0}\circ I_C = \id_C\sqcup \iota_1\circ\tr^C_1$.

By Corollary~\ref{c:transitive.transfer}, we have that $H_0(\mathscr G)\cong \mathbb Z/2\mathbb Z$.   Recall that, for $n\geq 1$, the  Mayer--Vietoris sequence~\cite[Corollary~7.7]{Browncohomology} yields that the inclusions of $A,B,C$ induce an isomorphism $H_n(A)\oplus H_n(B)\oplus H_n(C)\to H_n(A\ast B\ast C)$ for $n\geq 1$.  Therefore, the left- and right-most downward maps in the diagram induce isomorphisms on homology.  By commutativity of the diagram, we deduce that $H_n(\sigma_0)\circ \tr^{G}_{G_0} = \id$ for $n\geq 1$ as the homology of the trivial group is $0$ for $n\geq 1$, whence $\id -H_n(\sigma_0)\circ \tr^{G}_{G_0}=0$ for $n\geq 1$.  The long exact sequence in Corollary~\ref{c:transitive.transfer} and \eqref{eq:cyclic.homology} then imply that $H_n(\mathscr G)\cong (\mathbb Z/2\mathbb Z)^3$ for $n\geq 1$.

Next we turn to K-theory.  By a theorem of Cuntz~\cite[Page~192]{Cuntz_K_amenability}, the inclusions $A,B,C \to A \ast B\ast C$ induce an isomorphism $K_1(\cs(A\ast B\ast C)) =K_1(\cs(A))\oplus K_1(\cs(B))\oplus K_1(\cs(C))=0$ (as $A,B,C$ are finite) and an isomorphism of $K_0(\cs(A\ast B\ast C))$ with the quotient of $K_0(\cs(A))\oplus K_0(\cs(B))\oplus K_0(\cs(C))$ that identifies the classes of the unit in each of the three algebras.  Thus $K_0(\cs(A\ast B\ast C))\cong \mathbb Z^4$ with basis $[1]_0$, $[p_a]_0$, $[p_b]_0$, $[p_c]_0$ where $p_x =\frac{1}{2}(1-u_x)$ for $x=a,b,c$. The transfer to the trivial group takes $p_x$ to a rank $1$ projection matrix and hence to $[1]_0$.  Tracing the commutative diagram across the top and using Cuntz's theorem yields \[\id -K_0(\sigma_0)\circ \tr^{G}_{G_0} = \begin{bmatrix}-2 & -1 &-1&-1\\ 0 & 0&0 &0\\0 & 0 & 0 &0\\ 0&0&0&0 \end{bmatrix}\]
and so $K_0(\cs(\mathscr G))\cong \mathbb Z^3\cong K_1(\cs(\mathscr G))$ by Corollary~\ref{c:transitive.transfer.Ktheory}. Note that $[1]_0$ is in the image of $\id -K_0(\sigma_0)\circ \tr^{G}_{G_0}$ and therefore the class of the unit in $K_0(\cs(\mathscr G))$ vanishes.
\end{proof}

\section{Computations: multispinal self-similar groups}\label{s:multispinal}

We now consider multispinal groups such as the Grigorchuk group. The reader is referred to Section~\ref{ss:multispinal} for notation. As in the proof of Theorem \ref{t:Hanoi}, both the homology and K-theory groups associated to the free product $A \ast B$ of groups can be expressed in terms of those of $A$ and $B$.

\begin{Lemma}\label{l:multispinal}
Let $(A,B,\Phi)$ be the data defining a multispinal group. Let $C$ be any abelian group and let $F_*$ be $K_*(\cs(-))$ or $H_*(-,C)$. Then, for each $n \geq 0$, the diagram
\[\begin{tikzcd}[arrow style=math font]
	{F_n(A) \oplus F_n(B)} && {F_n(A) \oplus F_n(B)} \\
	{F_n(A \ast B)} && {F_n(A \ast B)}
	\arrow["M", from=1-1, to=1-3]
	\arrow[from=1-1, to=2-1]
	\arrow[from=1-3, to=2-3]
	\arrow["{F_n(\sigma_1) \circ \tr^{A \ast B}_{(A \ast B)_1}}"', from=2-1, to=2-3]
\end{tikzcd}\]
commutes, where $1 \in A$ is the unit and
\[ M = {\begin{array}{c} \begin{bmatrix} F_n(\iota_A) \circ \tr^A_1 & \sum_{a\in A_1} F_n(\Phi_a)\\ 0 & \sum_{a\in A_0} F_n(\Phi_a)\end{bmatrix} \end{array}} \in \End(F_n(A) \oplus F_n(B)) \]
with $\iota_A \colon 1 \to A$ the inclusion.
\end{Lemma}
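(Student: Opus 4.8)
The plan is to reproduce the mechanism of the proof of Theorem~\ref{t:Hanoi}: build a single diagram of proper \'etale correspondences that commutes up to isomorphism, and then apply $F_n$, using that $F_n$ sends finite disjoint unions of proper correspondences to direct sums and respects composition (so that the argument is uniform for $K_*(\cs(-))$ and $H_*(-,C)$). Concretely, I would construct the diagram
\[\begin{tikzcd}[arrow style=math font, column sep=1.1cm]
	{A\sqcup B} & {1\sqcup \bigsqcup_{a\in A}B} & {A\sqcup B}\\
	{A\ast B} & {(A\ast B)_1} & {A\ast B}
	\arrow[from=1-1,to=1-2]
	\arrow[from=1-1,to=2-1]
	\arrow[from=1-2,to=1-3]
	\arrow[from=1-2,to=2-2]
	\arrow[from=1-3,to=2-3]
	\arrow["{\tr^{A\ast B}_{(A\ast B)_1}}",from=2-1,to=2-2]
	\arrow["{\sigma_1}",from=2-2,to=2-3]
\end{tikzcd}\]
in which the outer vertical arrows are the correspondences of the inclusions $A,B\hookrightarrow A\ast B$, the bottom composite induces $F_n(\sigma_1)\circ\tr^{A\ast B}_{(A\ast B)_1}$, and the top composite will be shown to induce $M$ after identifying $F_n(A\sqcup B)\cong F_n(A)\oplus F_n(B)$.

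First I would record that the action of $A\ast B$ on the alphabet $A$ is the transitive (indeed free) left regular action of $A$, so that $[A\ast B:(A\ast B)_1]=|A|$ and $\{a\mid a\in A\}$ is a transversal of $(A\ast B)/(A\ast B)_1$. The left square is then two applications of the Mackey decomposition (Proposition~\ref{p:mackey}). For $I_A$ there is a single double coset in $A\backslash(A\ast B)/(A\ast B)_1$, with representative $1$ and $A\cap(A\ast B)_1=\{1\}$, giving $\tr^{A\ast B}_{(A\ast B)_1}\circ I_A\cong\iota_1\circ\tr^A_1$ with $\iota_1\colon 1\to(A\ast B)_1$. For $I_B$, since $B$ fixes every letter, the double cosets are indexed by $a\in A$ with representatives $a$, and $B\cap a(A\ast B)_1a\inv=B$ for every $a$, giving $\tr^{A\ast B}_{(A\ast B)_1}\circ I_B\cong\bigsqcup_{a\in A}\bar c_a$, where $\bar c_a\colon B\to(A\ast B)_1$ is the correspondence of $b\mapsto a\inv ba$. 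This fixes the middle term as $1\sqcup\bigsqcup_{a\in A}B$, with top-left map given by $\tr^A_1$ on the $A$-summand and by the diagonal $\bigsqcup_a\id_B$ on the $B$-summand.

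The right square is where the self-similar structure enters, and the key computation is the cocycle identity $(a\inv ba)|_1=\Phi_a(b)$. I would verify this directly from $(hg)|_e=h|_{g(e)}g|_e$ together with $a|_x=1$, $b|_a=\Phi_a(b)$, and $(ba)(1)=a$; likewise $\sigma_1\circ\iota_1$ is the trivial inclusion $1\to A\ast B$. Hence $\sigma_1\circ\bar c_a=\Phi_a$, which is an automorphism of $B$ when $a\in A_0$ and a homomorphism $B\to A$ when $a\in A_1$. Reading the top composite column by column, the $A$-summand contributes $F_n(\iota_A)\circ\tr^A_1$ with zero in the $B$-component, while the $B$-summand splits along $A=A_0\sqcup A_1$ into $\sum_{a\in A_1}F_n(\Phi_a)$ landing in $F_n(A)$ and $\sum_{a\in A_0}F_n(\Phi_a)$ landing in $F_n(B)$; this is exactly $M$.

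I expect the main obstacle to be verifying that the two squares commute \emph{at the level of correspondences}, not merely after applying $F_n$: pinning down the correct double-coset representatives and intersections for the left square, and confirming through the cocycle computation that $\sigma_1\circ\bar c_a$ lands in $B$ or in $A$ according to whether $a\in A_0$ or $a\in A_1$ for the right square. Once these isomorphisms of correspondences are in place, applying $F_n$ and invoking functoriality under composition together with the passage from disjoint unions to direct sums yields the stated commuting square simultaneously in homology and in K-theory.
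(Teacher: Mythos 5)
Your proposal is correct and follows essentially the same route as the paper: the same three-column diagram of correspondences with middle term $1\sqcup\bigsqcup_{a\in A}B$ over $(A\ast B)_1$, the left square handled by the Mackey decomposition (Proposition~\ref{p:mackey}) with exactly the double cosets and intersections you identify, and the right square reduced to the cocycle computation $\sigma_1(a\inv ba)=\Phi_a(b)$.
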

\begin{proof}
We view the downwards maps as induced by the groupoid homomorphism $ A\sqcup B\to A\ast B$ which separately includes $A$ and $B$ into $A \ast B$. The result is implied by commutativity of the diagram of correspondences
\[\begin{tikzcd}[arrow style=math font, column sep = 2cm]
	{A\sqcup B} & {1 \sqcup \bigsqcup_{a \in A} B} & {A \sqcup B} \\
	{A \ast B} & {(A \ast B)_1} & {A \ast B}
	\arrow["{\mathrm{tr}^A_1\sqcup \bigsqcup_{a \in A} B}", from=1-1, to=1-2]
	\arrow[from=1-1, to=2-1]
	\arrow["{\iota_A\sqcup \bigsqcup_{a \in A} \Phi_a }", from=1-2, to=1-3]
	\arrow["{\iota \sqcup \bigsqcup_{a \in A} c_a}", from=1-2, to=2-2]
	\arrow[from=1-3, to=2-3]
	\arrow["\tr^{A\ast B}_{(A\ast B)_1}", from=2-1, to=2-2]
	\arrow["\sigma_1", from=2-2, to=2-3]
\end{tikzcd}\]
up to isomorphism where $\iota \colon 1 \to (A \ast B)_1$ is the inclusion, $c_a \colon B \to (A \ast B)_1$ is the conjugation map $b \mapsto a \inv b a$, 
and the downward maps and right-hand square should be viewed as the correspondences associated to the displayed groupoid homomorphisms. The right-hand square commutes as $\sigma_1(a\inv ba)=\Phi_a(b)$. The left-hand square commutes by Proposition~\ref{p:mackey} as $(A\ast B)_1$ is normal, $A\ast B=A(A\ast B)_1$ and $A\cap (A\ast B)_1=\{1\}$ and $B\backslash (A\ast B)/(A\ast B)_1= \{Ba(A\ast B)_1 \mid a \in A \}$ and $B\cap a(A\ast B)_1a\inv =B$ for all $a\in A$, noting $\tr^B_B=B$ as a bispace.
\end{proof}

Note that that $\tr^A_1$ is $0$ on $K_1$ and  $H_n$ for $n\geq 1$.  On $H_0$ it is multiplication by $|A|$ and on  $K_0$ it sends $[1]_0$ to $|A|[1]_0$. 

\begin{Thm}\label{t:multispinal}
Let $G=G_{(A,B)}$ be a multispinal group coming from the data $(A,B,\Phi)$.
Let $\mathscr G=\mathscr G_{(G,A)}$.  Let $C$ be an abelian group without $(|A|-1)$-torsion.  Then $H_0(\mathscr G,C)=C/(|A|-1)C$ and there is a long exact sequence $\cdots \to H_{n+1}(\mathscr G,C)\to  H_n(B,C)\to H_n(B,C)\to H_n(\mathscr G,C) \to \cdots \to H_1(\mathscr G,C)\to 0$ where the map $H_n(B,C)\to H_n(B,C)$ is given by 
$\id -\sum_{a\in A_0}H_n(\Phi_a,C)$. In particular, $H_n(\mathscr G)$ is a finite group for all $n\geq 0$. 
\end{Thm}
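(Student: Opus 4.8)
The plan is to reduce everything to the free product $A\ast B$ and then run the machinery of Corollary~\ref{c:transitive.transfer}. First I would use the observation from Section~\ref{ss:multispinal} that $\mathscr G_{(G,A)}\cong \mathscr G_{(A\ast B,A)}$ via Corollary~\ref{c:contracting.case}, so that $H_n(\mathscr G,C)\cong H_n(\mathscr G_{(A\ast B,A)},C)$ and I may compute using the (typically non-faithful) action of $A\ast B$ on the finite alphabet $A$. The single vertex is regular and $|A|\geq 2$, so Corollary~\ref{c:transitive.transfer} applies with the one-point transversal $\{1\}\subseteq A$ and yields the long exact sequence
\[\cdots\to H_{n+1}(\mathscr G,C)\to H_n(A\ast B,C)\xrightarrow{\id-\Phi_n}H_n(A\ast B,C)\to H_n(\mathscr G,C)\to\cdots\]
with $\Phi_n=H_n(\sigma_1,C)\circ\tr^{A\ast B}_{(A\ast B)_1}$.

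The next step is to rewrite this in terms of $B$ alone. For $n\geq 1$ the Mayer--Vietoris isomorphism gives $H_n(A\ast B,C)\cong H_n(A,C)\oplus H_n(B,C)$, and under it Lemma~\ref{l:multispinal} (with $F_*=H_*(-,C)$) identifies $\Phi_n$ with the matrix $M$. Since $\tr^A_1=0$ on $H_n$ for $n\geq 1$, the top-left block of $M$ vanishes, so $\id-\Phi_n$ becomes block upper-triangular,
\[\id-\Phi_n=\begin{bmatrix}\id_{H_n(A,C)} & -\sum_{a\in A_1}H_n(\Phi_a,C)\\ 0 & \psi_n\end{bmatrix},\qquad \psi_n:=\id-\sum_{a\in A_0}H_n(\Phi_a,C),\]
with identity diagonal block on the $A$-summand. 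I would then isolate a formal reduction lemma: given an exact sequence $\cdots\to H_{n+1}\to P_n\xrightarrow{f_n}P_n\to H_n\to\cdots$ with $P_n=P_n'\oplus P_n''$ and $f_n$ upper-triangular with identity on $P_n'$, one may replace $P_n$ by $P_n''$ and $f_n$ by its lower-right block, with the new connecting maps induced by inclusion and projection. This is a short diagram chase using $\ker f_n\cong\ker\psi_n$ (via the projection) and $\image f_n=P_n'\oplus\image\psi_n$. Applied with $P_n''=H_n(B,C)$ and lower-right block $\psi_n$, it converts the sequence into the asserted one in degrees $\geq 1$.

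It remains to treat the bottom of the sequence, which is where the torsion hypothesis enters. On $H_0(-,C)=C$ the transfer $\tr^{A\ast B}_{(A\ast B)_1}$ is multiplication by the index $|A|$ and $H_0(\sigma_1,C)=\id$, so $\id-\Phi_0=(1-|A|)\id_C$; hence $H_0(\mathscr G,C)\cong C/(|A|-1)C$, and $\ker(\id-\Phi_0)$ is precisely the $(|A|-1)$-torsion of $C$, which is trivial by hypothesis. Exactness then forces the connecting map $H_1(\mathscr G,C)\to H_0(A\ast B,C)$ to vanish, so the sequence detaches and terminates as $\cdots\to H_1(\mathscr G,C)\to 0$, completing the long exact sequence as stated.

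For the finiteness statement I would specialise to $C=\mathbb Z$, which has no $(|A|-1)$-torsion since $|A|\geq 2$. Then $H_0(\mathscr G)\cong\mathbb Z/(|A|-1)\mathbb Z$ is finite, and because $B$ is a finite group, $H_n(B)$ is finite for every $n\geq 1$. Reading off the reduced sequence, for $n\geq 2$ the group $H_n(\mathscr G)$ is an extension of $\ker(\psi_{n-1})\leq H_{n-1}(B)$ by $\coker(\psi_n)$, a quotient of $H_n(B)$, both finite; while $H_1(\mathscr G)\cong\coker(\psi_1)$ is a quotient of the finite group $H_1(B)$. Hence $H_n(\mathscr G)$ is finite for all $n\geq 0$. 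The hard part will be the bookkeeping in the reduction step: ensuring that the block-triangular simplification is compatible with the connecting homomorphisms of the long exact sequence --- not merely with the termwise kernels and cokernels --- and dovetailing this with the low-degree splicing, where the absence of $(|A|-1)$-torsion is exactly the condition that detaches $H_0$.
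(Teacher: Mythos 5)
Your proposal is correct and follows essentially the same route as the paper: reduce to $A\ast B$ via Corollary~\ref{c:contracting.case}, apply Corollary~\ref{c:transitive.transfer}, identify $\Phi_n$ with the block upper-triangular matrix of Lemma~\ref{l:multispinal} under the Mayer--Vietoris splitting, and extract the $B$-only sequence from the kernel/cokernel computation, with the torsion hypothesis killing the connecting map into degree zero. The only difference is one of emphasis: you explicitly flag the compatibility of the block-triangular reduction with the connecting homomorphisms as a lemma to be checked, whereas the paper treats the identification of kernels and cokernels as sufficient and leaves that diagram chase implicit.
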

\begin{proof}
By Corollary~\ref{c:contracting.case}, we may identify $\mathscr G$ with $\mathscr G_{(A\ast B,A)}$, and we do so from now on.  
We use Corollary~\ref{c:transitive.transfer}, which in particular implies the result for $H_0(\mathscr G,C)$. By the Mayer--Vietoris sequence~\cite[Corollary~7.7]{Browncohomology} in group homology, there is an isomorphism $H_n(A,C)\oplus H_n(B,C)\to H_n(A\ast B,C)$ for $n\geq 1$ induced by the inclusions. By Lemma \ref{l:multispinal} the map $H_n(\sigma_1,C)\circ \mathrm{tr}^{A\ast B}_{(A\ast B)_1}$ for $n\geq 1$ is given (under these identifications) by the matrix   
\begin{equation}\label{eq:transfer.matrix.multispinal}
M=\begin{bmatrix} 0 & \sum_{a\in A_1}H_n(\Phi_a,C)\\ 0 &\sum_{a\in A_0}H_n(\Phi_a,C)\end{bmatrix} \in \End(H_n(A,C) \oplus H_n(B,C))
\end{equation}
since $\mathrm{tr}^A_1$ factors through the homology of the trivial group.
Therefore, we have that \[\id -M = \begin{bmatrix} \id & -\sum_{a\in A_1}H_n(\Phi_a,C)\\ 0 &\id -\sum_{a\in A_0}H_n(\Phi_a,C)\end{bmatrix}.\]
  It follows that $\ker (\id -M) \cong \ker (\id -\sum_{a\in A_0}H_n(\Phi_a,C))$ and $\coker (\id -M)\cong \coker (\id -\sum_{a\in A_0}H_n(\Phi_a,C))$. The latter isomorphism is clear as the image of $\id -M$ is $H_n(A,C)\oplus \image(\id -\sum_{a\in A_0}H_n(\Phi_a,C))$. For the former, notice that $\ker (\id -M)$  consists of $(x,y)$ with $y\in \ker  (\id -\sum_{a\in A_0}H_n(\Phi_a,C))$ and $x= \sum_{a\in A_1}H_n(\Phi_a,C)(y)$.
The result now follows from the long exact sequence in Corollary~\ref{c:transitive.transfer}  and the observation that multiplication by $|A|-1$ is injective on $C$.  The final statement follows because $H_0(\mathscr G)\cong \mathbb Z/(|A|-1)\mathbb Z$ and the homology of any finite group is finite in degree greater than $0$.  Therefore, $H_n(\mathscr G)$ is finite for $n\geq 1$ from the long exact sequence.  
\end{proof}

\begin{Rmk}
We sketch here a topological proof that the matrix of $H_n(\sigma_1)\circ \mathrm{tr}^{A\ast B}_{(A\ast B)_1}$ is given by \eqref{eq:transfer.matrix.multispinal}.  
Let $X_A$ be a $K(A,1)$ and $X_B$ a $K(B,1)$ with a single vertex.  We may take $X=X_A\vee X_B$ as our $K(A\ast B,1)$ by a well-known result of Whitehead~\cite{Browncohomology}.  The Mayer--Vietoris sequences tells us that  $H_n(X,C)\cong H_n(X_A,C)\oplus H_n(X_B,C)\cong H_n(A,C)\oplus H_n(B,C)$, for $n\geq 1$, with the isomorphism induced by the inclusions of $X_A,X_B$.  

For each $a\in A_0$ (respectively, $a\in A_1)$, we can choose a cellular map $\p_a\colon X_B\to X_B$ (respectively, $\p_a\colon X_B\to X_A$) realizing $\Phi_a$  on fundamental groups.
The action of $A\ast B$ on $A$ is the composition of the projection to $A$ with the regular action. Thus the stabilizer $(A\ast B)_1$ is the normal closure of $B$ in $A\ast B$.  This is isomorphic to $\Asterisk_{a\in A}a\inv Ba$.  Indeed,  the $|A|$-fold regular covering space $Y$ of $X$ associated to this normal closure is constructed as follows.  Let $\til X_A$ be the universal covering space of $X_A$.  It is an $|A|$-fold contractible covering space of $X_A$ with $|A|$-vertices.  Fix a base vertex $x_1$ of   $\til X_A$. For each $a\in A$, let $x_a=a\inv x_1$ with respect to the deck action of $A$ on $\til X_A$.  Let $Y$ be the space obtained from wedging a copy $Y_a$ of $X_B$ at the vertex $x_a$ for each $a\in A$.  Then $Y$ is an $|A|$-fold regular covering space of $X$ by extending the covering $\til X_A\to X_A$ by mapping each $Y_a$ identically to $X_B$.  The deck transformation action is obtained by projecting to $A$ and performing the natural free action of $A$ on $\til X_A$, and extending it to $Y$ by shuffling rigidly the $|A|$ copies of $X_B$.   Since $\til X_A$ contracts to a point, $Y$ is homotopy equivalent to $\bigvee_{a\in A} Y_a$. As $a\inv x_1=x_a$,  $(A\ast B)_1$ is isomorphic to $\Asterisk_{a\in A}a\inv Ba$.  Notice that $Y$ is a $K(A\ast B)_1$ as it has the same universal cover as $X$. 

According to~\cite[Page~82, (E)]{Browncohomology} there is chain map, called the pretransfer, from $C_\bullet(X,C)$ to $C_\bullet(Y,C)$ taking a cell to the sum of its $|A|$ lifts, which in turn induces the transfer homomorphism as the composition $H_n(A\ast B,C)\xrightarrow{\cong} H_n(X,C)\to H_n(Y,C)\xrightarrow{\cong} H_n((A\ast B)_1,C)$.    The pretransfer sends each $n$-cell of $X_A$ to a sum of the $|A|$ $n$-cells of $\til X_A$ that lift it.  Each cell of $X_B$ is sent to the sum of the corresponding copies of that cell in the $Y_a$ with $a\in A$.   The homomorphism $\sigma_1\colon (A\ast B)_1\to A\ast B$ has $\sigma_1(a\inv ba) = \Phi_a(b)$ and is realized cellularly  via the map $f\colon Y\to X$  collapsing $\til X_A$ to the wedge point and mapping $Y_a$ to $X_A\vee X_B$ by $\p_a$ followed by the inclusion. It follows that the composition of the pretransfer with the map of chain complexes induced by $f$ sends each $n$-cell $c$ of $X_A$ to $0$, and of $X_B$ to $\sum_{a\in A}\p_a(c)\in C_n(X_A\vee X_B)$.   In particular, the induced map $H_n(\sigma_1,C)\circ \mathrm{tr}^{A\ast B}_{(A\ast B)_1}$ is given by the matrix  in \eqref{eq:transfer.matrix.multispinal}  for $n\geq 1$. 
\end{Rmk}

Using Li's work~\cite{li2022ample}, we may now prove that a large number of R\"over--Nekrashevych groups are rationally acyclic. 
\begin{Cor}\label{c:rationally acyclic}
Let $G$ be a multispinal group or the Hanoi towers group.  Then the R\"over--Nekrashevych group $V(G)$ and its commutator subgroup $V(G)'$ are rationally acyclic.    
\end{Cor}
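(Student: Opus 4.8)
The plan is to combine the finiteness of the groupoid homology established in Theorems~\ref{t:multispinal} and~\ref{t:Hanoi} with Li's computation of the rational homology of $V(G)$ and $V(G)'$ quoted in the introduction. The key observation is that in both families the associated ample groupoid has \emph{rationally trivial} homology, whereupon Li's formulas force the relevant free graded-commutative algebras to collapse to $\mathbb Q$ concentrated in degree $0$.

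First I would record the finiteness input. For $G=G_{(A,B)}$ a multispinal group, Theorem~\ref{t:multispinal} shows that $H_n(\mathscr G)$ is a finite group for every $n\geq 0$, where $\mathscr G=\mathscr G_{(G,A)}$; and for the Hanoi towers group, Theorem~\ref{t:Hanoi} gives $H_n(\mathscr G)\cong\mathbb Z/2\mathbb Z$ or $(\mathbb Z/2\mathbb Z)^3$, again finite in every degree. By the Universal Coefficient Theorem (Theorem~\ref{T:uct}), or more directly by the remark that $H_n(\mathscr G,\mathbb Q)\cong H_n(\mathscr G)\otimes_{\mathbb Z}\mathbb Q$, a finite homology group contributes nothing rationally; hence $H_n(\mathscr G,\mathbb Q)=0$ for all $n\geq 0$ in both cases. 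In particular $H^{\mathrm{odd}}_\bullet(\mathscr G,\mathbb Q)=0=H^{\mathrm{even}}_\bullet(\mathscr G,\mathbb Q)$.

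Next I would feed this into Li's theorem. Since the exterior and symmetric algebras in Li's formulas are then generated by the zero graded vector space, we obtain
\[ H_\bullet(V(G),\mathbb Q)\cong \Lambda(0)\otimes\mathrm{Sym}(0)\cong \mathbb Q, \]
concentrated in degree $0$, and likewise $H_\bullet(V(G)',\mathbb Q)\cong \mathbb Q$ in degree $0$ (the restriction to $\bullet>1$ for the odd part is immaterial, as that part already vanishes). Thus $H_n(V(G),\mathbb Q)=0=H_n(V(G)',\mathbb Q)$ for all $n\geq 1$, which is precisely rational acyclicity.

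The only point requiring care is the identification of the groupoid to which Li's theorem is applied: for a multispinal group $G_{(A,B)}$ one must check that $\mathscr G_{(G,A)}$ is indeed the groupoid underlying the R\"over--Nekrashevych group $V(G)$, which is the very object whose homology Theorem~\ref{t:multispinal} computes (using Corollary~\ref{c:contracting.case} to identify it with $\mathscr G_{(A\ast B,A)}$). Beyond this bookkeeping I expect no genuine obstacle: all the substantive work has already been carried out in establishing the finiteness of $H_\bullet(\mathscr G)$, and the corollary is a formal consequence of Li's theorem together with the vanishing of rational groupoid homology.
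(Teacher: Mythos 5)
Your proposal is correct and is essentially the paper's own argument: both deduce $H_\bullet(\mathscr G,\mathbb Q)=0$ from the finiteness of the integral groupoid homology in Theorems~\ref{t:multispinal} and~\ref{t:Hanoi} and then invoke Li's results to conclude rational acyclicity of $V(G)$ and $V(G)'$. The only cosmetic difference is that the paper explicitly verifies the standing hypotheses of Li's Corollary~C (the groupoid is minimal and purely infinite with Cantor unit space, since it contains the boundary path groupoid of a bouquet of $|X|\geq 2$ loops with the same unit space), whereas you appeal to the version quoted in the introduction where these hypotheses are implicit.
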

\begin{proof}
The groupoid associated to a self-similar group action $(G,X,\sigma)$ contains a copy of the boundary path groupoid of the bouquet of $|X| \geq 2$ loops, with the same unit space, and is hence minimal and purely infinite with unit space a Cantor space. Therefore,~\cite[Corollary~C]{li2022ample} applies to conclude that $V(G)$ and $V(G)'$ are rationally acyclic if $H_k(\mathscr G_{(G,X)},\mathbb Q)=0$ for $k>0$.  The result follows from Theorems~\ref{t:Hanoi} and~\ref{t:multispinal}.
\end{proof}

We next consider the corresponding K-theoretic computation. 
Recall that the groupoid associated to any contracting group is amenable (cf., Corollary~\ref{c:contracting.amenable}),
and so the universal and reduced $\cs$-algebras coincide for groupoids associated to multispinal groups.  All \v{S}uni\'c groups are amenable~\cite{sunicgroups}, and so the amenability of their groupoids also follows from that.

A (complex) character $\chi \colon H \to \mathbb C$ of a finite group $H$ is the trace of a finite dimensional unitary representation $\pi_\chi$, which is determined up to unitary equivalence by $\chi$. If $\pi_\chi$ is irreducible we call $\chi$ irreducible, and we denote by $\wh H$ the set of irreducible complex characters of $H$. Each irreducible character $\chi \in \wh H$ determines a matrix subalgebra $M_{\chi} = (\ker \pi_\chi)^\perp \subseteq \cs(H)$ of degree $\chi(1)$ and $\cs(H) = \bigoplus_{\chi \in \wh H} M_{\chi}$. Note that $\mathrm{Aut}(H)$ acts on the left of $\wh H$ by $(f,\chi)\mapsto \chi\circ f\inv$.

Recall that the Schreier graph of a left action of group $H$ on the left of a set $X$ with respect to a set of generators $S$ is the graph with vertex set $X$ and edge set $S\times X$ where $\sour(s,x) = x$ and $\ran(s,x)=sx$. 

\begin{Thm}\label{t:multispinal.k}
Let $G_{(A,B)}$ be a multispinal group coming from the data $(A,B,\Phi)$, and let $\mathscr G$ be the corresponding groupoid.  
Let $A_0=\Phi\inv(\mathrm{Aut}(B))$ and let $d=|A|$.  Let  $T$ be the adjacency matrix for the Schreier graph of the left action of $\langle A_0\rangle$ on the set of nontrivial characters of $B$ with respect to the generating set $A_0$. Then
\begin{enumerate}
    \item $K_0(\cs(\mathscr G))\cong \mathbb Z/(d-1)\mathbb Z\oplus \coker(\id -T)$,
    \item $K_1(\cs(\mathscr G))\cong \ker(\id -T)$,
\end{enumerate}
and $[1]_0 \in K_0(\cs(\mathscr G))$ is given by $1 \in \Z /(d-1)\Z$.
\end{Thm}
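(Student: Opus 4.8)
The plan is to extract both groups from the six-term sequence of Corollary~\ref{c:transitive.transfer.Ktheory}. First I would pass to the free product: since the action of $A\ast B$ on $A$ is contracting with nucleus inside $A\cup B$ and its nontrivial elements act nontrivially, Corollary~\ref{c:contracting.case} gives $\mathscr G\cong\mathscr G_{(A\ast B,A)}$, so I may run the sequence for $G=A\ast B$ acting on the alphabet $A$, with $d=|A|$, stabilizer $(A\ast B)_1$ and virtual endomorphism $\sigma_1$. As $A,B$ are finite, $K_0(\cs(A))$ and $K_0(\cs(B))$ are the representation rings while $K_1(\cs(A))=K_1(\cs(B))=0$; by Cuntz's computation of the K-theory of free products~\cite{Cuntz_K_amenability} (as in the proof of Theorem~\ref{t:Hanoi}) one then has $K_1(\cs(A\ast B))=0$ and $K_0(\cs(A\ast B))=(K_0(\cs(A))\oplus K_0(\cs(B)))/\langle[1]_0^A-[1]_0^B\rangle$. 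Feeding $K_1=0$ into the sequence collapses it to isomorphisms $K_1(\cs(\mathscr G))\cong\ker(1-\Phi_0)$ and $K_0(\cs(\mathscr G))\cong\coker(1-\Phi_0)$, where $\Phi_0=K_0(\sigma_1)\circ\tr^{A\ast B}_{(A\ast B)_1}$.

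Next I would read off $\Phi_0$ from Lemma~\ref{l:multispinal}, which presents it on $K_0(\cs(A))\oplus K_0(\cs(B))$ by the matrix $M$ with top-left block $K_0(\iota_A)\circ\tr^A_1$, bottom-right block $\sum_{a\in A_0}K_0(\Phi_a)$, and off-diagonal block $\sum_{a\in A_1}K_0(\Phi_a)$. Writing $[1]_0$ for the common class of the unit and splitting $K_0(\cs(A))=\Z[1]_0\oplus\til{R}_A$ and $K_0(\cs(B))=\Z[1]_0\oplus\til{R}_B$ by the augmentation, the key observations are: $\tr^A_1$ is the dimension functional, so $K_0(\iota_A)\circ\tr^A_1$ sends $x\mapsto(\dim x)[1]_0$, killing $\til{R}_A$ into the $[1]_0$-line and acting by $d$ on $[1]_0$; each $\Phi_a$ with $a\in A_0$ is an automorphism fixing the trivial character, so $\sum_{a\in A_0}K_0(\Phi_a)$ preserves $\til{R}_B$ and there restricts, by definition of the Schreier graph, to the adjacency matrix $T$ on the nontrivial characters (acting by $|A_0|$ on $[1]_0$); and the $\Phi_a$ with $a\in A_1$ contribute only an off-diagonal map $\til{R}_B\to K_0(\cs(A))$.

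Assembling these, $1-\Phi_0$ is block upper-triangular on $\Z[1]_0\oplus\til{R}_A\oplus\til{R}_B$ with diagonal blocks $1-d$, $\id_{\til{R}_A}$ and $1-T$. As the middle block is invertible I would eliminate $\til{R}_A$ by an integral change of basis, reducing to $\left(\begin{smallmatrix}1-d&-g\\0&1-T\end{smallmatrix}\right)$ on $\Z[1]_0\oplus\til{R}_B$, where $g$ records the $A_1$-coupling. The snake lemma then yields an exact sequence $0\to\ker(1-\Phi_0)\to\ker(1-T)\xrightarrow{\ \partial\ }\Z/(d-1)\to\coker(1-\Phi_0)\to\coker(1-T)\to0$ with connecting map $\partial$ induced by $g$. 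Since $\ker(1-T)$ is free and $\partial$ has finite image, $\ker(1-\Phi_0)$ is a finite-index subgroup of $\ker(1-T)$ and hence abstractly isomorphic to it, proving (2). The hard part, and the step I expect to be the main obstacle, is to prove that $\partial=0$ — equivalently that $g$ carries $\ker(1-T)$ into $(d-1)\Z$ — for only then does the cokernel sequence split off the full $\Z/(d-1)$ summand and give (1). I would attack this by a dimension count for the $A_1$-coupling $\sum_{a\in A_1}K_0(\Phi_a)$ evaluated on $T$-invariant characters, using the $\langle A_0\rangle$-orbit structure on the nontrivial characters of $B$ together with the separation hypothesis~(2) in the definition of a multispinal group.

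Finally, the assertion about $[1]_0$ falls out of the first two paragraphs: the transfer multiplies the class of the unit by the index $d$ (Proposition~\ref{p:transfer.monomial.form}) and $\sigma_1$ is unital, so $\Phi_0([1]_0)=d[1]_0$ and $1-\Phi_0$ acts as multiplication by $1-d$ on $\Z[1]_0$; thus $[1]_0$ maps to the generator $1$ of the $\Z/(d-1)$ summand of $K_0(\cs(\mathscr G))$.
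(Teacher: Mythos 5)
Your route is the same as the paper's: reduce to $G=A\ast B$ via Corollary~\ref{c:contracting.case}, collapse the six-term sequence of Corollary~\ref{c:transitive.transfer.Ktheory} using Cuntz's computation $K_1(\cs(A\ast B))=0$, and read $\Phi_0$ off Lemma~\ref{l:multispinal} in the basis $[1]_0$, $\{[p_\chi]_0\}_{\chi\ne 1}$, $\{[p_\theta]_0\}_{\theta\ne 1}$. Your treatment of part~(2) (a finite-index subgroup of the free abelian group $\ker(\id-T)$ is abstractly isomorphic to it) and of the class of the unit agrees with the paper and is complete.

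The gap you flag in part~(1) is genuine, and moreover the statement you propose to prove --- that the Schur-complement coupling $g$ carries $\ker(\id-T)$ into $(d-1)\Z$ --- appears to be false in general, so no dimension count will close it. Eliminating $\til R_A$ turns the coupling into $g=u\circ v$, where $u_\chi=\chi(1)$ and $v$ is the $A_1$-block; unwinding, $g([p_\theta]_0)=\sum_{a\in A_1}\rk_{\reg_A}(\Phi_a(p_\theta))$, the total rank of the $A_1$-images of $p_\theta$ in the regular representation of $A$. For the Gupta--Sidki $p$-group one has $T=\id$, so $\ker(\id-T)=\Z^{p-1}$, while $g$ takes the value $2$ on each basis vector (the two isomorphisms $\Phi_0,\Phi_1$ each contribute $1$, the trivial maps contribute $0$); since $2\notin(p-1)\Z$ for $p\ge 5$, your connecting map $\partial$ is nonzero there and the torsion subgroup of $\coker(\id-\Phi_0)$ comes out as $\Z/\gcd(2,p-1)\Z$ rather than $\Z/(p-1)\Z$. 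Be aware that the paper's own proof does not treat this point either: it records that the $(1,3)$ block of $\id-\Lambda$ vanishes and then reads the cokernel off the block-triangular form, which silently discards the composite coupling through the middle block. What the block form actually yields is an extension of $\coker(\id-T)$ by $\Z/\langle d-1,\ g(\ker(\id-T))\rangle$, so your instinct that $\partial=0$ is the crux is exactly right; as written, neither your proposal nor the paper's argument establishes part~(1) in the generality claimed, and the example above suggests the splitting-off of a full $\Z/(d-1)\Z$ summand needs an additional hypothesis or a correction.
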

\begin{proof}
By a theorem of Cuntz~\cite[Page~192]{Cuntz_K_amenability}, the inclusions $A,B \to A \ast B$ induce isomorphisms $K_1(\cs(A\ast B)) \cong K_1(\cs(A))\oplus K_1(\cs(B))$ (which vanishes as $A$ and $B$ are finite) and \[K_0(\cs(A\ast B))\cong (K_0(\cs(A))\oplus K_0(\cs(B)))/\langle ([1]_0,-[1]_0)\rangle\] where $1$ is the unit.   Putting $G=A\ast B$, we have that $G_1$ is a normal subgroup of $G$ with $G/G_1\cong A$, $B\subseteq G_1$ 
and that $\sigma_1(a\inv ba) = \Phi_a(b)$ for $a\in A$.  

Corollary~\ref{c:transitive.transfer.Ktheory} and the above discussion gives us an exact sequence
\[0\to K_1(\cs(\mathscr G))\to K_0(\cs(G))\xrightarrow{1-\Lambda} K_0(\cs(G))\to K_0(\cs(\mathscr G))\to 0\]
where $\Lambda=K_0(\sigma_1)\circ \mathrm{tr}^G_{G_1}$. 
For each $\chi \in \wh A$ and $\theta \in \wh B$ pick minimal projections $p_\chi \in M_\chi$ and $p_\theta \in M_\theta$. By Cuntz's theorem and the representation theory of finite groups, $K_0(\cs(G))$ is a free abelian group with basis $[1]_0$, the  $[p_{\chi}]_0$ with $\chi\in \wh A$ nontrivial and the  $[p_{\theta}]_0$ with $\theta\in \wh B$ nontrivial. 
By Lemma \ref{l:multispinal}, if we order the basis for $K_0(\cs(G))$ so that $[1]_0$ precedes $[p_{\chi}]_0$ with $\chi\in \wh A$ nontrivial, which in turn precedes $[p_{\theta}]$ with $\theta\in \wh B$ nontrivial, then the matrix of $\Lambda$ has the upper triangular form
\[\Lambda=\begin{bmatrix}d & \ast & 0 \\ 0 & 0 & \ast\\ 0 & 0 & \sum_{a\in A_0}P_a\end{bmatrix}\] where $P_a$ is the permutation matrix encoding the action $\theta \mapsto \theta \circ \Phi_a \inv$ of $\Phi_a$ on the nontrivial characters of $\wh B$.  The last column follows because if $a \notin A_0$, the coefficient of $[1]_0 \in K_0(\cs(A))$ in $[\Phi_a(p_\theta)]_0$ is picked out by $K_0(\cs(A))\to \Z$ induced by the trivial representation $A \to \mathbb C$, which composes with $\Phi_a \colon B \to A$ to the trivial representation $B \to \mathbb C$, and since $\theta$ is nontrivial the coefficient vanishes. If $a\in A_0$, then  $[\Phi_a(p_\theta)]_0$ is the class of a minimal projection in $M_{\theta\circ \Phi_a\inv}$. It follows that $1-\Lambda$ has block form 
\begin{equation}\label{eq:block.form.k}
1-\Lambda=\begin{bmatrix}1-d & \ast & 0\\ 0 & 1 & \ast\\ 0 & 0 & 1-\sum_{a\in A_0}P_a\end{bmatrix}.
\end{equation} 

Since $d\geq 2$, we conclude that $\ker (1-\Lambda)$ is isomorphic to the free abelian group $\ker(1-\sum_{a\in A_0}P_a)$.  Now $\sum_{a\in A_0}P_a$ is the adjacency matrix $T$ for the Schreier graph of the action of $\langle A_0\rangle$ on the nontrivial characters of $B$ with respect to the generators $A_0$.  This proves (2).  
In light of \eqref{eq:block.form.k}, we see that $\coker(\id -\Lambda)\cong \mathbb Z/(d-1)\mathbb Z\oplus \coker(\id -T)$, establishing (1), and that $[1]_0$ maps to $1\in \mathbb Z/(d-1)\mathbb Z$.  
\end{proof}

\subsection{Spinal groups}\label{subs:Sunic}
In many special cases, like \v{S}uni\'c groups from Example~\ref{ex:sunic.groups} and GGS groups from Example~\ref{ex:Gupta-Sidki groups}, $|A_0|=1$, in which case we can give a more precise computation. Such groups are  examples of spinal groups in the sense of~\cite{branchgroups}.

\begin{Cor}\label{c:sunic.k}
Let $G_{(A,B)}$ be a multispinal group coming from the data $(A,B,\Phi)$ with exactly one $a\in A$ such that $\Phi_a\in \mathrm{Aut}(B)$ and let $d=|A|$. Let $n$ be the number of orbits of $\Phi_a$ on the set of nontrivial conjugacy classes of $B$.  Then 
\begin{enumerate}
    \item $K_0(\cs(\mathscr G_{(A,B)}))\cong \mathbb Z/(d-1)\mathbb Z\oplus \mathbb Z^n$,
    \item $K_1(\cs(\mathscr G_{(A,B)}))\cong \mathbb Z^n$,
\end{enumerate}
and $[1]_0 \in K_0(\cs(\mathscr G_{(A,B)}))$ is given by $1 \in \Z /(d-1)\Z$. If particular, if $f\in \mathbb F_p[x]$ is a primitive polynomial, then $K_0(\cs(\mathscr G_{p,f}))\cong \mathbb Z/(p-1)\mathbb Z\oplus \mathbb Z$ and $K_1(\cs(\mathscr G_{p,f}))\cong \mathbb Z$. 
\end{Cor}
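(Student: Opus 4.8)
The plan is to read off both parts of the corollary directly from Theorem~\ref{t:multispinal.k} by computing the matrix $T$ under the hypothesis $|A_0|=1$. Write $A_0=\{a\}$ with $\Phi_a\in\mathrm{Aut}(B)$, so that $\langle A_0\rangle=\langle\Phi_a\rangle$ acts on the set $S$ of nontrivial irreducible characters of $B$ through the single generator $\Phi_a$ via $\theta\mapsto\theta\circ\Phi_a\inv$. The Schreier graph of one invertible generator acting on a finite set is a disjoint union of directed cycles, one per $\langle\Phi_a\rangle$-orbit, so its adjacency matrix $T$ is precisely the permutation matrix of the permutation $\theta\mapsto\theta\circ\Phi_a\inv$ of $S$.

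First I would record the elementary fact that for a permutation matrix $T$ with $k$ cycles one has $\ker(\id-T)\cong\mathbb Z^k\cong\coker(\id-T)$: reordering $S$ makes $\id-T$ block diagonal with one block $\id_\ell-C_\ell$ per cycle of length $\ell$, where $C_\ell$ is the cyclic shift, and the diagonal vector spans $\ker(\id_\ell-C_\ell)\cong\mathbb Z$ while the sum map identifies $\coker(\id_\ell-C_\ell)\cong\mathbb Z$, each contributing one free summand. Substituting $k$ into Theorem~\ref{t:multispinal.k} then gives $K_0(\cs(\mathscr G_{(A,B)}))\cong\mathbb Z/(d-1)\mathbb Z\oplus\mathbb Z^k$ and $K_1(\cs(\mathscr G_{(A,B)}))\cong\mathbb Z^k$, with $[1]_0$ still equal to the generator $1$ of the $\mathbb Z/(d-1)\mathbb Z$ summand as recorded in Theorem~\ref{t:multispinal.k}. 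The step that requires genuine care, and which I expect to be the main obstacle, is that the theorem counts orbits on \emph{characters} whereas the corollary is phrased in terms of \emph{conjugacy classes}; here I would invoke Brauer's permutation lemma, which asserts that a group acting on a finite group by automorphisms has the same permutation character on $\Irr(B)$ as on the set of conjugacy classes. Since the trivial character and the identity class are each fixed by all automorphisms, deleting them shows that the number $k$ of $\langle\Phi_a\rangle$-orbits on nontrivial characters equals the number $n$ of orbits on nontrivial conjugacy classes, establishing (1) and (2).

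For the primitive-polynomial case I would specialize the data of Example~\ref{ex:sunic.groups}: here $d=|A|=p$, the group $B=(\mathbb Z/p\mathbb Z)^{\deg f}$ is abelian so that conjugacy classes are just elements, and the unique $a\in A_0$ has $\Phi_a$ equal to multiplication by the companion matrix $C_f$. Identifying $B$ with $\mathbb F_q$, $q=p^{\deg f}$, via $\mathbb F_p[x]/(f)\cong\mathbb F_q$, multiplication by $C_f$ becomes multiplication by a root $\alpha$ of $f$. Because $f$ is primitive, $\alpha$ generates the cyclic group $\mathbb F_q^\times$, so multiplication by $\alpha$ permutes $B\setminus\{0\}=\mathbb F_q^\times$ in a single cycle; thus $n=1$ and the general formula collapses to $K_0(\cs(\mathscr G_{p,f}))\cong\mathbb Z/(p-1)\mathbb Z\oplus\mathbb Z$ and $K_1(\cs(\mathscr G_{p,f}))\cong\mathbb Z$. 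Apart from the character/conjugacy-class bookkeeping through Brauer's lemma, the remaining ingredients are the routine Smith-normal-form computation for a permutation matrix and the standard identification of $C_f$ with a primitive element of $\mathbb F_q^\times$.
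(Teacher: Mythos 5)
Your proposal is correct and follows essentially the same route as the paper's proof: compute $\ker(\id-T)$ and $\coker(\id-T)$ for the permutation (cycle-union Schreier graph) matrix $T$, invoke Brauer's permutation lemma to convert the orbit count on nontrivial characters into one on nontrivial conjugacy classes, and for primitive $f$ use that $C_f$ acts transitively on $B\setminus\{0\}$ so that $n=1$.
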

\begin{proof}
The Schreier graph of $\Phi_a$ acting on the nontrivial irreducible characters of $B$ is just a union of cycles, one for each orbit of $\Phi_a$.  If $T$ is the adjacency matrix, then $\ker(\id -T)$  is the eigenspace of $1$, which is spanned by vectors that are constant on orbits of $\Phi_a$.  On the other hand, $\coker(\id -T)$ is the matrix for the linear transformation corresponding to the set theoretic map collapsing each orbit to a point.  Thus $\ker(\id -T)$ and $\coker(\id -T)$ are free abelian of rank the number of orbits of $B$ on nontrivial characters of $B$.  But since $(\theta\circ\Phi_a\inv)(g) =\theta(\Phi_a\inv(g))$ for all $\theta\in \wh B$, Brauer's permutation lemma (cf.~\cite{Kovacs.perm}) implies that $\Phi_a$ has the same number of orbits on $\wh B$ as $\Phi_a\inv$ does on the set of conjugacy classes of $B$.  Since $\Phi_a$ fixes the trivial character and $\Phi_a\inv$ fixes the trivial conjugacy class, we deduce that the number of orbits of $\Phi_a$ on nontrivial characters of $B$ equals the number of orbits of $\Phi_a$ on nontrivial conjugacy classes of $B$.  An application of Theorem~\ref{t:multispinal.k} completes the proof of (1), (2) and the unit computation.

The `in particular' statement follows because if $f$ is a primitive polynomial, then $\mathscr G_{p,f}$ is a multispinal group with $A=\mathbb Z/p\mathbb Z$, $B=\mathbb F_p^{\deg f}$, $\Phi_i\notin \mathrm{Aut}(B)$ for $0\leq i<p-1$ and $\Phi_{p-1}=C_f$ acts transitively on $\mathbb F_p^{\deg f}\setminus \{0\}$. 
\end{proof}

Every groupoid $\mathscr G_{p,f}$ is amenable and has a nontrivial free abelian subgroup in $K_i(\cs(\mathscr G_{p,f}))$ for $i=0,1$.  On the other hand, the homology of $\mathscr G_{p,f}$ consists entirely of finite groups by Theorem~\ref{t:multispinal}.  Therefore, all these groups fail the rational HK property in both degrees $0,1$. 
Note that the groupoid $\mathscr G_{p,f}$ is Hausdorff if and only if $\deg f=1$ and $p=2$.

Let $\mathscr G=\mathscr G_{2,x-1}$.  It is the groupoid associated the self-similar action of the infinite dihedral group $D_{\infty}$ described in Example~\ref{ex:infinite.dihedral.gp}.  Ortega and Sanchez~\cite{ortegadihedral} computed the K-theory of $\cs(\mathscr G)$ and computed the homology of $\mathscr G$ in degrees $0,1,2$ and observed that it is a torsion group in higher degrees in order to give a counterexample to the rational HK property in both degrees $0$ and $1$.  Here we compute the homology in $\mathscr G$ in every degree using our methods and give an easier computation of its K-theory.  The method extends to GGS-groups (see Example \ref{ex:Gupta-Sidki groups}) such as the groups $G_{p,x-1}$ studied in~\cite{FG85,justinfinite} and the Gupta--Sidki $p$-groups~\cite{GuptaSidki}.
GGS groups are generated by bounded automata and hence are amenable~\cite{boundedaut}. Thus their groupoids are amenable (also they are contracting groups).  

\begin{Cor}\label{c:dihedral}
Let $G$ be a GGS group over an $m$-element alphabet.\footnote{Recall that we assume $\gcd(\Phi_0(b),\cdots,\Phi_{m-2}(b),m)=1$ for the group to be multispinal.} Let $\mathscr G$ be the associated groupoid.  
Then
\begin{align*}
 H_0(\mathscr G)  & =  \mathbb Z/(m-1)\mathbb Z, & K_0(\cs(\mathscr G))  & =  \mathbb Z/(m-1)\mathbb Z\oplus \mathbb Z^{m-1}, \\   
H_n(\mathscr G) & =  \mathbb Z/m\mathbb Z,  (n\geq 1),  &  K_1(\cs(\mathscr G))  & =  \mathbb Z^{m-1}.
\end{align*}
with $[1]_0 \in K_0(\cs(\mathscr G))$ given by $1\in \mathbb Z/(m-1)\mathbb Z$. 
\end{Cor}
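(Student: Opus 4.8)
The plan is to recognize a GGS group as a multispinal group for which $|A_0|=1$, so that both Corollary~\ref{c:sunic.k} and Theorem~\ref{t:multispinal} apply, and then to feed in the homology of the finite cyclic group $B$. Recall from Example~\ref{ex:Gupta-Sidki groups} that a GGS group over an $m$-element alphabet is the multispinal group with data $A=\mathbb Z/m\mathbb Z$, $B=C_m$ cyclic of order $m$, $\Phi_{m-1}=\id_B$ and $\Phi_i\in\mathrm{Hom}(B,A)$ for $0\leq i\leq m-2$. Thus $A_0=\Phi\inv(\mathrm{Aut}(B))=\{m-1\}$ has exactly one element, and $d=|A|=m$, so we are precisely in the situation of Corollary~\ref{c:sunic.k} with distinguished automorphism $\Phi_a=\Phi_{m-1}=\id_B$.

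First I would settle the K-theory. Since $B$ is abelian its conjugacy classes are singletons, and since $\Phi_{m-1}=\id_B$ acts trivially, each of the $m-1$ nontrivial conjugacy classes of $B$ forms its own orbit; hence the orbit count appearing in Corollary~\ref{c:sunic.k} is $n=m-1$. Plugging $d=m$ and $n=m-1$ into Corollary~\ref{c:sunic.k} yields $K_0(\cs(\mathscr G))\cong \mathbb Z/(m-1)\mathbb Z\oplus\mathbb Z^{m-1}$ and $K_1(\cs(\mathscr G))\cong\mathbb Z^{m-1}$, with $[1]_0$ the generator $1\in\mathbb Z/(m-1)\mathbb Z$, exactly as claimed.

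Next I would treat the homology using Theorem~\ref{t:multispinal} with coefficients $C=\mathbb Z$, which has no $(m-1)$-torsion. The theorem gives $H_0(\mathscr G)=\mathbb Z/(m-1)\mathbb Z$ and a long exact sequence in which the map $H_n(B)\to H_n(B)$ is $\id-\sum_{a\in A_0}H_n(\Phi_a)=\id-H_n(\id_B)=0$. The crucial observation is that this map vanishes in every degree $n\geq 1$, so the long exact sequence collapses into short exact sequences $0\to H_n(B)\to H_n(\mathscr G)\to H_{n-1}(B)\to 0$ for $n\geq 2$, together with $H_1(\mathscr G)\cong\coker(0\colon H_1(B)\to H_1(B))=H_1(B)$. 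Feeding in the cyclic homology \eqref{eq:cyclic.homology} of $B=C_m$ — namely $H_k(B)=\mathbb Z/m\mathbb Z$ for $k$ odd and $H_k(B)=0$ for $k$ even and positive — I observe that in each short exact sequence exactly one of the two outer terms vanishes, so no extension problem arises and $H_n(\mathscr G)\cong\mathbb Z/m\mathbb Z$ for all $n\geq 1$.

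There is no serious obstacle here: once the identification with a multispinal group is in place, the entire computation hinges on the single structural fact that $\Phi_{m-1}=\id_B$ forces the connecting map to be zero, after which one simply reads off the cyclic homology. The one point demanding a little care is the low-degree end, where one must check that the termination $H_1(\mathscr G)\to 0$ of the reduced sequence in Theorem~\ref{t:multispinal} correctly gives $H_1(\mathscr G)=\mathbb Z/m\mathbb Z$ and is consistent with the separately computed $H_0(\mathscr G)=\mathbb Z/(m-1)\mathbb Z$; both are immediate from the vanishing of the map and the injectivity of $\id-\Phi_0$ used in Corollary~\ref{c:transitive.transfer}.
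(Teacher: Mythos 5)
Your proposal is correct and follows essentially the same route as the paper: the K-theory is read off from Corollary~\ref{c:sunic.k} using that $\Phi_{m-1}=\id_B$ has $m-1$ orbits on the nontrivial (conjugacy classes of) elements of $B$, and the homology follows from Theorem~\ref{t:multispinal} because $\id-H_n(\id_B)=0$ collapses the long exact sequence into short exact sequences in which, by \eqref{eq:cyclic.homology}, one outer term always vanishes. The only detail worth keeping in mind is that the standing hypothesis $\gcd(\Phi_0(b),\dots,\Phi_{m-2}(b),m)=1$ from Example~\ref{ex:Gupta-Sidki groups} is what makes the GGS data genuinely multispinal in the first place.
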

\begin{proof}
 Recall that $\Phi_{m-1}=\id_B$ and $\Phi_i\colon B\to A$ for $0\leq i\leq m-2$. 
It follows from Theorem~\ref{t:multispinal} that  $H_0(\mathscr G)=\mathbb Z/(m-1)\mathbb Z$ and, since $\id -H_n(\id_B)=0$, we have exact sequences, for each odd $n$, $0\to H_{n+1}(\mathscr G)\to \mathbb Z/m\mathbb Z\to 0$ and $0\to \mathbb Z/m\mathbb Z\to H_n(\mathscr G)\to 0$ by \eqref{eq:cyclic.homology}. Therefore,  $H_n(\mathscr G)\cong \mathbb Z/m\mathbb Z$ for all $n\geq 1$.  The K-theory computation is immediate from Corollary~\ref{c:sunic.k} since $\Phi_{m-1}=\id_B$ has $m-1$ orbits on the nontrivial elements of $B$.
\end{proof}

In particular, for a GGS group $G$ over an $m$-element alphabet with $\gcd(\Phi_i,m)=1$ for all $0\leq i\leq m-2$, then $\mathscr G$ is an effective, minimal and Hausdorff groupoid failing the rational HK property in both degrees. This applies in particular to the infinite dihedral group $G_{2,x-1}$  and the Gupta--Sidki $3$-group $G_3$.

%

We next consider the homology of more general \v{S}uni\'c groups.  The reader is referred back to Example~\ref{ex:sunic.groups}.

\begin{Thm}\label{t:sunic.group}
Let $G_{p,f}$ be the \v{S}uni\'c group associated to $f\in \mathbb F_p[x]$.  If the order of the companion matrix $C_f$ is not divisible by $p$, then $H_0(\mathscr G_{p,f})=\mathbb Z/(p-1)\mathbb Z$,  $H_1(\mathscr G_{p,f})$ is an $\mathbb F_p$-vector space of dimension $\dim \ker (\id -C_f)$ and $H_n(\mathscr G_{p,f})$ is an $\mathbb F_p$-vector space of dimension $\dim \ker (\id -H_n(C_f,\mathbb F_p))$ for all $n\geq 2$. 
\end{Thm}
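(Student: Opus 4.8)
The plan is to feed the \v{S}uni\'c data into Theorem~\ref{t:multispinal} and then translate between integral and mod-$p$ coefficients, the translation being exactly what the hypothesis $p \nmid \mathrm{ord}(C_f)$ is designed to control. Write $d = \mathrm{ord}(C_f)$ for the order of $C_f$ as an automorphism of $B = \mathbb F_p^{\deg f}$. For the \v{S}uni\'c group we have $A = \mathbb Z/p\mathbb Z$ and the data singles out $A_0 = \Phi\inv(\mathrm{Aut}(B)) = \{p-1\}$ with $\Phi_{p-1} = C_f$, so the endomorphism appearing in the long exact sequence of Theorem~\ref{t:multispinal} is $\id - H_n(C_f, C)$ on $H_n(B, C)$. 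Taking $C = \mathbb Z$ yields $H_0(\mathscr G_{p,f}) = \mathbb Z/(p-1)\mathbb Z$ at once. Since $B$ is an elementary abelian $p$-group, $H_m(B;\mathbb Z)$ is a finite $\mathbb F_p$-vector space for every $m \geq 1$ (K\"unneth), so the short exact sequences
\[ 0 \to \coker\bigl(\id - H_n(C_f)\bigr) \to H_n(\mathscr G_{p,f}) \to \ker\bigl(\id - H_{n-1}(C_f)\bigr) \to 0 \]
extracted from the sequence exhibit each $H_n(\mathscr G_{p,f})$, $n \geq 1$, as a finite abelian $p$-group. Degree $n = 1$ is then finished immediately: the sequence terminates with $H_1(\mathscr G_{p,f}) = \coker(\id - C_f)$ on $H_1(B;\mathbb Z) = B$, a quotient of $\mathbb F_p^{\deg f}$, hence an $\mathbb F_p$-space of dimension $\dim\coker(\id - C_f) = \dim\ker(\id - C_f)$.

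For $n \geq 2$ I would measure $H_n(\mathscr G_{p,f})$ in two independent ways. From the displayed integral sequence and the identity $\dim\ker\phi = \dim\coker\phi$ for an endomorphism $\phi$ of a finite-dimensional space, its order is
\[ \log_p \lvert H_n(\mathscr G_{p,f})\rvert = \dim\ker\bigl(\id - H_n(C_f,\mathbb Z)\bigr) + \dim\ker\bigl(\id - H_{n-1}(C_f,\mathbb Z)\bigr). \]
Separately, applying Theorem~\ref{t:multispinal} with $C = \mathbb F_p$ (legitimate, as $\mathbb F_p$ has no $(p-1)$-torsion) and using the same identity gives
\[ \dim_{\mathbb F_p} H_n(\mathscr G_{p,f},\mathbb F_p) = \dim\ker\bigl(\id - H_n(C_f,\mathbb F_p)\bigr) + \dim\ker\bigl(\id - H_{n-1}(C_f,\mathbb F_p)\bigr). \]
Combining this with the Universal Coefficient Theorem for groupoid homology (Theorem~\ref{T:uct}), which identifies $\dim_{\mathbb F_p} H_n(\mathscr G_{p,f},\mathbb F_p)$ with the number of cyclic factors of $H_n(\mathscr G_{p,f})$ plus that of $H_{n-1}(\mathscr G_{p,f})$ (the $H_0$ contribution vanishing since $\gcd(p-1,p)=1$), an induction starting from the degree $1$ computation shows that the number of cyclic factors of $H_n(\mathscr G_{p,f})$ equals $\dim\ker(\id - H_n(C_f,\mathbb F_p))$.

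It remains to match the order against the number of generators, and this is where $p \nmid d$ enters. The Universal Coefficient Theorem for $B$ provides a sequence
\[ 0 \to H_n(B;\mathbb Z) \to H_n(B;\mathbb F_p) \to H_{n-1}(B;\mathbb Z) \to 0 \]
that is natural, hence equivariant for the cyclic group $\langle C_f\rangle$ of order $d$. As $p \nmid d$, the algebra $\mathbb F_p[\langle C_f\rangle]$ is semisimple, so the invariants functor $M \mapsto \ker(\id - C_f|_M)$ is exact and yields $\dim\ker(\id - H_n(C_f,\mathbb F_p)) = \dim\ker(\id - H_n(C_f,\mathbb Z)) + \dim\ker(\id - H_{n-1}(C_f,\mathbb Z))$. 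Comparing with the order computation gives $\log_p\lvert H_n(\mathscr G_{p,f})\rvert = \dim\ker(\id - H_n(C_f,\mathbb F_p))$, which coincides with the number of cyclic factors found above; a finite $p$-group whose order is $p$ to the power of its number of cyclic factors is elementary abelian. Hence $H_n(\mathscr G_{p,f})$ is an $\mathbb F_p$-vector space of dimension $\dim\ker(\id - H_n(C_f,\mathbb F_p))$, as claimed.

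The main obstacle is precisely the passage from ``finite $p$-group'' to ``elementary abelian $\mathbb F_p$-vector space'': neither the integral nor the mod-$p$ long exact sequence alone detects this, and it is forced only by matching the order (read off integrally) against the minimal number of generators (read off mod $p$ via the groupoid Universal Coefficient Theorem), with the semisimplicity afforded by $p \nmid d$ being exactly what makes the two counts agree and what converts the integral fixed-space dimensions into the stated $\dim\ker(\id - H_n(C_f,\mathbb F_p))$.
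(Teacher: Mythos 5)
Your proof is correct, and all of its ingredients --- Theorem~\ref{t:multispinal} with coefficients $\mathbb Z$ and $\mathbb F_p$, the Universal Coefficient Theorem for both $B$ and $\mathscr G_{p,f}$, and Maschke-type exactness of invariants over $\mathbb F_p[\langle C_f\rangle]$ when $p\nmid\mathrm{ord}(C_f)$ --- coincide with those in the paper. What differs is how the pieces are assembled at the crucial step of showing $H_n(\mathscr G_{p,f})$ is elementary abelian. The paper arranges everything into a $3\times 3$ commutative diagram whose rows come from the two long exact sequences and whose outer columns come from the $C_f$-equivariant UCT sequence for $B$ (made exact on invariants/coinvariants by semisimplicity); a snake-lemma argument then yields injectivity of $H_n(\mathscr G_{p,f})\to H_n(\mathscr G_{p,f},\mathbb F_p)$, which forces $p\cdot H_n(\mathscr G_{p,f})=0$ directly, after which a single dimension count finishes the induction. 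You never prove $p$-torsion directly: instead you read off $\log_p\lvert H_n(\mathscr G_{p,f})\rvert$ from the integral sequence, read off the minimal number of generators from the mod-$p$ sequence together with the groupoid UCT and an induction, and then use the semisimplicity identity $\dim\ker(\id-H_n(C_f,\mathbb F_p))=\dim\ker(\id-H_n(C_f,\mathbb Z))+\dim\ker(\id-H_{n-1}(C_f,\mathbb Z))$ to see that the two counts agree, whence the group is forced to be elementary abelian. Your route avoids the diagram chase entirely at the cost of the slightly less direct ``order equals $p$ to the number of generators'' argument; the paper's route gives the $\mathbb F_p$-vector space structure up front and is perhaps more readily adaptable to situations (such as the Grigorchuk--Erschler computation in Subsection~\ref{subs:GE}) where the conclusion fails and one must track exactly where $4$-torsion appears. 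Both are valid; yours is a clean alternative.
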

\begin{proof}
By \eqref{eq:cyclic.homology} and the K\"unneth theorem, $H_n(B)$ is a finite dimensional $\mathbb F_p$-vector space for all $n\geq 1$. 
By Theorem~\ref{t:multispinal} we have $H_0(\mathscr G_{p,f})=\mathbb Z/(p-1)\mathbb Z$, $\ker (\id -H_0(C_f))=0$ and $H_1(\mathscr G)\cong \coker (\id -C_f)\cong \ker(\id -C_f)$ as $\mathbb F_p$-vector spaces (since $H_1(C_f)=C_f$), and so the result is true for $n=0,1$.

By the naturality of the Universal Coefficient Theorem and the fact that $H_q(B)$ is an $\mathbb F_p$-vector space for $q\geq 1$, we have a commuting diagram with exact rows for $n\geq 2$:
\begin{equation}\label{eq:make.a.module}
\begin{tikzcd}[arrow style=math font]
 0\ar{r} & H_n(B)\ar{r}\ar{d}{H_n(C_f)} & H_n(B,\mathbb F_p)\ar{r}\ar{d}{H_n(C_f,\mathbb F_p)}& H_{n-1}(B)\ar{r}\ar{d}{H_{n-1}(C_f)}& 0\\
  0\ar{r} & H_n(B)\ar{r} & H_n(B,\mathbb F_p)\ar{r}& H_{n-1}(B)\ar{r}& 0
\end{tikzcd}
\end{equation}
Let $r$ be the order of the automorphism $C_f$.   Then we can view these three vector spaces as $\mathbb F_pC_r$-modules, where $C_r$ is the cyclic group of order $r$.   The commutativity of the above diagram shows that the exact sequence appearing in the two rows is an exact sequence of $\mathbb F_pC_r$-modules.

If $H$ is any finite group of order prime to $p$ and $M$ is an $\mathbb F_pH$-module, then the natural map $\pi\colon M^H\to M_H$ given by $m\mapsto \ov m$ has inverse $\tau\colon M_H\to M^H$ given by $\tau(\ov m) = \frac{1}{|H|}\sum_{h\in H}hm$. In particular, the invariant and coinvariant functors are exact on $\mathbb F_pH$-modules.  
Thus, $\ker (\id -H_n(C_f))=H_n(B)^{C_r} \cong H_n(B)_{C_r}=\coker (\id-H_n(C_f))$ and $\ker (\id -H_n(C_f,\mathbb F_p))= H_n(B,\mathbb F_p)^{C_r} \cong  H_n(B,\mathbb F_p)_{C_r} = \coker (\id - H_n(C_f,\mathbb F_p))$.  
Assuming the result for $n-1\geq 1$, we have that $H_{n-1}(\mathscr G)=\Tor_1^{\Z}(H_{n-1}(\mathscr G),\mathbb F_p)$ by induction. 
We then have a commutative diagram for $n\geq 2$, shown in Figure~\ref{fig:cool.3x3},
\begin{figure}
\begin{tikzcd}[arrow style=math font, column sep = 0.6cm]
	& 0 & 0 & 0 \\
	0 & {H_n(B)_{C_r}} & {H_n(\mathscr G)} & {H_{n-1}(B)^{C_r}} & 0 \\
	0 & {H_n(B,\mathbb F_p)_{C_r}} & {H_n(\mathscr G,\mathbb F_p)} & {H_{n-1}(B,\mathbb F_p)^{C_r}} & 0 \\
	0 & {H_{n-1}(B)_{C_r}} & {H_{n-1}(\mathscr G)} & {H_{n-2}(B)^{C_r}} & 0 \\
	& 0 & 0 & 0
	\arrow[from=1-2, to=2-2]
	\arrow[dashed, from=1-3, to=2-3]
	\arrow[from=1-4, to=2-4]
	\arrow[from=2-1, to=2-2]
	\arrow[from=2-2, to=2-3]
	\arrow[from=2-2, to=3-2]
	\arrow[from=2-3, to=2-4]
	\arrow[from=2-3, to=3-3]
	\arrow[from=2-4, to=2-5]
	\arrow[from=2-4, to=3-4]
	\arrow[from=3-1, to=3-2]
	\arrow[from=3-2, to=3-3]
	\arrow[from=3-2, to=4-2]
	\arrow[from=3-3, to=3-4]
	\arrow[from=3-3, to=4-3]
	\arrow[from=3-4, to=3-5]
	\arrow[from=3-4, to=4-4]
	\arrow[from=4-1, to=4-2]
	\arrow[from=4-2, to=4-3]
	\arrow[from=4-2, to=5-2]
	\arrow[from=4-3, to=4-4]
	\arrow[from=4-3, to=5-3]
	\arrow[from=4-4, to=4-5]
	\arrow[from=4-4, to=5-4]
\end{tikzcd}
\caption{A commutative diagram with exact rows and columns\label{fig:cool.3x3}}
\end{figure}
with exact rows from Theorem~\ref{t:multispinal} and columns from the Universal Coefficient Theorem. The first and last column are exact by applying $C_r$-coinvariants and invariants to the exact sequence in \eqref{eq:make.a.module},\footnote{The third column is exact for $n=2$ since $H_1(C_f)=C_f=H_1(C_f,\mathbb F_p)$ and $\ker (\id-H_0(C_f))=0$.} and the middle column is exact by the Universal Coefficient Theorem except possibly at $0\to H_n(\mathscr G)\to H_n(\mathscr G,\mathbb F_p)$.
But exactness there follows from the snake lemma or a diagram chase.  In particular, $H_n(\mathscr G)$ is an $\mathbb F_p$-vector space.

Assuming the result for $n-1\geq 1$, we have that by exactness in Figure~\ref{fig:cool.3x3} and induction that
$\dim H_n(B,\mathbb F_p)^{C_r}=\dim H_n(B,\mathbb F_p)_{C_r} = \dim H_n(\mathscr G,\mathbb F_p)-\dim H_{n-1}(B,\mathbb F_p)^{C_r}=  \dim H_n(\mathscr G,\mathbb F_p)-\dim H_{n-1}(\mathscr G) = \dim H_n(\mathscr G)$.  This completes the proof.
\end{proof}

\subsection{The Grigorchuk group}\label{subs:Grig}

The most famous \v{S}uni\'c group is the Grigorchuk group $G_{2,1+x+x^2}$; see Example~\ref{ex:grigorchuk.gr}.  We compute the homology of the groupoid $\mathscr G_{2,1+x+x^2}$.  The following lemma can be deduced from the theory of Brauer characters, but we provide an elementary proof.

\begin{Lemma}\label{l:brauer.lift}
Let $p$ be a prime and let $A\in \mathrm{GL}_n(\mathbb Z)$ have finite order $k$ coprime to $p$.  Let $B\in \mathrm{GL}_n(\mathbb F_p)$ be the reduction of $A$.  Then the multiplicities of $1$ as an eigenvalue both of $A$ and $B$ are the same.  
\end{Lemma}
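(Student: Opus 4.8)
The plan is to realize the multiplicity of the eigenvalue $1$ as the rank of a single projective module over the localization $\mathbb{Z}_{(p)}$, and then to read off that rank both rationally and modulo $p$. The coprimality hypothesis $\gcd(k,p)=1$ is exactly what makes this possible, as it lets us form and then reduce an averaging idempotent.

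First I would record the relevant semisimplicity. Since $A^k=I$, the minimal polynomial of $A$ divides $x^k-1$, which is separable over $\mathbb{Q}$ (characteristic zero); hence $A$ is diagonalizable over $\overline{\mathbb{Q}}$, so the algebraic and geometric multiplicities of the eigenvalue $1$ coincide, and this common value $m_1(A)$ equals $\dim_{\mathbb{Q}}\ker(A-\id)$. Likewise $B^k=I$, and because $p\nmid k$ the polynomial $x^k-1$ is separable over $\mathbb{F}_p$ (its derivative $kx^{k-1}$ is a unit times $x^{k-1}$, coprime to $x^k-1$); thus $B$ is diagonalizable over $\overline{\mathbb{F}_p}$ and $m_1(B)=\dim_{\mathbb{F}_p}\ker(B-\id)$.

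Next I would introduce the averaging idempotent $e=\frac1k\sum_{j=0}^{k-1}A^j$. One checks $Ae=e=e^2$ and $ev=v$ precisely when $Av=v$, so $e$ is the projection of $\mathbb{Q}^n$ onto $\ker(A-\id)$; in particular $\dim_{\mathbb{Q}}(e\,\mathbb{Q}^n)=m_1(A)$. The crucial point is that $k$ is a unit in the discrete valuation ring $\mathbb{Z}_{(p)}$, so $e$ lies in $M_n(\mathbb{Z}_{(p)})$. Setting $M=e\,\mathbb{Z}_{(p)}^n$, the idempotent $e$ gives a splitting $\mathbb{Z}_{(p)}^n=M\oplus(1-e)\mathbb{Z}_{(p)}^n$, so $M$ is a finitely generated projective, hence free, $\mathbb{Z}_{(p)}$-module; write $r$ for its rank. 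Then I would compute $r$ by base change. Over the fraction field, $M\otimes_{\mathbb{Z}_{(p)}}\mathbb{Q}=e\,\mathbb{Q}^n=\ker_{\mathbb{Q}}(A-\id)$, so $r=m_1(A)$. Over the residue field, the split idempotent $e$ reduces to $\bar e=\frac1k\sum_j B^j$, whence $M\otimes_{\mathbb{Z}_{(p)}}\mathbb{F}_p=\bar e\,\mathbb{F}_p^n$; the same idempotent computation, now over $\mathbb{F}_p$, identifies this image with $\ker_{\mathbb{F}_p}(B-\id)$, so $r=m_1(B)$. Comparing the two gives $m_1(A)=r=m_1(B)$.

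I expect the main obstacle to be conceptual rather than computational: a naive trace or Brauer-character computation yields only the congruence $m_1(A)\equiv m_1(B)\pmod p$, while a reduction-of-rank argument yields only the inequality $m_1(B)\geq m_1(A)$, and neither forces equality on its own. The real content is to package the $1$-eigenspace as an honest free $\mathbb{Z}_{(p)}$-module summand, so that its rank is simultaneously visible over $\mathbb{Q}$ and over $\mathbb{F}_p$. The steps to handle with care are therefore the freeness of $M$ (finitely generated projectives over the local ring $\mathbb{Z}_{(p)}$ are free) and the fact that base change commutes with the idempotent splitting, so that $\bar e$ is genuinely the projection computing $m_1(B)$.
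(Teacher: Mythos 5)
Your proof is correct, and it takes a genuinely different route from the paper's. The paper works entirely with characteristic polynomials: writing $h(x)=(x-1)^m q(x)$ for the characteristic polynomial of $A$, it reduces the claim to showing $p\nmid q(1)$, which follows because every irreducible factor of $q$ is a cyclotomic polynomial $\Phi_d$ with $1\neq d\mid k$, so that $\Phi_d(1)$ divides $1+1+\cdots+1=k$, which is prime to $p$. Your argument instead packages the $1$-eigenspace as the image of the averaging idempotent $e=\frac1k\sum_j A^j$ over $\mathbb Z_{(p)}$ and compares the rank of the resulting free summand over the fraction field and the residue field; this is essentially the decomposition-map/Brauer-character mechanism that the paper explicitly mentions and then sidesteps in favour of an ``elementary proof.'' Both are complete. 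The paper's version requires no module theory and only the arithmetic of cyclotomic polynomials; yours is more conceptual and more robust --- it isolates exactly where coprimality enters (invertibility of $k$ in $\mathbb Z_{(p)}$), makes no use of the specific shape of the irreducible factors, and generalizes immediately to statements about the full fixed-point functor under reduction mod $p$. The two genuinely delicate points in your write-up --- that finitely generated projectives over the local ring $\mathbb Z_{(p)}$ are free, and that tensoring the split inclusion $M\hookrightarrow\mathbb Z_{(p)}^n$ with $\mathbb F_p$ stays injective with image $\bar e\,\mathbb F_p^n$ --- are both handled correctly, and your preliminary reduction to geometric multiplicities via separability of $x^k-1$ in both characteristics is exactly what is needed to make the kernel dimensions the right invariants to compare.
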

\begin{proof}
Both $A$ and $B$ satisfy the polynomial $x^k-1$, which has distinct roots over fields of characteristic $0$ and $p$ as $p\nmid k$.  In particular, $1$ is a semisimple eigenvalue of both $A$ and $B$.   Let $h(x)$ be the characteristic polynomial of $A$.  Then $h(x)=(x-1)^mq(x)$ in $\mathbb Z[x]$ where $q(1)\neq 0$ and $m$ is the multiplicity of $1$ as an eigenvalue of $A$. To prove our result it suffices to show that $p\nmid q(1)$, as the characteristic polynomial of $B$ is obtained from $h(x)$ by reducing mod $p$. We can factor $q(x)=q_1(x)\cdots q_r(x)$ with $q_i(x)\in \mathbb Z[x]$ irreducible over $\mathbb Q$.  Moreover, since $h(x)$ has the same irreducible factors as the minimal polynomial of $A$, each $q_i(x)$ is  a cyclotomic polynomial $\Phi_d$ where $1\neq d\mid k$.  Note that $\Phi_d$ divides $f(x)=1+x+\cdots+x^{k-1}$ in $\mathbb Z[x]$.  Now  $q(1)=q_1(1)\cdots q_r(1)$, and so if $p\mid q(1)$, then $p\mid q_i(1)$ for some $i$.    But then $p\mid q_i(1)\mid f(1)=k$, a contradiction.  Therefore $p\nmid q(1)$ as required.
\end{proof}

\begin{Thm}\label{t:grigorchuk.group}
Let $\mathscr G=\mathscr G_{2,1+x+x^2}$ be the groupoid associated to the Grigorchuk group.  Then 
\[H_n(\mathscr G) = \begin{cases}0, & \text{if}\ n=0,\\ (\mathbb Z/2\mathbb Z)^{\frac{n}{3}+1}, & \text{if}\ n\equiv 0\bmod 3, n\geq 1,\\ (\mathbb Z/2\mathbb Z)^{\frac{n-1}{3}}, & \text{if}\ n\equiv 1\bmod 3,\\ (\mathbb Z/2\mathbb Z)^{\frac{n+1}{3}}, & \text{if}\ n\equiv 2\bmod 3.\end{cases}\]  On the other hand, the associated Nekrashevych algebra $\mathcal O_{\mathrm{Grig}} = \cs(\mathscr G)$ satisfies $K_0(\mathcal O_{\mathrm{Grig}})\cong \mathbb Z\cong K_1(\mathcal O_{\mathrm{Grig}})$ with $[1]_0 = 0$.
\end{Thm}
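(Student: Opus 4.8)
The plan is to handle the homology and the K-theory separately, reducing each to the general \v{S}uni\'c results already in hand. The Grigorchuk group is $G_{2,1+x+x^2}$, so here $A=\mathbb Z/2\mathbb Z$, $B=(\mathbb Z/2\mathbb Z)^2$, $p=2$, and the companion matrix is $C_f=\begin{bmatrix}0&1\\1&1\end{bmatrix}$. The two facts that unlock everything are that $1+x+x^2$ is primitive over $\mathbb F_2$ and that $C_f$ has multiplicative order $3$ (it generates $\mathbb F_4^\times$), which is coprime to $p=2$; this is precisely the hypothesis of Theorem~\ref{t:sunic.group}.

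For the homology, Theorem~\ref{t:sunic.group} gives $H_0(\mathscr G)=\mathbb Z/(p-1)\mathbb Z=0$ and reduces the computation of each $H_n(\mathscr G)$, $n\geq 1$, to the $\mathbb F_2$-dimension of $\ker(\id-H_n(C_f,\mathbb F_2))$ acting on $H_n(B,\mathbb F_2)$ (with the $n=1$ case governed by $C_f$ on $B$). Since $\langle C_f\rangle\cong C_3$ has order prime to $2$, this kernel is the space of $C_3$-invariants, and its dimension is the multiplicity of the trivial summand in the $\mathbb F_2 C_3$-module $H_n(B,\mathbb F_2)$. I would then identify this module: $H^\bullet(B,\mathbb F_2)=\mathbb F_2[x,y]$ is the polynomial algebra on two degree-one generators, so its degree-$n$ part is $\mathrm{Sym}^n$ of the two-dimensional module $H^1(B,\mathbb F_2)$, on which $C_3$ acts irreducibly with characteristic polynomial $x^2+x+1$. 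As $\dim M^{C_3}=\dim(M^\ast)^{C_3}$, passing between $H_n$ and $H^n$ does not affect the invariant count, so it suffices to decompose $\mathrm{Sym}^n$ of the standard module.

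To carry out the count I would invoke Lemma~\ref{l:brauer.lift}. Lift the standard module to an integral representation $\rho\colon C_3\to\mathrm{GL}_2(\mathbb Z)$ with $\rho(g)$ of order $3$ and characteristic polynomial $x^2+x+1$; then $\mathrm{Sym}^n\rho(g)$ is an integral matrix of order dividing $3$ whose reduction mod $2$ is the action on $H^n(B,\mathbb F_2)$. By Lemma~\ref{l:brauer.lift} the multiplicity of the eigenvalue $1$ is the same before and after reduction, so it may be computed over $\mathbb C$, where $\rho(g)$ has eigenvalues $\zeta,\zeta^2$ with $\zeta=e^{2\pi i/3}$ and $\mathrm{Sym}^n\rho(g)$ has eigenvalues $\zeta^{2n-i}$, $i=0,\dots,n$. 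Hence $\dim_{\mathbb F_2}H_n(\mathscr G)=a_n:=\#\{\,0\le i\le n : i\equiv 2n \bmod 3\,\}$, and a short residue analysis yields $a_n=\tfrac{n}{3}+1,\ \tfrac{n-1}{3},\ \tfrac{n+1}{3}$ according as $n\equiv0,1,2\bmod 3$ (with $a_1=0$ recovering $H_1(\mathscr G)=0$), which is exactly the asserted table.

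The K-theory is then a one-line appeal to Corollary~\ref{c:sunic.k}: because $f=1+x+x^2$ is primitive over $\mathbb F_2$, its ``in particular'' clause gives $K_0(\cs(\mathscr G))\cong\mathbb Z/(p-1)\mathbb Z\oplus\mathbb Z=\mathbb Z$ and $K_1(\cs(\mathscr G))\cong\mathbb Z$, with the class of the unit equal to $1\in\mathbb Z/(p-1)\mathbb Z=0$. The only genuine work is the homological count, and the main obstacle there is organising the $C_3$-module structure of $H_\bullet(B,\mathbb F_2)$ and executing the case analysis for $a_n$; everything else is a direct citation of the \v{S}uni\'c machinery of Theorem~\ref{t:sunic.group} and Corollary~\ref{c:sunic.k}.
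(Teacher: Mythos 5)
Your argument is correct, and for the homology computation it takes a genuinely different route from the paper. The paper also reduces to computing $\dim\ker(\id-H_n(C_f,\mathbb F_2))$ via Theorem~\ref{t:sunic.group} and also invokes Lemma~\ref{l:brauer.lift}, but it works entirely on the homology side: it computes the explicit matrix of $H_n(C_f,\mathbb F_2)$ on the cellular basis $e_i\otimes e_{n-i}$ of $\mathbb{RP}^\infty\times\mathbb{RP}^\infty$ by conjugating the chain map on $BV$ with the Eilenberg--Zilber and Alexander--Whitney maps, obtaining the binomial matrix $e_j\otimes e_{n-j}\mapsto\sum_i\binom{n-i}{j}e_i\otimes e_{n-i}$; it then needs Wildon's identity $A^3=(-1)^n\id$ for the signed integral lift $A_{ij}=(-1)^i\binom{n-i}{j}$ (proved in the appendix) to see that the lift has finite order before Brauer lifting applies, and finally extracts the eigenvalue-$1$ multiplicity from a trace computation. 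You instead pass to cohomology, use that $H^\bullet(V,\mathbb F_2)\cong\mathbb F_2[x,y]$ is generated in degree one so that $H^n(V,\mathbb F_2)\cong\mathrm{Sym}^n(H^1(V,\mathbb F_2))$ as an $\mathbb F_2C_3$-module, and lift the two-dimensional representation itself (the integral companion matrix of $x^2+x+1$, visibly of order $3$) rather than the $(n+1)\times(n+1)$ matrix; the symmetric-power eigenvalues $\zeta^{2n-i}$ then give the count directly. This bypasses both the Alexander--Whitney/Eilenberg--Zilber computation and Wildon's lemma, at the cost of the duality step $\dim\ker(\id-H_n(C_f,\mathbb F_2))=\dim\ker(\id-H^n(C_f,\mathbb F_2))$ and the identification of the ring structure, both of which you justify adequately (the group order $3$ being invertible in $\mathbb F_2$ makes the fixed-point counts dual-invariant, and $\mathrm{Sym}^n$ commutes with reduction mod $2$ on the monomial basis). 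Your residue counts match the stated table, and the K-theory part is, as you say, a direct citation of Corollary~\ref{c:sunic.k}.
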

\begin{proof}
 The K-theory computation is immediate from Corollary~\ref{c:sunic.k} as $1+x+x^2$ is a primitive polynomial.
 
Let $V$ be the Klein $4$-group with elements $b,c,d,1$ with $1$ the identity. Put $f=1+x+x^2$, which has companion matrix $C_f$ as in Example~\ref{ex:grigorchuk.gr}. Then the action of $C_f$ on $V$ is the $3$-cycle $(b,c,d)$.  Taking as a $K(V,1)$ the space $\mathbb {RP}^\infty\times \mathbb {RP}^\infty$   one can compute by the K\"unneth theorem that 
\[H_n(V)\cong \begin{cases}\mathbb Z, &  \text{if}\ n=0,\\ (\mathbb Z/2\mathbb Z)^{\frac{n}{2}}, & \text{if}\ n\in 2\mathbb Z, n>0,\\ (\mathbb Z/2\mathbb Z)^{\frac{n+3}{2}},  & \text{if}\ n\in 1+2\mathbb Z.\end{cases}\]  

The automorphism $C_f$ has order $3$, and so we can apply Theorem~\ref{t:sunic.group}.  Therefore, $H_0(\mathscr G)=0$.  Moreover, $H_1(\mathscr G)=0$, as 
$C_f$ has characteristic polynomial $f=1+x+x^2$, and hence $1$ is not an eigenvalue.  For $n\geq 2$, we have that $H_n(\mathscr G)$ is an $\mathbb F_2$-vector space of dimension the multiplicity of $1$ as an eigenvalue of $H_n(C_f,\mathbb F_2)$.

We shall make use of two $K(V,1)$s in the proof, both $\mathbb {RP}^\infty\times \mathbb RP^\infty$  and $BV$.  The CW structure on $\mathbb {RP}^{\infty}$ has a single cell in each dimension.  Since the cellular boundary maps for $\mathbb RP^\infty$ are $0$ in odd degree and multiplication by $2$ in even degree (greater than $0$), the boundary maps in the cellular chain complex $C_\bullet(\mathbb {RP}^\infty\times \mathbb {RP}^\infty,\mathbb F_2)\cong C_\bullet(\mathbb {RP}^{\infty},\mathbb F_2)\otimes C_\bullet(\mathbb {RP}^{\infty},\mathbb F_2)$ (the tensor product of chain complexes of $\mathbb F_2$-vector spaces) are all zero.  
We write $e_i$ for the unique cell of dimension $i$ of $\mathbb RP^\infty$.  Then the $e_i\otimes e_{n-i}$, with $0\leq i\leq n$, form a basis for the degree $n$ component of  $C_\bullet(\mathbb {RP}^{\infty},\mathbb F_2)\otimes C_\bullet(\mathbb {RP}^{\infty},\mathbb F_2)$. Since the boundary maps in this complex are all zero, we shall not distinguish between the chain vector spaces and the homology spaces.

\begin{Claim*}
The map $H_n(C_f,\mathbb F_2)$ is given by $e_j\otimes e_{n-j} \mapsto \sum_{i=0}^n\binom{n-i}{j}e_i\otimes e_{n-i}$ (modulo $2$). 
\end{Claim*}

We defer the proof of the claim in order to show how the result follows from it.  As mentioned above, $H_n(\mathscr G)$ is an $\mathbb F_2$-vector space of dimension the multiplicity of $1$ as an eigenvalue of the 
matrix $B$ of $H_n(C_f,\mathbb F_2)$.   Let $A$ be the $(n+1)\times (n+1)$ integer matrix, with rows and columns indexed by $0,\ldots, n$, with $A_{ij} = (-1)^i\binom{n-i}{j}$.  Then $A$ reduces modulo $2$ to $B$ by the claim.  It was observed by M.~Wildon~\cite{Wildon} that $A^3=(-1)^n\id$ (see Lemma~\ref{l:wildon} for a proof).  It follows that $A^4$ has order $3$ and reduces to $B$ modulo $2$.    Since $2\nmid 3$, the multiplicities of $1$ as an eigenvalue of $A^4$ and $B$ are the same by Lemma~\ref{l:brauer.lift}.  So, we are reduced to computing the multiplicity of $1$ as an eigenvalue of $A^4$.
 
The trace of $A$ is 
\begin{equation}\label{eq:trace}
\sum_{i=0}^n (-1)^i\binom{n-i}{i} = \begin{cases}1, & \text{if}\ n\equiv 0,1\bmod 6\\0, & \text{if}\ n\equiv 2,5\bmod 6\\ -1, & \text{if}\ n\equiv 3,4\bmod 6\end{cases}
\end{equation}
see~\cite{binomialcoeffs}.

First we handle the case that $n$ is even, and so $A=A^4$ has order $3$.  Since $A$ has order $3$ it diagonalizes over $\mathbb C$ and its complex eigenvalues come in conjugate pairs.  Let $k$ be the multiplicity of the eigenvalue $1$ and $m$ the multiplicity of the pair of conjugate primitive $3^{rd}$-roots of unity. Then $n+1=k+2m$, and since the sum of the two primitive $3^{rd}$ roots of units is $-1$, the trace in \eqref{eq:trace} is $k-m$.  Running through the three cases $n\equiv 0,2,4\bmod 6$, 
we see that $k=1+n/3$ if $n\equiv 0\bmod 6$,  $k=(n+1)/3$ if $n\equiv 2\bmod 6$ and $k=(n-1)/3$ if $n\equiv 4\bmod 6$.

Next consider the case that $n$ is odd.  Then $A^3=-\id$ and so the minimal polynomial of $A$ divides $x^3+1=(x+1)(x^2-x+1)$.  So $A$ diagonalizes with eigenvalues $-1$ and the two primitive $6^{th}$-roots of unity, and the complex eigenvalues come in conjugate pairs.    Note that the sum of the two complex eigenvalues is $1$. Let $k$ be the multiplicity of the eigenvalue $-1$ and $m$ the multiplicity of the conjugate pair of complex eigenvalues.  Then $k$ is the multiplicity of $1$ as an eigenvalue of $A^4$. Note that $n+1=k+2m$ and $m-k$ is the right-hand side of \eqref{eq:trace}.  Breaking up into the three cases $n\equiv 1,3,5\bmod 6$, we see that $k=(n-1)/3$ if $n\equiv 1\bmod 6$, $k=n/3+1$ if $n\equiv 3\bmod 6$ and $k=(n+1)/3$ if $n\equiv 5\bmod 6$.    

Since $\dim H_n(\mathscr G)$ is multiplicity of $1$ as an eigenvalue of $B$, which is the same as the multiplicity of $1$ as an eigenvalue of $A^4$, this completes the proof of the homology computation assuming the claim.  
\end{proof}

\begin{proof}[Proof of claim.]
Unfortunately, it does not seem easy to find an explicit cellular map on $\mathbb {RP}^\infty\times \mathbb {RP}^\infty$ that realizes $C_f$ on the fundamental group.  But it is easy to compute the effect of $C_\bullet(C_f,\mathbb F_2)$ on $C_\bullet(BV,\mathbb F_2)$. 
Fortunately, the Eilenberg--Zilber and Alexander--Whitney maps give explicit chain homotopy inverse morphisms between the cellular chain complexes of these $K(V,1)$s.  We can conjugate the action of $C_\bullet(C_f,\mathbb F_2)$ by these maps to get an action on the chain complex of $\mathbb {RP}^\infty\times \mathbb {RP}^\infty$ computing $H_\bullet(\Phi,\mathbb F_2)$.

 Suppose that $G,K$ are groups.  We recall the definitions of the Alexander--Whitney~\cite{Browncohomology,MacHomology} and Eilenberg--Zilber (or shuffle) maps~\cite{MacHomology} for $BG$ and $BK$ for the case of $\mathbb F_2$-coefficients only.  The Alex\-ander--Whitney map $\gamma\colon C_\bullet(B(G\times K),\mathbb F_2)\to C_\bullet(BG,\mathbb F_2)\otimes C_\bullet(BK,\mathbb F_2)$ is given on nondegenerate $n$-simplices by \[\gamma((g_1,k_1),\ldots,(g_n,k_n)) = \sum_{i=1}^{n+1} (g_1,\ldots, g_{i-1})\otimes (k_{i},\ldots k_n)\]  where any degenerate simplex is treated as $0$. The Eilenberg--Zilber map $\eta\colon C_\bullet(BG,\mathbb F_2)\otimes C_\bullet(BK,\mathbb F_2)\to C_\bullet(B(G\times K),\mathbb F_2)$ is more complicated to define.
 A $(p,q)$-shuffle is a permutation $\sigma$ of $1,\ldots, p+q$ such that $\sigma|_{[1,p]}$ and $\sigma|_{[p+1,p+q]}$ are order-preserving.
 We can similarly define a $(p,q)$-shuffle of any set of $p+q$ elements with a fixed a linear order, and we write the result of the shuffle as a $(p+q)$-tuple.  
 
 Because we are working over $\mathbb F_2$, we can ignore the signs that appear in the usual definition of the Eilenberg--Zilber map over $\mathbb Z$ to obtain that $\eta((g_1,\ldots, g_i)\otimes (k_{i+1},\ldots, k_n))$ is the sum of all $(i,n-i)$-shuffles of \[(g_1,1)\ldots, (g_i,1),(1,k_{i+1}),\ldots, (1,k_n).\]  Then $\gamma\eta$ and $\eta\gamma$ are chain homotopic to identity maps~\cite[Chapter~VIII.8]{MacHomology}. 
 
 Specializing to the case $G=K=\mathbb Z/2\mathbb Z$ and using that $B(\mathbb Z/2\mathbb Z)$ and $\mathbb {RP}^{\infty}$ are isomorphic CW complexes, it follows that $\gamma C_\bullet(C_f,\mathbb F_2)\eta$ is a chain map on  $C_\bullet(\mathbb {RP}^{\infty},\mathbb F_2)\otimes C_\bullet(\mathbb {RP}^{\infty},\mathbb F_2)$, which we can identify with its homology since all boundary maps are zero, inducing $H_\bullet(C_f,\mathbb F_2)$.  To prove the claim, we show that $\gamma C_\bullet(C_f,\mathbb F_2)\eta(e_j\otimes e_{n-j}) = \sum_{i=0}^n\binom{n-i}{j}e_i\otimes e_{n-i}$ (modulo $2$).

 If $X$ is a set and $x\in X$, it will be convenient to write $x^{(n)}$ for the $n$-tuple $(x,\ldots, x)$.  
 Write $V=\langle b\rangle\times \langle c\rangle$.  Note that $d=(b,c)$. We have that the unique nondegenerate $n$-cell of $B(\langle b\rangle)\cong \mathbb {RP}^{\infty}$ is $b^{(n)}$, and similarly the unique nondegenerate $n$-cell of $B(\langle c\rangle)$ is $c^{(n)}$.  Thus $e_j\otimes e_{n-j} = b^{(j)}\otimes c^{(n-j)}$.  
 
 The Eilenberg--Zilber map sends $b^{(j)}\otimes c^{(n-j)}$ to the sum of all shuffles of $(b,1),\ldots,(b,1)$ and $(1,c),\ldots, (1,c)$ with $j$ copies of $(b,1)$ and $n-j$ copies of $(1,c)$.  Note that $C_f((b,1))=(1,c)$ and $C_f((1,c))=(b,c)$, and so $C_\bullet(C_f,\mathbb F_2)\circ \eta$ sends $e_j\otimes e_{n-j}$ to the sum of all shuffles of $(1,c),\ldots, (1,c)$ ($j$-copies) and $(b,c),\ldots, (b,c)$ ($(n-j)$-copies).   This is the sum of all elements of the form $((x_1,c),(x_2,c),\ldots, (x_n,c))$, where exactly $j$ of the $x_i$ are $1$, and the remaining $n-j$ are $b$.  
 Next we compute the effect of the Alexander--Whitney map on a simplex $\tau=((b,c),(b,c),\ldots,(b,c),(1,c),(x_{r+2},c),\ldots, (x_n,c))$, where $x_k\in \{1,b\}$, $r$ is number of leading $(b,c)$s and exactly $n-j-r$ of the $x_k$s are $b$s.  Since any simplex of $B(\langle b\rangle)$ containing a $1$ is degenerate, it follows that this simplex is sent to $\sum_{k=0}^r b^{(k)}\otimes c^{(n-k)}$.
 So an occurrence of $b^{(i)}\otimes c^{(n-i)}$ in $\gamma C_n(C_f,\mathbb F_2)\eta(b^{(j)}\otimes c^{(n-j)})$  corresponds to such a $\tau$ with at least $i$ leading $(b,c)$s.  We then have $n-j-i$ remaining $(b,c)$s to place in $n-i$ locations. There are $\binom{n-i}{n-j-i}=\binom{n-i}{j}$ such simplices.  This establishes the claim.
 \end{proof}

\begin{Cor}
R\"over's simple group $V(G)$ containing the Grigorchuk group $G$ is rationally acyclic but not acyclic as it has Schur multiplier $H_2(V(G))\cong \mathbb Z/2\mathbb Z$.   \end{Cor}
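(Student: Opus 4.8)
The plan is to read off both assertions directly from Theorem~\ref{t:grigorchuk.group} by feeding the homology of $\mathscr G = \mathscr G_{2,1+x+x^2}$ into Li's comparison theorem~\cite{li2022ample} (the theorem of Li quoted in the introduction).

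First I would dispose of rational acyclicity. By Theorem~\ref{t:grigorchuk.group} each group $H_n(\mathscr G)$ is a finite elementary abelian $2$-group, with $H_0(\mathscr G)=0$. The Universal Coefficient Theorem (Theorem~\ref{T:uct}) then gives $H_n(\mathscr G,\mathbb Q)\cong H_n(\mathscr G)\otimes_{\mathbb Z}\mathbb Q = 0$ for every $n\geq 0$. Li's rational formula consequently forces $H_\bullet(V(G),\mathbb Q)\cong \Lambda(0)\otimes \mathrm{Sym}(0)\cong \mathbb Q$, concentrated in degree $0$; that is, $V(G)$ is rationally acyclic. This is of course also the instance of Corollary~\ref{c:rationally acyclic} with $G$ the Grigorchuk group.

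Next I would compute the Schur multiplier. The key observation is that Theorem~\ref{t:grigorchuk.group} supplies exactly the low-degree vanishing needed to apply the integral clause of Li's theorem: one has $H_0(\mathscr G)=0$ (the $n=0$ case) and $H_1(\mathscr G)=0$ (the $n\equiv 1\bmod 3$ branch at $n=1$, where the exponent $(n-1)/3$ vanishes), while $H_2(\mathscr G)\cong \mathbb Z/2\mathbb Z$ (the $n\equiv 2\bmod 3$ branch at $n=2$, with exponent $(n+1)/3=1$). Taking $k=2$, the hypothesis $H_q(\mathscr G)=0$ for $0\leq q<k$ is met, so Li's theorem yields $H_1(V(G))=0$ and $H_2(V(G))\cong H_2(\mathscr G)\cong \mathbb Z/2\mathbb Z$. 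Since the Schur multiplier of a group is precisely its second integral homology, this is the claimed value, and its nonvanishing shows that $V(G)$ is not acyclic.

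I do not expect a genuine obstacle here, as everything reduces to a direct substitution into results already established. The one point deserving care, rather than mere assertion, is that the hypothesis of Li's integral comparison really holds all the way up to degree $k=2$: this hinges on the vanishing $H_1(\mathscr G)=0$, which is exactly where the arithmetic of the exponents in Theorem~\ref{t:grigorchuk.group} (the $n\equiv 1\bmod 3$ case) is used. Once that vanishing is in place, the isomorphism $H_2(V(G))\cong H_2(\mathscr G)$ is immediate, and the corollary follows.
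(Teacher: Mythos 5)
Your proposal is correct and follows essentially the same route as the paper: rational acyclicity via Li's rational formula (the paper simply cites its Corollary~\ref{c:rationally acyclic}, of which this is the Grigorchuk instance), and the Schur multiplier via the integral clause of Li's theorem applied with $k=2$, using $H_0(\mathscr G)=H_1(\mathscr G)=0$ and $H_2(\mathscr G)\cong\mathbb Z/2\mathbb Z$ from Theorem~\ref{t:grigorchuk.group}. The only difference is that you spell out the exponent arithmetic and the UCT step explicitly, which the paper leaves implicit.
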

\begin{proof}
We  saw that $V(G)$ is rationally acyclic in Corollary~\ref{c:rationally acyclic}.  Since the groupoid $\mathscr G$ associated to the Grigorchuk group satisfies $H_n(\mathscr G)=0$ for $n<2$ and $H_2(\mathscr G)\cong \mathbb Z/2\mathbb Z$ by Theorem~\ref{t:grigorchuk.group},  the result follows from~\cite[Corollary~D]{li2022ample}.
\end{proof}

Since $\mathcal O_{\mathrm{Grig}}$ is a UCT Kirchberg algebra, our K-theory computation identifies it with the 2-adic ring $\cs$-algebra $\mathcal Q_2$ of the integers~\cite{LarsenLi12} by the Kirchberg--Phillips theorem.

\subsection{The Grigorchuk--Erschler group}\label{subs:GE}

The Grigorchuk--Erschler group is the multispinal group $G_{2,1+x^2}$; see Example~\ref{ex:Grig-Erschler}. The following computation offers a taste of how to compute the homology in the setting of \v{S}uni\'c groups for which Theorem \ref{t:sunic.group} does not apply.

\begin{Thm}
Let $\mathscr G=\mathscr G_{2,1+x^2}$ be the groupoid associated to the Grigor\-chuk--Erschler group $G_{2,1+x^2}$.  Then
\[H_n(\mathscr G)=\begin{cases}
0, & \text{if}\ n=0,\\
(\mathbb Z/2\mathbb Z)^{\frac{n}{2}+1}, & \text{if}\ n\equiv 0\bmod 2, n>0,\\
(\mathbb Z/2\mathbb Z)^{\frac{n+1}{2}}, & \text{if}\ n\equiv 1\bmod 4,\\
(\mathbb Z/2\mathbb Z)^{\frac{n-1}{2}}\oplus \mathbb Z/4\mathbb Z, & \text{if}\ n\equiv 3\bmod 4.\\
\end{cases}\]
Moreover, $K_0(\cs(\mathscr G)) \cong \mathbb Z^2\cong K_1(\cs(\mathscr G))$ with $[1]_0 = 0$.
\end{Thm}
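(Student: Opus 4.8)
The K-theory is immediate from Corollary~\ref{c:sunic.k}. Here $A=\mathbb Z/2\mathbb Z$, so $d=|A|=2$ and $\mathbb Z/(d-1)\mathbb Z=0$; the unique element of $A$ with $\Phi_a\in\mathrm{Aut}(B)$ is the nontrivial one, acting on $B=(\mathbb Z/2\mathbb Z)^2$ by the companion matrix $C_{1+x^2}$, which swaps $b$ and $d$ and fixes $c$. Thus $C_f$ has exactly two orbits, $\{b,d\}$ and $\{c\}$, on the three nontrivial elements (equivalently conjugacy classes, as $B$ is abelian) of $B$, and Corollary~\ref{c:sunic.k} yields $K_0(\cs(\mathscr G))\cong\mathbb Z^2\cong K_1(\cs(\mathscr G))$ with $[1]_0=1\in\mathbb Z/(d-1)\mathbb Z=0$.

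For the homology the essential point is that $f=1+x^2=(1+x)^2$ over $\mathbb F_2$, so $C_f$ has order $2=p$ and Theorem~\ref{t:sunic.group} does \emph{not} apply. I would instead feed $C=\mathbb Z$ into the long exact sequence of Theorem~\ref{t:multispinal}, whose connecting map is $\id-H_n(C_f)\colon H_n(V,\mathbb Z)\to H_n(V,\mathbb Z)$, where $V$ is the Klein four-group and (since $A_0$ is the singleton giving $C_f$) there is a single transfer term. The integral groups $H_*(V,\mathbb Z)$ are as recorded in the proof of Theorem~\ref{t:grigorchuk.group}. The decisive structural observation, and what makes this case tractable where the Grigorchuk $3$-cycle was not, is that $C_f$ is the coordinate swap: in the product decomposition $V=\langle d\rangle\times\langle b\rangle$ it is the honest flip of the two factors, hence it is realized cellularly by the swap of $\mathbb {RP}^\infty\times\mathbb {RP}^\infty$, a $K(V,1)$.

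The plan is then to compute $H_*(\mathscr G)$ as the homology of a mapping cone \emph{at the chain level}, rather than merely reading orders off the long exact sequence. Realizing $\Phi$ cellularly as in the remark following Theorem~\ref{t:multispinal}, and using that the $A$-summand $\mathbb {RP}^\infty$ contributes a cone of a quasi-isomorphism in positive degrees (because $\tr^A_1$ and $\iota_A$ vanish on positive-degree homology), one reduces to the mapping cone of $\psi=\id-\tau$ on the reduced cellular chains $C_\bullet(\mathbb {RP}^\infty)\otimes C_\bullet(\mathbb {RP}^\infty)$, where the geometric swap induces $\tau(e_i\otimes e_j)=(-1)^{ij}e_j\otimes e_i$ with its Koszul sign. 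Thus $H_n(\mathscr G)\cong H_n(\mathrm{Cone}(\psi))$ for $n\geq 1$. Since $\partial e_{2\ell}=2e_{2\ell-1}$ and $\partial e_{2\ell+1}=0$, this cone is an explicit finite-rank complex of free abelian groups whose integral homology I would compute directly. The factors of $2$ from the differential, combined with the identifications $\psi$ imposes on each swapped pair $e_i\otimes e_j\leftrightarrow e_j\otimes e_i$, conspire to produce exactly one $\mathbb Z/4\mathbb Z$ summand in each degree $n\equiv 3\bmod 4$ (traceable to the diagonal cell $e_m\otimes e_m$ with $m$ even, whose boundary is twice a symmetric cycle sitting in degree $2m-1\equiv 3\bmod 4$), and only copies of $\mathbb Z/2\mathbb Z$ otherwise; the counts can be cross-checked against $\dim_{\mathbb F_2}H_n(\mathscr G,\mathbb F_2)=n+1$, obtained by running the same sequence with $\mathbb F_2$ coefficients.

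The hard part will be exactly this integral extension problem. The homology-level sequence of Theorem~\ref{t:multispinal} places each $H_n(\mathscr G)$ in a short exact sequence of elementary abelian $2$-groups $0\to\coker(\id-H_n(C_f))\to H_n(\mathscr G)\to\ker(\id-H_{n-1}(C_f))\to 0$, which pins down only the \emph{order} of $H_n(\mathscr G)$ and therefore cannot distinguish $\mathbb Z/4\mathbb Z$ from $(\mathbb Z/2\mathbb Z)^2$. Resolving this is precisely why one must descend to the chain-level cone and retain the Koszul sign in $\tau$; the explicit shape of the differential also shows no summand of order exceeding $4$ can occur. The remaining technical labour is to organize the resulting integral linear algebra uniformly in $n$ and extract the stated $4$-periodic answer.
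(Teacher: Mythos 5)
Your K-theory computation is correct and is exactly the paper's: two orbits of the swap $C_{1+x^2}$ on the three nontrivial elements of the Klein four-group, so Corollary~\ref{c:sunic.k} gives $\mathbb Z^2$ in each degree with $[1]_0=0$. The homology argument, however, has a genuine gap at precisely the point you identify as the crux. The long exact sequence of Theorem~\ref{t:multispinal} (with $\mathbb Z$ coefficients) tells you that $H_n(\mathscr G)$ sits in an extension $0\to\coker(\id-H_n(C_f))\to H_n(\mathscr G)\to\ker(\id-H_{n-1}(C_f))\to 0$, and the mapping cone of \emph{any} chain map inducing $\id-H_\bullet(C_f)$ sits in an extension with the same outer terms --- but these extensions need not agree. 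Knowing a chain map only through its effect on homology does not determine the quasi-isomorphism type of its cone: for complexes of free abelian groups $D',C$ the group of homotopy classes $[D',C]$ surjects onto $\bigoplus_n\Hom(H_nD',H_nC)$ with kernel $\bigoplus_n\Ext^1_{\mathbb Z}(H_nD',H_{n+1}C)$, and perturbing a chain map by a nonzero element of that kernel can change the cone's homology from $(\mathbb Z/2\mathbb Z)^2$ to $\mathbb Z/4\mathbb Z$ --- which is exactly the distinction you need to make. So the assertion ``$H_n(\mathscr G)\cong H_n(\mathrm{Cone}(\psi))$ for $n\geq 1$'' requires you to connect your explicit complex $C_\bullet(\mathbb{RP}^\infty)\otimes C_\bullet(\mathbb{RP}^\infty)$ with $\psi=\id-\tau$ to the groupoid chain complex by squares commuting at the chain level (or up to specified homotopy), through the Rukolaine correspondences, the Morita equivalences, the free-product decomposition of Lemma~\ref{l:multispinal}, and the Eilenberg--Zilber comparison between $BV$ and $\mathbb{RP}^\infty\times\mathbb{RP}^\infty$. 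The paper establishes commutativity of these squares only on homology, and upgrading all of them is a substantial piece of work you have not supplied.

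The paper resolves the extension problem by an entirely different mechanism that stays at the level of homology: it runs the long exact sequence of Theorem~\ref{t:multispinal} with both $\mathbb Z$ and $\mathbb F_2$ coefficients, uses the naturality of the Universal Coefficient Theorem (Theorem~\ref{T:uct}) to map one onto the other, and extracts $2H_n(\mathscr G)$ via the snake lemma as the cokernel of a map $H_1(C_2,H_n(V,\mathbb F_2))\to H_1(C_2,H_{n-1}(V))$; this detects the single $\mathbb Z/4\mathbb Z$ summand in degrees $n\equiv 3\bmod 4$, and the structure theorem for finite abelian groups together with the mod-$2$ dimension count $\dim_{\mathbb F_2}H_n(\mathscr G,\mathbb F_2)=n+1$ (which you also propose, correctly) then pins down the group. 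If you want to salvage your route, you would either need to carry out the chain-level identification in full, or replace it with a coefficient-comparison argument of this kind; as written, the step that produces the $\mathbb Z/4\mathbb Z$ is unjustified.
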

\begin{proof}
Let $f=1+x^2$ with companion matrix
\[ C_f = \begin{bmatrix} 0 & 1 \\ 1 & 0 \end{bmatrix}. \]
The K-theory computation follows from  Corollary~\ref{c:sunic.k} as $\Phi_1=C_f$ swaps to the two standard basis elements, and thus has two orbits on nonzero elements:  $\{(1,1)\}$ and $\{(1,0),(0,1)\}$.  

Again, let $V=\mathbb Z/2\mathbb Z\times \mathbb Z/2\mathbb Z$ and take $\mathbb {RP}^\infty\times \mathbb{RP}^\infty$ as our $K(V,1)$.  The automorphism $C_f$ of $V$ is then induced on the fundamental group by the cellular map swapping the two coordinates, and so we can use this complex directly to compute $H_n(C_f)$ and $H_n(C_f,\mathbb F_2)$.  
The automorphism $C_f$ has order $2$, and so generates a copy of the cyclic group $C_2$.   

Then $H_n(V)$ and $H_n(V,\mathbb F_2)$ are $\mathbb F_2C_2$-modules.  
 By the universal coefficient theorem, we have the same commutative diagram as \eqref{eq:make.a.module} with $B=V$ and $p=2$.  Hence there is an exact sequence of $\mathbb F_2C_2$-modules
\begin{equation}\label{eq:exact.c2.modules}
0\to H_n(V)\to H_n(V,\mathbb F_2)\to H_{n-1}(V)\to 0.
\end{equation}

Note that
$\coker (\id -H_n(C_f)) = H_n(V)_{C_2}$, $\ker (\id -H_n(C_f)) = H_n(V)^{C_2}$, $\coker (\id -H_n(C_f,\mathbb F_2)) = H_n(V,\mathbb F_2)_{C_2}$ and $\ker (\id-H_n(V,\mathbb F_2)) = H_n(V,\mathbb F_2)^{C_2}$. Moreover, $\id -H_1(C_f)=\id -C_f=\id -H_1(C_f,\mathbb F_2)$. Also note that $H_n(\mathscr G)\otimes_{\mathbb Z}\mathbb F_2\cong H_n(\mathscr G)/2H_n(\mathscr G)$ and that the functor $H \mapsto \Tor_1^{\Z}(H,\mathbb F_2)=\{h\in H\mid 2h=0\}$ on abelian groups $H$ is left exact and fixes $\mathbb F_2$-vector spaces.  Since $H_{n-1}(V)_{C_2}$ and $H_{n-2}(V)^{C_2}$ are $\mathbb F_2$-vector spaces for $n\geq 2$,  applying coinvariants and invariants to \eqref{eq:exact.c2.modules} and using Theorem~\ref{t:multispinal} and the Universal Coefficient Theorem, we have the  commutative diagram in Figure~\ref{fig:bigdiagram} with exact rows and columns for $n\geq 2$.

\begin{figure}
\begin{tikzcd}[arrow style=math font, column sep = 0.3cm]
    & {H_1(C_2,H_n(V))} & \\
	& {H_1(C_2,H_n(V, \mathbb F_2))} & 0  \\
	& {H_1(C_2,H_{n-1}(V))} & {2H_n(\mathscr G)} & 0  \\
	0 & {H_n(V)_{C_2}} & {H_n(\mathscr G)} & {H_{n-1}(V)^{C_2}}  & 0 \\
	0 & {H_n(V,\mathbb F_2)_{C_2}} & {H_n(\mathscr G;\mathbb F_2)} & {H_{n-1}(V,\mathbb F_2)^{C_2}} & 0  \\
	0 & {H_{n-1}(V)_{C_2}} & {\Tor_1^{\Z}(H_{n-1}(\mathscr G),\mathbb F_2)} & {H_{n-2}(V)^{C_2}} \\
	& 0 & 0 &  \\
    \arrow[from=1-2, to=2-2]
	\arrow[from=2-2, to=3-2]
	\arrow[from=2-3, to=3-3]
	\arrow[from=3-2, to=4-2]
	\arrow[from=3-3, to=4-3]
	\arrow[from=3-4, to=4-4]
	\arrow[from=4-1, to=4-2]
	\arrow[from=4-2, to=4-3]
	\arrow[from=4-2, to=5-2]
	\arrow[from=4-3, to=4-4]
	\arrow[from=4-3, to=5-3]
    \arrow[from=4-4, to=4-5]
	\arrow[from=4-4, to=5-4]
	\arrow[from=5-1, to=5-2]
	\arrow[from=5-2, to=5-3]
	\arrow[from=5-2, to=6-2]
	\arrow[from=5-3, to=5-4]
	\arrow[from=5-3, to=6-3]
    \arrow[from=5-4, to=5-5]
	\arrow[from=5-4, to=6-4]
	\arrow[from=6-1, to=6-2]
	\arrow[from=6-2, to=6-3]
	\arrow[from=6-2, to=7-2]
	\arrow[from=6-3, to=6-4]
	\arrow[from=6-3, to=7-3]
\end{tikzcd}
\caption{A commutative diagram with exact rows and columns\label{fig:bigdiagram}}
\end{figure}

The snake lemma or a diagram chase provides an isomorphism  of $2H_n(\mathscr G)$ with the cokernel of  $H_1(C_2,H_n(V,\mathbb F_2))\to H_1(C_2, H_{n-1}(V))$. We proceed to compute this cokernel.  
 
By Shapiro's Lemma $H_1(C_2,\mathbb F_2C_2)\cong H_1(1,\mathbb F_2) =0$.  On the other hand, the Universal Coefficient Theorem yields $H_1(C_2,\mathbb F_2)\cong H_1(C_2)\otimes_{\mathbb F_2}\mathbb F_2\cong \mathbb F_2$.   

In order to use the diagram in Figure~\ref{fig:bigdiagram}, we need three claims.
\begin{Claim}\label{cl:claim1}
As $\mathbb F_2C_2$-modules we have
\[H_n(V,\mathbb F_2)\cong \begin{cases}\mathbb F_2\oplus \mathbb F_2C_2^{\frac{n}{2}}, & \text{if}\ n\equiv 0\bmod 2,\\ \mathbb F_2C_2^{\frac{n+1}{2}}, & \text{if}\ n\equiv 1\bmod 2.\end{cases}\]
Therefore,
\[H_1(C_2,H_n(V,\mathbb F_2))\cong \begin{cases}\mathbb F_2, & \text{if}\ n\equiv 0\bmod 2, \\ 0, & \text{if}\ n\equiv 1\bmod 2.\end{cases}\]
\end{Claim}
\begin{proof}[Proof of claim.]
First note that $H_n(V,\mathbb F_2)$ has basis $e_i\otimes e_{n-i}$ where $e_j$ is the unique $j$-cell of $\mathbb{RP}^{\infty}$.  The action of $C_2$ swaps the two 
coordinates.  We see that if $n$ is even, then $H_n(V,\mathbb F_2)\cong \mathbb F_2\oplus \mathbb F_2C_2^{\frac{n}{2}}$ where the copy of $\mathbb F_2$ is spanned by $e_{n/2}\otimes e_{n/2}$ and the copies of $\mathbb F_2C_2$ are spanned by $e_i\otimes e_{n-i},e_{n-i}\otimes e_i$ with $0\leq i<n/2$. 
If $n$ is odd, then $H_n(V,\mathbb F_2)\cong \mathbb F_2C_2^{\frac{n+1}{2}}$ with the copies spanned by $e_i\otimes e_{n-i}, e_{n-i}\otimes e_i$ with $0\leq i\leq (n-1)/2$. 
The second equation follows because $H_1(C_2,\mathbb F_2)\cong \mathbb F_2$ and $H_1(C_2,\mathbb F_2C_2)=0$.
\end{proof}

\begin{Claim}\label{cl:claim3}
As $\mathbb F_2C_2$-modules we have
\[H_n(V) = \begin{cases}\mathbb F_2C_2^{\frac{n}{4}}, & \text{if}\ n\equiv 0\bmod 4,\\ \mathbb F_2C_2^{\frac{n+3}{4}}, & \text{if}\ n\equiv 1\bmod 4\\ \mathbb F_2\oplus \mathbb F_2C_2^{\frac{n-2}{4}}, & \text{if}\ n\equiv 2\bmod 4\\ \mathbb F_2\oplus\mathbb F_2C_2^{\frac{n+1}{4}}, & \text{if}\ n\equiv 3\bmod 4. \end{cases}\]
Therefore,
\[H_1(C_2,H_n(V)) = \begin{cases}0, & \text{if}\ n\equiv 0,1\bmod 4,\\ 
\mathbb F_2, & \text{if}\ n\equiv 2,3\bmod 4. \end{cases}\]
\end{Claim}
\begin{proof}[Proof of claim.]
If $n$ is even, $H_n(V)$ has basis the $e_i\otimes e_{n-i}$ with $i$ odd  under the embedding in \eqref{eq:exact.c2.modules}.  If $n\equiv 0\bmod 4$, this implies that $H_n(V)\cong \mathbb F_2C_2^{\frac{n}{4}}$ since $n/2$ is even.  If $n\equiv 2\bmod 4$, then $H_n(V)\cong \mathbb F_2\oplus \mathbb F_2C_2^{\frac{n-2}{4}}$ where $e_{n/2}\otimes e_{n/2}$ spans the copy of $\mathbb F_2$ (note that $n/2$ is odd in this case).

If $n$ is odd, $H_n(V)$ has basis the elements $e_0\otimes e_n$, $e_n\otimes e_0$ and $e_i\otimes e_{n-i}+e_{i+1}\otimes e_{n-i-1}$ with $1\leq i\leq n-2$ odd.  
If $n\equiv 1\bmod 4$, then these basis elements are swapped in pairs since $i$ odd and $i=n-i-1$ implies $(n-1)/2$ is odd.  Thus $H_n(V)\cong \mathbb F_2C_2^{\frac{n+3}{4}}$.
If $n\equiv 3\bmod 4$, then $(n-1)/2$ is odd and $e_{(n-1)/2}\otimes e_{(n+1)/2}+ e_{(n+1)/2}\otimes e_{(n-1)/2}$ is fixed by the action.  The remaining basis elements are swapped in pairs, and so $H_n(V)\cong \mathbb F_2\oplus \mathbb F_2C_2^{\frac{n+1}{4}}$.  The second equation follows because $H_1(C_2,\mathbb F_2)\cong \mathbb F_2$ and $H_1(C_2,\mathbb F_2C_2)=0$.
\end{proof}

\begin{Claim}\label{cl:claim2}
For $n\geq 2$, $H_n(\mathscr G,\mathbb F_2)=\mathbb F_2^{n+1}$.
\end{Claim}
\begin{proof}
Since $H_n(\mathscr G,\mathbb F_2)$ is an $\mathbb F_2$-vector space, from the exactness in Figure~\ref{fig:bigdiagram}, we have that $\dim H_n(\mathscr G,\mathbb F_2)=\dim H_n(V,\mathbb F_2)_{C_2}+\dim H_{n-1}(V,\mathbb F_2)^{C_2}$.  Observing that $(\mathbb F_2)_{C_2}\cong \mathbb F_2\cong (\mathbb F_2)^{C_2}$ and $(\mathbb F_2C_2)_{C_2}\cong \mathbb F_2\cong (\mathbb F_2C_2)^{C_2}$, the claim follows from Claim~\ref{cl:claim1}.
\end{proof}

We conclude from the exactness in Figure~\ref{fig:bigdiagram} and Claims~\ref{cl:claim1} and~\ref{cl:claim3} that  $2H_n(\mathscr G)\cong \coker (H_1(C_2,H_n(V,\mathbb F_2))\to H_1(C_2, H_{n-1}(V)))$ is $\mathbb F_2$ if $n\equiv 3\bmod 4$ and is $0$ if $n\equiv 0,1,2\bmod 4$.  The only nonobvious case is when $n\equiv 0\bmod 4$.  In this case  $H_1(C_2,H_n(V))\cong 0$, and so $\mathbb F_2\cong H_1(C_2,H_n(V,\mathbb F_2))\to H_1(C_2,H_{n-1}(V))\cong \mathbb F_2$ must be injective, hence an isomorphism.

We now turn to the proof of the theorem.
The theorem for $n=0$ follows from Theorem~\ref{t:multispinal}.  For $n=1$, since $V$ is abelian, we have that $H_1(\mathscr G)\cong \coker(\id -C_f)\cong \mathbb Z/2\mathbb Z$, as $\id -C_f$ is the $2\times 2$ all ones matrix over $\mathbb F_2$. 
This handles the base cases.

Assume the result is true for $n-1$ and let $n\geq 2$.
First assume that $n$ is even.  Then $2H_n(\mathscr G)=0$, and hence $H_n(\mathscr G)$ is an $\mathbb F_2$-vector space.   By induction, either $H_{n-1}(\mathscr G)\cong (\mathbb Z/2\mathbb Z)^{\frac{n}{2}}$ or $H_{n-1}(\mathscr G)\cong \mathbb (\Z/2\mathbb Z)^{\frac{n-2}{2}}\oplus \mathbb Z/4\mathbb Z$.  In either case, $\Tor_1^{\mathbb Z}(H_{n-1}(\mathscr G),\mathbb F_2)\cong  (\mathbb Z/2\mathbb Z)^{\frac{n}{2}}$.  Applying exactness of the middle column of Figure~\ref{fig:bigdiagram} and using Claim~\ref{cl:claim2}, we see that $\dim H_n(\mathscr G) =\frac{n}{2}+1$, as required. 
Next suppose that $n$ is odd.  Then $\Tor_1^{\mathbb Z}(H_{n-1}(\mathscr G),\mathbb F_2) \cong (\mathbb Z/2\mathbb Z)^{\frac{n+1}{2}}$ by induction. We have two cases.  If $n\equiv 1\bmod 4$, then $2H_n(\mathscr G)=0$, and so $H_n(\mathscr G)$ is an $\mathbb F_2$-vector space of dimension $\frac{n+1}{2}$ by Claim~\ref{cl:claim2} and exactness in the middle column of Figure~\ref{fig:bigdiagram}.  
Next assume that $n\equiv 3\bmod 4$. In this case, $2H_n(\mathscr G)\cong \mathbb Z/2\mathbb Z$.  By exactness of the middle column in Figure~\ref{fig:bigdiagram} and Claim~\ref{cl:claim2}, we have that $H_n(\mathscr G)/2H_n(\mathscr G)\cong \mathbb (\mathbb Z/2\mathbb Z)^{\frac{n+1}{2}}$. It follows from the structure theorem for finite abelian groups that $H_n(\mathscr G)\cong (\mathbb Z/2\mathbb Z)^{\frac{n-1}{2}}\oplus \mathbb Z/4\mathbb Z$.   This completes the proof of the homology computation.    
\end{proof}

\section{Computations: solvable self-similar groups}\label{subsub:solvable}

\subsection{Lamplighter groups}\label{subs:lamp}
We compute the homology and $K$-theory for groupoids associated to self-similar actions of lamplighter groups given in Example~\ref{ex:lamplighter}.

First we compute the homology of $A\wr \mathbb Z$.  Note that $\mathbb Z$ acts on $H_n(\bigoplus_{i \in \mathbb Z}A \delta_i)$, and hence it makes sense to take $\mathbb Z$-coinvariants.

\begin{Prop}\label{p:lamplighter.homology}
Let $A$ be a finite abelian group.  Then 
\[H_n(A\wr \mathbb Z) = \begin{cases} \mathbb Z, & \text{if}\ n=0,\\
A\oplus \mathbb Z, & \text{if}\ n=1,\\
H_n(\bigoplus\limits_{i\in \mathbb Z}A\delta_i)_{\mathbb Z}, & \text{if}\ n\geq 2,\end{cases}\]
where for $n\geq 2$, the isomorphism is induced by the map $H_n(\bigoplus_{i\in \mathbb Z}A\delta_i)\to H_n(A\wr \mathbb Z)$ coming from the inclusion.
\end{Prop}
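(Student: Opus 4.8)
The plan is to exploit the split extension $1 \to M \to A\wr\mathbb Z \to \mathbb Z \to 1$, where $M = \bigoplus_{i\in\mathbb Z} A\delta_i$ carries the shift action of $\mathbb Z$. First I would feed this into the Lyndon--Hochschild--Serre spectral sequence $E^2_{pq} = H_p(\mathbb Z, H_q(M)) \Rightarrow H_{p+q}(A\wr\mathbb Z)$, cf.~\cite{Browncohomology}. Since $\mathbb Z$ has homological dimension $1$, only the columns $p=0,1$ survive, so $E^2 = E^\infty$ and the sequence collapses to short exact sequences $0 \to H_n(M)_{\mathbb Z} \to H_n(A\wr\mathbb Z) \to H_{n-1}(M)^{\mathbb Z} \to 0$ for every $n$, where $H_0(\mathbb Z, W) = W_{\mathbb Z} = \coker(1-t)$ and $H_1(\mathbb Z, W) = W^{\mathbb Z} = \ker(1-t)$ for the shift $t$. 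Crucially, the left-hand edge map is the map induced on homology by the inclusion $M \hookrightarrow A\wr\mathbb Z$, followed by the passage to coinvariants.

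Next I would evaluate the low-dimensional terms. As $M$ is abelian, $H_0(M) = \mathbb Z$ with trivial $\mathbb Z$-action and $H_1(M) = M$ with the shift action, whose coinvariants are $M_{\mathbb Z} \cong A$ (via $a\delta_i \mapsto a$). For $n=0$ the sequence gives $H_0(A\wr\mathbb Z) \cong \mathbb Z$, and for $n=1$ it gives $0 \to A \to H_1(A\wr\mathbb Z) \to \mathbb Z \to 0$, which splits since $\mathbb Z$ is free, yielding $A \oplus \mathbb Z$.

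The heart of the argument, and the step I expect to be the main obstacle, is to show $H_m(M)^{\mathbb Z} = 0$ for all $m \ge 1$; granting this, the right-hand term of the collapsed sequence vanishes for $n \ge 2$ and the inclusion-induced edge map becomes the asserted isomorphism $H_n(A\wr\mathbb Z) \cong H_n(M)_{\mathbb Z}$. To prove the vanishing I would write $M = \varinjlim_{S} M_S$ over finite subsets $S \subseteq \mathbb Z$, where $M_S = \bigoplus_{i\in S} A\delta_i$, and use that group homology commutes with this directed colimit (the splittings $M_{S'} = M_S \times M_{S'\setminus S}$ make each $H_m(M_S)\to H_m(M)$ a split injection). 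Iterating the K\"unneth theorem decomposes $H_m(M_S)$ into summands indexed by the \emph{support}, namely the set of indices sitting in positive homological degree or entering a $\Tor$-pairing, and these decompositions are compatible with the colimit, so $H_m(M) \cong \bigoplus_{\emptyset \ne S \subseteq \mathbb Z \text{ finite}} K_S$ for $m \ge 1$. The shift carries $K_S$ isomorphically onto $K_{S+1}$, and since no finite nonempty subset of $\mathbb Z$ is invariant under a nonzero translation, $\mathbb Z$ permutes the supports in free orbits. Hence $H_m(M)$ is induced from the trivial subgroup, and $H_m(M)^{\mathbb Z} = \ker(1-t) = 0$, either directly (a finitely supported shift-invariant element must vanish) or via Shapiro's lemma, $H_1(\mathbb Z, \Ind_1^{\mathbb Z} V) \cong H_1(1,V) = 0$.

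The genuinely delicate point in the last paragraph is the bookkeeping of the support decomposition and its naturality with respect to the colimit, compounded by the fact that $A$ is finite so that $H_*(A)$ carries torsion and the $\Tor$ terms of the K\"unneth formula genuinely occur; one must check that these torsion contributions are again supported on finite nonempty index sets and are translated by the shift, so that they too contribute only free $\mathbb Z$-orbits and cannot produce invariants. Everything else is formal once the induced-module structure of $H_m(M)$ is in place.
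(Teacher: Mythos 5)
Your proposal is correct and follows the same skeleton as the paper's proof: the Lyndon--Hochschild--Serre spectral sequence for $1\to M\to A\wr\mathbb Z\to\mathbb Z\to 1$ with $M=\bigoplus_{i\in\mathbb Z}A\delta_i$, the collapse to two columns since $\mathbb Z$ has (co)homological dimension $1$, the identification $H_1(\mathbb Z,W)\cong W^{\mathbb Z}$, and the reduction of everything to the vanishing $H_m(M)^{\mathbb Z}=0$ for $m\geq 1$. The one place you genuinely diverge is in proving that vanishing. The paper argues more economically: $H_m(M)=\varinjlim_n H_m(\bigoplus_{i=-n}^{n}A\delta_i)$ with injective transition maps (split by the retractions $\rho_n$ onto finite blocks), so a nonzero invariant class $z$ is detected by some $H_m(\rho_n)$; but shifting $z$ by $2n+1$ lands it in a summand killed by $\rho_n$, contradicting invariance. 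Your route instead assembles the iterated K\"unneth decomposition of $H_m(M_S)$ into summands indexed by finite nonempty supports, checks (correctly --- the $\Tor$ terms cause no trouble since $H_0=\mathbb Z$ is free, so every $\Tor$ summand also has nonempty support) that this is natural in the colimit, and concludes that $H_m(M)$ is induced from the trivial subgroup, hence has no invariants. Both arguments are valid; the paper's avoids the K\"unneth bookkeeping entirely and is shorter, while yours proves the stronger structural fact that $H_m(M)$ is a free $\mathbb Z[\mathbb Z]$-module, which also gives $H_1(\mathbb Z,H_m(M))=0$ at once by Shapiro's lemma. Your treatment of $n=1$ via the split short exact sequence $0\to A\to H_1(A\wr\mathbb Z)\to\mathbb Z\to 0$ is also fine, where the paper simply reads off the abelianization directly.
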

\begin{proof}
The result for $n=0$ is trivial.  For $n=1$, it suffices to observe that the abelianization map $A\wr \mathbb Z\to A\oplus \mathbb Z$ is given by $(x,k)\mapsto (\varepsilon(x),k)$ where $\varepsilon(s\delta_i)= s$.

Since the action of $G$ on $H_n(G)$ induced by conjugation is trivial for any group $G$ (cf.~\cite[Proposition~8.1]{Browncohomology}), we observe that the natural map  $H_n(\bigoplus_{i\in \mathbb Z}A\delta_i)\to H_n(A\wr \mathbb Z)$ factors through $H_n(\bigoplus_{i\in \mathbb Z}A\delta_i)_{\mathbb Z}$ for all $n$.  We shall use the Lyndon--Hochschild--Serre spectral sequence~\cite{Browncohomology} \[E^2_{p,q} = H_p(\mathbb Z,H_q(\bigoplus_{i\in \mathbb Z}A\delta_i))\Rightarrow H_{p+q}(A\wr \mathbb Z).\]   We claim that $E^2_{p,q}=0$ unless $p=0$ or $p=1$, $q=0$.  

First note that since $\mathbb Z$ has cohomological dimension $1$ (the circle is a $K(\mathbb Z,1)$), it follows that $H_p(\mathbb Z,H_q(\bigoplus_{i\in \mathbb Z}A\delta_i))=0$ for $p\geq 2$. Since the circle is an orientable $1$-manifold, $\mathbb Z$ is an orientable Poincar\'e duality group of dimension $1$, cf.~\cite[Chapter~VIII.10]{Browncohomology}.  It follows  $H_1(\mathbb Z,H_q(\bigoplus_{i\in \mathbb Z}A\delta_i))\cong H^0(\mathbb Z,H_q(\bigoplus_{i\in \mathbb Z}A\delta_i))\cong H_q(\bigoplus_{i\in \mathbb Z}A\delta_i)^{\mathbb Z}$.  Now we have $H_q(\bigoplus_{i\in \mathbb Z}A\delta_i)=\varinjlim_{n\in \mathbb N} H_q(\bigoplus_{i=-n}^nA\delta_i)$. Moreover, we have retractions $\rho_n\colon \bigoplus_{i\in \mathbb Z}A\delta_i\to \bigoplus_{i=-n}^nA\delta_i$,  and so the maps of this direct system are injective. 
Suppose that $0\neq z\in H_q(\bigoplus_{i\in \mathbb Z}A\delta_i)^{\mathbb Z}$ with $q\geq 1$.  Fix $n$ such that $z$ comes from $H_q(\bigoplus_{i=-n}^nA\delta_i)$.  Then $H_q(\rho_n)(z)=z\neq 0$.  But if we apply the shift $2n+1$ times to $z$, we obtain an element that maps to $0$ under $H_q(\rho_n)$ since $\rho_n(\bigoplus_{i\geq n+1}A\delta_i)=0$.  This contradicts that $z$ is fixed by the shift.  Thus $H_q(\bigoplus_{i\in \mathbb Z}A\delta_i))^{\mathbb Z}=0$.  

It now follows from~\cite[Corollary~10.29]{Rotmanhom} that the edge morphisms give isomorphisms $H_n(\bigoplus_{i\in \mathbb Z}A\delta_i)_{\mathbb Z}\cong H_n(A\wr \mathbb Z)$ for $n\geq 2$, and from the construction of  the Lyndon--Hochschild--Serre spectral sequence this edge morphism is induced by the natural map.
\end{proof}

\begin{Thm}
Let $A$ be a finite abelian group and $\mathscr G$ the groupoid associated to a Skipper--Steinberg self-similar action of $A\wr \mathbb Z$ on $A$. Then $H_0(\mathscr G)\cong \mathbb Z/(|A|-1)\mathbb Z\cong H_1(\mathscr G)$ and $H_n(\mathscr G)=0$ for $n\geq 2$.
\end{Thm}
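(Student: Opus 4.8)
The plan is to invoke Corollary~\ref{c:transitive.transfer}. First I would check that $A\wr\mathbb Z$ acts transitively on the alphabet $A$: already the base group $B=\bigoplus_{i\in\mathbb Z}A\delta_i$ acts by the translations $z\mapsto z+s(b-a)r^{i+1}$, and since $(b-a)r$ is a unit, $s\delta_0$ translates by an arbitrary element of $A$. Thus Corollary~\ref{c:transitive.transfer} applies with a one-point transversal, giving a long exact sequence with maps $\id-\Phi_n$ on $H_n(A\wr\mathbb Z)$, together with $H_0(\mathscr G)\cong\mathbb Z/(|A|-1)\mathbb Z$ and $H_1(\mathscr G)\cong\coker(\id-\Phi_1)$. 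Splitting the sequence into $0\to\coker(\id-\Phi_n)\to H_n(\mathscr G)\to\ker(\id-\Phi_{n-1})\to 0$, the theorem reduces to: $\id-\Phi_n$ is an isomorphism for $n\ge2$; $\id-\Phi_1$ is injective; and $\coker(\id-\Phi_1)\cong\mathbb Z/(|A|-1)\mathbb Z$.

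For the high-degree vanishing I would use the chain-level description of $\Phi_n$ from Corollary~\ref{c:transitive.transfer}, namely $(g_1,\dots,g_n)\mapsto\sum_{e\in A}(g_1|_{\cdots},\dots,g_n|_e)$. The structural input (Example~\ref{ex:lamplighter}) is that for $w\in B$ the section $w|_e=w|_0\in B$ does not depend on $e$. Hence on chains supported in $B$ the chain map is exactly $|A|$ times the chain map of the section endomorphism $D\colon B\to B$, $w\mapsto w|_0$, so on homology $\Phi_n\circ H_n(\iota)=|A|\,H_n(\iota)\circ H_n(D)$ for the inclusion $\iota\colon B\hookrightarrow A\wr\mathbb Z$. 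By Proposition~\ref{p:lamplighter.homology}, $H_n(\iota)$ is surjective for $n\ge2$, so $\operatorname{im}\Phi_n\subseteq|A|\,H_n(A\wr\mathbb Z)$. Now $H_n(B)$ is annihilated by $\exp(A)$, hence by $|A|$, for every $n\ge1$: via the Künneth theorem $H_n(B)$ is a colimit of tensor and $\operatorname{Tor}$ terms, each carrying a positive-degree factor $H_{\ge1}(A)$ killed by $\exp(A)$ (the $\operatorname{Tor}$ terms pairing with $H_0(A)=\mathbb Z$ vanish). Since $H_n(A\wr\mathbb Z)$ is a quotient of $H_n(B)$ for $n\ge2$, it is $|A|$-torsion, so $|A|\,H_n(A\wr\mathbb Z)=0$ and therefore $\Phi_n=0$. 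Thus $\id-\Phi_n=\id$ for $n\ge2$, which gives $H_n(\mathscr G)=0$ for $n\ge3$ and $H_2(\mathscr G)\cong\ker(\id-\Phi_1)$.

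It remains to compute $\Phi_1$ on $\ab{(A\wr\mathbb Z)}\cong A\oplus\mathbb Z$, where the projection is $(x,k)\mapsto(\varepsilon(x),k)$ with $\varepsilon(s\delta_i)=s$. On the $A$-summand, represented by $w\in B$, letter-independence gives $\Phi_1(\overline w)=|A|\,\overline{w|_0}$, which lies in the $A$-summand and is killed by $|A|$; so $\Phi_1$ vanishes there. On the generator $t$ I would use the explicit section $t|_e=(e\delta_0)t$ (Example~\ref{ex:lamplighter}), so that $\Phi_1(\overline t)=\sum_{e\in A}\overline{(e\delta_0)t}=(\textstyle\sum_{e\in A}e,\,|A|)$. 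Writing $\Sigma=\sum_{e\in A}e$, this yields $(\id-\Phi_1)(a,k)=(a-k\Sigma,\,k(1-|A|))$. As $|A|>1$ this map is injective, so $H_2(\mathscr G)=0$; and for each fixed $k$ the first coordinate ranges over all of $A$, so the image is $A\oplus(|A|-1)\mathbb Z$ and $\coker(\id-\Phi_1)\cong\mathbb Z/(|A|-1)\mathbb Z$. Combined with $H_0(\mathscr G)\cong\mathbb Z/(|A|-1)\mathbb Z$, this finishes the proof.

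I expect the main obstacle to be the two bookkeeping facts underlying the high-degree collapse: that $\Phi_n$ acquires the factor $|A|$ at the chain level (which hinges on the letter-independence of sections of base-group elements), and that $H_n(B)$ is $|A|$-torsion for $n\ge1$. Both are routine but must be stated carefully. Pleasantly, the explicit form of the section endomorphism $D$ is never needed beyond this factorization, so the only genuinely computational ingredient is the single value $t|_e=(e\delta_0)t$ already recorded in Example~\ref{ex:lamplighter}.
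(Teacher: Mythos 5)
Your proposal is correct and follows essentially the same route as the paper: Corollary~\ref{c:transitive.transfer} with the transversal $\{0\}$, the letter-independence $g|_d=g|_0$ of sections of base-group elements producing the factor $|A|$ at the chain level, the $|A|$-torsion of $H_n(\bigoplus_{i}A\delta_i)$ together with the surjectivity of $H_n(\iota)$ from Proposition~\ref{p:lamplighter.homology} to kill $\Phi_n$ for $n\geq 2$, and the same explicit computation of $\id-\Phi_1$ on $A\oplus\mathbb Z$. The only (immaterial) difference is that you bound $\operatorname{im}\Phi_n$ inside $|A|\,H_n(A\wr\mathbb Z)$, whereas the paper first shows the corresponding map $\Psi_n$ on $H_n(\bigoplus_i A\delta_i)$ vanishes and then pushes forward.
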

\begin{proof}
The computation of $H_0(\mathscr G)$ follows from Corollary~\ref{c:transitive.transfer}. 
We have that $H_1(A\wr \mathbb Z)\cong A\oplus \mathbb Z$ where the classes of $t=(0,1)$ and $(s\delta_0,0)$ map to $(0,1)$ and $(s,0)$ respectively. 
Now $(s\delta_0)|_d =(s\delta_0)|_0\in \bigoplus_{i \in \mathbb Z}A \delta_i$ for all $d\in A$, as was discussed in Example~\ref{ex:lamplighter}.  Thus $\sum_{d\in A} (s\delta_0)|_d=|A|\cdot (s\delta_0)|_0=0$.  On the other hand, $t|_d =(d\delta_0)t$, see Example~\ref{ex:lamplighter}. By Corollary~\ref{c:transitive.transfer}, the map $\id-H_1(\sigma_0)\circ \mathrm{tr}^{G}_{G_0}\colon  A\oplus \mathbb Z\to A\oplus \mathbb Z$ is given by $(d,0)\mapsto (d,0)$ and $(0,1)\mapsto (\sum_{d\in A} d, 1-|A|)$ and $H_1(\mathscr G) \cong (A\oplus \mathbb Z)/(A+\langle (\sum_{d\in A}d,1-|A|)\rangle)\cong \Z/(|A|-1)\mathbb Z$. 

Let $\sigma$ be the $1$-cocycle.  
We saw in Example~\ref{ex:lamplighter} that $\sigma$ restricts to a $1$-cocycle $\til \sigma \colon (\bigoplus_{i\in \mathbb Z}A\delta_i) \times A\to \bigoplus_{i\in \mathbb Z}A\delta_i$ and that $g|_d=g|_0$ for all $g\in \bigoplus_{i \in \mathbb Z}A \delta_i$ and $d\in A$. We obtain a diagram
\[\begin{tikzcd}[arrow style=math font, column sep = 2cm]
	{\bigoplus_{i \in \Z}A \delta_i} & {\bigoplus_{i \in \Z}A \delta_i} \\
	{A \wr \Z} & {A \wr \Z}
	\arrow["{\til \sigma_0 \circ \tr^H_{H_0}}", from=1-1, to=1-2]
	\arrow[from=1-1, to=2-1]
	\arrow[from=1-2, to=2-2]
	\arrow["{\sigma_0 \circ \tr^G_{G_0}}"', from=2-1, to=2-2]
\end{tikzcd}\]
which commutes up to isomorphism, with $H = \bigoplus_{i \in \Z}A \delta_i$. We show that $\Psi_n = H_n(\til \sigma_0)\circ \mathrm{tr}^{H}_{H_0} \colon H_n(\bigoplus_{i\in \mathbb Z}A\delta_i)\to H_n(\bigoplus_{i\in \mathbb Z}A\delta_i)$ is $0$ for all $n\geq 1$.
By Corollary~\ref{c:transitive.transfer}, $\Psi_n$ is induced by the chain map on $C_\bullet(\bigoplus_{i \in \mathbb Z} A \delta_i)$ given at $(f_1,\ldots, f_n) \in (\bigoplus_{i \in \mathbb Z} A \delta_i)^n$ by $(f_1,\ldots, f_n) \mapsto \sum_{d\in A}(f_1|_{f_2\cdots f_n(d)}  ,f_n|_d) = |A|(f_1|_0,\ldots, f_n|_0)$.   If $K$ is a finite group of exponent $r$, then $r\cdot H_n(K)=0$ for all $n\geq 1$, cf.~\cite[Corollary~9.95]{Rotmanhom}.   Therefore,  $|A|\cdot H_n(\bigoplus_{i=-m}^mA\delta_i)=0$, and hence $|A|\cdot H_n(\bigoplus_{i \in \mathbb Z} A\delta_i) =0$, as $H_n(\bigoplus_{i\in \mathbb Z}A\delta_i)=\varinjlim_{m\in \mathbb N} H_n(\bigoplus_{i=-m}^mA\delta_i)$. 
Since this chain map has image contained in $|A|\cdot C_n(\bigoplus_{i \in \mathbb Z}A \delta_i)$, we conclude that $\Psi_n=0$ for $n\geq 1$.

It now follows from Proposition~\ref{p:lamplighter.homology} that $\id -H_n(\sigma_0)\circ \mathrm{tr}^{G}_{G_0}=\id$ for all $n\geq 2$.  Note also that $\id-H_1(\sigma_0)\circ \mathrm{tr}^{G}_{G_0}\colon  A\oplus \mathbb Z\to A\oplus \mathbb Z$ computed in the first paragraph is injective.  We deduce from the long exact sequence of Corollary~\ref{c:transitive.transfer} that $H_n(\mathscr G)=0$ for $n\geq 2$.  
\end{proof}
The groupoid $\mathscr G$ is second countable, amenable, effective, minimal and Hausdorff with torsion-free isotropy. Since $H_n(\mathscr G)=0$ for $n\geq 2$ it satisfies the HK property by~\cite[Remark~3.5]{PY22}. This can also be deduced via Corollary~\ref{c:transitive.transfer.Ktheory} and the K-theory computation for $\cs(A\wr \mathbb Z)$ in~\cite{FPV17}.

\subsection{Solvable Baumslag--Solitar groups}\label{subs:BS}
Next we consider the example of solvable Baumslag--Solitar groups $BS(1,m)$ from Example~\ref{ex:BS.group}.

\begin{Thm}\label{t:BSgroup}
If $n\geq 2$ is relatively prime to $m$, then the homology of the groupoid $\mathscr G_{(m,n)}$ associated to the self-similar action of the  Baumslag--Solitar group $BS(1,m)$ from Example~\ref{ex:BS.group} is given by
\begin{enumerate}
  \item $H_0(\mathscr G_{(m,n)}) \cong \mathbb Z/(n-1)\mathbb Z$;
  \item $H_1(\mathscr G_{(m,n)})\cong \mathbb Z/(m-1)\mathbb Z\oplus \mathbb Z/(n-1)\mathbb Z$;
  \item $H_2(\mathscr G_{(m,n)})\cong \mathbb Z/(m-1)\mathbb Z$;
\end{enumerate}
and $H_q(\mathscr G_{(m,n)})=0$ for $q\geq 3$.
\end{Thm}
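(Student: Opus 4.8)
The plan is to feed the group homology of $G=BS(1,m)$ (with $m\geq 2$ as usual) into the long exact sequence of Corollary~\ref{c:transitive.transfer} for the transitive action on the $n$-letter alphabet $\mathbb Z/n\mathbb Z$. First I would record $H_\bullet(G)$. Since $BS(1,m)=\langle a,b\mid bab\inv a^{-m}\rangle$ is a one-relator group whose relator is not a proper power, its presentation $2$-complex is aspherical and gives a two-dimensional $K(G,1)$ (equivalently, $G=\mathbb Z[1/m]\rtimes\mathbb Z$ is an ascending HNN extension of $\mathbb Z$, the mapping torus of the degree-$m$ self-map of the circle). Hence $H_q(G)=0$ for $q\geq 3$, and the abelianized cellular complex $\mathbb Z\xrightarrow{\partial_2}\mathbb Z^2\to\mathbb Z$ with $\partial_2=(1-m,0)^{T}$ (the exponent-sum vector of the relator) yields $H_0(G)\cong\mathbb Z$, $H_1(G)=\ab G\cong\mathbb Z/(m-1)\mathbb Z\oplus\mathbb Z$ with $\bar a$ of order $m-1$ and $\bar b$ free, and $H_2(G)=\ker\partial_2=0$.

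Plugging these into Corollary~\ref{c:transitive.transfer}, the vanishing of $H_q(G)$ for $q\geq 2$ forces $H_q(\mathscr G_{(m,n)})=0$ for $q\geq 3$ directly from exactness, and gives $H_0(\mathscr G_{(m,n)})\cong\mathbb Z/(n-1)\mathbb Z$ from the ``in particular'' clause (as $|E|=n$). Using that $\id-\Phi_0=1-n$ is injective on $H_0(G)=\mathbb Z$, the sequence collapses to isomorphisms $H_2(\mathscr G_{(m,n)})\cong\ker(\id-\Phi_1)$ and $H_1(\mathscr G_{(m,n)})\cong\coker(\id-\Phi_1)$, where $\Phi_1\colon\ab G\to\ab G$ is $\Phi_1(\bar g)=\sum_{i=0}^{n-1}\overline{g|_i}$.

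The heart of the argument is computing $\Phi_1$ on the two generators. From $a|_i=1$ for $i<n-1$ and $a|_{n-1}=a$ I get $\Phi_1(\bar a)=\bar a$, and from $b|_i=a^{\lfloor mi/n\rfloor}b$ I get $\Phi_1(\bar b)=c\,\bar a+n\,\bar b$ with $c=\sum_{i=0}^{n-1}\lfloor mi/n\rfloor$; the classical identity $\sum_{i=0}^{n-1}\lfloor mi/n\rfloor=\tfrac{(m-1)(n-1)}{2}$ (valid as $\gcd(m,n)=1$) evaluates $c$. Thus $\id-\Phi_1$ sends $\bar a\mapsto 0$ and $\bar b\mapsto -c\,\bar a+(1-n)\bar b$. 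Its kernel is cut out by $y(1-n)=0$ in the $\bar b$-coordinate, so (using $n\geq 2$) it is exactly the torsion summand $\langle\bar a\rangle\cong\mathbb Z/(m-1)\mathbb Z$, yielding $H_2(\mathscr G_{(m,n)})\cong\mathbb Z/(m-1)\mathbb Z$.

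Finally I would compute the cokernel as $\mathbb Z^2$ modulo the relations $(m-1)\bar a=0$ and $-c\,\bar a+(1-n)\bar b=0$, i.e.\ the cokernel of $\left(\begin{smallmatrix}m-1&0\\-c&1-n\end{smallmatrix}\right)$, whose Smith normal form is read off from $\lvert\det\rvert=(m-1)(n-1)$ and the gcd of its entries $\gcd(m-1,c,n-1)$. The one point needing care is that $g:=\gcd(m-1,n-1)$ divides $c=\tfrac{(m-1)(n-1)}{2}$, which a short parity case-check on $g$ confirms; then $\gcd(m-1,c,n-1)=g$, so the invariant factors are $g$ and $(m-1)(n-1)/g=\operatorname{lcm}(m-1,n-1)$. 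The standard isomorphism $\mathbb Z/s\oplus\mathbb Z/t\cong\mathbb Z/\gcd(s,t)\oplus\mathbb Z/\operatorname{lcm}(s,t)$ rewrites this as $\mathbb Z/(m-1)\mathbb Z\oplus\mathbb Z/(n-1)\mathbb Z$, giving $H_1(\mathscr G_{(m,n)})$. I expect the main obstacle to be purely bookkeeping: tracking the $(m-1)$-torsion in $\ab G$ correctly through $\id-\Phi_1$ and verifying the divisibility $g\mid c$, rather than any deeper structural input.
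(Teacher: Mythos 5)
Your proof is correct, and its skeleton is the same as the paper's: compute $H_\bullet(BS(1,m))$ from the aspherical presentation $2$-complex, feed it into the long exact sequence of Corollary~\ref{c:transitive.transfer}, and reduce everything to the kernel and cokernel of $\id-\Phi_1$ on $\ab{G}\cong\mathbb Z/(m-1)\mathbb Z\oplus\mathbb Z$. Where you genuinely diverge is the evaluation of $c=\sum_{i=0}^{n-1}\lfloor mi/n\rfloor$ and the ensuing cokernel computation, and there your version is the one that actually works. The paper asserts $c=(m-1)\binom{n}{2}$, hence $c\equiv 0\bmod(m-1)$, and concludes $(\id-\Phi_1)\bar b=-(n-1)\bar b$, which makes the cokernel immediate; but the correct identity is $mi=n\lfloor mi/n\rfloor+(mi\bmod n)$, which yields $c=(m-1)(n-1)/2$ exactly as you say (the paper drops a factor of $n$), and this is \emph{not} divisible by $m-1$ when $n$ is even --- for $m=3$, $n=2$ one gets $c=1$ while $m-1=2$. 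So the $\bar a$-component of $(\id-\Phi_1)\bar b$ does not vanish in general, and one really does need your Smith normal form argument: the cokernel is that of $\bigl(\begin{smallmatrix}m-1&-c\\0&1-n\end{smallmatrix}\bigr)$, with invariant factors $\gcd(m-1,c,n-1)$ and $(m-1)(n-1)/\gcd(m-1,c,n-1)$, and your divisibility check $\gcd(m-1,n-1)\mid c$ (which holds because $m$ and $n$ cannot both be even) is precisely what identifies these with $\gcd(m-1,n-1)$ and $\operatorname{lcm}(m-1,n-1)$, hence with $\mathbb Z/(m-1)\mathbb Z\oplus\mathbb Z/(n-1)\mathbb Z$. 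Your kernel computation is likewise correct and, as you observe, independent of $c$, so part (3) is unaffected. In short: same strategy, but your more careful handling of the arithmetic step repairs a slip in the published argument while confirming that the stated answer is nevertheless right.
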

\begin{proof}
It is immediate from the presentation $BS(1,m) = \langle a,b\mid bab\inv =a^m\rangle$ that $\ab{BS(1,m)}\cong \mathbb Z/(m-1)\mathbb Z \oplus \mathbb Z$ where the class of $a$ maps to $(1,0)$ and the class of $b$ to $(0,1)$.  Lyndon's Identity Theorem implies that if $G$ is a torsion-free one-relator group, then its  presentation $2$-complex is a $K(G,1)$.  Hence,
$H_n(BS(1,m))=0$ for $n\geq 2$ since the defining relator doesn't belong to the commutator subgroup of the free group on $a,b$.  See~\cite[Chapter II.4, Example~3]{Browncohomology}.

The computation of $H_0(\mathscr G_{(m,n)})$ follows from Corollary~\ref{c:transitive.transfer}.  Let us write $\til g$ for the image of $g$ in $\ab {BS(1,m)}$.  Since $H_q(BS(1,m))=0$ for $q\geq 2$, we have $H_q(\mathscr G)=0$ for $q\geq 3$, and an exact sequence \[0\to H_2(\mathscr G)\to H_1(BS(1,m))\xrightarrow{\id -\Phi_1} H_1(BS(1,m))\to H_1(\mathscr G)\to 0\] from Corollary~\ref{c:transitive.transfer} where $\Phi_1(\til a) = \sum_{i=0}^{n-1}\til {a|_{\ov i}}$ and  $\Phi_1(\til b) = \sum_{i=0}^{n-1}\til {b|_{\ov i}}$. Then we see that $\Phi_1(\til a) = \til a$, and so $(\id -\Phi_1)(\til a)=0$.  To compute $\Phi_1(\til b)$, note that $\Phi_1(\til b) = \sum_{i=0}^{n-1}\lfloor mi/n\rfloor \til a+ n\til b$, and hence $(\id -\Phi_1)(\til b) =- \sum_{i=0}^{n-1}\lfloor mi/n\rfloor \til a - (n-1)\til b $.  But by definition $\sum_{i=0}^{n-1}mi = \sum_{i=0}^{n-1}(\lfloor mi/n\rfloor + b(\ov i))$.  Since $b$ permutes $\mathbb Z/n\mathbb Z$, we have that $\sum_{i=0}^{n-1}b(\ov i)=\sum_{i=0}^{n-1}i=\binom{n}{2}$.  Therefore,  \[\sum_{i=0}^{n-1}\lfloor mi/n\rfloor  = (m-1)\binom{n}{2}\equiv 0\bmod {(m-1)}.\] It follows that $(\id -\Phi_1)\til b = -(n-1)\til b$, and hence  $\coker (\id -\Phi_1)\cong   \mathbb Z/(m-1)\mathbb Z\oplus \mathbb Z/(n-1)\mathbb Z$.   Clearly, $\ker (\id -\Phi_1) = \langle \til a\rangle \cong \mathbb Z/(m-1)\mathbb Z$ since $(\id -\Phi_1)\til a =0$ and $(\id -\Phi_1)\til b=-(n-1)\til b$ has infinite order. This completes the proof.
\end{proof}

\begin{Rmk}
Using Theorem~\ref{t:BSgroup} and the spectral sequence of~\cite{PV18}, one can show that $K_1(\cs(\mathscr G_{(m,n)}))\cong \mathbb Z/(m-1)\mathbb Z\oplus \mathbb Z/(n-1)\mathbb Z$ and that there is an exact sequence $0\to \mathbb Z/(n-1)\mathbb Z\to K_0(\mathscr G_{(m,n)})\to \mathbb Z/(m-1)\mathbb Z\to 0$.  This can also be obtained using our methods and the well-known K-theory of $\cs(BS(1,m))$~\cite{PV18}.  It is not immediately clear for which $m,n$ this sequence splits, and thus when the HK property holds for this groupoid.    
\end{Rmk}

\subsection{Free abelian groups}\label{subs:freeabelian}
Next we consider self-similar actions of free abelian groups as per Example~\ref{Ex:FreeAb}, whose notation we retain.

We write $\Lambda^q(C)$ for the $q^{th}$-exterior power of a matrix $C$, where we take $\Lambda^q(C)=0$ if $q<0$.  The trick in the following lemma is inspired by~\cite{EHR11}. 

\begin{Lemma}\label{l:ext.power.trick}
Let $e_1,\ldots, e_n$ be the standard basis for $G=\mathbb Z^n$, let $d_1,\ldots, d_n\geq 1$ and put $d=d_1\cdots d_n$.  Let $H$ be the subgroup with basis $f_i=d_ie_i$   and let $\iota\colon H\to G$ be the inclusion.  Then, for $0\leq q\leq n$, 
\[T(e_{i_1}\wedge\cdots \wedge e_{i_q}) = d\left(\frac{1}{d_{i_1}}f_{i_1}\wedge\cdots \wedge \frac{1}{d_{i_q}}f_{i_q}\right)\]
is the unique homomorphism $T\colon \Lambda^q(G)\to \Lambda^q(H)$ such that $\Lambda^q(\iota)\circ T=d\cdot\id=T\circ \Lambda^q(\iota)$. 
\end{Lemma}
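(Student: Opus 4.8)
The plan is to verify the three claimed properties directly on basis elements, since everything in sight is a homomorphism between free abelian groups with explicit bases. Recall that $\Lambda^q(G)$ has basis $\{e_{i_1}\wedge\cdots\wedge e_{i_q} \mid i_1<\cdots<i_q\}$ and $\Lambda^q(H)$ has basis $\{f_{i_1}\wedge\cdots\wedge f_{i_q}\mid i_1<\cdots<i_q\}$. Since $f_i=d_ie_i$, the inclusion induces $\Lambda^q(\iota)(f_{i_1}\wedge\cdots\wedge f_{i_q}) = d_{i_1}\cdots d_{i_q}\,e_{i_1}\wedge\cdots\wedge e_{i_q}$, and the element $\frac{1}{d_{i_1}}f_{i_1}\wedge\cdots\wedge\frac{1}{d_{i_q}}f_{i_q}$ appearing in the statement is understood as the genuine element $\frac{1}{d_{i_1}\cdots d_{i_q}}(f_{i_1}\wedge\cdots\wedge f_{i_q})$ of $\Lambda^q(H)$ scaled by $d$; the factor $d/(d_{i_1}\cdots d_{i_q})$ is a nonnegative integer, so $T$ genuinely lands in $\Lambda^q(H)$.

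First I would check that $\Lambda^q(\iota)\circ T = d\cdot\id$. Applying $\Lambda^q(\iota)$ to $T(e_{i_1}\wedge\cdots\wedge e_{i_q}) = \frac{d}{d_{i_1}\cdots d_{i_q}}(f_{i_1}\wedge\cdots\wedge f_{i_q})$ and using the formula for $\Lambda^q(\iota)$ above yields $\frac{d}{d_{i_1}\cdots d_{i_q}}\cdot d_{i_1}\cdots d_{i_q}\,(e_{i_1}\wedge\cdots\wedge e_{i_q}) = d\,(e_{i_1}\wedge\cdots\wedge e_{i_q})$, as required. Next I would check $T\circ\Lambda^q(\iota)=d\cdot\id$ on $\Lambda^q(H)$. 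Applying $\Lambda^q(\iota)$ first sends $f_{i_1}\wedge\cdots\wedge f_{i_q}$ to $d_{i_1}\cdots d_{i_q}\,(e_{i_1}\wedge\cdots\wedge e_{i_q})$, and then $T$ sends this to $d_{i_1}\cdots d_{i_q}\cdot\frac{d}{d_{i_1}\cdots d_{i_q}}(f_{i_1}\wedge\cdots\wedge f_{i_q}) = d\,(f_{i_1}\wedge\cdots\wedge f_{i_q})$. Both computations are immediate once the conventions are pinned down.

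For uniqueness, I would observe that $\Lambda^q(\iota)$ is injective (it is the composition of a basis relabeling with multiplication of basis vectors by positive integers, hence injective as a map of free abelian groups). Therefore if $T'$ is any homomorphism with $\Lambda^q(\iota)\circ T' = d\cdot\id$, then $\Lambda^q(\iota)\circ T' = \Lambda^q(\iota)\circ T$ forces $T'=T$ by injectivity of $\Lambda^q(\iota)$. Thus the single equation $\Lambda^q(\iota)\circ T=d\cdot\id$ already determines $T$ uniquely, and the second equation $T\circ\Lambda^q(\iota)=d\cdot\id$ is an additional verified property rather than part of the characterization.

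I do not expect any serious obstacle here: the whole lemma is a bookkeeping exercise in multilinear algebra over free abelian groups. The one point deserving care is the interpretation of the symbol $\frac{1}{d_i}f_i$ and the verification that the scalar $d/(d_{i_1}\cdots d_{i_q})$ is an integer, which holds because $d=d_1\cdots d_n$ is divisible by any subproduct $d_{i_1}\cdots d_{i_q}$. Making this explicit is the only step that is not entirely mechanical, and it is what guarantees $T$ is well defined as a map into $\Lambda^q(H)$ rather than into $\Lambda^q(H)\otimes\mathbb Q$.
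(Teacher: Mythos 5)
Your proposal is correct and follows essentially the same route as the paper: both arguments rest on the injectivity of $\Lambda^q(\iota)$ for uniqueness, the computation $\Lambda^q(\iota)(f_{i_1}\wedge\cdots\wedge f_{i_q})=d_{i_1}\cdots d_{i_q}\,e_{i_1}\wedge\cdots\wedge e_{i_q}$, and the observation that $d/(d_{i_1}\cdots d_{i_q})$ is an integer so that $T$ lands in $\Lambda^q(H)$. The paper merely packages this as ``$T=d\Lambda^q(\iota)^{-1}$ over $\mathbb Q$, which is defined over $\mathbb Z$,'' whereas you verify the two composite identities directly on basis elements; the content is identical.
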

\begin{proof}
The map $\Lambda^q(\iota)$ is injective, and so invertible over $\mathbb Q$.  Thus over $\mathbb Q$ there is a unique such $T$, namely $d\Lambda^q(\iota)\inv$.  We check that $d\Lambda^q(\iota)\inv$ is defined over $\mathbb Z$.  Indeed,
$\Lambda^q(\iota)(f_{i_1}\wedge\cdots \wedge f_{i_q}) = d_{i_1}e_{i_1}\wedge\cdots\wedge d_{i_q}e_q = d_{i_1}\cdots d_{i_q}e_{i_1}\wedge\cdots 
\wedge e_{i_q}$.  
It follows that 
\begin{align*}
d\Lambda^q(\iota)\inv (e_{i_1}\wedge\cdots\wedge e_{i_q}) & = \frac{d}{d_{i_1}\cdots d_{i_q}}(f_{i_1}\wedge\cdots \wedge f_{i_q}) \\
& = d\left(\frac{1}{d_{i_1}}f_{i_1}\wedge\cdots \wedge \frac{1}{d_{i_q}}f_{i_q}\right)
\end{align*} which belongs to  $\Lambda^q(H)$.
\end{proof}

We can now compute the homology of the groupoid associated to a transitive self-similar group action of $\mathbb Z^n$.

\begin{Thm}\label{t:homology.self.similar.free.ab}
Let $G=\mathbb Z^n$ have a self-similar transitive action on a set $X$ of cardinality $d\geq 2$ and let $\mathscr G$ be the corresponding groupoid.  Fix $x\in X$,  and let $A$ the matrix of $\sigma_x\otimes 1_{\mathbb Q}$ with respect to some basis.  Then
\[H_q(\mathscr G) = \ker(\id -d\Lambda^{q-1}(A))\oplus \coker (\id -d\Lambda^q(A)).\]
In particular, $H_q(\mathscr G)=0$ if $q>n+1$.
\end{Thm}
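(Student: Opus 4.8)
The plan is to run the long exact sequence of Corollary~\ref{c:transitive.transfer} (with $\mathbb Z$-coefficients) against an explicit identification of the maps $\Phi_q$. Since $G=\mathbb Z^n$ is free abelian, $(S^1)^n$ is a $K(\mathbb Z^n,1)$ and the K\"unneth theorem gives $H_q(\mathbb Z^n)\cong \Lambda^q(\mathbb Z^n)$, free abelian of rank $\binom nq$, and in particular $H_q(\mathbb Z^n)=0$ for $q>n$. Because the action is transitive, a $G$-transversal to $X$ is a single point $\{x\}$, so the sequence reads
\[ \cdots \to H_{q+1}(\mathscr G)\to \Lambda^q(\mathbb Z^n)\xrightarrow{\id-\Phi_q}\Lambda^q(\mathbb Z^n)\to H_q(\mathscr G)\to \cdots, \]
with $\Phi_q=H_q(\sigma_x)\circ \tr^G_{G_x}$ and $H_q(\sigma_x)=\Lambda^q(\sigma_x)$ by functoriality of $\Lambda^q$ on homomorphisms of free abelian groups.

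The key step is to show $\Phi_q=d\,\Lambda^q(A)$. As in Example~\ref{Ex:FreeAb}, Smith normal form supplies a basis $e_1,\dots,e_n$ of $G$ and integers $d_1,\dots,d_n$ so that $f_i=d_ie_i$ is a basis of $G_x$ and $d=d_1\cdots d_n$. Since $G_x$ is normal (abelian $G$) of index $d$, Corollary~\ref{c:inc.transfer} gives $\tr^G_{G_x}\circ\Lambda^q(\iota)=d\cdot\id$; as $\Lambda^q(\iota)$ becomes invertible after $\otimes\mathbb Q$, this characterizes $\tr^G_{G_x}$ rationally, hence integrally, and it must coincide with the map $T$ of Lemma~\ref{l:ext.power.trick}. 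Composing with $\Lambda^q(\sigma_x)$ and using that the matrix $B$ of $\sigma_x$ in these bases has $i$-th column equal to $d_i$ times the $i$-th column of $A$, so that $\sigma_x(f_{i_k})=d_{i_k}(\sigma_x\otimes 1_{\mathbb Q})(e_{i_k})$, the factors $1/d_{i_k}$ appearing in $T$ cancel exactly the factors $d_{i_k}$ from $\sigma_x$, yielding $\Phi_q(e_{i_1}\wedge\cdots\wedge e_{i_q})=d\,\Lambda^q(A)(e_{i_1}\wedge\cdots\wedge e_{i_q})$.

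With $\id-\Phi_q=\id-d\Lambda^q(A)$ established, the long exact sequence breaks into short exact sequences
\[ 0\to \coker(\id-d\Lambda^q(A))\to H_q(\mathscr G)\to \ker(\id-d\Lambda^{q-1}(A))\to 0. \]
Here $\ker(\id-d\Lambda^{q-1}(A))$ is a subgroup of the free abelian group $\Lambda^{q-1}(\mathbb Z^n)$, hence free, so the sequence splits and gives the stated isomorphism. The final ``in particular'' clause is then immediate: for $q>n+1$ one has both $q-1>n$ and $q>n$, so $\Lambda^{q-1}(\mathbb Z^n)=0=\Lambda^q(\mathbb Z^n)$ and both summands vanish, whence $H_q(\mathscr G)=0$.

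The only genuinely technical point I expect to need care is the identification $\tr^G_{G_x}=T$: one must confirm that Corollary~\ref{c:inc.transfer} applies and that agreement over $\mathbb Q$ lifts to $\mathbb Z$ because both maps are integral while $\Lambda^q(\iota)$ is rationally invertible. Everything else—the splitting via freeness, and the bookkeeping of the $d_i$ factors—is routine.
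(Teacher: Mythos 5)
Your proposal is correct and follows essentially the same route as the paper: identify $H_q(\mathbb Z^n)$ with $\Lambda^q(\mathbb Z^n)$, pin down the transfer as the integral map $T=d\Lambda^q(\iota)^{-1}$ of Lemma~\ref{l:ext.power.trick} via a composition identity and rational invertibility, cancel the $d_i$ factors against $\sigma_x(f_i)=d_i\,Ae_i$ to get $\Phi_q=d\Lambda^q(A)$, and split the resulting short exact sequences using freeness of the kernel. The only cosmetic difference is that you characterize the transfer via $\tr^G_{G_x}\circ H_q(\iota)=d\cdot\id$ (Corollary~\ref{c:inc.transfer}, valid since $G$ is abelian) while the paper uses the opposite composition $H_q(\iota)\circ\tr^G_{G_x}=d\cdot\id$; both determine the same map.
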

\begin{proof}
Without loss of generality we may assume that  $e_1,\ldots, e_n$ is a basis for $\mathbb Z^n$ such that $f_1=d_1e_1,\ldots, f_n=d_ne_n$ is a basis for $G_x$  and $d=d_1\cdots d_n$.  Let $B\in M_n(\mathbb Z)$ be the matrix for the virtual endomorphism $\sigma_x$ with respect to these bases and $A$ the matrix for $\sigma_x\otimes 1_{\mathbb Q}$ with respect to the basis $e_1,\ldots, e_n$.

Recall~\cite[Chapter~V, Section~6]{Browncohomology} that, for an abelian group $H$, there is a  homomorphism $\psi\colon \Lambda^*(H)\to H_*(H)$, natural in $H$, induced by the identification $H\to H_1(H)$, where $H_*(H)$ is an anti-commutative ring via the Pontryagin product. 
Moreover, $\psi$ is an isomorphism whenever $H$ is torsion-free.  In particular, $H_q(G)\cong \mathbb Z^{\binom{n}{q}}$, for all $q\geq 0$.  

We show that under the identification $\Lambda^q(G)\cong H_q(G)$, the map $H_q(\sigma_x)\circ \mathrm{tr}^G_{G_x}$ is given by $d\Lambda^q(A)$.   
Let $\iota\colon G_x\to G$ be the inclusion.  Then $H_q(\iota)\circ\mathrm{tr}^G_{G_x} = d\cdot \id$ by~\cite[Chapter III, Proposition~9.5]{Browncohomology}.  
Under the natural identification $H_q(G)\cong\Lambda^q(G)$ and $H_q(G_x)\cong\Lambda^q(G_x)$, we have that $H_q(\iota)$ corresponds to $\Lambda^q(\iota)$, and so $\mathrm{tr}^G_{G_x}(e_{i_1} \wedge \cdots \wedge e_{i_q}) = d(\frac{1}{d_{i_1}}f_{i_1}\wedge \cdots \wedge \frac{1}{d_{i_q}}f_{i_q})$ by Lemma~\ref{l:ext.power.trick}.   From the naturality of $\psi$,  and the fact that $Ae_i = \frac{1}{d_i}\sigma_x(f_i)$, we see that $H_q(\sigma_x)\circ \mathrm{tr}^G_{G_x}\colon H_q(G)\to H_q(G)$ is given by $d\Lambda^q(A)$ for $q\geq 0$ under our identifications.  The result follows from  Corollary~\ref{c:transitive.transfer}.
\end{proof}

Recall from Example~\ref{Ex:FreeAb} that if the self-similar action is self-replicating, then $A\inv$ is an integer dilation matrix with image $G_x$, where $x\in X$, and so $d=[\mathbb Z^n:G_x]=|\det(A\inv)|$.  It follows that $d\Lambda^q(A)\cdot \Lambda^q(A\inv) = d\cdot\id$.
The following is essentially~\cite[Proposition~4.6]{EHR11} (where we note they work with the transpose of $A\inv$), but we give an easier proof.

\begin{Prop}\label{p:self.replicating.contracting}
Suppose that $A$ has spectral radius less than $1$ and $\sigma_x$ is surjective.  
\begin{enumerate}
    \item $1 -d\Lambda^0(A) = 1-d<0$. 
    \item $\det(\id -d\Lambda^q(A))\neq 0$ for $1\leq q<n$.
    \item $1 -d\Lambda^n(A)=\begin{cases}0, & \text{if}\ \det A>0\\ 2, & \text{if}\ \det A< 0.\end{cases}$
\end{enumerate}
\end{Prop}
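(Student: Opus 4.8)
The plan is to reduce everything to the eigenvalues of $A$, exploiting the self-replicating hypothesis to control their moduli. First I would recall from the discussion of Example~\ref{Ex:FreeAb} that surjectivity of $\sigma_x$ forces the virtual endomorphism to be an isomorphism $G_x\to \mathbb Z^n$, so that $A$ is invertible, $A\inv\in M_n(\mathbb Z)$, and $d=[\mathbb Z^n:G_x]=|\det(A\inv)|$. In particular $d\,|\det A|=1$.

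Parts (1) and (3) are then immediate. Since $\Lambda^0(A)$ is the identity on the one-dimensional space $\Lambda^0(\mathbb Q^n)$ and $d\geq 2$, we get $1-d\Lambda^0(A)=1-d<0$, giving (1). For (3), $\Lambda^n(A)$ is the scalar $\det A$, so $d\Lambda^n(A)=d\det A$; since $d\,|\det A|=1$ this equals $\operatorname{sign}(\det A)\in\{1,-1\}$, and hence $1-d\Lambda^n(A)$ is $0$ when $\det A>0$ and $2$ when $\det A<0$.

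The substance is in (2). I would pass to an algebraic closure and list the eigenvalues $\lambda_1,\dots,\lambda_n$ of $A$ with multiplicity; by the standard triangularization argument the eigenvalues of $\Lambda^q(A)$ are exactly the products $\lambda_{i_1}\cdots\lambda_{i_q}$ over $q$-element subsets $\{i_1<\cdots<i_q\}$. Thus $\det(\id-d\Lambda^q(A))\neq 0$ is equivalent to the assertion that $d\,\lambda_{i_1}\cdots\lambda_{i_q}\neq 1$ for every such subset. Writing $\mu_i=\lambda_i\inv$ for the eigenvalues of the integer matrix $A\inv$, the hypothesis that $A$ has spectral radius less than $1$ says precisely that $|\mu_i|>1$ for all $i$, and the condition to rule out becomes $\prod_{j}\mu_{i_j}=d$.

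The key step is a strict modulus comparison. Since $\prod_{i=1}^n\mu_i=\det(A\inv)$ has modulus $d$, for a subset $S$ of size $q$ with $1\leq q\leq n-1$ the complementary subset $S^c$ is nonempty, whence $\bigl|\prod_{i\in S}\mu_i\bigr|=d\big/\bigl|\prod_{i\in S^c}\mu_i\bigr|<d$ because each $|\mu_i|>1$. As $d$ is a positive real, this forces $\prod_{i\in S}\mu_i\neq d$, so $1$ is not an eigenvalue of $d\Lambda^q(A)$ and the determinant is nonzero. The only real care needed — and the point I would watch most closely — is that a modulus inequality really does yield the desired inequality of complex numbers: it is essential that we compare against the genuinely positive real $d$, and that we use the nonemptiness of $S^c$, which fails exactly at $q=n$ and accounts for the different conclusion in (3).
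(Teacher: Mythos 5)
Your proposal is correct and follows essentially the same route as the paper: both reduce (2) to the fact that the eigenvalues of $\Lambda^q(A)$ are the $q$-fold products of eigenvalues of $A$, and both derive the contradiction from a strict modulus comparison using $d=|\det(A\inv)|$ together with the spectral radius hypothesis (the paper phrases this for the $\lambda_i$ directly, showing a product equal to $1/d$ would force $|\lambda_1\cdots\lambda_n|<1/d$, while you work with the reciprocals $\mu_i=\lambda_i\inv$ — a purely cosmetic difference). Parts (1) and (3) are handled identically in both arguments.
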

\begin{proof}
The first item is trivial.  For the second item, note that $d=|\det A\inv|$, and so if $\lambda_1,\ldots, \lambda_n$ are the complex eigenvalues of $A$ with multiplicity, then $|\lambda_1\cdots\lambda_n|=1/d$.  Now the eigenvalues of $\Lambda^q(A)$ are well known to be all products $\lambda_{i_1}\dots \lambda_{i_q}$ with $i_1<\cdots<i_q$.  If $1\leq q<n$, then we cannot have  $\lambda_{i_1}\dots \lambda_{i_q}=1/d$ as the spectral radius of $A$ is less than $1$, and so we would obtain the contradiction  $|\lambda_1\cdots\lambda_n|<1/d$.  Thus (2) holds.  Since $d=|\det A\inv|$, we have $1-d\Lambda^n(A) = 1- \det A/|\det A|$, and (3) follows.
\end{proof}
\begin{Cor}\label{c:self.rep.ab}
Let $G=\mathbb Z^n$ have a self-similar contracting and self-rep\-li\-ca\-ting action on a set $X$ of cardinality $d\geq 2$ and let $\mathscr G$ be the corresponding groupoid.  Fix $x\in X$,  and let $A$ the matrix of $\sigma_x\otimes 1_{\mathbb Q}$ with respect to some basis.  Then
\[H_q(\mathscr G) = \begin{cases} \mathbb Z/(d-1)\mathbb Z, & \text{if}\ q=0,\\
\coker (\id -d\Lambda^q(A)), & \text{if}\ 1\leq q<n,\\
\mathbb Z, & \text{if}\ n \leq q \leq n+1, \det A>0,\\
\mathbb Z/2\mathbb Z & \text{if}\ q=n, \det A<0,\\
0, & \text{else.}
\end{cases}\]   
\end{Cor}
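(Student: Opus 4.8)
The plan is to feed the outputs of Proposition~\ref{p:self.replicating.contracting} into the formula
\[H_q(\mathscr G)\cong \ker(\id -d\Lambda^{q-1}(A))\oplus \coker(\id -d\Lambda^q(A))\]
of Theorem~\ref{t:homology.self.similar.free.ab}, treating each degree $q$ against the range $[0,n]$ separately. First I would record the boundary conventions: $\Lambda^q(A)=0$ whenever $q<0$ or $q>n$, while $\Lambda^0(A)=1$ is the identity on the rank-one group $\Lambda^0(G)\cong\mathbb Z$ and $\Lambda^n(A)=\det A$ acts on $\Lambda^n(G)\cong\mathbb Z$. With these in hand every term in the direct sum is the kernel or cokernel of an endomorphism of a free abelian group of finite rank.

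Next I would dispose of the kernel summands. For $0\le q-1<n$ the map $\id-d\Lambda^{q-1}(A)$ is injective: when $q-1=0$ it is multiplication by $1-d\neq 0$ (here $d\geq 2$ is essential), and when $1\le q-1<n$ it is Proposition~\ref{p:self.replicating.contracting}(2), which gives nonzero determinant. Hence $\ker(\id-d\Lambda^{q-1}(A))=0$ for all $1\le q\le n$, so the kernel summand contributes only at $q=n+1$, where it becomes $\ker(\id-d\Lambda^n(A))=\ker(1-d\det A)$.

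Then I would assemble the cokernel summands degree by degree. At $q=0$ the kernel term vanishes (since $\Lambda^{-1}(A)=0$) and $\coker(\id-d\Lambda^0(A))=\coker(1-d)=\mathbb Z/(d-1)\mathbb Z$. For $1\le q<n$ only the cokernel survives, giving $\coker(\id-d\Lambda^q(A))$, which is finite by Proposition~\ref{p:self.replicating.contracting}(2). At $q=n$ the surviving term is $\coker(1-d\det A)$, and at $q=n+1$ it is $\ker(1-d\det A)$, with $\coker(\id-d\Lambda^{n+1}(A))=0$. Now Proposition~\ref{p:self.replicating.contracting}(3) splits into two subcases: if $\det A>0$ then $1-d\det A=0$, so $\coker=\mathbb Z$ at $q=n$ and $\ker=\mathbb Z$ at $q=n+1$; if $\det A<0$ then $1-d\det A=2$, so $\coker=\mathbb Z/2\mathbb Z$ at $q=n$ and $\ker=0$ at $q=n+1$. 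Finally $H_q(\mathscr G)=0$ for $q>n+1$ is immediate from Theorem~\ref{t:homology.self.similar.free.ab}. Collating these values reproduces the five-way formula.

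There is no genuine obstacle here: the corollary is a bookkeeping consequence of the two cited results. The only point requiring care is tracking exactly which of the two summands in $H_q(\mathscr G)$ survives at the three boundary degrees $q=0,n,n+1$, and correctly pairing the sign of $\det A$ with the vanishing versus doubling of $1-d\det A$ coming from Proposition~\ref{p:self.replicating.contracting}(3). I would present the matching as a short case analysis over these degrees to make it transparent, and in particular to confirm that the $\det A<0$, $q=n+1$ value indeed lands in the ``else'' (that is, $0$) branch.
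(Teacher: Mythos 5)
Your proposal is correct and is exactly the paper's argument: the paper's proof is the single sentence ``This is immediate from Theorem~\ref{t:homology.self.similar.free.ab} and Proposition~\ref{p:self.replicating.contracting},'' and your case analysis over $q=0$, $1\le q<n$, $q=n$, $q=n+1$ fills in precisely the bookkeeping that sentence leaves implicit, with the correct pairing of the sign of $\det A$ to the values $0$ and $2$ of $1-d\Lambda^n(A)$.
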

\begin{proof}
This is immediate from Theorem~\ref{t:homology.self.similar.free.ab} and Proposition~\ref{p:self.replicating.contracting}.
\end{proof}

We now turn to the K-theory of the $\cs$-algebras of self-similar actions of free abelian groups.  These results generalize those of~\cite{EHR11}, which in light of~\cite{LRRW14} correspond to the case of self-replicating contracting free abelian groups.

The K-theory $K_*(B) = K_0(B) \oplus K_1(B)$ of a commutative $\cs$-algebra $B$ has the structure of a $\Z /2 \Z$-graded ring.
It is a well-known result, cf.~\cite{Elliot84,Ji86}, that  $K_*(\cs(\mathbb Z^n))$ is graded isomorphic to the exterior algebra $ \Lambda^*(\mathbb Z^n)$, with the grading into even degree and odd degree wedge products. If $e_1,\ldots, e_n$ is the standard basis for $\mathbb Z^n$, then $[u_{e_i}]_1 \mapsto e_i\in \Lambda^1(\mathbb Z^n)$ under the isomorphism (and $[1]_0$ maps to the empty wedge product).
It follows easily from this that if $A\in M_n(\mathbb Z)$ is a matrix, then the map on K-theory induced by the endomorphism $A$ of $\mathbb Z^n$, which is a ring homomorphism, is conjugate via the above isomorphism to $\Lambda^*(A)$.    

\begin{Thm}\label{t:FreeAbK}
Let $G=\mathbb Z^n$ have a self-similar transitive action on a set $X$ of cardinality $d\geq 2$ and let $\mathscr G$ be the corresponding groupoid.  Fix $x\in X$,  and let $A$ the matrix of $\sigma_x\otimes 1_{\mathbb Q}$ with respect to some basis of $G$.  Then:
\begin{enumerate}
\item $K_0(\cs(\mathscr G)) =  \bigoplus_{q\geq 0}\ker(\id -d\Lambda^{2q-1}(A))\oplus \coker(\id -d\Lambda^{2q}(A))$.
\item $K_1(\cs(\mathscr G))= \bigoplus_{q\geq 0}\ker(\id -d\Lambda^{2q}(A))\oplus \coker(\id -d\Lambda^{2q+1}(A))$.
\end{enumerate}
The class $[1]_0\in K_0(\cs(\mathscr G))$ of the unit is the generator of the summand $\coker(\id-d\Lambda^0(A))\cong \mathbb Z/(d-1)\mathbb Z$.
\end{Thm}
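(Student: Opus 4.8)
The plan is to mirror the homology computation of Theorem~\ref{t:homology.self.similar.free.ab}, but run it through the six-term K-theory sequence of Corollary~\ref{c:transitive.transfer.Ktheory}. Since the action is transitive I may take the $G$-transversal $T\subseteq X$ to be the single point $\{x\}$, so the relevant endomorphisms of $K_i(\cs(G))$ are $\Phi_i=K_i(\sigma_x)\circ\tr^G_{G_x}$, and $\mathcal O_{\mathcal X}\cong\cs(\mathscr G)$ by Proposition~\ref{Toeplitz isomorphism}. Everything then reduces to identifying the $\Phi_i$.

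The heart of the argument is to show that under the graded ring isomorphism $K_*(\cs(\mathbb Z^n))\cong\Lambda^*(\mathbb Z^n)$ recalled just before the theorem, the combined map $\Phi=\Phi_0\oplus\Phi_1$ is exactly $\bigoplus_{q\geq 0}d\Lambda^q(A)$. I would fix the Smith-normal-form bases $e_1,\dots,e_n$ of $G$ and $f_i=d_ie_i$ of $G_x$ as in Example~\ref{Ex:FreeAb}, write $D=\operatorname{diag}(d_1,\dots,d_n)$ for the matrix of the inclusion $\iota\colon G_x\to G$ and $B$ for the matrix of $\sigma_x$ in these bases, so that $A=BD^{-1}$. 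By functoriality of $K_*(\cs(-))\cong\Lambda^*(-)$ on homomorphisms of free abelian groups, $K_*(\iota)$ and $K_*(\sigma_x)$ correspond to $\Lambda^*(D)$ and $\Lambda^*(B)$ respectively. Now $G_x\trianglelefteq G$ has index $d$, so Corollary~\ref{c:inc.transfer} gives $\tr^G_{G_x}\circ K_*(\iota)=d\cdot\id$; since $\Lambda^*(D)\otimes\mathbb Q$ is invertible and $\Lambda^*(\mathbb Z^n)$ is torsion-free, rationalizing forces $\tr^G_{G_x}=d\,\Lambda^*(D)^{-1}=d\,\Lambda^*(D^{-1})$. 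Composing, $\Phi=\Lambda^*(B)\circ d\,\Lambda^*(D^{-1})=d\,\Lambda^*(BD^{-1})=d\,\Lambda^*(A)$, which is diagonal for the grading with degree-$q$ block $d\Lambda^q(A)$. Thus $\Phi_0=\bigoplus_{q\text{ even}}d\Lambda^q(A)$ and $\Phi_1=\bigoplus_{q\text{ odd}}d\Lambda^q(A)$. This is the precise K-theoretic analogue of Lemma~\ref{l:ext.power.trick} together with the naturality step of Theorem~\ref{t:homology.self.similar.free.ab}; note that the invariants $\ker,\coker$ depend only on the conjugacy class of $A$, so the choice of basis is immaterial.

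With the maps pinned down I would extract from the six-term sequence the two short exact sequences
\[0\to\coker(\id-\Phi_0)\to K_0(\cs(\mathscr G))\to\ker(\id-\Phi_1)\to 0,\]
\[0\to\coker(\id-\Phi_1)\to K_1(\cs(\mathscr G))\to\ker(\id-\Phi_0)\to 0.\]
Block-diagonality gives $\coker(\id-\Phi_0)=\bigoplus_q\coker(\id-d\Lambda^{2q}(A))$ and $\ker(\id-\Phi_1)=\bigoplus_q\ker(\id-d\Lambda^{2q+1}(A))=\bigoplus_q\ker(\id-d\Lambda^{2q-1}(A))$, the $q=0$ term of the last expression vanishing since $\Lambda^{-1}=0$; similarly for the $K_1$ sequence. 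Each kernel is a subgroup of the finitely generated free abelian group $\Lambda^*(\mathbb Z^n)$, hence free, so both sequences split and yield the stated decompositions (1) and (2). For the unit, $[1]_0\in K_0(\cs(G))\cong\Lambda^{\mathrm{even}}(\mathbb Z^n)$ is the generator of $\Lambda^0=\mathbb Z$, and the unital map $\cs(G)\to\mathcal O_{\mathcal X}$ sends it to $[1]_0\in K_0(\cs(\mathscr G))$, which in the first sequence lies in the copy of $\coker(\id-\Phi_0)$; in degree $0$ we have $\id-d\Lambda^0(A)=1-d$ on $\mathbb Z$, so $\coker(\id-d\Lambda^0(A))\cong\mathbb Z/(d-1)\mathbb Z$ and $[1]_0$ is its generator.

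The main obstacle I anticipate is the bookkeeping in the transfer computation: deducing $\tr^G_{G_x}=d\,\Lambda^*(D^{-1})$ cleanly from the single identity of Corollary~\ref{c:inc.transfer} via rationalization, and keeping the two distinct bases on the domain $G_x$ and codomain $G$ straight so that the functorial identity $\Lambda^*(B)\Lambda^*(D^{-1})=\Lambda^*(A)$ is applied correctly. Once $\Phi=d\Lambda^*(A)$ is established, the remaining work—splitting the sequences and tracking the unit—is routine and parallels the homology case already in hand.
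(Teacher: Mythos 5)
Your proposal is correct and follows essentially the same route as the paper's proof: Smith normal form bases, Corollary~\ref{c:inc.transfer} to force $\tr^G_{G_x}=dK_*(\iota)\inv$, identification of $K_*(\sigma_x)\circ\tr^G_{G_x}$ with $d\Lambda^*(A)$ under $K_*(\cs(\mathbb Z^n))\cong\Lambda^*(\mathbb Z^n)$, and then Corollary~\ref{c:transitive.transfer.Ktheory} plus freeness of the kernels to split the resulting short exact sequences. Your matrix formulation $\tr^G_{G_x}=d\,\Lambda^*(D^{-1})$ is exactly the content of Lemma~\ref{l:ext.power.trick}, which the paper cites at that step, so the two arguments coincide.
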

\begin{proof}
%
Without loss of generality, we may assume that we have chosen our basis $e_1,\ldots, e_n$ of $\mathbb Z$ so that there are positive integers $d_1,\ldots,d_n$ such that $f_1=d_1e_1,\ldots, f_n=d_ne_n$ is a basis for $G_x$. Note that $d=d_1\cdots d_n$. Let $\iota\colon G_x\to G$ be the inclusion.   By Corollary~\ref{c:inc.transfer}, we have that $\mathrm{tr}^G_{G_x}\circ K_*(\iota)=d\cdot \id$.  In particular, $K_*(\iota)$ is invertible over $\mathbb Q$ and $\mathrm{tr}^G_{G_x}$ must be $dK_*(\iota)\inv$.  
Under the identification of the $K$-theory of $\cs(G)$ and $\cs(G_x)$ with $\Lambda^*(G)$ and $\Lambda^*(G_x)$, respectively, we see from Lemma~\ref{l:ext.power.trick} that $\mathrm{tr}^G_{G_x}(e_{i_1}\wedge\cdots\wedge e_{i_q}) = d(\frac{1}{d_{i_1}}f_{i_1}\wedge\cdots\wedge \frac{1}{d_{i_q}}f_{i_q})$.  As $Ae_i = \frac{1}{d_i}\sigma_x(f_i)$, the result now follows from Corollary~\ref{c:transitive.transfer.Ktheory} and the observation that $\ker(\id -d\Lambda^q(A))$ is free abelian. 
\end{proof}

The groupoid $\mathscr G$ is Hausdorff and amenable, and thus satisfies the HK property by comparing Theorem~\ref{t:homology.self.similar.free.ab} and Theorem~\ref{t:FreeAbK}.

In the case that the action is contracting and self-replicating, we obtain the follow simplification in light of Proposition~\ref{p:self.replicating.contracting}, recovering the K-theoretic computation in~\cite{EHR11} by~\cite[Corollary~3.10]{LRRW14}.

\begin{Cor}\label{c:FreeAbK}
Let $G=\mathbb Z^n$ have a self-similar contracting and self-replicating action on a set $X$ of cardinality $d\geq 2$ and let $\mathscr G$ be the corresponding groupoid.  Fix $x\in X$,  and let $A$ the matrix of $\sigma_x\otimes 1_{\mathbb Q}$ with respect to some basis.  Then:
\begin{enumerate}
    \item If $\det A>0$ and $n$ is odd, then 
    \begin{align*}
        K_0(\cs(\mathscr G)) & = \mathbb Z\oplus \left(\bigoplus_{0\leq q\leq \frac{n-1}{2}} \coker(\id -d\Lambda^{2q}(A))\right)\\
        K_1(\cs(\mathscr G)) & =  \bigoplus_{0\leq q\leq \frac{n-1}{2}} \coker(\id -d\Lambda^{2q+1}(A)).
    \end{align*}
    \item If $\det A>0$ and $n$ is even, then
    \begin{align*}
        K_0(\cs(\mathscr G)) & = \bigoplus_{0\leq q\leq \frac{n}{2}} \coker(\id -d\Lambda^{2q}(A))\\
        K_1(\cs(\mathscr G)) & = \mathbb Z\oplus \left(\bigoplus_{0\leq q< \frac{n}{2}} \coker(\id -d\Lambda^{2q+1}(A))\right).
    \end{align*}
    \item If $\det A<0$, then
    \begin{align*}
        K_0(\cs(\mathscr G)) & = \bigoplus_{0\leq q\leq \lfloor \frac{n}{2}\rfloor} \coker(\id -d\Lambda^{2q}(A))\\
        K_1(\cs(\mathscr G)) & = \bigoplus_{0\leq q\leq  \lfloor\frac{n-1}{2}\rfloor} \coker(\id -d\Lambda^{2q+1}(A)).
    \end{align*}
\end{enumerate}     
\end{Cor}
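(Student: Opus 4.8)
The plan is to read the answer straight off Theorem~\ref{t:FreeAbK}, evaluating each factor $\ker(\id - d\Lambda^q(A))$ and $\coker(\id - d\Lambda^q(A))$ with the help of Proposition~\ref{p:self.replicating.contracting}. First I would record that $\Lambda^q(A) = 0$ for $q < 0$ or $q > n$, so that $\id - d\Lambda^q(A) = \id$ is invertible outside the range $0 \le q \le n$ and contributes nothing; hence both direct sums in Theorem~\ref{t:FreeAbK} are finite. At the two ends I would compute directly. For $q = 0$ the map $\id - d\Lambda^0(A)$ is multiplication by $1 - d$ on $\mathbb Z$, which by Proposition~\ref{p:self.replicating.contracting}(1) has trivial kernel and cokernel $\mathbb Z/(d-1)\mathbb Z$. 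For $q = n$ the map $\id - d\Lambda^n(A)$ is multiplication by $1 - d\det A$ on $\mathbb Z$, which by Proposition~\ref{p:self.replicating.contracting}(3) is the zero map when $\det A > 0$ (so kernel and cokernel are both $\mathbb Z$) and multiplication by $2$ when $\det A < 0$ (so kernel $0$ and cokernel $\mathbb Z/2\mathbb Z$). In the intermediate range $1 \le q < n$, Proposition~\ref{p:self.replicating.contracting}(2) gives $\det(\id - d\Lambda^q(A)) \ne 0$, so each such map is injective, contributing a trivial kernel and the finite cokernel $\coker(\id - d\Lambda^q(A))$.

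With these values in hand, the proof reduces to bookkeeping in the summation indices of Theorem~\ref{t:FreeAbK}, split according to the parity of $n$ and the sign of $\det A$. Since all kernels vanish except possibly at $q = n$ (and only when $\det A > 0$), the kernel sums contribute at most a single copy of $\mathbb Z$, and I would track into which of $K_0$, $K_1$ it falls: the kernel factors in $K_0$ carry odd index $2q - 1$ while those in $K_1$ carry even index $2q$, so the $q = n$ kernel lands in $K_0$ precisely when $n$ is odd and in $K_1$ precisely when $n$ is even. Dually, when $\det A > 0$ the cokernel at $q = n$ equals $\coker(\text{zero map}) = \mathbb Z$ and is absorbed into the corresponding cokernel sum of the \emph{other} K-group; when $\det A < 0$ it is the $\mathbb Z/2\mathbb Z$ appearing as the top term of the relevant cokernel sum. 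Matching the resulting ranges, using that the largest even index $\le n$ is $n$ or $n-1$ according to the parity of $n$ (and likewise for odd indices), yields exactly the three stated cases, with the $\det A < 0$ case collapsing the even and odd cokernel ranges to $0 \le q \le \lfloor n/2\rfloor$ and $0 \le q \le \lfloor (n-1)/2\rfloor$ respectively.

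I expect no conceptual difficulty here, since the entire content is already contained in the two cited results; the only real care is combinatorial. The main obstacle, such as it is, will be keeping the parity bookkeeping consistent across the four subcases, namely the interaction of the parity of $n$ with whether the distinguished top index $n$ is reached by the even-indexed or the odd-indexed family in each of $K_0$ and $K_1$. The subtlest point to verify is that, in the $\det A > 0$ case, the vanishing of $\id - d\Lambda^n(A)$ produces a free $\mathbb Z$ as kernel and a free $\mathbb Z$ as cokernel, and that these two copies are distributed one to each of $K_0$ and $K_1$ (the kernel copy appearing as a pulled-out summand, the cokernel copy folded into a cokernel sum), so that neither is lost nor double-counted. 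Confirming these placements against the stated index ranges completes the proof.
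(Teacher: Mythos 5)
Your proposal is correct and is exactly the route the paper takes: the corollary is stated as an immediate consequence of Theorem~\ref{t:FreeAbK} and Proposition~\ref{p:self.replicating.contracting}, and your case-by-case bookkeeping (all kernels trivial except possibly at exterior degree $n$ when $\det A>0$, cokernels $\mathbb Z/(d-1)\mathbb Z$ at degree $0$, finite in degrees $1\le q<n$, and $\mathbb Z$ or $\mathbb Z/2\mathbb Z$ at degree $n$ according to the sign of $\det A$) correctly reproduces all three cases. The placement of the two copies of $\mathbb Z$ in the $\det A>0$ case, one as a pulled-out kernel summand and one folded into the top of the opposite cokernel sum, is verified correctly.
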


As an example, we compute the homology and K-theory of the groupoid and $\cs$-algebra associated to the sausage automaton from Example~\ref{ex:sausage} for free abelian groups of prime rank.  This the action is self-replicating and contracting, so we can apply the previous corollary.

\begin{Thm}
Let $\mathscr G$ be the groupoid associated to the self-similar action of $\mathbb Z^n$ on $\{0,1\}$ given via the sausage automaton with $n$ prime.  Then
\[H_q(\mathscr G) = \begin{cases}\left(\mathbb Z/(2^{n-q}-1)\mathbb Z\right)^{\frac{1}{n}\binom{n}{q}}, & \text{if}\ 1\leq q\leq n-1\\
 \mathbb Z/(1+(-1)^{n})\mathbb Z, & \text{if}\ q=n\\   (1+(-1)^{n-1})\mathbb Z, & \text{if}\ q=n+1\\ 0, & \text{else.}\end{cases}\]
Moreover $K_0(\cs(\mathscr G))=\bigoplus_{q\geq 0} H_{2q}(\mathscr G)$ and $K_1(\cs(\mathscr G))\cong \bigoplus_{q\geq 0}H_{2q+1}(\mathscr G)$.
\end{Thm}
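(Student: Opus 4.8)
The plan is to read everything off Corollary~\ref{c:self.rep.ab} and Theorem~\ref{t:FreeAbK}, reducing the whole statement to one explicit cokernel computation for the matrix $A$ of Example~\ref{ex:sausage}. Here $d = |X| = 2$ and, as noted in that example, the action is contracting and self-replicating, so Corollary~\ref{c:self.rep.ab} applies once the sign of $\det A$ is fixed. Since $A$ sends $e_0 \mapsto \tfrac12 e_{n-1}$ and $e_i \mapsto e_{i-1}$ for $1 \le i \le n-1$, it is $\tfrac12$ times the $n$-cycle permutation matrix, whence $\det A = (-1)^{n-1}/2$, positive precisely when $n$ is odd. Feeding this into Corollary~\ref{c:self.rep.ab} immediately gives $H_0(\mathscr G) = \mathbb Z/(d-1)\mathbb Z = 0$, the boundary values $H_n(\mathscr G) \cong \mathbb Z/(1+(-1)^n)\mathbb Z$ and $H_{n+1}(\mathscr G) \cong (1+(-1)^{n-1})\mathbb Z$ (the two cases $\det A \gtrless 0$ corresponding to the parity of $n$), and $H_q(\mathscr G) = 0$ for $q > n+1$.

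The substance lies in the range $1 \le q \le n-1$, where $H_q(\mathscr G) \cong \coker(\id - 2\Lambda^q(A))$. I would index the standard basis of $\Lambda^q(\mathbb Z^n)$ by the $q$-element subsets $S$ of $\mathbb Z/n\mathbb Z = \{0,\dots,n-1\}$, writing $e_S$ for the associated wedge. A direct computation then shows that $2\Lambda^q(A)$ sends $e_S$ to $\varepsilon_S\, 2^{c(S)} e_{\tau(S)}$, where $\tau(S) = S - 1 \bmod n$ is the cyclic shift, $c(S) = 1$ if $0 \notin S$ and $c(S) = 0$ if $0 \in S$ (the global factor $2$ cancelling the $\tfrac12$ from $Ae_0$), and $\varepsilon_S = (-1)^{q-1}$ if $0 \in S$ (the sign incurred when moving $e_{n-1}$ to the end of the wedge) while $\varepsilon_S = 1$ otherwise. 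Thus $\id - 2\Lambda^q(A)$ is block diagonal along the orbits of the action of $\langle \tau \rangle \cong \mathbb Z/n\mathbb Z$ on $q$-subsets.

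Because $n$ is prime and $1 \le q \le n-1$, any $\tau^k$ with $0 < k < n$ is again an $n$-cycle, so its only invariant subsets are $\emptyset$ and the whole set; hence the action on $q$-subsets is free, every orbit has size $n$, and there are $\tfrac1n\binom nq$ of them (which is an integer for the same reason). On a single orbit $\{S,\tau S,\dots,\tau^{n-1}S\}$ the corresponding block is a cyclic companion-type matrix whose product of entries around the cycle is $\eta\, 2^{m}$, with $m = \sum_j c(\tau^j S)$ and $\eta = \prod_j \varepsilon_{\tau^j S}$. Exactly $q$ of the shifts contain $0$ (those with $j \in S$), so $m = n-q$ and $\eta = \bigl((-1)^{q-1}\bigr)^q = (-1)^{q(q-1)} = 1$. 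Eliminating all but one generator via the relations $e_{\tau^j S} = \varepsilon_{\tau^j S}\, 2^{c(\tau^j S)} e_{\tau^{j+1}S}$ shows the cokernel of each block is cyclic of order $|1 - \eta\, 2^{n-q}| = 2^{n-q}-1$. Summing over orbits yields $H_q(\mathscr G) \cong (\mathbb Z/(2^{n-q}-1)\mathbb Z)^{\frac1n\binom nq}$.

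Finally, the K-theory statement is the HK property for this Hausdorff amenable groupoid, which follows by matching Theorem~\ref{t:homology.self.similar.free.ab} with Theorem~\ref{t:FreeAbK} term by term: regrouping the summands of $K_0(\cs(\mathscr G))$ and $K_1(\cs(\mathscr G))$ reproduces exactly $\bigoplus_{q\ge0} H_{2q}(\mathscr G)$ and $\bigoplus_{q\ge0} H_{2q+1}(\mathscr G)$, as already observed after Theorem~\ref{t:FreeAbK}. The main obstacle is the bookkeeping in the single-orbit block, especially verifying that the signs multiply to $\eta = +1$ — this is what pins down $2^{n-q}-1$ rather than $2^{n-q}+1$ — with the primality of $n$ (giving both freeness of the $\tau$-action and integrality of $\tfrac1n\binom nq$) being the other essential input.
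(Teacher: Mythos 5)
Your proposal is correct and follows essentially the same route as the paper: reading off the $q=0$ and $q\geq n$ cases from Corollary~\ref{c:self.rep.ab} via $\det A = (-1)^{n-1}/2$, decomposing $\id - 2\Lambda^q(A)$ along the free $\mathbb Z/n\mathbb Z$-orbits of $q$-subsets, and verifying that the signs around each cycle multiply to $+1$ while the powers of $2$ accumulate to $2^{n-q}$, with the K-theory handled by Corollary~\ref{c:FreeAbK}. The paper's argument is the same computation, phrased slightly more tersely.
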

\begin{proof}
The K-theory statement follows from Corollary~\ref{c:FreeAbK}.
The result for $q=0$ and $q\geq n$ is clear from Corollary~\ref{c:self.rep.ab} and the observation that $\det (A) = (-1)^{n-1}/2$. 

It remains to compute $\coker(\id -2\Lambda^q(A))$ for $1\leq q\leq n-1$.
If $I=\{i_1,\ldots, i_q\}\subseteq [n]$ with $i_1<\cdots<i_q$, put $e_I = e_{i_1}\wedge \cdots\wedge e_{i_q}$.   Identifying $[n]$ with $\mathbb Z/n\mathbb Z$, we can define $I -1 = \{i-1\mid i\in I\}$ (taken modulo $n$).  Since $1\leq q<n$ and $n$ is prime, it follows that $I, I-1,\ldots, I-(n-1)$ are distinct.  There are then $\frac{1}{n}\binom{n}{q}$ orbits like this.   Notice that \[2\Lambda^q(A) e_I = \begin{cases}2e_{I-1},  & \text{if}\ 0\notin I\\ (-1)^{q-1}e_{I-1}, & \text{if}\ 0\in I. \end{cases}\]
It follows that $\coker(\id -2\Lambda^q(A))$ has one cyclic summand per orbit.  Since there are $q$ terms in the orbit $I,\ldots, I -(n-1)$ which contain $0$, in the cokernel the summand corresponding to the orbit of  $e_I$ satisfies the relation $e_I = 2^{n-q}((-1)^{q-1})^qe_I = 2^{n-q}(-1)^{q(q-1)}e_I = 2^{n-q}e_I$.  Therefore, this summand is isomorphic to $\mathbb Z/(2^{n-q}-1)\mathbb Z$, and so $\coker(\id -2\Lambda^q(A))\cong (\mathbb Z/(2^{n-q}-1)\mathbb Z)^{\frac{1}{n}\binom{n}{q}}$, as required. 
\end{proof}

\appendix\section{Wildon's lemma}

Since Wildon's lemma~\cite{Wildon} is not formally  published, we include a proof. 

\begin{Lemma}[Wildon]\label{l:wildon}
Let $A$ be the $(n+1)\times (n+1)$-matrix, indexed by $0,\ldots, n$, with $A_{ij}= (-1)^i\binom{n-i}{j}$.  Then $A^3= (-1)^n\id$.
\end{Lemma}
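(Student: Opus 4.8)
The plan is to realise $A$---up to a transpose and a conjugation by a diagonal sign matrix---as the matrix of an explicit operator on polynomials, where an order-$3$ M\"obius transformation makes the cube computation transparent. The point is that the coupled binomial coefficient $\binom{n-i}{j}$ resists a direct single-variable substitution interpretation, but a mild repackaging of $A$ does admit one.

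First I would work over $\mathbb{Q}$ with the space $V$ of polynomials of degree at most $n$ and the linear operator $T\colon V\to V$ given by $(Tp)(x)=(1-x)^n\,p\bigl(\tfrac{1}{1-x}\bigr)$. One checks $T$ preserves $V$ since $T(x^j)=(1-x)^{n-j}$ has degree at most $n$; expanding yields $T(x^j)=\sum_{i=0}^n(-1)^i\binom{n-j}{i}x^i$, so in the basis $1,x,\dots,x^n$ the matrix of $T$ is $\widehat A$ with $\widehat A_{ij}=(-1)^i\binom{n-j}{i}$. Since $A$ has integer entries, proving the cube identity over $\mathbb{Q}$ suffices.

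The key step is the identity $T^3=(-1)^n\,\id$, which I would prove by directly composing the substitutions while tracking the automorphy factor $(1-x)^n$. Writing $g(x)=\tfrac{1}{1-x}$, one has $g^2(x)=\tfrac{x-1}{x}$ and $g^3=\id$; a short computation then gives $(T^2p)(x)=(-1)^n x^n\,p\bigl(\tfrac{x-1}{x}\bigr)$ and, applying $T$ once more, $(T^3p)(x)=(-1)^n p(x)$, the factors telescoping to $(-1)^n$. Hence $\widehat A^3=(-1)^n I$.

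Finally I would transfer this to $A$. With $D=\mathrm{diag}\bigl((-1)^0,\dots,(-1)^n\bigr)$ one verifies entrywise that $A=D\,\widehat A^{\,T}\,D$, since $(D\widehat A^{\,T}D)_{ij}=(-1)^{i+j}\widehat A_{ji}=(-1)^{i+2j}\binom{n-i}{j}=(-1)^i\binom{n-i}{j}=A_{ij}$. Because $D^2=I$ and $(\widehat A^{\,T})^3=(\widehat A^3)^T$, this gives $A^3=D(\widehat A^{\,T})^3D=D\bigl((-1)^nI\bigr)D=(-1)^n I$, as required. I expect the only delicate points to be the bookkeeping in the two computations---the telescoping of the automorphy factors through $g,g^2,g^3$ in the proof of $T^3=(-1)^n\id$, and the sign-and-transpose matching in $A=D\widehat A^{\,T}D$---both of which are routine once the setup is fixed.
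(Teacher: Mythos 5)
Your proof is correct, and it takes a genuinely different route from the paper's. The paper proves the identity by brute force: it expands $A^3_{ij}$ as a double sum and collapses it using the upper-negation identity $(-1)^r\binom{s}{r}=\binom{r-s-1}{r}$, Vandermonde's identity, and the orthogonality relation $\sum_r(-1)^r\binom{i}{r}\binom{r}{j}=(-1)^i\delta_{ij}$. You instead realise $\widehat A$ as the matrix of the weight-$n$ action $p\mapsto (1-x)^n p\bigl(\tfrac{1}{1-x}\bigr)$ on polynomials of degree at most $n$, so that $T^3=(-1)^n\id$ follows from the fact that $x\mapsto \tfrac{1}{1-x}$ is an order-$3$ M\"obius transformation together with the telescoping of the automorphy factors; the conjugation $A=D\widehat A^{\,T}D$ with $D^2=I$ then transfers the identity to $A$. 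I checked the three computational steps --- $T(x^j)=(1-x)^{n-j}$ giving $\widehat A_{ij}=(-1)^i\binom{n-j}{i}$, the composition $(T^2p)(x)=(-1)^nx^np\bigl(\tfrac{x-1}{x}\bigr)$ and $(T^3p)(x)=(-1)^np(x)$, and the entrywise identity $(D\widehat A^{\,T}D)_{ij}=(-1)^i\binom{n-i}{j}$ --- and all are correct. Your approach is more conceptual: it explains \emph{why} the order is three (it descends from an order-$3$ element of $\mathrm{PGL}_2$ acting on binary forms) rather than verifying it, and it would also hand you the eigenvalue structure of $A$ directly, which is what the surrounding argument in the paper actually needs; the paper's computation has the advantage of being entirely self-contained and elementary, requiring nothing beyond standard binomial identities.
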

\begin{proof}
We compute 
\begin{align*}
A^3_{ij} &= \sum_{k,\ell=0}^n(-1)^{i+k+\ell} \binom{n-i}{k}\binom{n-k}{\ell}\binom{n-\ell}{j}\\ & =(-1)^{i+n}\sum_{k,\ell=0}^n(-1)^{k+\ell-n}\binom{n-i}{k}\binom{n-k}{n-k-\ell}\binom{n-\ell}{j}\\ & = (-1)^{i+n}\sum_{k,\ell=0}^n\binom{n-i}{k}\binom{-\ell -1}{n-k-\ell}\binom{n-\ell}{j}\\ 
& =(-1)^{i}\sum_{\ell=0}^n(-1)^{\ell}(-1)^{n-\ell} \binom{n-i-\ell -1}{n-\ell}\binom{n-\ell}{j} \\ & =
(-1)^{i}\sum_{\ell=0}^n(-1)^{\ell}\binom{i}{n-\ell}\binom{n-\ell}{j}\\ & = (-1)^{i+n}\sum_{r=0}^n (-1)^{r}\binom{i}{r}\binom{r}{j} \\ & = (-1)^n\delta_{ij}
\end{align*}
where the third and fifth equalities use the identity $(-1)^r\binom{s}{r} = \binom{r-s-1}{r}$, the fourth equality uses Vandermonde's identity and the last equality uses that
\[\sum_{r=0}^n(-1)^r\binom{i}{r}\binom{r}{j} = (-1)^i\delta_{ij};\] see, for instance,~\cite{binomialcoeffs}.
\end{proof}

\newcommand{\etalchar}[1]{$^{#1}$}
\def\malce{\mathbin{\hbox{$\bigcirc$\rlap{\kern-7.75pt\raise0,50pt\hbox{${\tt m}$}}}}}\def\cprime{$'$} \def\cprime{$'$} \def\cprime{$'$} \def\cprime{$'$} \def\cprime{$'$} \def\cprime{$'$} \def\cprime{$'$} \def\cprime{$'$} \def\cprime{$'$} \def\cprime{$'$}

\end{document}